\renewcommand{\eprint}[1]{#1}
\numberwithin{equation}{section}
\theoremstyle{plain}
\newtheorem{thm}{Theorem}[section]
\newtheorem{prop}[thm]{Proposition}
\newtheorem{lemma}[thm]{Lemma}
\newtheorem{cor}[thm]{Corollary}
\theoremstyle{definition}
\newtheorem{defn}[thm]{Definition}
\theoremstyle{remark}
\newtheorem{remark}[thm]{Remark}
\theoremstyle{plain}
\newcommand\bp{\begin{proof}}
\newcommand\ep{\end{proof}}
\newcommand{\un}{\mathds{1}}
\newcommand\dach{{\!\widehat{\ \ }}}
\newcommand\C{\mathbb{C}}
\newcommand\N{\mathbb{N}}
\newcommand\Q{\mathbb{Q}}
\newcommand\R{\mathbb{R}}
\newcommand\T{\mathbb{T}}
\newcommand\Z{\mathbb{Z}}
\newcommand{\BB}{\mathcal{B}}
\newcommand{\CC}{\mathcal{C}}
\newcommand{\G}{\mathcal{G}}
\newcommand{\LL}{\mathcal{L}}
\newcommand{\M}{\mathcal{M}}
\newcommand{\OO}{\mathcal{O}}
\newcommand{\SSS}{\mathcal{S}}
\newcommand{\UU}{\mathcal{U}}
\newcommand\Char{\operatorname{Char}}
\newcommand{\Cl}{\operatorname{Cl}}
\newcommand\Dom{\operatorname{dom}}
\newcommand{\Gu}{{\mathcal{G}^{(0)}}}
\newcommand{\Gxx}{\mathcal{G}^x_x}
\newcommand\Ind{\operatorname{Ind}}
\newcommand\MT{\operatorname{M}}
\newcommand\Stab{\operatorname{Stab}}
\newcommand\Sub{\operatorname{Sub}}
\newcommand\Per{\operatorname{Per}}
\newcommand\Prim{\operatorname{Prim}}
\newcommand\Ran{\operatorname{ran}}
\newcommand\prim{\mathrm{prim}}
\newcommand\sing{\mathrm{sing}}
\newcommand\SL{\operatorname{SL}}
\newcommand\eps{\varepsilon}
\newcommand\Iso{\operatorname{Iso}(\mathcal G)^{\circ}}
\newcommand\IsoG{\operatorname{Iso}(\mathcal G)}
\newcommand\IsoGx[1]{\operatorname{Iso}(\mathcal G_{\overline{[#1]}})}
\newcommand\ee{\nopagebreak\mbox{\ }\hfill$\diamond$}
\begin{document}

\title{The primitive spectrum of C$^*$-algebras of \'etale groupoids with abelian isotropy}

\date{May 3, 2024; revised March 3, 2025}

\author{Johannes Christensen}
\address{Department of Mathematics, KU Leuven, Belgium}
\email{johannes.christensen@kuleuven.be}

\author{Sergey Neshveyev}
\address{Department of Mathematics, University of Oslo, Norway}
\email{sergeyn@math.uio.no}

\thanks{J.C. is supported by the postdoctoral fellowship 1291823N of the Research Foundation Flanders. S.N. is partially supported by the NFR funded project 300837 ``Quantum Symmetry''.}

\begin{abstract}
Given a Hausdorff locally compact \'etale groupoid $\G$, we describe as a topological space the part of the primitive spectrum of $C^*(\G)$ obtained by inducing one-dimensional representations of amenable isotropy groups of $\G$. When $\G$ is amenable, second countable, with abelian isotropy groups, our result gives the description of $\Prim C^*(\G)$ conjectured by van~Wyk and Williams. This, in principle, completely determines the ideal structure of a large class of separable C$^*$-algebras, including the transformation group C$^*$-algebras defined by amenable actions of discrete groups with abelian stabilizers and the C$^*$-algebras of higher rank graphs. As an illustration we describe the primitive spectrum of the C$^*$-algebra of any row-finite higher rank graph without sources.
\end{abstract}

\maketitle

\section*{Introduction}
The primitive spectrum $\Prim A$ of a C$^{*}$-algebra $A$ consists of the ideals that can be realized as kernels of irreducible representations. When equipped with the Jacobson topology, this space contains crucial information about the C$^{*}$-algebra, as it completely determines the ideal structure of~$A$. 
A complete description of the Jacobson topology is often a difficult task even for C$^*$-algebras with a relatively simple representation theory, see, e.g.,~\citelist{\cite{BP}\cite{NT}}. Our goal in this paper is to describe the topological space $\Prim A$ for a fairly large class of groupoid C$^*$-algebras.

The groupoid C$^*$-algebras belong to what can be loosely called algebras of crossed product type. The ``Mackey machine'',
since its inception in the works of Clifford~\cite{MR1503352} and Mackey~\citelist{\cite{MR0031489}\cite{MR0098328}}, has been the main tool to study representations of such algebras. One of the biggest achievements of the theory is the proof of the Effros--Hahn conjecture, which states that every primitive ideal of a separable transformation group C$^*$-algebra $C_0(X)\rtimes G$ defined by an action $G\curvearrowright X$ of an amenable group is induced by an irreducible representation of one of the stabilizers~\cite{EH}. This conjecture was proved, in a generalized form, by Sauvageot~\cite{Sau} and Gootman--Rosenberg~\cite{GR}. Their techniques were then extended to groupoid crossed products by Renault~\cite{R} and Ionescu--Williams~\cite{IW}.

Therefore we know by now that if $\G$ is an amenable second countable Hausdorff locally compact \'etale groupoid, then as a set the primitive ideal space $\Prim C^*(\G)$ is a quotient of the set $\Stab(\G)^\prim$ of pairs $(x,J)$, where $x\in\Gu$ and $J\in\Prim C^*(\Gxx)$. Although this is a very powerful result, in order to completely understand the ideal structure of $C^*(\G)$ one still needs to solve two related problems: determine when two points in $\Stab(\G)^\prim$ have identical images under the induction map $\Ind\colon\Stab(\G)^\prim\to\Prim C^*(\G)$ and describe the Jacobson topology on $\Prim C^*(\G)$. Note that the problems are related, because two points have identical images in $\Prim C^*(\G)$ if and only if the closures of these images coincide.

Effros and Hahn themselves proved in~\cite{EH} that if an action $G\curvearrowright X$ of a discrete amenable group is free, so that $\Stab(G\ltimes X)^\prim=X$, then as a topological space $\Prim(C_0(X)\rtimes G)$ is homeomorphic to the space $(G\backslash\! X)^\sim$ of quasi-orbits of the action, that is, $(G\backslash\! X)^\sim$ is the quotient of $X$ such that two points $x,y\in X$ have identical images if and only if $\overline{Gx}=\overline{Gy}$. More generally, Williams proved in~\cite{MR0617538}  that if all stabilizers are contained in one abelian subgroup $H\subset G$, then $\Stab(G\ltimes X)^\prim$ can be given the topology of a quotient of $X\times\widehat H$ and $\Prim(C_0(X)\rtimes G)$ is homeomorphic to the quasi-orbit space $(G\backslash\!\Stab(G\ltimes X)^\prim)^\sim$.

It is then tempting to say that there should exist a natural topology on $\Stab(\G)^\prim$ such that in the amenable case the induction map $\Ind\colon\Stab(\G)^\prim\to\Prim C^*(\G)$ induces a homeomorphism~$\Ind^\sim$ of $(\G\backslash\!\Stab(\G)^\prim)^\sim$ onto $\Prim C^*(\G)$. Making sense of this for a large class of groupoids, beyond the already mentioned cases, has proved to be difficult. All available general results of this sort involve significant restrictions on the local structure of the isotropy bundle. These results cover, for example, transformation groupoids defined by proper actions~\cite{EE}, groupoids with isotropy groups $\Gxx$ that vary continuously in $x\in\Gu$~\citelist{\cite{MR0146297}\cite{MR2966476}} and groupoids with abelian isotropy groups that vary continuously except for ``jump discontinuities''~\cite{MR4395600}.

In the last paper (\cite{MR4395600}) van Wyk and Williams tried to formalize this problem for groupoids with abelian isotropy. They introduced a topology on the space $\Stab(\G)\dach$ of pairs $(x,\chi)$ ($x\in\Gu$, $\chi\in\widehat{\Gxx}$), which can be identified with $\Stab(\G)^\prim$, and cautiously wrote that, under the assumptions of second countability and  amenability, they expect the map $\Ind^\sim\colon(\G\backslash\!\Stab(\G)\dach)^\sim\to\Prim C^*(\G)$ to be a homeomorphism in most circumstances.

Almost at the same time Katsura~\cite{Kat} succeeded in describing the primitive spectrum for C$^*$-algebras of singly generated dynamical systems. These C$^*$-algebras can be defined as groupoid C$^*$-algebras associated with a partially defined local homeomorphism $\sigma\colon\Dom(\sigma)\subset X\to X$. On a superficial level the corresponding groupoids $\G_\sigma$ may seem similar to transformation groupoids $\Z\ltimes X$, but they are known to have a considerably more complicated isotropy structure. The class of C$^*$-algebras~$C^*(\G_\sigma)$ includes graph C$^*$-algebras, and so the results of Katsura subsume in particular earlier results on the ideal structure of Cuntz--Krieger algebras and their generalizations, see~\cite{HS} and the references there for the history of the problem.

From the groupoid point of view the main result of~\cite{Kat} can be interpreted as a description of the pre-images of closed subsets of $\Prim C^*(\G_\sigma)$ in $\Stab(\G_\sigma)\dach$. Katsura, however, does not use groupoids, working instead with C$^*$-correspondences, and it is not obvious (but is true, see Section~\ref{ssec:Katsura}) that this description agrees with the conjecture of van Wyk and Williams, according to which this should give the $\G_\sigma$-invariant closed subsets of $\Stab(\G_\sigma)\dach$.

Very recently, Brix, Carlsen and Sims~\cite{BCS} studied the topology on the primitive spectrum for the Deaconu--Renault groupoids $\G_T$ defined by $k$-tuples $T=(T_1,\dots,T_k)$ of commuting local homeomorphisms $T_i\colon X\to X$. For $k=1$ this gives the subclass of the groupoids $\G_\sigma$ discussed above such that~$\sigma$ is globally defined. The main result of~\cite{BCS} describes the topology on $\Prim C^*(\G_T)$ under the assumption of existence of ``harmonious families of bisections''. It is shown in \cite{BCS} that this assumption is weak enough to cover many examples of one or two commuting local homeomorphisms, but it remains unclear how often it is satisfied for $k\ge3$.  It should be said that it is not at all obvious (but is again true, see Section~\ref{ssec:harmonious}) that the results of~\cite{BCS} agree with the conjecture of van Wyk and Williams.

\smallskip

In this paper we prove that $\Ind^\sim\colon(\G\backslash\!\Stab(\G)\dach)^\sim\to \Prim C^*(\G)$ is a homeomorphism for all amenable second countable \'etale groupoids $\G$ with abelian isotropy groups. Our approach draws on the insight from~\cite{NS} and~\cite{CN2}, but is in itself elementary and relies only on basic properties of the Jacobson topology. In fact, it allows us to prove a more general result. Given an \'etale groupoid $\G$, with no additional assumptions of amenability or second countability, we introduce a topological space $\Char(\G)$ consisting of pairs $(x,\chi)$, where $x\in\Gu$ and $\chi\colon\Gxx\to\T$ is a character. Induction gives us a map $\Ind\colon\Char(\G)\to\Prim C^*(\G)$. Our main result says roughly that every convergent net $\Ind(x_i,\chi_i)\to\Ind(x,\chi)$ in $\Prim C^*(\G)$ comes, up to replacing $(x_i,\chi_i)$ by another point on its $\G$-orbit, from a converging net in $\Char(\G)$. When $\Gxx$ is amenable, the converse is also true. In the follow-up paper~\cite{CN4} we extend our methods to primitive ideals obtained by inducing finite dimensional irreducible representations and even some infinite dimensional ones.

\smallskip

The paper is organized as follows. In Section~\ref{sec:prelim} we fix our notation and quickly review basic facts about \'etale groupoids, Jacobson and Fell topologies, and quasi-orbit spaces.

In Section~\ref{sec:main} we introduce the topological space $\Char(\G)$. We first define the topology similarly to~\cite{MR4395600}, but then reformulate it in a form more amenable to analysis. It is actually this reformulated form that we found when we studied primitive spectra, and then realized that it is equivalent to the construction by van Wyk and Williams. We then discuss the induction map, prove our main result (Theorem~\ref{thm:main}) and draw some consequences.

Once one knows for a groupoid $\G$ with abelian isotropy that the spaces $(\G\backslash\!\Stab(\G)\dach)^\sim$ and $\Prim C^*(\G)$ are homeomorphic, it is in principle possible to completely understand the ideal structure of $C^*(\G)$: the ideals are in a one-to-one correspondence with the $\G$-invariant closed subsets of $\Stab(\G)\dach$. However, the topology on $\Stab(\G)\dach$ is complicated, and in practice getting a good grasp on how such $\G$-invariant sets look like can easily become a herculean task. In Section~\ref{sec:examples} we collect several classes of examples where the space $\Stab(\G)\dach$ has a bit more transparent structure and our results can be formulated in a more explicit form. These are first of all transformation groupoids defined by group actions with abelian stabilizers and groupoids injectively graded by an abelian group, which includes the Deaconu--Renault groupoids $\G_\sigma$, $\G_T$. In particular, the results from~\citelist{\cite{MR0617538}\cite{Kat}\cite{BCS}} we discussed above fall into one of these classes and we show how they can be relatively quickly deduced from our results (at least for the second countable spaces in the case of~\cite{Kat}).

The homeomorphism $(\G\backslash\!\Stab(\G)\dach)^\sim\cong \Prim C^*(\G)$ implies that $\Ind(x,\chi)=\Ind(y,\eta)$ if and only if $(x,\chi)$ and $(y,\eta)$ have the same $\G$-quasi-orbits in $\Stab(\G)\dach$. The groupoids injectively graded by abelian groups provide a large class of examples where this relation takes a relatively simple form: in Section~\ref{ssec:graded} we show that for such groupoids we have $\Ind(x,\chi)=\Ind(y,\eta)$ if and only if $x$ and $y$ have the same orbit closures in $\Gu$ and $\chi$ and $\eta$, viewed as characters of subgroups of the grading group, coincide on the essential isotropy groups at $x$ and $y$. This has actually already been shown in our previous paper \cite{CN2} (and for smaller classes of groupoids earlier in~~\citelist{\cite{MR0617538}\cite{SW}}) using the theory of isotropy fibers of ideals that we developed there. Here we use essentially the same ideas, but thanks to our description of the topology on $\Prim C^*(\G)$ the arguments become significantly shorter and, hopefully, more digestible. Therefore we get a relatively explicit ``catalogue'' of primitive ideals.

In Section~\ref{sec:Graph} we apply results of Section~\ref{sec:examples} to graph algebras. As was already mentioned, a complete description of the primitive spectrum of Cuntz--Krieger algebras of countable directed graphs was obtained by Hong and Szyma\'nski~\cite{HS} as a culmination of a long line of research. In Section~\ref{ssec:1-graphs} we give an equivalent description based on analysis of quasi-orbits of the canonical shift map on the path space of the graph. A key observation, a form of which is valid for all partially defined local homeomorphisms of second countable spaces, is that every quasi-orbit is represented either by an aperiodic path or by a periodic path with discrete orbit. In both cases the orbit closures are then not too difficult to understand in graph-theoretic terms.

In Section~\ref{subsecHRG} we consider higher rank graphs. Although formally our general results completely describe the primitive spectra of the corresponding Cuntz--Krieger algebras, a lot of work is still needed to formulate this description in terms of the underlying graphs. In this paper we carry out this work for higher rank graphs that are row-finite and have no sources. This is the most tractable class of possibly infinite higher rank graphs, and most of the research has been concentrated on~it.

A parameterization of the primitive ideals in the spirit of~\cite{HS} in this case has been already obtained in~\cite{MR3150171}. We show that it can also be quickly deduced from our results and give two complementary descriptions of the topology on the primitive spectrum. Our first result describes the open subsets of the spectrum. It is inspired by a result in~\cite{BCS} obtained under the assumption of existence of harmonious families of bisections, which is always satisfied for $2$-graphs. Our description is more explicit and does not require any extra assumptions. However, it is probably still not optimal, since in general it involves conditions on uncountably many paths.
Our second result describes convergence in the primitive spectrum and involves only finite paths. For usual row-finite graphs without sources this gives again the result of Hong and Szyma\'nski~\cite{HS}.

\bigskip

\section{Preliminaries} \label{sec:prelim}

\subsection{\'Etale groupoids and their \texorpdfstring{C$^*$}{C*}-algebras}\label{ssec:groupoids}
Throughout the paper we work with Hausdorff locally compact \'etale groupoids~$\G$. In the following we will briefly introduce our notation for such groupoids, and we refer the reader to \cite{SSW} for more background information. For a number of results we have to assume that $\G$ is in addition second countable or amenable, but we will make these assumptions explicitly every time they are needed.

\smallskip

As usual we denote by $s\colon\G\to\Gu$ and $r\colon\G\to\Gu$ the source and range maps. The assumption of \'etaleness means that these maps are local homeomorphisms. Recall that a subset of $\G$ on which $s$ and $r$ are injective is called a bisection of $\G$. We let $\G_x:=s^{-1}(x)$, $\G^x:=r^{-1}(x)$ and we define the isotropy group at $x\in \Gu$ to be $\Gxx:=\G_x\cap\G^x$. We denote by $[x]:=r(\G_x)$ the $\G$-orbit of $x$ in $\Gu$. The isotropy bundle is defined by
$$
\IsoG:=\{g\in\G:s(g)=r(g)\}.
$$
This is a closed subgroupoid of $\G$.

\smallskip

The space $C_c(\G)$ of continuous compactly supported functions on $\G$ is a $*$-algebra with convolution product
\begin{equation*} \label{eprod}
(f_{1}*f_{2})(g) := \sum_{h \in \G^{r(g)}} f_{1}(h) f_{2}(h^{-1}g)
\end{equation*}
and involution by $f^{*}(g):=\overline{f(g^{-1})}$. If $f\in C_c(W)$ for an open bisection $W\subset\G$, then for every representation $\pi\colon C_c(\G)\to B(H)$ we have
$$
\|\pi(f)\|\le\|f\|_\infty.
$$
This implies that we can define a norm on the $*$-algebra $C_c(\G)$ by $\lVert f \rVert =\sup_{\pi}\|\pi(f)\|$, where the supremum is taken over all representations of $ C_c(\G)$. We denote by $C^*(\G)$ the C$^{*}$-algebra obtained by completing $C_c(\G)$ in this norm.

\smallskip

Take a point $x\in\Gu$ and a subgroup $S\subset\Gxx$. Then every unitary representation $\pi\colon S\to U(H)$ on a Hilbert space $H$ can be induced to a representation $\Ind\pi=\Ind^\G_S\pi$ of $C^*(\G)$ as follows. The underlying space $\Ind H$ of $\Ind\pi$ consists of the functions $\xi \colon  \G_{x} \to H$ such that
\begin{equation*}
\xi(gh)=\pi(h)^{*}\xi(g),\quad g \in \G_{x},\ h\in S,
\end{equation*}
and
\begin{equation*}
\sum_{g\in \G_{x}/S}\lVert \xi(g)\rVert^{2}<\infty .
\end{equation*}
The space $\Ind H$ is then a Hilbert space with the inner product
\begin{equation*}
(\xi_{1}, \xi_{2} ):=\sum_{g\in \G_{x}/S}( \xi_{1}(g), \xi_{2}(g)) ,\quad \xi_{1}, \xi_{2} \in \Ind H.
\end{equation*}
For $f\in C_c(\G)$ we have
\begin{equation}\label{eq:ind-rep}
\big((\Ind\pi)(f)\xi\big)(g) :=
\sum_{h\in \G^{r(g)}}f(h) \xi(h^{-1}g),\quad g\in\G_x,\ \xi\in\Ind H.
\end{equation}

We write out the following standard observation for future reference.

\begin{lemma} \label{lem:UE}
For every $g\in\G_x$, the right translation by $g$ defines a unitary equivalence between the representations $\Ind^\G_S\pi$ and $\Ind^\G_{gSg^{-1}}\pi(g^{-1}\cdot g)$.
\end{lemma}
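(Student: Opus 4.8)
The plan is to exhibit the intertwining operator explicitly as right translation by $g$ and then to check the three things that make it a unitary equivalence: that it lands in the correct induced space, that it is isometric and invertible, and that it intertwines the two representations. Write $y:=r(g)$, so that $s(g)=x$ and $r(g)=y$. A short computation with the groupoid operations shows that conjugation by $g$ carries $S\subset\Gxx$ into the isotropy group at $y$, i.e. $gSg^{-1}\subset\G^y_y$, and that $k\mapsto\pi(g^{-1}kg)$ (with $g^{-1}kg\in S$ for $k\in gSg^{-1}$) is a genuine unitary representation of $gSg^{-1}$ on $H$; this is the representation denoted $\pi(g^{-1}\cdot g)$. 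Note also that for $g'\in\G_y$ the product $g'g$ is defined, since $s(g')=y=r(g)$, and lies in $\G_x$, while $g'\mapsto g'g$ is a bijection $\G_y\to\G_x$ with inverse $g''\mapsto g''g^{-1}$. I would therefore define, for $\xi$ in the space of $\Ind^\G_S\pi$,
$$
(U\xi)(g'):=\xi(g'g),\qquad g'\in\G_y.
$$

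First I would check that $U\xi$ obeys the covariance condition defining the space of $\Ind^\G_{gSg^{-1}}\pi(g^{-1}\cdot g)$. Writing a typical element of $gSg^{-1}$ as $k=ghg^{-1}$ with $h\in S$, one has $g'kg=g'gh$, so that
$$
(U\xi)(g'k)=\xi(g'gh)=\pi(h)^*\xi(g'g)=\pi(g^{-1}kg)^*(U\xi)(g'),
$$
which is exactly the required transformation rule. Next, the bijection $g'\mapsto g'g$ descends to a bijection $\G_y/(gSg^{-1})\to\G_x/S$, because $g_1'(gSg^{-1})=g_2'(gSg^{-1})$ is equivalent to $g_1'gS=g_2'gS$. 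Summing $\|(U\xi)(g')\|^2=\|\xi(g'g)\|^2$ over these cosets then shows that $U$ preserves the inner product, and the analogous construction with $g^{-1}$ in place of $g$ supplies a two-sided inverse, so $U$ is unitary.

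Finally I would verify the intertwining relation $U\,(\Ind^\G_S\pi)(f)=(\Ind^\G_{gSg^{-1}}\pi(g^{-1}\cdot g))(f)\,U$ for $f\in C_c(\G)$ by evaluating both sides at $g'\in\G_y$ using formula~\eqref{eq:ind-rep}. The left-hand side becomes $\sum_{h\in\G^{r(g'g)}}f(h)\,\xi(h^{-1}g'g)$, while the right-hand side becomes $\sum_{h\in\G^{r(g')}}f(h)\,\xi(h^{-1}g'g)$; since $r(g'g)=r(g')$ these two sums coincide. I do not expect a genuine obstacle: the content of the lemma is entirely formal. The only points needing care are the groupoid-composition conventions (on which side the arguments compose) and the verification that right translation respects simultaneously the covariance condition and the coset decomposition, namely the passage from $\G_x/S$ to $\G_y/(gSg^{-1})$ — this is where a side error would most easily creep in.
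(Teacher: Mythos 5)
Your proposal is correct and takes essentially the same approach as the paper: the paper's unitary is $V\colon H_{r(g)}\to H_x$, $(V\xi)(g'):=\xi(g'g^{-1})$, which is exactly the inverse of your $U$, and the verifications you spell out (covariance, coset bijection, intertwining via $r(g'g)=r(g')$) are the ones the paper leaves to the reader.
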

\begin{proof}
Let $H_{x}$ be the Hilbert space underlying $\Ind \pi$ and let $H_{r(g)}$ be the Hilbert space underlying $\Ind \pi(g^{-1} \cdot g)$. The map $V\colon H_{r(g)} \to H_{x}$ defined by
\begin{equation*}
(V\xi)(g'):=\xi(g' g^{-1}) , \quad g'\in \G_{x}, \; \xi \in H_{r(g)},
\end{equation*}
is a unitary operator satisfying $V^{*} (\Ind^\G_S\pi)(\cdot) V = \Ind^\G_{gSg^{-1}}\pi(g^{-1}\cdot g)$.
\end{proof}

\subsection{Weak containment and the Jacobson topology}
By an ideal in a C$^{*}$-algebra we always mean a closed two-sided ideal. We refer the reader to \cite{Ped} for an in-depth treatment of primitive ideals. Given a C$^*$-algebra $A$, recall that the \emph{Jacobson topology} on its set of primitive ideals is the topology in which the closed sets have the form
$$
\operatorname{hull}(J):=\{I\in\Prim A: J\subset I\},
$$
where $J\subset A$ is an ideal. Therefore the closure of a set $\CC\subset\Prim A$ is $\text{hull}(\bigcap_{I\in\CC}I)$.

Given a nondegenerate representation $\pi\colon A\to B(H)$, denote by $\SSS_\pi(A)$ the collection of states on $A$ of the form $(\pi(\cdot)\xi,\xi)$, where $\xi\in H$ is a unit vector. Following \cite{MR0146681}, we say that a representation~$\pi$ is \emph{weakly contained} in a representation $\rho$, if $\SSS_\pi(A)$ is contained in the weak$^*$ closure of the convex hull of $\SSS_\rho(A)$. 
If $\pi$ has a cyclic unit vector $\xi$, it suffices to check that $(\pi(\cdot)\xi,\xi)$ lies in that closed convex hull.

\begin{lemma}[{\cite{MR0146681}*{Theorem~1.4}}]\label{lem:Fell}
Assume $A$ is a C$^*$-algebra, $\pi$ and $\pi_i$ ($i\in I$) are irreducible representations of $A$. Then the following conditions are equivalent:
\begin{enumerate}
\item $\ker\pi$ lies in the closure of $(\ker\pi_i)_{i\in I}$ in $\Prim A$;
\item $\pi$ is weakly contained in $\bigoplus_{i\in I}\pi_i$;
\item $\SSS_\pi(A)$ is contained in the weak$^*$ closure of $\bigcup_{i\in I}\SSS_{\pi_i}(A)$.
\end{enumerate}
\end{lemma}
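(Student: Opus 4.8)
The plan is to run the cycle (1) $\Leftrightarrow$ (2), (3) $\Rightarrow$ (2) and (2) $\Rightarrow$ (3), with essentially all the real content packed into one auxiliary fact relating weak containment to kernels. Write $\rho:=\bigoplus_{i\in I}\pi_i$, so that $\ker\rho=\bigcap_i\ker\pi_i$. By the description of the Jacobson closure recalled above, condition (1) says precisely that $\bigcap_i\ker\pi_i\subseteq\ker\pi$, i.e.\ $\ker\rho\subseteq\ker\pi$. Hence (1) $\Leftrightarrow$ (2) reduces to the key equivalence: $\pi$ is weakly contained in $\rho$ if and only if $\ker\rho\subseteq\ker\pi$.

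For the forward direction of this equivalence I would argue at the level of norms. For $a\ge0$ one has $\|\pi(a)\|=\sup_{\phi\in\SSS_\pi(A)}\phi(a)$, and since the supremum of the weak$^*$-continuous linear evaluation $\psi\mapsto\psi(a)$ over a set equals its supremum over the weak$^*$-closed convex hull, weak containment gives $\sup_{\SSS_\pi(A)}\phi(a)\le\sup_{\SSS_\rho(A)}\psi(a)=\|\rho(a)\|$; applying this to $a^*a$ yields $\|\pi(a)\|\le\|\rho(a)\|$ for all $a$, whence $\ker\rho\subseteq\ker\pi$. The reverse direction is the heart of the matter, and is where I expect the main obstacle to sit. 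After passing to $A/\ker\rho$ I may assume $\rho$ is faithful, and then I claim that the weak$^*$-closed convex hull of $\SSS_\rho(A)\cup\{0\}$ is the whole quasi-state space $Q(A)$ of positive functionals of norm $\le1$. I would prove this by Hahn--Banach separation: a weak$^*$-continuous functional separating a quasi-state from this convex set is evaluation at some self-adjoint $a\in A$, and for such $a$ faithfulness gives $\sup_{\SSS_\rho(A)\cup\{0\}}\psi(a)=\max\big(\max\operatorname{spec}(a),0\big)$, which already dominates $\phi(a)$ for every quasi-state $\phi$; thus no separating functional can exist. Since $\SSS_\pi(A)\subseteq Q(A)$, this gives $\pi\prec\rho$ (a short weak$^*$ lower-semicontinuity argument for the norm lets me drop the adjoined $0$, because elements of $\SSS_\pi(A)$ have norm one).

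The implication (3) $\Rightarrow$ (2) is then immediate: each $\SSS_{\pi_i}(A)$ sits inside $\SSS_\rho(A)$ by placing the vector in the $i$-th summand, so the weak$^*$ closure of $\bigcup_i\SSS_{\pi_i}(A)$ is contained in the weak$^*$-closed convex hull of $\SSS_\rho(A)$, which is exactly condition (2).

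Finally, for (2) $\Rightarrow$ (3) I would use the irreducibility of $\pi$ through Milman's theorem. Set $K$ to be the weak$^*$-closed convex hull of $\bigcup_i\SSS_{\pi_i}(A)\cup\{0\}$, a weak$^*$-compact convex subset of $Q(A)$. Every vector state of $\rho$ is an at most countable convex combination of vector states of the $\pi_i$, hence a weak$^*$ limit of finite convex combinations from $\bigcup_i\SSS_{\pi_i}(A)\cup\{0\}$, so $\SSS_\rho(A)\subseteq K$ and therefore, by (2), $\SSS_\pi(A)\subseteq K$. Now each $\phi\in\SSS_\pi(A)$ is a pure state, hence an extreme point of $Q(A)$ and a fortiori of $K$; Milman's theorem gives $\operatorname{ext}(K)\subseteq\overline{\bigcup_i\SSS_{\pi_i}(A)\cup\{0\}}$ (weak$^*$ closure), and since $\phi\ne0$ a net from the union converging to $\phi$ cannot be eventually $0$, so $\phi$ lies in the weak$^*$ closure of $\bigcup_i\SSS_{\pi_i}(A)$. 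This is exactly (3), closing the cycle. The only genuinely delicate point is the faithful-case separation argument above; everything else is bookkeeping with weak$^*$ topologies and the extreme-point structure of the quasi-state space.
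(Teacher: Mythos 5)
Your proof is correct. Note that the paper offers no proof of this lemma at all---it is imported verbatim from Fell's 1960 paper (Theorem~1.4 there)---so there is no internal argument to compare against; what you have written is essentially the classical Fell--Dixmier proof: reduce (1)$\Leftrightarrow$(2) to the statement that weak containment of $\pi$ in $\rho=\bigoplus_i\pi_i$ is equivalent to the kernel inclusion $\ker\rho\subset\ker\pi$ (norm comparison in one direction, Hahn--Banach separation on the quasi-state space in the other), and use Milman's partial converse to the Krein--Milman theorem together with purity of the vector states of the irreducible $\pi$ for (2)$\Rightarrow$(3). The two places where such arguments usually hide a gap---discarding the adjoined $0$ via weak$^*$ lower semicontinuity of the norm, and checking that a pure state lying in the compact convex set $K$ is an extreme point of $K$ so that Milman applies---are both handled correctly.
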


From the equivalence of (1) and (2) we get the following well-known observation.

\begin{lemma}\label{lem:Jacobson}
Assume $A$ is a C$^*$-algebra, $\pi$ is an irreducible representation of $A$ and $(\pi_i)_i$ is a net of irreducible representations of $A$. Then $\ker\pi_i\to\ker\pi$ in $\Prim A$ if and only if for every subnet $(\pi_{i_j})_j$ the representation $\pi$ is weakly contained in $\bigoplus_j\pi_{i_j}$.
\end{lemma}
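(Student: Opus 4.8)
The plan is to reduce the statement to a purely topological characterization of net convergence and then feed it into the equivalence (1)$\Leftrightarrow$(2) of Lemma~\ref{lem:Fell}. The underlying topological fact is the following: in any topological space $X$, a net $(x_i)_i$ converges to a point $x$ if and only if for every subnet $(x_{i_j})_j$ the point $x$ lies in the closure of the set $\{x_{i_j}:j\}$. Crucially, this does not require $X$ to be Hausdorff, which matters here since $\Prim A$ is in general only a $T_0$-space, so arguments based on uniqueness of limits are unavailable.

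First I would dispatch the forward direction of this fact. If $x_i\to x$, then every subnet $(x_{i_j})_j$ also converges to $x$; since any limit of a net whose values lie in a set $S$ must belong to $\overline S$ (every neighborhood of the limit eventually meets $S$), we obtain $x\in\overline{\{x_{i_j}:j\}}$.

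The backward direction is the main point, and where I expect the only genuine care to be needed. Arguing by contraposition, suppose $x_i\not\to x$. Then there is an open neighborhood $U$ of $x$ such that the set $C:=\{i:x_i\notin U\}$ is cofinal in the index set. As a cofinal subset of a directed set, $C$ is itself directed, so the inclusion $C\hookrightarrow I$ defines a subnet $(x_i)_{i\in C}$. All of its values lie in the closed set $X\setminus U$, hence $\overline{\{x_i:i\in C\}}\subseteq X\setminus U$, and in particular $x\notin\overline{\{x_i:i\in C\}}$. This contradicts the hypothesis and completes the proof of the topological fact.

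Finally, I would apply this with $X=\Prim A$, $x=\ker\pi$ and $x_i=\ker\pi_i$. For each subnet, the equivalence (1)$\Leftrightarrow$(2) of Lemma~\ref{lem:Fell} identifies the condition $\ker\pi\in\overline{\{\ker\pi_{i_j}:j\}}$ with the weak containment of $\pi$ in $\bigoplus_j\pi_{i_j}$. Combining this identification with the topological fact yields precisely the asserted equivalence.
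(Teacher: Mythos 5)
Your proof is correct and follows essentially the same route as the paper: the forward direction via the fact that subnets of a convergent net converge to the same limit, and the backward direction by extracting the cofinal subnet of indices lying outside a neighbourhood of $\ker\pi$ and invoking the equivalence (1)$\Leftrightarrow$(2) of Lemma~\ref{lem:Fell}. Isolating the underlying topological fact (valid without any Hausdorff assumption) is a clean way to package the identical argument.
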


\begin{proof}
The ``only if'' implication follows by Lemma \ref{lem:Fell}, since a subnet of a convergent net converges towards the same limit. For the ``if'' implication, observe that if $U$ is a neighbourhood of $\ker\pi$ such that there does not exist an index $i_{0}$ with $\ker\pi_i \in U$ for all $i\geq i_{0}$, then $J:=\{i \; | \; \ker\pi_i \notin U \}$ defines a subnet such that $\pi$ is not weakly contained in $\bigoplus_j\pi_{i_j}$ by Lemma~\ref{lem:Fell}.
\end{proof}

\subsection{Fell topology}
Assume $X$ is a topological space. Denote by $\Cl(X)$ the set of closed subsets of $X$. The \emph{Fell topology} on $\Cl(X)$ is defined using as a basis the sets
$$
\UU(K;(U_i)^n_{i=1}):=\{A\in\Cl(X):A\cap K=\emptyset,\ A\cap U_i\ne\emptyset\ \ \text{for}\ \ i=1,\dots,n\},
$$
where $K\subset X$ is compact and $U_i\subset X$ are open. As is shown in~\cite{MR0139135}, the space $\Cl(X)$ is always compact. It is Hausdorff when~$X$ is locally compact.

If $G$ is a locally compact group, then the subset $\operatorname{Sub}(G)\subset\Cl(G)$ of closed subgroups of $G$ is closed, hence it is compact in the relative topology, which is called the \emph{Chabauty topology}.

We will mainly use the Fell topology for discrete spaces $X$. In this case a net $(C_{i})_{i}$ converges to $C\in\Cl(X)$ if and only if the indicator functions $\un_{C_{i}}$ converge to $\un_{C}$ pointwise, and all the above statements become straightforward to verify.

\subsection{\texorpdfstring{$T_0$}{T0}-ization}\label{ssec:T0} Recall that a topological space $X$ is $T_{0}$, when for any pair of distinct points in~$X$ there exists an open set containing only one of them. When $X$ is a topological space and~$R$ is an equivalence relation on $X$, we remind that the quotient space $X / R$ is equipped with the topology in which a subset of $X / R$ is open exactly when its pre-image under the quotient map $X\to X / R$ is open.

Following~\cite{EH}, for a topological space $X$, we denote by $X^\sim$ its \emph{$T_0$-ization}, also known as the \emph{Kolmogorov quotient} of $X$, the topological space obtained by identifying points of $X$ that have identical closures. If $p\colon X\to X^\sim$ is the quotient map and $F\subset X$ is a closed subset, then it follows by definition that $p^{-1}(p(F))=F$. Hence $p$ is both closed and open, and $p$ establishes a bijection between the closed subsets of $X$ and the closed subsets of $X^\sim$, equivalently, a bijection between the open subsets of $X$ and the open subsets of $X^\sim$. The space $X^\sim$ is a $T_0$-space, and every continuous map from $X$ into a $T_0$-space factors through a continuous map from~$X^\sim$.

The following lemma will be useful to recognize the spaces $X^\sim$.

\begin{lemma}\label{lem:T0}
Assume $X$ and $Y$ are topological spaces, with $Y$ a $T_0$-space, and $p\colon X\to Y$ is a surjective continuous map. Assume also that $R$ is an equivalence relation on $X$ satisfying the following properties:
\begin{enumerate}
  \item[(i)] if $x_1\sim_R x_2$, then $p(x_1)=p(x_2)$;
  \item[(ii)] if $U\subset X$ is an open set, then its $R$-saturation
  $$
  R(U):=\{x\in X\mid x\sim_R u \ \text{for some}\ u\in U\}
  $$
  is again open in $X$;
  \item[(iii)] if $x\in X$ and $A\subset X$ are such that $p(x)\in\overline{p(A)}$, then $x\in\overline{R(A)}$.
\end{enumerate}
Then $p$ defines a homeomorphism of $(X/R)^\sim$ onto $Y$. Moreover, the map $p\colon X\to Y$ is open, and we have $p(x_1)=p(x_2)$ if and only if $\overline{R(x_1)}=\overline{R(x_2)}$.
\end{lemma}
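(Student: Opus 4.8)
My plan is to factor the map $p$ through the Kolmogorov quotient and verify the universal property, then separately check openness and the characterization of the fibers. First, by property (i) the map $p$ is constant on $R$-equivalence classes, so it descends to a continuous map $\bar p\colon X/R\to Y$. Since $Y$ is a $T_0$-space, the universal property of the Kolmogorov quotient recalled in Section \ref{ssec:T0} factors $\bar p$ further through a continuous map $q\colon (X/R)^\sim\to Y$. The task is then to show $q$ is a homeomorphism, i.e.\ that it is a continuous open bijection.

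The core of the argument is a single claim about closures, from which everything follows: for $x\in X$ and $A\subset X$,
\begin{equation*}
p(x)\in\overline{p(A)}\quad\Longleftrightarrow\quad x\in\overline{R(A)}.
\end{equation*}
The forward implication is exactly hypothesis (iii). For the reverse implication I would argue by contraposition: if $p(x)\notin\overline{p(A)}$, then since $p$ is continuous the set $p^{-1}(Y\setminus\overline{p(A)})$ is an open neighbourhood of $x$ disjoint from $A$ (as $p(A)\subset\overline{p(A)}$), and moreover it is $R$-saturated because its complement $p^{-1}(\overline{p(A)})$ is $R$-saturated by (i). Hence it is an open set containing $x$ and disjoint from $R(A)$, so $x\notin\overline{R(A)}$. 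This equivalence immediately yields the last assertion of the lemma: applying it with $A=R(x_2)$, and noting $p(R(x_2))=\{p(x_2)\}$ since $p$ is constant on classes, we get $p(x_1)=p(x_2)$ iff $p(x_1)\in\overline{\{p(x_2)\}}$ and symmetrically, which (as $Y$ is $T_0$) holds iff $x_1\in\overline{R(x_2)}$ and $x_2\in\overline{R(x_1)}$, i.e.\ $\overline{R(x_1)}=\overline{R(x_2)}$.

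Next I would use the closure equivalence to prove $q$ is injective and open. For injectivity: if $p(x_1)=p(x_2)$ then the equivalence (with $A$ a single point) forces $\overline{R(x_1)}=\overline{R(x_2)}$, which says exactly that the images of $x_1,x_2$ in $(X/R)^\sim$ coincide; so $q$ separates distinct points of $(X/R)^\sim$. Surjectivity is inherited from that of $p$. For openness of $p$ (and hence of $q$, after identifying open sets of $X/R$, $(X/R)^\sim$ and their $R$-saturated preimages in $X$ via Section \ref{ssec:T0}): given an open $U\subset X$, I must show $p(U)$ is open, equivalently that $Y\setminus p(U)$ is closed. A point $y=p(x)$ lies in $\overline{Y\setminus p(U)}$ iff, taking $A=p^{-1}(Y\setminus p(U))$, we have $p(x)\in\overline{p(A)}$; by the equivalence this means $x\in\overline{R(A)}$. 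Since $A$ is $R$-saturated and disjoint from $U$, while $R(U)$ is open by (ii) and disjoint from $A$, one checks $\overline{R(A)}$ cannot meet the open saturated set $R(U)\supset U$; so $p(x)\in\overline{Y\setminus p(U)}$ forces $x\notin U$, giving $y\notin p(U)$ and proving $p(U)$ open. A continuous open bijection is a homeomorphism, completing the proof.

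\medskip

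The step I expect to be the main obstacle is the bookkeeping in the openness argument: one must carefully track how the three hypotheses interact with $R$-saturation, since the naive image $p(U)$ need not correspond to $U$ itself but to its saturation $R(U)$, and it is precisely hypothesis (ii) that makes $R(U)$ open and hypothesis (iii) (via the closure equivalence) that controls which limit points can be added. Getting the saturation juggling exactly right — rather than the closure equivalence itself, which is essentially immediate from (iii) and continuity — is where care is needed.
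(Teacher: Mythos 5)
Your proposal is correct and follows essentially the same route as the paper: establish the closure equivalence (the paper uses (iii) directly where you state it as a two-sided equivalence), deduce the fiber characterization and injectivity of the induced map, and prove openness by showing the complement of a saturated open set has closed image. The only point stated too quickly is the ``i.e.'' identifying ``$x_1\in\overline{R(x_2)}$ and $x_2\in\overline{R(x_1)}$'' with ``$\overline{R(x_1)}=\overline{R(x_2)}$'', but this follows immediately from your own equivalence, which gives $\overline{R(A)}=p^{-1}\bigl(\overline{p(A)}\bigr)$.
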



\bp
We start by proving that $p(x_1)=p(x_2)$ if and only if $\overline{R(x_1)}=\overline{R(x_2)}$. If $p(x_1)=p(x_2)$, then, for every $x\sim_R x_1$, we have $p(x)=p(x_2)$ by (i), hence $x\in\overline{R(x_2)}$ by (iii). Therefore $\overline{R(x_1)}\subset\overline{R(x_2)}$. For the same reason the opposite inclusion holds, so $\overline{R(x_1)}=\overline{R(x_2)}$. Conversely, if $\overline{R(x_1)}=\overline{R(x_2)}$, then
$$
p(x_1)\in p\big(\,\overline{R(x_2)}\,\big)\subset \overline{p(R(x_2))}=\overline{\{p(x_2)\}}.
$$
For the same reason $p(x_2)\in \overline{\{p(x_1)\}}$. As $Y$ is a $T_0$-space, it follows that $p(x_1)=p(x_2)$.

Since $p$ factors through $X/R$ and $Y$ is a $T_0$-space, we get a surjective continuous map $p^\sim\colon (X/R)^\sim\to Y$. Now we observe that if $\overline{R(x_1)}=\overline{R(x_2)}$, then the images of $x_1$ and $x_2$ in $X/R$ have the same closures, hence their images in $(X/R)^\sim$ are equal. It follows that $p^\sim$ is a bijection. It remains to show that $p$ is open, since then $p^\sim$ is open as well.

Take an open set $U\subset X$ and consider the set $F:=X\setminus R(U)$, which is closed by (ii). If $p(x)\in\overline{p(F)}$, then by (iii) we have $x\in\overline{R(F)}$. But the set $F$ is already $R$-saturated and closed, hence $x\in F$. This shows that the set $p(F)$ is closed and $p(U)\cap p(F)=\emptyset$. Hence $p(U)=Y\setminus p(F)$ is open.
\ep

We remark that once it is proved that $(X/R)^\sim\to Y$ is a homeomorphism, the fact that $p\colon X\to Y$ is open follows from openness of the quotient maps $X\to X/R$ and $X/R\to(X/R)^\sim$. Even more, we have the following property.

\begin{cor}\label{cor:open}
In the setting of Proposition~\ref{lem:T0}, the map $p$ establishes a bijection between the $R$-saturated open subsets of $X$ and the open subsets of~$Y$.
\end{cor}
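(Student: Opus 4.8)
The plan is to exhibit the two assignments $U \mapsto p(U)$ and $V \mapsto p^{-1}(V)$ as mutually inverse bijections between the $R$-saturated open subsets of $X$ and the open subsets of $Y$. Conceptually this is a composition of three standard bijections: the $R$-saturated open subsets of $X$ correspond to the open subsets of $X/R$ via the quotient map $q\colon X\to X/R$; these correspond to the open subsets of $(X/R)^\sim$ by the properties of the Kolmogorov quotient recalled in Section~\ref{ssec:T0}; and the latter correspond to the open subsets of $Y$ through the homeomorphism $(X/R)^\sim\cong Y$ furnished by Proposition~\ref{lem:T0}. Since $p$ factors as the composite of these three maps, the induced bijection is exactly $U\mapsto p(U)$. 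I would, however, prefer to give the direct verification, which avoids unwinding the intermediate quotients.

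First I would check that the two maps are well defined: for open $V\subset Y$ the set $p^{-1}(V)$ is open by continuity of $p$, and it is $R$-saturated by property~(i); for an $R$-saturated open $U\subset X$ the image $p(U)$ is open because $p$ is open by Proposition~\ref{lem:T0}. One composite is immediate: $p\big(p^{-1}(V)\big)=V$ because $p$ is surjective.

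The one substantive point is the identity $p^{-1}\big(p(U)\big)=U$ for $R$-saturated open $U$. The inclusion $U\subseteq p^{-1}(p(U))$ is trivial. For the reverse, take $x$ with $p(x)=p(u)$ for some $u\in U$. By the characterization proved in Proposition~\ref{lem:T0}, this gives $\overline{R(x)}=\overline{R(u)}$. Now $X\setminus U$ is closed and, being the complement of an $R$-saturated set, is itself $R$-saturated. Hence, if $x\notin U$, then $R(x)\subseteq X\setminus U$ and so $\overline{R(x)}\subseteq X\setminus U$; but $u\in R(u)\subseteq\overline{R(u)}=\overline{R(x)}$, forcing $u\in X\setminus U$, a contradiction. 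Therefore $x\in U$, and the two assignments are mutually inverse.

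I expect essentially no obstacle here: every step is formal once one has the openness of $p$ and the equivalence $p(x_1)=p(x_2)\iff\overline{R(x_1)}=\overline{R(x_2)}$ from Proposition~\ref{lem:T0}. The only place requiring a small idea is the inclusion $p^{-1}(p(U))\subseteq U$, where the trick is to pass to complements so as to exploit that orbit closures respect closed $R$-saturated sets.
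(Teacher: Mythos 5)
Your proposal is correct. In fact your opening ``conceptual'' paragraph \emph{is} the paper's proof verbatim in spirit: the paper simply composes the bijection between $R$-saturated open subsets of $X$ and open subsets of $X/R$ with the bijection between open subsets of $X/R$ and of $(X/R)^\sim$ coming from the properties of the $T_0$-ization, and then invokes the homeomorphism $(X/R)^\sim\cong Y$. Your preferred direct verification is a legitimate alternative: it trades the factorization through the intermediate quotients for two ingredients of the ``Moreover'' clause of Lemma~\ref{lem:T0}, namely the openness of $p$ and the equivalence $p(x_1)=p(x_2)\iff\overline{R(x_1)}=\overline{R(x_2)}$, and the complement trick you use for $p^{-1}(p(U))\subseteq U$ is sound (the complement of an $R$-saturated open set is closed and $R$-saturated, so it absorbs $\overline{R(x)}$). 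The composition argument is shorter given what is already established; your direct argument makes explicit exactly which consequences of Lemma~\ref{lem:T0} are being used, at the cost of a few more lines. Either is acceptable.
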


\bp
By definition, the quotient map $X\to X/R$ establishes a bijection between the $R$-saturated open subsets of $X$ and the open subsets of~$X/R$. By properties of $T_0$-ization we also know that the map $X/R\to(X/R)^\sim$ defines a bijection between the open subsets of~$X/R$ and the open subsets of~$(X/R)^\sim$. By combining these two facts we get the result.
\ep

\begin{cor}
Assume $X$ is a topological space and $R$ is an equivalence relation on $X$ such that the $R$-saturation of every open set is open. Then $(X/R)^\sim$ is the quotient of $X$ obtained by identifying points that have identical closures of their $R$-equivalence classes.
\end{cor}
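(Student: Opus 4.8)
The plan is to invoke Lemma~\ref{lem:T0} with $Y:=(X/R)^\sim$ and with $p\colon X\to Y$ the composition of the quotient map $\pi\colon X\to X/R$ and the $T_0$-ization map $\kappa\colon X/R\to (X/R)^\sim$. Then $Y$ is a $T_0$-space and $p$ is continuous and surjective by construction, so the task reduces to verifying conditions (i)--(iii) of that lemma. Condition~(i) is immediate, since $x_1\sim_R x_2$ already gives $\pi(x_1)=\pi(x_2)$ and hence $p(x_1)=p(x_2)$; condition~(ii) is exactly the standing assumption that the $R$-saturation of every open set is open.

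The real content is condition~(iii), and this is where the openness of saturations enters. First I would reduce a closure in $Y$ to one in $X/R$: as $\kappa$ is both open and closed and satisfies $\kappa^{-1}(\kappa(F))=F$ for closed $F$, the statement $p(x)\in\overline{p(A)}$ is equivalent to $\pi(x)\in\overline{\pi(A)}$. Spelling out the quotient topology on $X/R$, the latter says that every $R$-saturated open set $V\ni x$ meets $A$, equivalently meets $R(A)$. To upgrade this to $x\in\overline{R(A)}$, take an arbitrary open $W\ni x$; then $R(W)$ is an $R$-saturated open set containing $x$ by~(ii), so it meets $R(A)$, and following the witnessing point along the relation $R$ produces an element of $W\cap R(A)$. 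This establishes~(iii).

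Once (i)--(iii) are checked, Lemma~\ref{lem:T0} both shows that $p$ is open and identifies its fibers via $p(x_1)=p(x_2)$ if and only if $\overline{R(x_1)}=\overline{R(x_2)}$. Since an open continuous surjection is a quotient map, $(X/R)^\sim$ carries the quotient topology induced by $p$, and its points are precisely the classes of $X$ under the relation identifying $x_1$ with $x_2$ when $\overline{R(x_1)}=\overline{R(x_2)}$; this is the claimed description. The step I expect to be the crux is the verification of~(iii): the passage from ``every $R$-saturated open neighbourhood of $x$ meets $R(A)$'' to membership in $\overline{R(A)}$ hinges precisely on $R(W)$ being open, so hypothesis~(ii) is not merely convenient but essential, as without it the two quotients can genuinely differ.
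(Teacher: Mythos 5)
Your proposal is correct and follows essentially the same route as the paper: both apply Lemma~\ref{lem:T0} to the map $p\colon X\to(X/R)^\sim$, with the only substantive work being the verification of condition~(iii). The paper checks (iii) by noting that $q(X\setminus\overline{R(A)})$ is an open set disjoint from $q(A)$ (using openness of $q\colon X\to X/R$), while you argue directly with saturated open neighbourhoods of $x$; these are the same idea in slightly different clothing.
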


\bp
Consider the quotient maps $q\colon X\to X/R$ and $p\colon X\to(X/R)^\sim$. Assume $p(x)\in\overline{p(A)}$ for some $x\in X$ and $A\subset X$. Since the map $X/R\to(X/R)^\sim$ is closed, the set $\overline{p(A)}$ is the image of the closed set $\overline{q(A)}$. By the definition of $T_0$-ization it follows that $q(x)\in \overline{q(A)}$. Since~$q$ is open, $q(X\setminus\overline{R(A)})$ is an open set that does intersect $q(A)$. It follows that $x\in \overline{R(A)}$. Therefore we can apply Lemma~\ref{lem:T0} to $p\colon X\to(X/R)^\sim$ and conclude that $p(x_1)=p(x_2)$ if and only if $\overline{R(x_1)}=\overline{R(x_2)}$.
\ep

\bigskip

\section{Primitive ideals induced by characters}\label{sec:main}

\subsection{The spaces \texorpdfstring{$\Char(\G)$}{Char(G)}, \texorpdfstring{$\Stab(\G)\dach$}{Stab(G)} and the induction map}
Assume $\G$ is a Hausdorff locally compact \'etale groupoid. Denote by $\Char(\G)$ the set of pairs $(x,\chi)$, where $x\in\Gu$ and $\chi\colon\Gxx\to\T$ is a character. For every open set $U\subset\Gu$, compact set $K\subset\IsoG$ and open set $V\subset\T$, consider the subset $\OO(U,K,V)\subset\Char(\G)$ defined by
$$
\OO(U,K,V):=\{(x,\chi): x\in U,\ \chi(K\cap \Gxx)\subset V\}.
$$
Consider the topology on $\Char(\G)$ with a basis consisting of finite intersections of the sets $\OO(U,K,V)$. When $\G$ has abelian isotropy groups, we denote this space by $\Stab(\G)\dach$. The space $\Char(\G)$ is easily seen to be $T_1$, but in general it is non-Hausdorff, as will be discussed later.

The topological space $\Stab(\G)\dach$ for transformation groupoids defined by proper actions (with not necessarily abelian stabilizers) was introduced in~\cite{EE}. For (not necessarily \'etale) groupoids with abelian isotropy it was introduced in~\cite{MR4395600}, whose definition we follow. We remark that in \cite{MR4395600} this space is denoted by $\Stab(\G)$. Although this notation is lighter than $\Stab(\G)\dach$, it seems misleading to us, so we will use the latter one, which is also more in line with~\cite{EE}.

\smallskip

The topology on $\Char(\G)$ can also be described as follows.

\begin{lemma}\label{lem:Char-topology}
Fix a point $(x,\chi)\in\Char(\G)$. For every $g\in\Gxx$, choose an open bisection $W_g$ containing $g$. Then a base of open neighbourhoods of $(x,\chi)$ in $\Char(\G)$ is given by the sets $\UU_x^\chi(U,\eps,(W_g)_{g\in F})$ defined as follows, where $\eps>0$, $F\subset\Gxx$ is a finite set and $U$ is an open neighbourhood of $x$ in $\Gu$ such that $U\subset\bigcap_{g\in F}r(W_g)$: the set $\UU_x^\chi(U,\eps,(W_g)_{g\in F})$ consists of the points $(y,\eta)$ such that $y\in U$ and for every $g\in F$ we have either  $W_g\cap\G^y_y=\emptyset$, or $W_g\cap\G^y_y=\{h\}$ for some $h$ and $|\chi(g)-\eta(h)|<\eps$.
\end{lemma}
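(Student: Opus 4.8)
The plan is to verify the two requirements for the family $\{\UU_x^\chi(U,\eps,(W_g)_{g\in F})\}$ to be a base of open neighbourhoods of $(x,\chi)$: that each such set is open in the $\OO$-topology and contains $(x,\chi)$, and that every basic open set $\OO(U_0,K_0,V_0)$ containing $(x,\chi)$ contains one of them. That $(x,\chi)$ lies in each $\UU_x^\chi(U,\eps,(W_g)_{g\in F})$ is immediate, since $W_g$ being a bisection with $r(g)=x$ forces $W_g\cap\Gxx=\{g\}$, so the defining condition reads $|\chi(g)-\chi(g)|=0<\eps$. I will also record the elementary observation that this family is a filter base: a single $\UU_x^\chi(U,\eps,(W_g)_{g\in F})$ with $U=U_1\cap U_2$, $\eps=\min(\eps_1,\eps_2)$, $F=F_1\cup F_2$ sits inside the intersection of two members. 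This reduces the second requirement to a single $\OO(U_0,K_0,V_0)$, since the $W_g$ are fixed once and for all. Throughout I would write $\tau_g:=(r|_{W_g})^{-1}$, a homeomorphism of $r(W_g)$ onto $W_g$, so that for $y\in r(W_g)$ the set $W_g\cap\G^y_y$ equals $\{\tau_g(y)\}$ when $s(\tau_g(y))=y$ and is empty otherwise.

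For openness of $\UU:=\UU_x^\chi(U,\eps,(W_g)_{g\in F})$ I would argue locally and $g$-by-$g$. Given $(y,\eta)\in\UU$, for each $g\in F$ with $W_g\cap\G^y_y=\emptyset$ the relation $s(\tau_g(y))\ne y$ persists on a neighbourhood of $y$ by continuity of $s\circ\tau_g$ and the Hausdorff property of $\Gu$; for each $g$ with $W_g\cap\G^y_y=\{\tau_g(y)\}$ and $|\chi(g)-\eta(\tau_g(y))|<\eps$ I would choose a compact neighbourhood $K_g'\subset W_g$ of $\tau_g(y)$ and a small arc $V_g'\ni\eta(\tau_g(y))$ with $|\chi(g)-v|<\eps$ for all $v\in V_g'$, then use continuity of $\tau_g$ to keep $\tau_g(y')\in K_g'$ for $y'$ near $y$. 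A suitable finite intersection of the basic sets $\OO(U'',K_g',V_g')$, with $U''$ a small neighbourhood of $y$, is then a neighbourhood of $(y,\eta)$ contained in $\UU$.

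For the second requirement I would fix $\OO(U_0,K_0,V_0)\ni(x,\chi)$. The key structural input is that $F:=K_0\cap\Gxx$ is finite: since $\G$ is \'etale the fibre $\G^x$ is discrete, so $\G^x_x$ is discrete and closed, and its intersection with the compact set $K_0$ is finite. Each $g\in F$ has $\chi(g)\in V_0$, so I can pick $\eps>0$ with the $\eps$-ball around every $\chi(g)$ contained in the open set $V_0$. The crucial step is to control $K_0\cap\G^y_y$ for $y$ near $x$: the compact remainder $L:=K_0\setminus\bigcup_{g\in F}W_g$ is disjoint from $\G^x_x$, and I claim there is a neighbourhood $U_1$ of $x$ with $L\cap\G^y_y=\emptyset$ for all $y\in U_1$. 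I would prove this by noting that $\{h\in L:r(h)=s(h)\}$ is compact (the diagonal of $\Gu\times\Gu$ is closed) and avoids $x$ under $r$, so deleting the compact, hence closed, set $r(\{h\in L:r(h)=s(h)\})$ from $\Gu$ yields the desired $U_1$. Taking $U:=U_0\cap U_1\cap\bigcap_{g\in F}r(W_g)$, any $(y,\eta)\in\UU_x^\chi(U,\eps,(W_g)_{g\in F})$ then satisfies $K_0\cap\G^y_y\subset\bigcup_{g\in F}W_g$, so each of its elements is the unique point $\tau_g(y)$ of some $W_g\cap\G^y_y$ with $|\chi(g)-\eta(\tau_g(y))|<\eps$, whence $\eta(\tau_g(y))\in V_0$ and $(y,\eta)\in\OO(U_0,K_0,V_0)$.

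The main obstacle is exactly the claim isolating the behaviour near $x$: one must rule out stray elements of $K_0\cap\G^y_y$ not lying in any chosen bisection $W_g$ as $y\to x$. This is the only non-formal step, and it is where compactness of $K_0$ together with the Hausdorff property of $\G$ (closedness of the diagonal and of compact sets) is essential; everything else reduces to continuity of the maps $\tau_g$ and routine manipulation of the sets $\OO(U,K,V)$.
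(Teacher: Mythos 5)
Your proposal is correct, and at the top level it follows the same two-part scheme as the paper: show each $\UU_x^\chi(U,\eps,(W_g)_{g\in F})$ is open in the $\OO$-topology, and show each basic $\OO$-neighbourhood of $(x,\chi)$ absorbs one of them. The openness half is essentially identical to the paper's (a compact neighbourhood of each $h\in W_g\cap\G^y_y$ inside $W_g$, plus persistence of $W_g\cap\G^{y'}_{y'}=\emptyset$ near $y$). Where you genuinely diverge is in the second half. The paper works with a finite intersection $\bigcap_i\OO(U_i,K_i,V_i)$ directly and runs a net/cluster-point argument to produce $U$ and a finite $F\subset\Gxx$ with $r^{-1}(U)\cap\bigcup_iK_i\subset\bigcup_{g\in F}W_g$, controlling \emph{all} of the $K_i$ over $r^{-1}(U)$; this then forces an extra shrinking step (arranging that $r^{-1}(U)\cap K_i\cap W_g\ne\emptyset$ implies $g\in K_i$) so that $\chi(g)$ is known to lie in $V_i$. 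You instead reduce to a single $\OO(U_0,K_0,V_0)$ via the filter-base property, take $F:=K_0\cap\Gxx$ (finite because $\G^x$ is closed and discrete by \'etaleness), and only control the isotropy part: the remainder $L=K_0\setminus\bigcup_{g\in F}W_g$ satisfies $L\cap\G^y_y=\emptyset$ for $y$ outside the closed set $r(L\cap\IsoG)$. This is all that the definition of $\OO(U_0,K_0,V_0)$ requires, it makes every $g\in F$ automatically lie in $K_0$ (so $\chi(g)\in V_0$ with no further adjustment), and it replaces the net argument by the elementary facts that $L\cap\IsoG$ is compact and that continuous images of compact sets are closed. The trade-off is only stylistic: the paper's stronger containment is reused verbatim in later arguments, while your version is a slightly leaner route to this particular lemma.
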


\bp
Let us show first that the sets $\UU_x^\chi(U,\eps,(W_g)_{g\in F})$  are open. Take a point $(y,\eta)\in \UU_x^\chi(U,\eps,(W_g)_{g\in F})$, so $y\in U$ and for every $g\in F$ we have either $W_g\cap\G^y_y=\emptyset$, or $W_g\cap\G^y_y=\{h_g\}$ for some $h_g$ and $|\chi(g)-\eta(h_g)|<\eps$. Consider the set $S\subset\G^y_y$ formed by the elements $h_g$. For every $h\in S$, let $V_h:=\bigcap_{g\in F:h_g=h}W_{g}$. This is an open bisection containing $h$. Choose a number $\delta>0$ such that $|\chi(g)-\eta(h_g)|+\delta<\eps$ for all $g\in F$ such that $W_g\cap\G^y_y\ne\emptyset$. Choose an open neighbourhood $V$ of $y$ satisfying the following properties: $\bar V$ is compact, $\bar V\subset U\cap\bigcap_{h\in S}r(V_h)$ and $VW_gV=\emptyset$ (in other words, $r^{-1}(V)\cap W_g\cap s^{-1}(V)=\emptyset$) for all $g\in F$ such that $W_g\cap\G^y_y=\emptyset$. For every $h\in S$, consider the compact subset
$$
K_h:=\IsoG\cap V_h\cap r^{-1}(\bar V)
$$
of $\IsoG\cap V_h$. We claim that then $(y,\eta)\in O \subset \UU_x^\chi(U,\eps,(W_g)_{g\in F})$, where
$$
O:=\bigcap_{h\in S}\OO(V,K_h,\{w\in\T:|w-\eta(h)|<\delta\}).
$$

Since $K_h\cap\G^y_y=\{h\}$ for all $h\in S$, it is clear that $(y,\eta)\in O$. Next, assume $(z,\omega)\in O$. Take $g\in F$ and assume $W_g\cap\G^z_z=\{h'\}$ for some $h'$. We must have $W_g\cap\G^y_y\ne\emptyset$, since otherwise $VW_gV=\emptyset$, contradicting the existence of~$h'$. Let $h:=h_g$. Then $h'\in K_h$ by the definition of~$K_h$. Hence $|\omega(h')-\eta(h)|<\delta$. By our choice of $\delta$ we also have $|\chi(g)-\eta(h)|+\delta<\eps$, hence $|\omega(h')-\chi(g)|<\eps$. This proves our claim. Hence $\UU_x^\chi(U,\eps,(W_g)_{g\in F})$ is an open neighbourhood of $(x,\chi)$.

\smallskip

Assume now that $(x,\chi)\in\bigcap^n_{i=1}\OO(U_i,K_i,V_i)$. By compactness of the sets $K_i$ we can find an open neighbourhood $U$ of $x$ and a finite subset $F\subset \Gxx$ such that $U\subset\bigcap_{i=1}^nU_i$ and $r^{-1}(U)\cap\big(\bigcup_{i=1}^nK_i\big)\subset\bigcup_{g\in F}W_g$. Indeed, otherwise we would be able to find a net $(g_j)_j$ in $\bigcup_{i=1}^nK_i$ such that it eventually lies outside of every bisection $W_g$ and $r(g_j)\to x$. But then by compactness of $\bigcup_{i=1}^nK_i\subset\IsoG$ we would get a cluster point $h$ of this net with the property $h\in\Gxx\setminus\bigcup_{g\in\Gxx}W_g$, which is impossible.

By replacing $U$ by a smaller set if necessary, we may assume that $U\subset\bigcap_{g\in F}r(W_g)$. By replacing $U$ by an even smaller set we may also assume that for every index $i$ and every $g\in F$ we have either $r^{-1}(U)\cap K_i\cap W_g=\emptyset$ or $g\in K_i$. Finally, let us choose $\eps>0$ such that
\begin{equation}\label{eq:Vi}
\{w\in\T:|w-\chi(g)|<\eps\}\subset V_i
\end{equation}
for all $i$ and $g\in F$ such that $g\in K_i$. We claim that then
$$
(x,\chi)\in\UU_x^\chi(U,\eps,(W_g)_{g\in F})\subset \bigcap^n_{i=1}\OO(U_i,K_i,V_i).
$$

In order to show this, assume $(y,\eta)\in \UU_x^\chi(U,\eps,(W_g)_{g\in F})$ and fix an index $i$. Assuming $K_i\cap\G^y_y\ne\emptyset$, take an element $h$ in this set. Then $h\in W_g$ for some $g\in F$, hence $|\chi(g)-\eta(h)|<\eps$. By our choice of $U$ we must have $g\in K_i$. Then $\eta(h)\in V_i$ by~\eqref{eq:Vi}. It follows that $(y,\eta)\in \OO(U_i,K_i,V_i)$, proving our claim. This completes the proof of the lemma.
\ep

As an immediate consequence we get the following description of convergence in $\Char(\G)$.

\begin{cor}\label{cor:char-convergence}
Fix a point $(x,\chi)\in\Char(\G)$. For every $g\in\Gxx$, choose an open bisection~$W_g$ containing $g$. Then a net $((x_i,\chi_i))_{i\in I}$ converges to $(x,\chi)$ in $\Char(\G)$ if and only if $x_i\to x$ in~$\Gu$ and, for every $g\in \Gxx$ and $\eps>0$, there is $i_0\in I$ such that for each $i\ge i_0$ we have either  $W_g\cap\G^{x_i}_{x_i}=\emptyset$, or $W_g\cap\G^{x_i}_{x_i}=\{h\}$ for some $h$ and $|\chi(g)-\chi_i(h)|<\eps$.
\end{cor}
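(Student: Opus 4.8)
The plan is to deduce this directly from Lemma~\ref{lem:Char-topology}. Fix once and for all the family of bisections $(W_g)_{g\in\Gxx}$ as in the statement. By Lemma~\ref{lem:Char-topology}, for this particular choice the sets $\UU_x^\chi(U,\eps,(W_g)_{g\in F})$, with $\eps>0$, $F\subset\Gxx$ finite and $U$ an admissible open neighbourhood of $x$, form a base of neighbourhoods of $(x,\chi)$. Since a net converges to a point exactly when it is eventually contained in each member of a neighbourhood base, the task reduces to rewriting ``eventually in $\UU_x^\chi(U,\eps,(W_g)_{g\in F})$ for all such $U,\eps,F$'' as the two stated conditions.

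For the ``only if'' direction, suppose $(x_i,\chi_i)\to(x,\chi)$. Testing against the neighbourhoods that only constrain the base point---i.e.\ taking $F=\emptyset$, so that $\UU_x^\chi(U,\eps,())=\{(y,\eta):y\in U\}$ and $U$ is an arbitrary neighbourhood of $x$---shows that $x_i\to x$ in $\Gu$. For the second condition, fix $g\in\Gxx$ and $\eps>0$, take $F=\{g\}$ and any admissible $U$; eventual membership of the net in $\UU_x^\chi(U,\eps,(W_g))$ yields precisely the asserted dichotomy for $W_g\cap\G^{x_i}_{x_i}$.

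For the ``if'' direction, assume $x_i\to x$ and that the second condition holds for every $g$ and $\eps$, and let $\UU_x^\chi(U,\eps,(W_g)_{g\in F})$ be an arbitrary basic neighbourhood. Since $x_i\to x$ there is an index past which $x_i\in U$, and for each of the finitely many $g\in F$ the hypothesis, applied with this $g$ and the common $\eps$, furnishes an index past which the required dichotomy holds. The one genuine step---mild as it is---is to combine these finitely many eventual statements: because $F$ is finite and $I$ is directed, there is a single $i_0\in I$ dominating all of them, and for $i\ge i_0$ we then have $(x_i,\chi_i)\in\UU_x^\chi(U,\eps,(W_g)_{g\in F})$. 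Hence the net is eventually in every basic neighbourhood, and so converges to $(x,\chi)$.

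I expect no real obstacle here, since the corollary is essentially a restatement of the lemma. The only point demanding (minor) care is precisely the finiteness of $F$ in the backward direction, which is exactly what allows the passage from ``for each $g\in F$ eventually'' to ``eventually for all $g\in F$'' in the directed set $I$; were $F$ permitted to be infinite this step would fail, but the neighbourhood base involves only finite $F$.
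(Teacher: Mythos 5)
Your proposal is correct and is precisely the argument the paper has in mind: the corollary is stated as an immediate consequence of Lemma~\ref{lem:Char-topology}, obtained by unwinding ``eventually in every basic neighbourhood $\UU_x^\chi(U,\eps,(W_g)_{g\in F})$'' exactly as you do, with the finiteness of $F$ and directedness of $I$ handling the backward direction.
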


For $(x,\chi)\in\Char(\G)$, denote by $\pi^\chi_x$ the induced representation $\Ind^\G_{\Gxx}\chi$ of $C^*(\G)$. It is known and not difficult to check that the representations $\pi^\chi_x$ are irreducible, see~\cite{IW0} for a more general statement in the second countable case. We therefore get a map
$$
\Ind\colon\Char(\G)\to\Prim C^*(\G),\quad \Ind(x,\chi):=\ker\pi^\chi_x.
$$

\begin{lemma}\label{lem:continuity}
Assume $(x,\chi)\in\Char(\G)$ is a point such that the group $\Gxx$ is amenable. Then the map $\Ind \colon\Char(\G)\to\Prim C^*(\G)$ is continuous at $(x,\chi)$.
\end{lemma}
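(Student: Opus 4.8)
The plan is to combine the Fell criterion (Lemma~\ref{lem:Fell}) with the subnet reformulation in Lemma~\ref{lem:Jacobson}. Fix a net $(x_i,\chi_i)\to(x,\chi)$ in $\Char(\G)$; I want $\ker\pi^{\chi_i}_{x_i}\to\ker\pi^\chi_x$. Since every subnet of a convergent net converges to the same limit, Lemma~\ref{lem:Jacobson} reduces this to showing that $\pi^\chi_x$ is weakly contained in $\bigoplus_i\pi^{\chi_i}_{x_i}$. As $\pi^\chi_x$ is irreducible, every nonzero vector is cyclic, so by the remark following the definition of weak containment it suffices to check that one distinguished vector state of $\pi^\chi_x$ lies in the weak$^*$ closed convex hull of $\bigcup_i\SSS_{\pi^{\chi_i}_{x_i}}(C^*(\G))$.

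First I would identify that state explicitly. Take $\xi_0\in\Ind H$ (with $H=\C$) supported on the identity coset, so that $\xi_0(h)=\overline{\chi(h)}$ for $h\in\Gxx$ and $\xi_0\equiv0$ on the remaining cosets; this is a unit vector, and a short computation with \eqref{eq:ind-rep} gives $(\pi^\chi_x(f)\xi_0,\xi_0)=\sum_{h\in\Gxx}f(h)\chi(h)$ for $f\in C_c(\G)$. Call this state $\phi^\chi_x$. The goal then becomes to approximate $\phi^\chi_x$ weak$^*$ by genuine vector states of the representations $\pi^{\chi_i}_{x_i}$.

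This is where amenability enters. Since $\Gxx$ is amenable and discrete, I would fix a F\o lner net and the associated finitely supported unit vectors $\zeta_n\in\ell^2(\Gxx)$, so that the normalized positive-definite functions $\varphi_n(g):=(\lambda(g)\zeta_n,\zeta_n)$ converge to $1$ pointwise. For a fixed $n$ and each $g$ in the finite support of $\zeta_n$, choose (shrinking as in the proof of Lemma~\ref{lem:Char-topology}) an open bisection $W_g\ni g$ with $\overline{W_g}\cap\Gxx=\{g\}$; for large $i$ the source fibre then yields a unique element $\gamma_i(g)\in W_g\cap\G_{x_i}$ with $\gamma_i(g)\to g$. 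Spreading $\zeta_n$ over the cosets of these $\gamma_i(g)$ and twisting by $\chi$ produces unit vectors $\zeta_n^i\in\Ind H_i$, and I would compute, reducing $f$ to a single bisection and letting $i\to\infty$, that $(\pi^{\chi_i}_{x_i}(f)\zeta_n^i,\zeta_n^i)\to\sum_{h\in\Gxx}\varphi_n(h)f(h)\chi(h)$. Here Corollary~\ref{cor:char-convergence} guarantees that along the persisting isotropy directions the characters $\chi_i$ track $\chi$, while the condition $\overline{W_g}\cap\Gxx=\{g\}$ annihilates the remaining terms. Letting $n\to\infty$ and using $\varphi_n\to1$ recovers $\phi^\chi_x(f)=\sum_{h\in\Gxx}f(h)\chi(h)$, and a diagonal argument over the directed set of triples (finite set of test functions, tolerance, index $n$) produces a single net of vector states in $\bigcup_i\SSS_{\pi^{\chi_i}_{x_i}}(C^*(\G))$ converging weak$^*$ to $\phi^\chi_x$.

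The main obstacle is the vector-state computation in the third step: one must show that only the ``diagonal'' contribution survives in the limit, carrying exactly the positive-definite weight $\varphi_n$, and control the cross terms coming from distinct cosets as well as the possibility that the groups $\G^{x_i}_{x_i}$ are strictly larger than the transported copy of $\supp\zeta_n$, so that several $\gamma_i(g)$ may land in one coset. It is precisely here that amenability is indispensable: the weight $\varphi_n$ cannot be pushed to $1$ without a F\o lner net, and for non-amenable $\Gxx$ the state $\phi^\chi_x$ genuinely fails to be weakly contained, so that $\Ind$ is not continuous at $(x,\chi)$.
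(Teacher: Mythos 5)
Your skeleton (reduce to weak containment via Lemma~\ref{lem:Jacobson}, then approximate the canonical vector state $\phi^\chi_x(f)=\sum_{h\in\Gxx}f(h)\chi(h)$ by vector states of the $\pi^{\chi_i}_{x_i}$) is the same as the paper's, but the central computation does not hold as stated, and the obstacle you flag at the end is not a technicality to be controlled — it is the crux. The problem is the claimed limit $(\pi^{\chi_i}_{x_i}(f)\zeta_n^i,\zeta_n^i)\to\sum_{h}\varphi_n(h)f(h)\chi(h)$. When two lifts $\gamma_i(g),\gamma_i(g')$ land in the same coset of $\G^{x_i}_{x_i}$, i.e.\ $k:=\gamma_i(g)^{-1}\gamma_i(g')\in\G^{x_i}_{x_i}$, the convergence in $\Char(\G)$ forces $\chi_i(k)\to\chi(g^{-1}g')$, so the two prescribed values $|F_n|^{-1/2}\overline{\chi(g)}$ and $|F_n|^{-1/2}\overline{\chi(g')}$ are (approximately) \emph{consistent} with the equivariance relation $\xi(\gamma k)=\overline{\chi_i(k)}\xi(\gamma)$ and therefore interfere constructively: the coset contributes $\approx 4/|F_n|$ rather than $2/|F_n|$ to $\|\zeta_n^i\|^2$, the vector must be renormalized, and the limit changes. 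Already for the constant net $(x_i,\chi_i)=(x,\chi)$ all lifts collapse into the single coset $\Gxx$, the renormalized vector is just $\xi_0$, and the $i$-limit is $\phi^\chi_x(f)$ itself, not $\sum_h\varphi_n(h)f(h)\chi(h)$. In general the $i$-limit is a vector state of $\Ind^\G_S(\chi|_S)$, where $S\subset\Gxx$ is the subgroup on which the isotropy groups $\G^{x_i}_{x_i}$ accumulate, and you cannot identify that limit without first extracting $S$.

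That extraction is precisely how the paper's proof is organized, and it lets amenability enter as a black box rather than through a Følner computation. After passing to a subnet one arranges that the sets $S_i$ of elements of $\Gxx$ ``seen'' by $\G^{x_i}_{x_i}$ converge in the Fell topology to a subgroup $S\subset\Gxx$; the single-coset vectors $\zeta_i$ supported on $\G^{x_i}_{x_i}$ (for which no collision is possible) then show that $\Ind^\G_S(\chi|_S)$ is weakly contained in $\bigoplus_i\pi^{\chi_i}_{x_i}$ — a step requiring no amenability. Only afterwards does amenability appear, via Greenleaf's theorem $\chi\prec\Ind^{\Gxx}_S(\chi|_S)$ and induction in stages, giving $\pi^\chi_x\prec\Ind^\G_S(\chi|_S)$. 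Your Følner construction is in effect an inline reproof of Greenleaf's theorem entangled with the groupoid approximation; to make it work you would have to (a) identify the collision subgroup $S$ by a subnet argument, (b) renormalize and recompute the $i$-limit as a vector state of $\Ind^\G_S(\chi|_S)$, and (c) check that the Følner vectors then witness $\chi\prec\Ind^{\Gxx}_S(\chi|_S)$ — none of which is carried out. (Your closing claim that $\Ind$ must fail to be continuous at points with non-amenable $\Gxx$ is also not asserted by the lemma and is not automatic.)
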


\bp
This can be proved along the lines of \cite{MR4395600}*{Corollary~4.6}. Since our \'etale case does not really need any sophisticated machinery, we will give an essentially self-contained proof for the reader's convenience.

Assume $((x_i,\chi_i))_{i\in I}$ is a net in $\Char(\G)$ converging to $(x,\chi)$. We need to show that the net $(\Ind(x_i,\chi_i))_i$ converges to $\Ind(x,\chi)$ in $\Prim C^*(\G)$. By Lemma~\ref{lem:Jacobson} for this it suffices to show that $\pi^\chi_x$ is weakly contained in $\bigoplus_{i\in I}\pi^{\chi_i}_{x_i}$.

For every $g\in\Gxx$, fix an open bisection $W_g$ containing $g$. By Corollary~\ref{cor:char-convergence}, by passing to a subnet we may assume that for every $i\in I$ we are given a number $\eps_i>0$ and a finite subset $F_i\subset\Gxx$ such that the following properties are satisfied: $\eps_i\to0$, $F_i\nearrow\Gxx$ and for every $g\in F_i$ we have either  $W_g\cap\G^{x_i}_{x_i}=\emptyset$, or $W_g\cap\G^{x_i}_{x_i}=\{h\}$ for some~$h$ and
$|\chi(g)-\chi_{i}(h)|<\eps_i$. Let $S_i\subset F_i$ be the subset of points $g$ such that $W_g\cap\G^{x_i}_{x_i}\ne\emptyset$. By passing to a subnet we may assume that $S_i\to S$ in the Fell topology for some subset $S\subset\Gxx$.

We claim that $S$ is a subgroup of $\Gxx$. In order to see this, assume $g,h\in S$. Then for all $i$ large enough we have $g,h\in S_i$ and $gh\in F_i$. Since $W_gW_h$ is an open bisection containing $gh$, we also have $W_gW_h\cap r^{-1}(V)=W_{gh}\cap r^{-1}(V)$ for a neighbourhood $V$ of $x$, and hence if $x_i\in V$ and  $g,h\in S_i$, then $W_{gh}\cap\G^{x_i}_{x_i}\ne\emptyset$. Therefore $gh\in S_i$ for all $i$ sufficiently large, hence $gh\in S$. Similar arguments show that $S$ contains the unit and is closed under taking inverses.

By construction the subgroup $S$ has the following properties. If $g\in S$, then $W_g\cap\G^{x_i}_{x_i}=\{h_{g,i}\}$ for some $h_{g,i}$ for all $i$ sufficiently large, and $\chi_i(h_{g,i})\to\chi(g)$. While if $g\in\Gxx\setminus S$, then $W_g\cap\G^{x_i}_{x_i}=\emptyset$ for all $i$ sufficiently large.

We claim that $\pi:=\Ind^\G_S(\chi|_S)$ is weakly contained in $\bigoplus_{i\in I}\pi^{\chi_i}_{x_i}$. In order to show this, define unit vectors $\zeta$ and $\zeta_i$ in the underlying spaces of the representations $\pi$ and $\pi^{\chi_i}_{x_i}$ by
$$
\zeta(g):=\begin{cases}
            \overline{\chi(g)}, & \mbox{if } g\in S, \\
            0, & \mbox{if } g\in\G_x\setminus S,
          \end{cases}\qquad
\zeta_i(g):=\begin{cases}
            \overline{\chi_i(g)}, & \mbox{if } g\in\G^{x_i}_{x_i}, \\
            0, & \mbox{if } g\in\G_{x_i}\setminus \G^{x_i}_{x_i}.
          \end{cases}
$$
Since $\zeta$ is a cyclic vector for the representation $\pi$, in order to prove the claim it suffices to show that the states $\varphi_i:=(\pi^{\chi_i}_{x_i}(\cdot)\zeta_i,\zeta_i)$ converge weakly$^*$ to $\varphi:=(\pi(\cdot)\zeta,\zeta)$. For this it suffices to check that $\varphi_i(f)\to\varphi(f)$ for all $f\in C_c(W)$, where $W$ runs through a collection of open bisections covering $\G$. It is enough to consider the bisections $W$ satisfying one of the following properties:
\begin{enumerate}
\item $W=W_g$ for some $g\in S$;
\item $W=W_g$ for some $g\in\Gxx\setminus S$;
\item $x\notin\overline{s(W)}\cap\overline{r(W)}$.
\end{enumerate}

In the first case we have $\varphi(f)=\chi(g)f(g)$ and, for $i$ large enough, $\varphi_i(f)=\chi_i(h_{g,i})f(h_{g,i})$. Hence $\varphi_i(f)\to\varphi(f)$ by the definition of $S$ and continuity of $f$, since $h_{g,i}\to g$. In the second case we have $\varphi(f)=0$ and $\varphi_i(f)=0$ as long as $i$ is large enough so that $W_g\cap\G^{x_i}_{x_i}=\emptyset$. Thus, $\varphi_i(f)\to\varphi(f)$. In the third case we have $\varphi(f)=0$ and $\varphi_i(f)=0$ as long as $i$ is large enough so that $x_i\notin\overline{s(W)}\cap\overline{r(W)}$. Therefore we again have $\varphi_i(f)\to\varphi(f)$. This completes the proof of weak containment of $\Ind^\G_S(\chi|_S)$ in $\bigoplus_{i\in I}\pi^{\chi_i}_{x_i}$.

Now, since $\Gxx$ is amenable, the representation $\chi$ of $\Gxx$ is weakly contained in $\Ind^{\Gxx}_S(\chi|_S)$ by \cite{MR0246999}*{Theorem~5.1}. Hence $\pi^\chi_x=\Ind^\G_{\Gxx}\chi$ is weakly contained in $\Ind^\G_{\Gxx}\Ind^{\Gxx}_S(\chi|_S)\sim\Ind^\G_S(\chi|_S)$.
\ep

Denote by $\Char_a(\G)\subset\Char(\G)$ the subset of pairs $(x,\chi)$ such that $\Gxx$ is amenable, and endow $\Char_a(\G)$ with the relative topology.
The groupoid $\G$ acts on $\Char(\G)$ and $\Char_a(\G)$ by $g(x,\chi):=(r(g),\chi(g^{-1}\cdot g))$ for $g\in\G_x$. Using either Lemma~\ref{lem:Char-topology} or Corollary~\ref{cor:char-convergence}, it is easy to verify the following.

\begin{lemma}[{cf.~\cite{MR4395600}*{Corollary~3.8}}]\label{lem:contconj}
The action of $\G$ on $\Char(\G)$ is continuous.
\end{lemma}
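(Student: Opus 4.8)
The plan is to verify continuity of the action map
$$
\G*\Char(\G)=\{(g,(x,\chi)):s(g)=x\}\to\Char(\G),\qquad (g,(x,\chi))\mapsto(r(g),\chi(g^{-1}\cdot g)),
$$
directly on nets, using the convergence criterion of Corollary~\ref{cor:char-convergence}. So I would fix a convergent net $(g_i,(x_i,\chi_i))\to(g,(x,\chi))$, which unwinds to $g_i\to g$ in $\G$ and $(x_i,\chi_i)\to(x,\chi)$ in $\Char(\G)$, with $s(g_i)=x_i$ and $s(g)=x$. Writing $y:=r(g)$, $\eta:=\chi(g^{-1}\cdot g)$ (a character of $\G^y_y=g\Gxx g^{-1}$), and $y_i:=r(g_i)$, $\eta_i:=\chi_i(g_i^{-1}\cdot g_i)$ (a character of $\G^{y_i}_{y_i}=g_i\G^{x_i}_{x_i}g_i^{-1}$), the goal is to show $(y_i,\eta_i)\to(y,\eta)$. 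The condition $y_i\to y$ is immediate from continuity of $r$, so by Corollary~\ref{cor:char-convergence} everything reduces to the character estimate, and since that criterion is independent of the choice of testing bisections, I am free to test convergence against one convenient family.

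The key idea is to transport, through the net $(g_i)$, the bisections that detect convergence of $(x_i,\chi_i)$. Given $k\in\G^y_y$, write $k=ghg^{-1}$ with $h:=g^{-1}kg\in\Gxx$. I would fix an open bisection $B$ containing $g$ and an open bisection $W_h$ containing $h$ (the latter chosen so that convergence of $(x_i,\chi_i)\to(x,\chi)$ is read off through the $W_h$), and then set $W_k:=BW_hB^{-1}$. This is again an open bisection, it contains $k$, and it is the natural candidate because conjugation by $g_i$ ought to carry $W_h\cap\G^{x_i}_{x_i}$ onto $W_k\cap\G^{y_i}_{y_i}$.

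The heart of the argument is the identity
$$
W_k\cap\G^{y_i}_{y_i}=g_i\,(W_h\cap\G^{x_i}_{x_i})\,g_i^{-1}\qquad\text{for all $i$ with $g_i\in B$,}
$$
valid once $i$ is large enough that $g_i\in B$. To prove it I would use that $B$ is a bisection: any element of $BW_hB^{-1}$ lying in $\G^{y_i}_{y_i}$ factors as $b_1 w b_2$ with $r(b_1)=y_i=s(b_2)$, and injectivity of $r|_B$ forces $b_1=g_i$ while injectivity of $s|_{B^{-1}}$ forces $b_2=g_i^{-1}$, whence $w=g_i^{-1}(\cdot)g_i\in W_h\cap\G^{x_i}_{x_i}$; the reverse inclusion is clear. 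Granting this, the two alternatives transfer verbatim: if $W_h\cap\G^{x_i}_{x_i}=\emptyset$ then $W_k\cap\G^{y_i}_{y_i}=\emptyset$, while if $W_h\cap\G^{x_i}_{x_i}=\{h'\}$ then $W_k\cap\G^{y_i}_{y_i}=\{g_ih'g_i^{-1}\}$, and here $\eta_i(g_ih'g_i^{-1})=\chi_i(h')$ whereas $\eta(k)=\chi(h)$. Thus the estimate $|\chi(h)-\chi_i(h')|<\eps$ furnished by convergence of $(x_i,\chi_i)$ reads exactly as $|\eta(k)-\eta_i(g_ih'g_i^{-1})|<\eps$, which is precisely the condition Corollary~\ref{cor:char-convergence} requires for $(y_i,\eta_i)\to(y,\eta)$.

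I expect the only genuinely delicate point to be the bisection bookkeeping behind the displayed identity: ensuring that $g_i\in B$ (hence $g_i^{-1}\in B^{-1}$) eventually, and that the bisection property of $B$ pins the outer factors down uniquely so that conjugation by $g_i$ really induces a bijection between the two intersections. Once that is in place, the rest is a mechanical transcription between the convergence criteria for $(x_i,\chi_i)$ and for $(y_i,\eta_i)$.
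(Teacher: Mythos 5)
Your proof is correct, and it takes exactly the route the paper indicates: the paper omits the argument, stating only that the lemma "is easy to verify" using Lemma~\ref{lem:Char-topology} or Corollary~\ref{cor:char-convergence}, and your net argument via Corollary~\ref{cor:char-convergence} with the conjugated bisections $W_k=BW_hB^{-1}$ (together with the bisection bookkeeping that pins down the outer factors as $g_i$ and $g_i^{-1}$) is a complete and accurate filling-in of those details.
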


By Lemma \ref{lem:UE}, the map $\Ind$ factors through $\G\backslash\!\Char(\G)$. Using the fact that $\Prim C^*(\G)$ is a $T_0$-space, we then get the following corollary to Lemma~\ref{lem:continuity}.

\begin{cor}\label{cor:Ind1}
The map $\Ind$ induces a continuous map $\G\backslash\!\Char_a(\G)\to\Prim C^*(\G)$, hence a continuous map
$$
\Ind^\sim\colon (\G\backslash\!\Char_a(\G))^\sim\to \Prim C^*(\G).
$$
\end{cor}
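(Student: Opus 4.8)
The plan is to assemble three facts that are already available and feed them through the relevant universal properties. First, I note that the $\G$-action preserves $\Char_a(\G)$: for $g\in\G_x$ one has $\G^{r(g)}_{r(g)}=g\Gxx g^{-1}$, which is isomorphic to $\Gxx$, so amenability of the isotropy is a $\G$-invariant condition and the quotient $\G\backslash\Char_a(\G)$ is well defined. By Lemma~\ref{lem:UE}, if $(y,\eta)=g(x,\chi)$ then right translation by $g$ implements a unitary equivalence $\pi^\chi_x\cong\pi^\eta_y$, so these representations have equal kernels; hence $\Ind$ is constant on $\G$-orbits and descends to a set-theoretic map $\overline{\Ind}\colon\G\backslash\Char_a(\G)\to\Prim C^*(\G)$ satisfying $\overline{\Ind}\circ q=\Ind|_{\Char_a(\G)}$, where $q\colon\Char_a(\G)\to\G\backslash\Char_a(\G)$ is the quotient map.

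Second, I would establish continuity of $\overline{\Ind}$ via the universal property of the quotient topology: $\overline{\Ind}$ is continuous if and only if $\overline{\Ind}\circ q=\Ind|_{\Char_a(\G)}$ is. For the latter, observe that by the very definition of $\Char_a(\G)$ every point $(x,\chi)$ in it has $\Gxx$ amenable, so Lemma~\ref{lem:continuity} gives continuity of $\Ind\colon\Char(\G)\to\Prim C^*(\G)$ at each such point; restricting to the subspace $\Char_a(\G)$ preserves continuity at each point, and hence $\Ind|_{\Char_a(\G)}$ is continuous.

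Third and finally, since $\Prim C^*(\G)$ is a $T_0$-space and $\overline{\Ind}$ is continuous, the universal property of the $T_0$-ization recalled in Section~\ref{ssec:T0} produces the desired continuous factorization $\Ind^\sim\colon(\G\backslash\Char_a(\G))^\sim\to\Prim C^*(\G)$.

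The argument is essentially bookkeeping, and the one point that requires genuine care is that $\Ind$ need \emph{not} be continuous on all of $\Char(\G)$---Lemma~\ref{lem:continuity} only supplies continuity at points with amenable isotropy. The key observation that makes everything go through is that $\Char_a(\G)$ is exactly the locus of such points, so the restriction $\Ind|_{\Char_a(\G)}$ is continuous even though the ambient map on $\Char(\G)$ generally is not.
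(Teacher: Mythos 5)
Your argument is correct and follows exactly the route the paper takes: Lemma~\ref{lem:UE} to descend to the orbit space, Lemma~\ref{lem:continuity} for continuity at points with amenable isotropy, and the universal property of the $T_0$-ization to obtain $\Ind^\sim$. The extra checks you record (that the $\G$-action preserves $\Char_a(\G)$ and that pointwise continuity passes to the subspace) are correct and merely make explicit what the paper leaves implicit.
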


\subsection{Main results}
The following is the key result of the paper.

\begin{thm}\label{thm:main}
Let $\G$ be a Hausdorff locally compact \'etale groupoid. Assume $(x,\chi)\in\Char(\G)$ and $A\subset\Char(\G)$ are such that $\Ind(x,\chi)\in\overline{\Ind A}$ in $\Prim C^*(\G)$. Then $(x,\chi)\in\overline{\G A}$ in $\Char(\G)$.
\end{thm}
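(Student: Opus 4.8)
The plan is to convert the hypothesis into weak containment, pass to the canonical vector state of $\pi^\chi_x$, and then produce a point of $\G A$ inside each basic neighbourhood of $(x,\chi)$ directly from the approximating states. First I would apply Lemma~\ref{lem:Fell} to rewrite $\Ind(x,\chi)\in\overline{\Ind A}$ as weak containment $\pi^\chi_x\prec\bigoplus_{(y,\eta)\in A}\pi^\eta_y$. Consider the canonical unit vector $\delta\in\Ind H$ given by $\delta(g)=\overline{\chi(g)}$ for $g\in\Gxx$ and $\delta(g)=0$ otherwise; a short computation from \eqref{eq:ind-rep} gives $(\pi^\chi_x(f)\delta,\delta)=\sum_{h\in\Gxx}\chi(h)f(h)=:\varphi(f)$. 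Since $\delta$ is cyclic, condition (3) of Lemma~\ref{lem:Fell} provides a net of states $\psi_j=(\pi^{\eta_j}_{y_j}(\cdot)\xi_j,\xi_j)$, with $(y_j,\eta_j)\in A$ and $\|\xi_j\|=1$, converging weak$^*$ to $\varphi$.

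Next I would extract positional and character information from this convergence. Testing against $f\in C_c(\Gu)$, which act by multiplication by $f\circ r$, shows that the probability measures $\mu_j:=\sum_{[g]\in\G_{y_j}/\G^{y_j}_{y_j}}|\xi_j(g)|^2\,\delta_{r(g)}$ (here $r$ is injective on cosets) converge weak$^*$ to $\delta_x$; in particular $\mu_j$ concentrates near $x$. For the characters, fix $g\in\Gxx$, a bisection $W_g\ni g$ and $f_g\in C_c(W_g)$ with $f_g(g)=1$, supported so close to $g$ that (using discreteness of $\Gxx$) $\varphi(f_g)=\chi(g)$ and $\varphi(f_g^{*}*f_g)=1$. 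Then, since $\|\xi_j\|=|\chi(g)|=1$,
$$
\|\pi^{\eta_j}_{y_j}(f_g)\xi_j-\chi(g)\xi_j\|^2=\psi_j(f_g^{*}*f_g)-2\operatorname{Re}\big(\overline{\chi(g)}\psi_j(f_g)\big)+1\longrightarrow 1-2+1=0,
$$
i.e.\ $\pi^{\eta_j}_{y_j}(f_g)\xi_j\to\chi(g)\xi_j$ in norm. Evaluating the $g'$-component: whenever $W_g$ meets $\G^{r(g')}_{r(g')}$ in a single isotropy point $h$ and $r(g')$ is close enough to $x$ that $f_g(h)=1$, the covariance of $\xi_j$ gives $(\pi^{\eta_j}_{y_j}(f_g)\xi_j)(g')=\omega_{g'}(h)\,\xi_j(g')$, where $\omega_{g'}$ is the character of $g'\cdot(y_j,\eta_j)\in\G A$. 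Hence the norm bound forces $\sum_{[g']\in B_{g,j}}|\xi_j(g')|^2\le\eps^{-2}\|\pi^{\eta_j}_{y_j}(f_g)\xi_j-\chi(g)\xi_j\|^2\to0$, where $B_{g,j}$ is the set of such cosets with $|\omega_{g'}(h)-\chi(g)|\ge\eps$.

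Finally I would assemble the point. Given any basic neighbourhood $N=\UU_x^\chi(U,\eps,(W_g)_{g\in F})$ of $(x,\chi)$, choose $V\subset U$ small and $f_g\equiv1$ near $g$ so that the two estimates above give $\mu_j(\{[g']:r(g')\notin V\})\to0$ and $\mu_j(B_{g,j})\to0$ for each $g\in F$. For $j$ large the $\mu_j$-mass of the bad union $\{r\notin V\}\cup\bigcup_{g\in F}B_{g,j}$ is $<1$, so we may pick a coset $[g'_j]$ in its complement; the point $(r(g'_j),\omega_{g'_j})=g'_j\cdot(y_j,\eta_j)\in\G A$ then lies in $N$, since $r(g'_j)\in V\subset U$ and for every $g\in F$ either $W_g\cap\G^{r(g'_j)}_{r(g'_j)}=\emptyset$ or it equals a singleton $\{h\}$ with $|\omega_{g'_j}(h)-\chi(g)|<\eps$ (Corollary~\ref{cor:char-convergence}). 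As $N$ was arbitrary, $(x,\chi)\in\overline{\G A}$, and no amenability is used.

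The main obstacle is the middle step: passing from weak$^*$ convergence of the states $\psi_j$, which involve \emph{arbitrary} unit vectors $\xi_j$ and thus a priori mix contributions from many cosets, to pointwise control of the characters $\omega_{g'}$ on the relevant isotropy elements. The device that overcomes it is the Cauchy--Schwarz equality argument, which upgrades the scalar convergence $\psi_j(f_g)\to\chi(g)$ to the norm convergence $\pi^{\eta_j}_{y_j}(f_g)\xi_j\to\chi(g)\xi_j$; the diagonal cosets (where isotropy persists) then read off the character, while on the remaining cosets the defining condition of $N$ is vacuous, so no matching is required there.
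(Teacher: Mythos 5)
Your argument is correct and follows essentially the same route as the paper's proof: Fell's lemma, the canonical cyclic vector with state $\varphi(f)=\sum_{h\in\Gxx}\chi(h)f(h)$, a bump function at $x$ together with bump functions $f_g$ on bisections through the isotropy, and the key observation that the coset vectors are eigenvectors of $\pi^\eta_y(f_g)$ with eigenvalue the conjugated character value. The only divergence is in how the final estimate is organized: you additionally test the states against $f_g^{*}*f_g$ to upgrade to the norm convergence $\pi^{\eta_j}_{y_j}(f_g)\xi_j\to\chi(g)\xi_j$ and then apply a union bound over a net, whereas the paper fixes a single well-approximating $(y,\eta)$ and $\xi$, uses a pigeonhole over $F$ and the bound $|(\pi^\eta_y(f_g)\tilde\xi,\tilde\xi)|\le\|\tilde\xi\|^2$ on the off-diagonal part together with a circle inequality --- both implementations are valid and of comparable length.
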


\bp For $(y,\eta)\in\Char(\G)$ and $z\in[y]:=r(\G_y)$, we denote by $\eta_z\colon\G^z_z\to\T$ the character $\eta(g^{-1}\cdot g)$, where $g$ is any element of $\G^z_y$. In other words, $(z,\eta_z)=g(y,\eta)$. Note that $\eta_z$ is independent of the choice of $g\in\G^z_y$. 

For every $g\in \Gxx$, fix an open bisection $W_g$ containing $g$. Fix a neighbourhood $U$ of $x$ in~$\Gu$, $\eps>0$ and a finite subset $F\subset\Gxx$. By the description of the topology on $\Char(\G)$ given in Lemma~\ref{lem:Char-topology} we need to show that there exist $(y,\eta)\in A$ and $z\in [y]\cap U$ satisfying the following property: for every $g\in F$, we have either  $W_g\cap\G^z_z=\emptyset$, or $W_g\cap\G^z_z=\{h\}$ for some $h$ and
$|\chi(g)-\eta_z(h)|<\eps$.

Denote by $H$ and $H_{y,\eta}$ the underlying spaces of the representations~$\pi^\chi_x$ and $\pi^\eta_y$. Consider the unit vector $\zeta\in H$ defined by
$$
\zeta(g):=\begin{cases}
            \overline{\chi(g)}, & \mbox{if } g\in\Gxx, \\
            0, & \mbox{if } g\in\G_x\setminus\Gxx,
          \end{cases}
$$
and the corresponding state $\varphi:=(\pi^\chi_x(\cdot)\zeta,\zeta)$ on $C^*(\G)$. By replacing $U$ by a smaller neighbourhood of $x$ if necessary, we may assume that there are functions $f_g\in C_c(W_g)$  ($g\in F$) such that $0\le f_g(h)\le 1$ for all $h\in W_g$, $f_g(h)=1$ for all $h\in r^{-1}(U)\cap W_g$. Let us also choose a function $f\in C_c(U)$ such that $0\le f \le 1$ and $f(x)=1$. Fix a number $\alpha\in(0,|F|^{-1})$. Let $\delta>0$ be such that
\begin{equation}\label{eq:circle-inequality}
\text{if}\ \operatorname{Re}w>1-\frac{\delta}{\alpha}\ \text{for some}\ w\in\T,\ \text{then}\ |1-w|<\eps.
\end{equation}

Since $\pi^\chi_x$ is weakly contained in $\bigoplus_{(y,\eta)\in A}\pi^\eta_y$, by Lemma~\ref{lem:Fell}(3) we can find $(y,\eta)\in A$ and a unit vector $\xi\in H_{y,\eta}$ such that
\begin{equation}\label{eq:weak}
|\varphi(f)-(\pi^\eta_y(f)\xi,\xi)|<1-|F|\alpha,\quad |\varphi(f_g)-(\pi^\eta_y(f_g)\xi,\xi)|<\delta\quad\text{for all}\quad g\in F.
\end{equation}
For every $z\in[y]$, fix an element $r_z\in\G^z_y$. We have $\varphi(f)=1$ and
$$
(\pi^\eta_y(f)\xi,\xi)=\sum_{z\in[y]}f(z)|\xi(r_z)|^2.
$$
Therefore, by the choice of $f$, the first inequality in~\eqref{eq:weak} implies that
\begin{equation}\label{eq:nec1}
\sum_{z\in [y]\cap U}|\xi(r_z)|^2>|F|\alpha.
\end{equation}
We also have $\varphi(f_g)=\chi(g)$, hence the second inequality gives
\begin{equation}\label{eq:nec2}
|\chi(g)-(\pi^\eta_y(f_g)\xi,\xi)|<\delta\quad\text{for all}\quad g\in F.
\end{equation}

For every $g\in F$, let $Z_g$ be the set of points $z\in[y]\cap U$ such that $W_g\cap \G^z_z=\{h_{g,z}\}$ for some~$h_{g,z}$~and
$
|\chi(g)-\eta_z(h_{g,z})|\ge\eps.
$
We want to show that the set $([y]\cap U)\setminus\bigcup_{g\in F}Z_g$ is nonempty. Assume this is not the case. By~\eqref{eq:nec1} we then get that
$$
\sum_{g\in F}\sum_{z\in Z_g}|\xi(r_z)|^2\ge \sum_{z\in [y]\cap U}|\xi(r_z)|^2>|F|\alpha.
$$
It follows that there is $g\in F$ such that
\begin{equation}\label{eq:>alpha}
\sum_{z\in Z_g}|\xi(r_z)|^2>\alpha.
\end{equation}

Now, for every $z\in Z_g$ consider the unit vector $\zeta_z\in H_{y,\eta}$ defined by
$$
\zeta_z(g'):=\begin{cases}
            \overline{\eta(r_z^{-1}g')}, & \mbox{if } g'\in \G^z_y, \\
            0, & \mbox{if } g'\in\G_y\setminus\G^z_y.
          \end{cases}
$$
From the definition~\eqref{eq:ind-rep} of an induced representation we get that, for each $z'\in[y]$,
$$
(\pi^\eta_y(f_g)\zeta_z)(r_{z'})
=\sum_{h\in \G^{z'}} f_{g}(h) \zeta_z(h^{-1} r_{z'})
=\sum_{h\in \G^{z'}_{z}} f_{g}(h) \zeta_z(h^{-1}r_{z'}).
$$
The last expression is zero for $z'\ne z$, since $f_g$ is supported on $W_g$ and $W_g\cap \G_z=W_g\cap \G^z_z=\{h_{g,z}\}$, while for $z'=z$ we get
$$
f_g(h_{g,z})\zeta_z(h^{-1}_{g,z}r_z)=\eta_z(h_{g,z}).
$$
Thus, $\pi^\eta_y(f_g)\zeta_z= \eta_z(h_{g,z})\zeta_z$, and a similar computation reveals that $\pi^\eta_y(f_g^*)\zeta_z=\overline{\eta_z(h_{g,z})}\zeta_z$.

Therefore in the representation $\pi^\eta_y$ the spaces $\C\zeta_z\subset H_{y,\eta}$ for $z\in Z_g$ are invariant subspaces for the C$^*$-algebra $A_g\subset C^*(\G)$ generated by $f_g$. Define $\tilde\xi\in H_{y,\eta}$ by $\tilde\xi(g'):=0$ if $r(g')\in Z_g$ and $\tilde\xi(g'):=\xi(g')$ otherwise. Then the vectors $\tilde\xi$ and $\zeta_z$ for all $z\in Z_g$ are mutually orthogonal and
$$
\xi = \tilde\xi+\sum_{z\in Z_{g}} \xi(r_{z}) \zeta_z \; .
$$
It follows that on $A_g$ we have
$$
(\pi^\eta_y(\cdot)\xi,\xi)=\sum_{z\in Z_g}|\xi(r_z)|^2(\pi^\eta_y(\cdot)\zeta_z,\zeta_z)+ (\pi^\eta_y(\cdot)\tilde\xi,\tilde\xi) \; .
$$
Applying this to $f_g$ we get
$$
\left|(\pi^\eta_y(f_g)\xi,\xi)-\sum_{z\in Z_g}|\xi(r_z)|^2\eta_z(h_{g,z})\right|=|(\pi^\eta_y(f_g)\tilde\xi,\tilde\xi)|\le \|\tilde\xi\|^2=1-\sum_{z\in Z_g}|\xi(r_z)|^2.
$$

Together with \eqref{eq:nec2} this gives
$$
\left|\chi(g)-\sum_{z\in Z_g}|\xi(r_z)|^2\eta_z(h_{g,z})\right|<1-\sum_{z\in Z_g}|\xi(r_z)|^2+\delta.
$$
It follows that
$$
\sum_{z\in Z_g}|\xi(r_z)|^2\operatorname{Re}\big(\overline{\chi(g)}\eta_z(h_{g,z})\big)>\sum_{z\in Z_g}|\xi(r_z)|^2-\delta.
$$
This implies that there is $z\in Z_g$ such that
$$
\operatorname{Re}\big(\overline{\chi(g)}\eta_z(h_{g,z})\big)>1-\frac{\delta}{\sum_{z'\in Z_g}|\xi(r_{z'})|^2}>1-\frac{\delta}{\alpha},
$$
where in the last inequality we used~\eqref{eq:>alpha}. By our choice~\eqref{eq:circle-inequality} of $\delta$ it follows that
$$
|\chi(g)-\eta_z(h_{g,z})|<\eps.
$$
But this contradicts the definition of $Z_g$.

In conclusion, there must exist an element $ z\in ([y]\cap U)\setminus\bigcup_{g\in F}Z_g$. By the definition of the sets~$Z_{g}$, we have, for every $g\in F$, that either $W_{g} \cap \G_{z}^{z} = \emptyset$, or $W_g\cap\G^z_z=\{h\}$ for some $h$ and $|\chi(g)-\eta_z(h)|<\eps$. This proves the theorem.
\ep

We can now show that the map $\Ind^\sim$ defined in Corollary~\ref{cor:Ind1} is a homeomorphism onto its image. It is convenient to formulate this in the following formally stronger form.

\begin{cor}\label{cor:Ind2}
For any $\G$-invariant subset $A\subset\Char_a(\G)$, the map $\Ind$ defines a homeomorphism of $(\G\backslash\! A)^\sim$ onto $\Ind A\subset\Prim C^*(\G)$. Moreover, the map $\Ind|_A\colon A\to\Ind A$ is open, and $\Ind(x,\chi)=\Ind(y,\eta)$ if and only if the $\G$-orbits of $(x,\chi)$ and $(y,\eta)$ have identical closures in $A$ (equivalently, in $\Char(\G)$).
\end{cor}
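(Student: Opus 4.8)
The plan is to deduce everything from Lemma~\ref{lem:T0}, applied to $X=A$ (with the relative topology from $\Char(\G)$), to $Y=\Ind A$ (with the relative topology from $\Prim C^*(\G)$), to the map $p=\Ind|_A\colon A\to\Ind A$, and to the relation $R$ given by the $\G$-orbits, so that $A/R=\G\backslash A$. Since $\Prim C^*(\G)$ is a $T_0$-space, so is its subspace $\Ind A$; the map $p$ is surjective by definition and continuous by Lemma~\ref{lem:continuity}, as every point of $A\subseteq\Char_a(\G)$ has amenable isotropy. It therefore remains to verify conditions (i)--(iii) of Lemma~\ref{lem:T0}.

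Condition (i), that $p$ is constant on $\G$-orbits, is exactly Lemma~\ref{lem:UE}: right translation by $g\in\G_x$ implements a unitary equivalence $\pi^\chi_x\sim\pi^{\chi(g^{-1}\cdot\,g)}_{r(g)}$, whence $\Ind(x,\chi)=\Ind\big(g(x,\chi)\big)$. For condition (ii) I would invoke the \'etale structure: covering $\G$ by open bisections $W$, each $W$ induces a partial homeomorphism $\theta_W$ of $\Char(\G)$ between the open sets $\{(x,\chi):x\in s(W)\}$ and $\{(x,\chi):x\in r(W)\}$ (continuity is Lemma~\ref{lem:contconj}, and applying the same to $W^{-1}$ shows $\theta_W$ is a homeomorphism onto an open set), so that the $R$-saturation of an open set $U\subseteq A$ equals $\bigcup_W\theta_W(U\cap\Dom\theta_W)$, a union of open sets. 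The crucial condition is (iii): if $(x,\chi)\in A$ and $B\subseteq A$ satisfy $\Ind(x,\chi)\in\overline{\Ind B}$, then $(x,\chi)\in\overline{R(B)}$. This is precisely Theorem~\ref{thm:main}. Indeed, since $A$ is $\G$-invariant we have $R(B)=\G B\subseteq A$; Theorem~\ref{thm:main} gives $(x,\chi)\in\overline{\G B}$ in $\Char(\G)$, and as $(x,\chi)\in A$ this means $(x,\chi)\in\overline{\G B}\cap A=\overline{R(B)}^{A}$. (Passing between closures in the subspaces $\Ind A$, $A$ and in the ambient spaces $\Prim C^*(\G)$, $\Char(\G)$ changes nothing, since the points involved already lie in the subspaces.)

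With (i)--(iii) in hand, Lemma~\ref{lem:T0} immediately yields that $\Ind$ induces a homeomorphism $(\G\backslash A)^\sim\to\Ind A$, that $\Ind|_A$ is open, and that $\Ind(x,\chi)=\Ind(y,\eta)$ if and only if $\overline{\G(x,\chi)}=\overline{\G(y,\eta)}$ in $A$. For the parenthetical equivalence I would show that these orbit closures agree in $A$ exactly when they agree in $\Char(\G)$: one direction is immediate by intersecting the $\Char(\G)$-closures with $A$; for the converse, $\Ind(x,\chi)=\Ind(y,\eta)$ lets one apply Theorem~\ref{thm:main} to the singletons $B=\{(y,\eta)\}$ and $B=\{(x,\chi)\}$, giving $(x,\chi)\in\overline{\G(y,\eta)}$ and $(y,\eta)\in\overline{\G(x,\chi)}$ in $\Char(\G)$; since the closure of a $\G$-orbit is itself $\G$-invariant (again via the homeomorphisms $\theta_W$), these point containments upgrade to equality of the full orbit closures.

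I do not expect a serious obstacle, as the analytic work is already contained in Theorem~\ref{thm:main} and the topological packaging in Lemma~\ref{lem:T0}. The only points needing care are the bookkeeping of subspace versus ambient closures and the verification that $\G$-saturations and $\G$-orbit closures are, respectively, open and $\G$-invariant, both of which rest on the bisection structure of $\G$ together with the continuity of the action recorded in Lemma~\ref{lem:contconj}.
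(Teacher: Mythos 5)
Your proposal is correct and follows essentially the same route as the paper: the paper's proof likewise applies Lemma~\ref{lem:T0} to $p=\Ind|_A$ with the orbit equivalence relation, citing Lemma~\ref{lem:continuity} for continuity and Lemma~\ref{lem:UE}, Lemma~\ref{lem:contconj} and Theorem~\ref{thm:main} for conditions (i)--(iii) respectively. Your additional bookkeeping (openness of saturations via bisections, subspace versus ambient closures, and the parenthetical equivalence of orbit closures in $A$ and in $\Char(\G)$) correctly fills in details the paper leaves implicit.
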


\bp
Consider the surjective map $p:=\Ind|_A\colon A\to\Ind A$ and the orbit equivalence relation on $X:=A$. By Lemma~\ref{lem:continuity} the map $p$ is continuous, and by respectively Lemma~\ref{lem:UE}, Lemma~\ref{lem:contconj} and Theorem~\ref{thm:main} it satisfies the three properties of Lemma~\ref{lem:T0}. This finishes the proof.
\ep

If $\Ind A$ coincides with the entire primitive spectrum, Corollary~\ref{cor:Ind2} gives a description of $\Prim C^*(\G)$. For example, if $\G$ is amenable and second countable, then we know that every primitive ideal is induced from an isotropy group~\cite{IW}. Hence we get the following result conjectured in~\cite{MR4395600}.

\begin{cor}\label{cor:kappa}
Assume $\G$ is an amenable second countable Hausdorff locally compact \'etale groupoid with abelian isotropy groups. Then the map
$\Ind\colon\Stab(\G)\dach\to\Prim C^*(\G)$ defines a homeomorphism of $(\G\backslash\!\Stab(\G)\dach)^\sim$ onto $\Prim C^*(\G)$.
\end{cor}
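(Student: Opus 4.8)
The plan is to obtain the corollary as a direct specialization of Corollary~\ref{cor:Ind2}, the only additional ingredient being surjectivity of the induction map. First I would observe that abelian groups are amenable, so under the standing hypothesis every isotropy group $\Gxx$ is amenable. Hence $\Char_a(\G)=\Char(\G)=\Stab(\G)\dach$, and this whole space is trivially $\G$-invariant. Taking $A=\Stab(\G)\dach$ in Corollary~\ref{cor:Ind2} then gives, with no further work, that $\Ind$ induces a homeomorphism of $(\G\backslash\!\Stab(\G)\dach)^\sim$ onto the subspace $\Ind(\Stab(\G)\dach)$ of $\Prim C^*(\G)$.

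It therefore remains only to show that this subspace is the whole primitive spectrum, i.e.\ that $\Ind$ is onto. Here I would invoke the generalized Effros--Hahn conjecture for groupoids proved by Ionescu and Williams~\cite{IW}: since $\G$ is amenable and second countable, every primitive ideal of $C^*(\G)$ is the kernel of a representation induced from an irreducible representation of some isotropy group $\Gxx$. The one point requiring care is to match their induction of abstract irreducible representations with our induction of characters. This is exactly where abelianness is used: an abelian group has only one-dimensional irreducible representations, so each such representation is a character $\chi\in\widehat{\Gxx}$ and the corresponding induced representation is precisely $\pi^\chi_x$. Consequently every primitive ideal equals $\Ind(x,\chi)=\ker\pi^\chi_x$ for some $(x,\chi)\in\Stab(\G)\dach$, whence $\Ind(\Stab(\G)\dach)=\Prim C^*(\G)$.

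Combining the two steps yields the asserted homeomorphism. I expect no serious obstacle in the corollary itself: all of the analytic difficulty has already been absorbed into Theorem~\ref{thm:main}, which supplies property~(iii) of Lemma~\ref{lem:T0}---the only nontrivial hypothesis there---and into the deep surjectivity result~\cite{IW}. The proof is thus essentially an assembly, and the sole subtlety is the identification of irreducible representations of abelian isotropy groups with their characters, which ensures that the image of our character-induction map coincides with the image of the general induction map appearing in~\cite{IW}.
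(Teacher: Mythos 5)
Your proposal matches the paper's own argument: the paper derives this corollary precisely by applying Corollary~\ref{cor:Ind2} to $A=\Stab(\G)\dach=\Char_a(\G)$ (abelian groups being amenable) and invoking the generalized Effros--Hahn result of Ionescu--Williams~\cite{IW} for surjectivity of $\Ind$, with the identification of irreducible representations of abelian isotropy groups with characters being the only bookkeeping point. The proof is correct and essentially identical to the one in the paper.
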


We finish the section with a small technical refinement of Theorem~\ref{thm:main}. It can happen that $\Ind(x,\chi)$ depends only on the values of $\chi$ on a proper subgroup of $\Gxx$. Not surprisingly, in this case we can ignore the values of $\chi$ outside that subgroup when discussing the topology on the primitive ideal space, at least when $\Gxx$ is abelian.

\begin{cor}\label{cor:prim-convergence}
Assume $\G$ is a Hausdorff locally compact \'etale groupoid, $x\in\Gu$ is a point with an abelian isotropy group $\Gxx$, $\chi\in\widehat{\Gxx}$ and $\Gamma_x\subset\Gxx$ is a subgroup such that $\Ind(x,\omega)=\Ind(x,\chi)$ for all $\omega\in\widehat{\Gxx}$ such that $\omega=\chi$ on $\Gamma_x$. For every $g\in \Gamma_x$, fix an open bisection $W_g$ containing~$g$. Then $\Ind(x,\chi)$ belongs to the closure of $\Ind A$ in $\Prim C^*(\G)$ for a subset $A\subset\Char(\G)$ if and only if for every neighbourhood~$U$ of~$x$, every $\eps>0$ and every finite subset $F\subset\Gamma_x$, there exist $(y,\eta)\in A$ and $z\in [y]\cap U$ satisfying the following property: for every $g\in F$, we have either  $W_g\cap\G^z_z=\emptyset$, or $W_g\cap\G^z_z=\{h\}$ for some~$h$ and $|\chi(g)-\eta_z(h)|<\eps$.
\end{cor}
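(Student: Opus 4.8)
The plan is to prove the two implications separately. The ``only if'' direction will follow immediately from Theorem~\ref{thm:main} and does not use the standing hypothesis on $\Gamma_x$, while the ``if'' direction will require re-running the argument of Lemma~\ref{lem:continuity} and then exploiting the defining property of $\Gamma_x$ at the very end.

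For the ``only if'' direction, assume $\Ind(x,\chi)\in\overline{\Ind A}$. I would first extend the given family $(W_g)_{g\in\Gamma_x}$ to a family $(W_g)_{g\in\Gxx}$ of open bisections by choosing arbitrary bisections for $g\in\Gxx\setminus\Gamma_x$. Theorem~\ref{thm:main} gives $(x,\chi)\in\overline{\G A}$ in $\Char(\G)$, and by Lemma~\ref{lem:Char-topology} the sets $\UU_x^\chi(U,\eps,(W_g)_{g\in F})$ with finite $F\subset\Gxx$ form a neighbourhood base at $(x,\chi)$. Testing the neighbourhoods with $F\subset\Gamma_x$ against $\G A$, and recalling that the points of $\G A$ are of the form $(z,\eta_z)$ with $(y,\eta)\in A$ and $z\in[y]$, yields exactly the asserted condition.

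For the ``if'' direction, assume the condition holds. Indexing by triples $(U,\eps,F)$ with $U$ shrinking to $x$, $\eps\to 0$ and $F\nearrow\Gamma_x$, the condition supplies for each index a pair $(y,\eta)\in A$ and a point $z\in[y]\cap U$, and I set $(x_i,\chi_i):=(z,\eta_z)$. By Lemma~\ref{lem:UE} each $\pi^{\chi_i}_{x_i}$ is unitarily equivalent to some $\pi^\eta_y$ with $(y,\eta)\in A$, so $\bigoplus_i\pi^{\chi_i}_{x_i}\prec\bigoplus_{(y,\eta)\in A}\pi^\eta_y$, and it suffices to show $\pi^\chi_x\prec\bigoplus_i\pi^{\chi_i}_{x_i}$. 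By construction $x_i\to x$ and, for every $g\in\Gamma_x$ and $\eps>0$, eventually either $W_g\cap\G^{x_i}_{x_i}=\emptyset$ or $W_g\cap\G^{x_i}_{x_i}=\{h\}$ with $|\chi(g)-\chi_i(h)|<\eps$. This is precisely the input needed to run the proof of Lemma~\ref{lem:continuity}, with a single modification: after passing to a subnet one obtains, as there, the subgroup $S\subset\Gxx$ arising as the Fell limit of $S_i=\{g\in\Gxx: W_g\cap\G^{x_i}_{x_i}\ne\emptyset\}$ and a limit character; but since $\chi_i(h_{g,i})$ is controlled only for $g\in\Gamma_x$, this limit is a character $\omega\in\widehat S$ satisfying merely $\omega=\chi$ on $S\cap\Gamma_x$ (for $g\in S\setminus\Gamma_x$ one only passes to a subnet along which $\chi_i(h_{g,i})$ converges). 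The states argument of Lemma~\ref{lem:continuity}, which uses only the full Fell limit $S$ and the convergence $\chi_i(h_{g,i})\to\omega(g)$, then gives $\Ind^\G_S\omega\prec\bigoplus_i\pi^{\chi_i}_{x_i}$.

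The remaining task is to pass from $\Ind^\G_S\omega$ back to $\pi^\chi_x$, and this is where the hypothesis on $\Gamma_x$ enters and constitutes the main obstacle: the limit character $\omega$ agrees with $\chi$ only on $S\cap\Gamma_x$, both because elements of $\Gamma_x$ may ``disappear'' along the net and because $\chi_i$ is uncontrolled off $\Gamma_x$. The resolution I propose is that, precisely because $\omega$ and $\chi$ agree on $S\cap\Gamma_x$, the characters $\omega$ of $S$ and $\chi|_{\Gamma_x}$ of $\Gamma_x$ are compatible, so they extend jointly to a character of the subgroup $S\Gamma_x$ and then, since $\Gxx$ is abelian, to some $\tilde\omega_0\in\widehat{\Gxx}$ with $\tilde\omega_0|_S=\omega$ and $\tilde\omega_0|_{\Gamma_x}=\chi|_{\Gamma_x}$. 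The hypothesis then gives $\Ind(x,\tilde\omega_0)=\Ind(x,\chi)$. Since $\Gxx$ is amenable, $\tilde\omega_0\prec\Ind^{\Gxx}_S(\tilde\omega_0|_S)=\Ind^{\Gxx}_S\omega$ by \cite{MR0246999}*{Theorem~5.1}, and inducing to $\G$ yields $\pi^{\tilde\omega_0}_x\prec\Ind^\G_S\omega$. Consequently
$$
\ker\Ind^\G_S\omega\subset\ker\pi^{\tilde\omega_0}_x=\Ind(x,\tilde\omega_0)=\Ind(x,\chi)=\ker\pi^\chi_x,
$$
so $\pi^\chi_x\prec\Ind^\G_S\omega\prec\bigoplus_{(y,\eta)\in A}\pi^\eta_y$, which by Lemma~\ref{lem:Fell} means $\Ind(x,\chi)\in\overline{\Ind A}$. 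The heart of the argument is thus the observation that the uncontrolled behaviour of the $\chi_i$ off $\Gamma_x$ can be absorbed into the choice of the single extension $\tilde\omega_0$, whose kernel is pinned to $\Ind(x,\chi)$ exactly by the defining property of $\Gamma_x$.
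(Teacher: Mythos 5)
Your proof is correct and takes essentially the same route as the paper: the ``only if'' part is read off from Theorem~\ref{thm:main} via Lemma~\ref{lem:Char-topology}, and the ``if'' part passes to subnets to obtain the Fell-limit subgroup $S$ and a limit character agreeing with $\chi$ on $S\cap\Gamma_x$, glues it with $\chi|_{\Gamma_x}$ into an extension $\omega$ of $\Gxx$, and invokes the defining property of $\Gamma_x$ to identify $\Ind(x,\omega)$ with $\Ind(x,\chi)$. The only cosmetic difference is that the paper packages the last step as ``$(x_i,\chi_i)\to(x,\omega)$ in $\Char(\G)$, then apply Lemma~\ref{lem:continuity}'', whereas you inline the weak-containment argument of that lemma; the content is identical.
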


Here we use the same notation as in the proof of Theorem~\ref{thm:main}: the character $\eta_z\colon\G^z_z\to\T$ is defined by $\eta_z=\eta(g^{-1}\cdot g)$, where $g$ is any element of $\G^z_y$.

\bp
The ``only if'' part follows from the theorem. For the ``if'' part, we may assume that $A$ is $\G$-invariant, since $\Ind(\G A)=\Ind A$. Then we can find a net $((x_i,\chi_i))_{i\in I}$ in $A$, numbers $\eps_i>0$ and finite subsets $F_i\subset\Gxx$ such that the following properties are satisfied: $x_i\to x$, $\eps_i\to0$, $F_i\nearrow\Gxx$ and for every $g\in F_i\cap\Gamma_x$ we have either  $W_g\cap\G^{x_i}_{x_i}=\emptyset$, or $W_g\cap\G^{x_i}_{x_i}=\{h_{g,i}\}$ for some~$h_{g,i}$ and
$|\chi(g)-\chi_{i}(h_{g,i})|<\eps_i$.

For every $g\in\Gxx\setminus\Gamma_x$, choose an open bisection $W_g$ containing $g$. For every $i\in I$, let $S_i\subset F_i$ be the subset of points $g$ such that $W_g\cap\G^{x_i}_{x_i}=\{h_{g,i}\}$ for some $h_{g,i}$. By passing to a subnet and arguing as in the proof of Lemma~\ref{lem:continuity}, we may assume that $S_i\to S$ for some subgroup $S\subset\Gxx$. Then $S_i\cap\Gamma_x\to S\cap\Gamma_x$. By passing to a subnet we may also assume that for every $g\in S$ the net $(\chi_i(h_{g,i}))_i$ converges to some $\eta(g)\in\T$. It is easy to see that $\eta\in\widehat{S}$ and $\eta=\chi$ on $S\cap\Gamma_x$. It follows that we can define a character $\omega$ on the subgroup $S\Gamma_x\subset\Gxx$ by $\omega(gh):=\eta(g)\chi(h)$ for $g\in S$ and $h\in\Gamma_x$. Extend $\omega$ to a character on $\Gxx$ and continue to denote this extension by~$\omega$.

By construction, for all $g\in S$ we have $\chi_i(h_{g,i})\to\eta(g)=\omega(g)$, while for $g\notin S$ we have $W_g\cap\G^{x_i}_{x_i}=\emptyset$ for all $i$ large enough. This means that $(x_i,\chi_i)\to(x,\omega)$ in $\Char(\G)$. By Lemma~\ref{lem:continuity} we then have $\Ind(x_i,\chi_i)\to\Ind(x,\omega)$. But since $\omega=\chi$ on $\Gamma_x$, we have $\Ind(x,\omega)=\Ind(x,\chi)$. Thus, $\Ind(x,\chi)\in\overline{\Ind A}$.
\ep

\bigskip

\section{Injectively graded groupoids}\label{sec:examples}

\subsection{Transformation groupoids}
Assume a discrete group $G$ acts by homeomorphisms on a Hausdorff locally compact space $X$ and denote by $G_{x}$ the stabilizer group of a point $x\in X$.  Consider the transformation groupoid $\G:=G\ltimes X$. As a topological space it is the product space $G\times X$, while the multiplication on $\G$ is defined by $(g,hx)(h,x)=(gh,x)$. In this case every element $g\in G$ defines a bisection $\{g\}\times X$ of $\G$ and as a result the topology on the space $\Char(\G)$ has a bit more transparent description: a net $((x_i,\chi_i))_{i\in I}$ converges to $(x,\chi)$ in $\Char(\G)$ if and only if $x_i\to x$ and, for every $g\in G_x$ and $\eps>0$, there is $i_0\in I$ such that for each $i\ge i_0$ we have either $gx_i\ne x_i$ or $|\chi(g)-\chi_i(g)|<\eps$.

The $\G$-orbits in $\Char(\G)$ are simply the $G$-orbits with respect to the action
\begin{equation}\label{eq:G-action}
g(x,\chi):=(gx,\chi^g),
\end{equation}
where $\chi^g:=\chi(g^{-1}\cdot g )\colon G_{gx}\to\T$. Hence Corollary~\ref{cor:Ind2} identifies the subspace of $\Prim(C_0(X)\rtimes G)$ of primitive ideals obtained by inducing one-dimensional representations of amenable stabilizers with $(G\backslash\!\Char_a(G\ltimes X))^\sim$. Corollary~\ref{cor:kappa} in this case gives the following result.

\begin{thm}
Assume we are given an amenable action of a countable group $G$ on a second countable Hausdorff locally compact space $X$ such that the stabilizer of every point is abelian. Then the map $\Ind\colon\Stab(G\ltimes X)\dach\to\Prim(C_0(X)\rtimes G)$ defines a homeomorphism of $(G\backslash\!\Stab(G\ltimes X)\dach)^\sim$ onto $\Prim(C_0(X)\rtimes G)$.
\end{thm}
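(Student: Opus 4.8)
The plan is to derive this statement directly from Corollary~\ref{cor:kappa} by verifying that the transformation groupoid $\G := G\ltimes X$ satisfies all of its hypotheses. First I would record the groupoid-theoretic structure. With $s(g,x)=x$ and $r(g,x)=gx$, the range and source maps are local homeomorphisms precisely because $G$ is discrete, so $\G$ is \'etale; and since $\G=G\times X$ as a topological space with $G$ countable discrete and $X$ second countable Hausdorff locally compact, the groupoid $\G$ is itself second countable, Hausdorff and locally compact.

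Next I would identify the isotropy. A direct computation gives $\Gxx=\{(g,x):gx=x\}\cong G_x$, so the assumption that every stabilizer $G_x$ is abelian is exactly the assumption that $\G$ has abelian isotropy groups. This is also consistent with the description given at the beginning of the subsection: the $\G$-orbits in $\Char(\G)$ are the $G$-orbits under the action~\eqref{eq:G-action}, and the convergence criterion in $\Char(\G)$ simplifies accordingly.

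The two remaining points to check are that $\G$ is amenable and that $C^*(\G)$ is the crossed product $C_0(X)\rtimes G$. Amenability of the transformation groupoid $G\ltimes X$ is by definition the amenability of the action $G\curvearrowright X$, and under this hypothesis the full groupoid C$^*$-algebra $C^*(\G)$ coincides with $C_0(X)\rtimes G$ (with the full and reduced crossed products agreeing as well). Having matched every hypothesis, Corollary~\ref{cor:kappa} applies verbatim and yields the desired homeomorphism of $(G\backslash\!\Stab(G\ltimes X)\dach)^\sim$ onto $\Prim(C_0(X)\rtimes G)$.

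Since the statement is a direct specialization of Corollary~\ref{cor:kappa}, there is no genuine obstacle in the argument; the only points requiring a word of justification are the two classical facts just mentioned, namely that amenability of the action gives amenability of the groupoid and that $C^*(\G)$ is identified with the crossed product. Both are standard, so the proof reduces to assembling these observations and invoking Corollary~\ref{cor:kappa}.
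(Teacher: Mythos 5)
Your proposal is correct and is exactly how the paper obtains this result: the theorem is stated as an immediate specialization of Corollary~\ref{cor:kappa} to the transformation groupoid $\G=G\ltimes X$, with the same routine identifications (étaleness from discreteness of $G$, isotropy groups $\Gxx\cong G_x$, amenability of the groupoid from amenability of the action, and $C^*(\G)\cong C_0(X)\rtimes G$). No further argument is given or needed.
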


The space $\Stab(G\ltimes X)\dach$ admits a better description when all stabilizers are contained in one abelian subgroup of $G$, which can then be assumed to be normal. Namely, we have the following result.

\begin{prop}\label{prop:Wil}
Assume a discrete group $G$ acts on a Hausdorff locally compact space $X$ and all stabilizers are contained in a normal abelian subgroup $H\subset G$. Let $\Delta$ be the quotient topological space of $X\times\widehat{H}$ obtained by identifying $(x,\chi)$ with $(x,\eta)$ when $\chi|_{G_x}=\eta|_{G_x}$. Consider the actions of $G$ on $X\times\widehat{H}$ and $\Delta$ defined similarly to~\eqref{eq:G-action}. Then the $G$-equivariant map
$$
X\times\widehat{H}\to\Stab(G\ltimes X)\dach,\quad (x,\chi)\mapsto(x,\chi|_{G_x}),
$$
is continuous and open, hence it induces a homeomorphism of $\Delta$ onto $\Stab(G\ltimes X)\dach$ and a homeomorphism of $(G\backslash\Delta)^\sim$ onto $(G\backslash\!\Stab(G\ltimes X)\dach)^\sim$.
\end{prop}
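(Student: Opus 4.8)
The plan is to establish the two analytic claims about the map
$$
q\colon X\times\widehat H\to\Stab(G\ltimes X)\dach,\qquad q(x,\chi):=(x,\chi|_{G_x}),
$$
namely that it is continuous and open, and then to read off every remaining assertion formally. Throughout I identify $\Gxx$ with the stabilizer $G_x\subseteq H$, so that a character of the isotropy group is just a character of $G_x$, and I use repeatedly the convergence criterion for $\Char(G\ltimes X)$ recalled above (equivalently Corollary~\ref{cor:char-convergence}) together with compactness of $\widehat H$. Continuity is immediate: if $(x_i,\chi_i)\to(x,\chi)$ in $X\times\widehat H$, then $x_i\to x$ and $\chi_i\to\chi$ pointwise on $H$, so for every $g\in G_x\subseteq H$ we have $\chi_i(g)\to\chi(g)$, which is exactly what the criterion demands for $q(x_i,\chi_i)\to q(x,\chi)$. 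The map $q$ is surjective because $\T$ is divisible, so every $\psi\in\widehat{G_x}$ extends to a character of $H$; and the fibres of $q$ are precisely the classes defining $\Delta$, so $q$ factors through a continuous bijection $\bar q\colon\Delta\to\Stab(G\ltimes X)\dach$.

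For openness I will use the net characterization of open maps: a continuous surjection is open if and only if, for every point $(x,\chi)$ and every net $(x_i,\psi_i)\to q(x,\chi)$ in $\Stab(G\ltimes X)\dach$, some subnet lifts to a net $(x_i,\chi_i')\to(x,\chi)$ in $X\times\widehat H$ with $q(x_i,\chi_i')=(x_i,\psi_i)$. So I fix such a net and must produce extensions $\chi_i'\in\widehat H$ of $\psi_i\in\widehat{G_{x_i}}$ converging to $\chi$. First I pass to a subnet with $G_{x_i}\to S$ in the Chabauty topology on $\Sub(H)$, which is possible since $\Sub(H)$ is compact. Because the action is continuous and $X$ is Hausdorff, any $g\notin G_x$ satisfies $gx_i\neq x_i$ eventually; hence $S\subseteq G_x$, and the convergence criterion gives $\psi_i\to\chi$ pointwise on $S$.

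Writing $\rho_i:=\psi_i\,\overline{\chi|_{G_{x_i}}}\in\widehat{G_{x_i}}$, I choose arbitrary extensions $\tilde\rho_i\in\widehat H$ and, using compactness of $\widehat H$, pass to a further subnet with $\tilde\rho_i\to\omega$. Since $h\in S$ lies in $G_{x_i}$ eventually, $\tilde\rho_i(h)=\rho_i(h)=\psi_i(h)\overline{\chi(h)}\to1$, so $\omega\in S^\perp$. Here is the only real obstacle: the $\psi_i$ constrain the lift only on the moving subgroups $G_{x_i}$, where I have no control away from $G_x$, and a naive element-by-element extension stumbles on the relations between $G_{x_i}$ and the test elements. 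This is resolved by invoking the continuity of the annihilator map $\Sub(H)\to\Sub(\widehat H)$, $L\mapsto L^\perp$ (a standard consequence of Pontryagin duality, elementary in the discrete case), which gives $G_{x_i}^\perp\to S^\perp$; in particular $\omega\in S^\perp$ is a limit $\theta_i\to\omega$ of elements $\theta_i\in G_{x_i}^\perp$. Replacing $\tilde\rho_i$ by $\tilde\rho_i\overline{\theta_i}$ preserves the property of extending $\rho_i$ (as $\theta_i|_{G_{x_i}}=1$) but now forces convergence to $1$, so that $\chi_i':=\chi\,\tilde\rho_i\overline{\theta_i}$ satisfies $\chi_i'|_{G_{x_i}}=\psi_i$ and $\chi_i'\to\chi$, completing the lifting and hence proving openness.

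With $q$ continuous and open, $\bar q\colon\Delta\to\Stab(G\ltimes X)\dach$ is a continuous open bijection, hence a homeomorphism; indeed, for open $V\subseteq\Delta$ the set $\bar q(V)=q(\pi^{-1}(V))$ is open, where $\pi\colon X\times\widehat H\to\Delta$ is the quotient map. The map $q$ intertwines the two $G$-actions defined as in~\eqref{eq:G-action}, using normality of $H$ and $G_{gx}=gG_xg^{-1}$, so $\bar q$ is $G$-equivariant. Finally a $G$-equivariant homeomorphism descends to a homeomorphism of orbit spaces $G\backslash\Delta\cong G\backslash\!\Stab(G\ltimes X)\dach$, and by functoriality of the $T_0$-ization it yields the asserted homeomorphism $(G\backslash\Delta)^\sim\cong(G\backslash\!\Stab(G\ltimes X)\dach)^\sim$.
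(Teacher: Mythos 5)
Your proof is correct and follows essentially the same route as the paper's: both establish openness by passing to a Chabauty-convergent subnet of the stabilizers $G_{x_i}$, observing that the limit subgroup sits inside $G_x$, and then correcting arbitrary extensions of the given characters by elements of $G_{x_i}^{\perp}$ that converge (via continuity of $L\mapsto L^{\perp}$) to the appropriate limit in the annihilator of the limit subgroup. The only cosmetic differences are that you phrase openness through the net-lifting characterization rather than by contradiction on a basic neighbourhood $U\times V$, and you inline the special case $S=H$ of the paper's Lemma~\ref{lem:fellconv} instead of invoking it as a separate statement.
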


When $\G=G\ltimes X$ is amenable and second countable, the last statement follows from \cite{MR4395600}*{Propositions~8.3} and \cite{MR0617538}*{Corollary 5.11}, but this relies on the identification of $(G\backslash\Delta)^\sim$ with $\Prim(C_0(X)\rtimes G)$ established in \cite{MR0617538}. We will prove the proposition by a direct argument. Together with our Corollary~\ref{cor:Ind2} this will provide an alternative proof of \cite{MR0617538}*{Corollary 5.11} in the case of discrete group actions.

For the proof we need a couple of auxiliary results, which will also be useful later.

\begin{lemma}
Assume $H$ is a discrete abelian group, $S$ is a subgroup of $H$ and $(S_i)_{i\in I}$ is a net of subgroups of $H$. Then $S_i\to S$ in the Chabauty topology on $\Sub(H)$ if and only if $S_i^\perp\to S^\perp$ in $\Sub(\widehat{H})$.
\end{lemma}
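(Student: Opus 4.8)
The plan is to regard annihilation as a single map $\Phi\colon\Sub(H)\to\Sub(\widehat H)$, $\Phi(S):=S^\perp$, and to prove that it is a \emph{homeomorphism}; the asserted net equivalence is then precisely the continuity of $\Phi$ together with that of $\Phi^{-1}$. By Pontryagin duality $\Phi$ is a bijection with inverse $T\mapsto T^\perp$ (the annihilator now taken in $\widehat{\widehat H}=H$), since $(S^\perp)^\perp=S$ for every subgroup $S\subseteq H$; note that all subgroups of the discrete group $H$ are automatically closed. Both $\Sub(H)$ and $\Sub(\widehat H)$ are compact Hausdorff in the Chabauty topology, because $H$ is discrete and $\widehat H$ is compact, hence both are locally compact, and $\Sub$ is a closed subset of the compact Hausdorff hyperspace of closed sets (as recalled in Section~\ref{sec:prelim}). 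Consequently it suffices to prove that $\Phi$ is continuous: a continuous bijection from a compact space onto a Hausdorff space is a homeomorphism, and then $S_i\to S$ in $\Sub(H)$ if and only if $S_i^\perp\to S^\perp$ in $\Sub(\widehat H)$. In net form, continuity of $\Phi$ is exactly the ``only if'' implication, so this is what I would establish.

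Assume therefore that $S_i\to S$ in $\Sub(H)$; since $H$ is discrete this means $\un_{S_i}\to\un_S$ pointwise. I would verify Fell convergence $S_i^\perp\to S^\perp$ in the compact group $\widehat H$ by checking the upper and lower limits separately. The upper limit is immediate: if a subnet $\chi_{i_j}\in S_{i_j}^\perp$ converges to some $\chi\in\widehat H$, then for each $h\in S$ we have $h\in S_{i_j}$ eventually, whence $\chi_{i_j}(h)=1$ and $\chi(h)=\lim_j\chi_{i_j}(h)=1$; thus $\chi\in S^\perp$, i.e.\ $\limsup_i S_i^\perp\subseteq S^\perp$.

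The real content is the lower limit $S^\perp\subseteq\liminf_i S_i^\perp$, and this is the step I expect to be the main obstacle. Fix $\chi\in S^\perp$ and suppose, for contradiction, that $\chi\notin\liminf_i S_i^\perp$; then there is an open neighbourhood $N$ of $\chi$ and a cofinal set of indices along which $S_i^\perp\cap N=\emptyset$. Passing to this subnet and then, using compactness of $\Sub(\widehat H)$, to a further subnet, I may assume $S_i^\perp\to L$ for some closed subgroup $L\subseteq\widehat H$; since $N$ is never met, $\chi\notin L$. By Pontryagin duality I can separate $\chi$ from the closed subgroup $L$: there exists $h\in L^\perp\subseteq H$ with $\chi(h)\ne1$. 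The key move is to push the convergence $S_i^\perp\to L$ forward through the continuous homomorphism $\widehat H\to\T$, $\psi\mapsto\psi(h)$. A direct check of both Fell limits (the upper one using compactness of $\widehat H$ to extract convergent subnets of selected points) shows that the images, which are closed subgroups of $\T$, converge to $\{1\}$. I would then invoke the rigidity of closed subgroups of the circle: the only closed subgroups of $\T$ are $\{1\}$, the finite cyclic groups, and $\T$ itself, and every nontrivial one meets the fixed compact arc $\{z\in\T:\operatorname{Re}z\le-\tfrac12\}$, which excludes $1$. Hence a net of closed subgroups converging to $\{1\}$ must eventually equal $\{1\}$ (otherwise one selects points in that arc cofinally and extracts a limit $\ne1$ lying in the upper limit $\{1\}$, a contradiction).

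It follows that $\psi(h)=1$ for all $\psi\in S_i^\perp$ eventually, that is, $h\in(S_i^\perp)^\perp=S_i$ for all large $i$. Because $S_i\to S$ with $H$ discrete, this forces $h\in S$, and since $\chi\in S^\perp$ we obtain $\chi(h)=1$, contradicting $\chi(h)\ne1$. This contradiction proves $S^\perp\subseteq\liminf_i S_i^\perp$, hence $S_i^\perp\to S^\perp$, so $\Phi$ is continuous and therefore a homeomorphism; the stated biconditional follows at once.
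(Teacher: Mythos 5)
Your proof is correct. It rests on the same two pillars as the paper's argument: compactness of the Chabauty spaces, which reduces everything to subnet limits and in particular makes the ``if'' direction a formal consequence of the ``only if'' direction, and the ``no small subgroups'' property of $\T$ (a closed subgroup of $\T$ avoiding a fixed arc away from $1$ is trivial), which converts Fell-closeness of $S_i^\perp$ to a subgroup annihilating $h$ into the algebraic statement $h\in(S_i^\perp)^\perp=S_i$. The organization, however, is genuinely different. The paper passes at the outset to a subnet with $S_i^\perp\to T^\perp$ and proves the two inclusions $T\subset S$ and $S\subset T$ directly, the latter by approximating a fixed $\chi\in T^\perp$ by characters $\chi_i\in S_i^\perp$ and evaluating at $h\in S$; you instead verify the upper and lower Fell limits separately, obtaining the lower one by contradiction via a Pontryagin separation of $\chi$ from the subnet limit $L$ (choosing $h\in L^\perp$ with $\chi(h)\ne1$) and a pushforward of the convergence along $\mathrm{ev}_h\colon\widehat H\to\T$. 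Your homeomorphism framing (continuous bijection from a compact space onto a Hausdorff space) packages the converse direction more slickly than the paper's explicit subnet argument, at the cost of a somewhat longer forward direction; the paper's direct proof of $T^\perp\subset S^\perp$ is shorter than your separation-plus-pushforward route but establishes the same inclusion. Both arguments are complete.
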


\bp
Assume first that $S_i\to S$. By passing to a subnet we may assume that $S_i^\perp\to T^\perp$ for some subgroup $T\subset H$, and we need to prove that $T=S$.

Take $h\in T$. As $S_i^\perp\to T^\perp$ and $\widehat{H}$ is compact, the groups $S_i^\perp$ eventually lie in every given neighbourhood of $T^\perp$. In particular, for every $\eps>0$ we have  $|1-\chi(h)|<\eps$ for all $\chi\in S_i^\perp$ and all $i$ large enough. But if a group character takes values in the set $\{z:|1-z|<1\}$, then it is trivial. Hence, for all $i$ large enough, we have $h\in(S_i^\perp)^\perp=S_i$. It follows that $h\in S$. Thus, $T\subset S$.

Next, take $\chi\in T^\perp$ and $h\in S$. As $S_i^\perp\to T^\perp$, for every $\eps>0$ and all $i$ large enough we can find $\chi_i\in S_i^\perp$ such that $|\chi(h)-\chi_i(h)|<\eps$. But eventually we have $h\in S_i$, so $\chi_i(h)=1$ and we get $|\chi(h)-1|<\eps$. It follows that $\chi(h)=1$. Therefore $\chi\in S^\perp$. Thus, $T^\perp\subset S^\perp$, and then $S\subset T$, completing the proof of the equality $T=S$.

Conversely, assume $S_i^\perp\to S^\perp$. By passing to a subnet we may assume that $S_i\to T$ for some subgroup $T\subset H$, and we need to prove that $T=S$. But by the first part of the proof we already know that $S_i^\perp\to T^\perp$, hence $T^\perp=S^\perp$ and $T=S$.
\ep


\begin{lemma}\label{lem:fellconv}
Assume $H$ is a discrete abelian group, $(S_i)_{i\in I}$ is a net of subgroups of $H$ converging to a subgroup $T\subset H$. Assume $(\chi_i)_{i\in I}$ and $\chi$ are characters on $H$, $S\subset H$ is a subgroup and $\chi_i(h)\to\chi(h)$ for all $h\in S\cap T$. Then, by possibly passing to a subnet, we can find characters $\eta_i,\eta\in\widehat{H}$ such that $\eta_i|_{S_i}=\chi_i|_{S_i}$, $\eta|_S=\chi|_S$ and $\eta_i(h)\to\eta(h)$ for all $h\in H$.
\end{lemma}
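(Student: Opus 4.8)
The plan is to pass to a convergent subnet of $(\chi_i)$, amalgamate its limit with $\chi$ into a single character $\eta$, and then realize $\eta$ as a limit of characters $\eta_i$ that agree with $\chi_i$ on $S_i$ by correcting $\chi_i$ inside the annihilator $S_i^\perp$.

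First I would use that $H$ is discrete, so $\widehat H$ is compact, and pass to a subnet along which $\chi_i\to\psi$ pointwise on $H$ for some $\psi\in\widehat H$. Comparing this with the hypothesis $\chi_i(h)\to\chi(h)$ for $h\in S\cap T$ yields $\psi=\chi$ on $S\cap T$. This compatibility is exactly what is needed to define a character $\mu$ on the subgroup $S+T\subset H$ by $\mu(s+t):=\chi(s)\psi(t)$ for $s\in S$, $t\in T$: if $s+t=s'+t'$ then $w:=s-s'=t'-t\in S\cap T$, and $\chi(w)=\psi(w)$ forces $\chi(s)\psi(t)=\chi(s')\psi(t')$, so $\mu$ is well defined (and clearly multiplicative). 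Since $\T$ is divisible, hence injective as an abelian group, $\mu$ extends to a character $\eta\in\widehat H$. By construction $\eta|_S=\chi|_S$, which is one of the required conclusions, and $\eta|_T=\psi|_T$.

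Next I would build the approximating net. Set $\tau:=\eta\,\overline\psi$; since $\eta=\psi$ on $T$, we have $\tau\in T^\perp$. By the preceding lemma, the convergence $S_i\to T$ in $\Sub(H)$ implies $S_i^\perp\to T^\perp$ in $\Sub(\widehat H)$. As $\widehat H$ is compact and $\tau\in T^\perp$, convergence in the Fell topology forces every neighbourhood of $\tau$ to meet $S_i^\perp$ eventually; passing to a further subnet I may therefore choose $\tau_i\in S_i^\perp$ with $\tau_i\to\tau$. Putting $\eta_i:=\chi_i\tau_i$, the relation $\tau_i\in S_i^\perp$ gives $\eta_i|_{S_i}=\chi_i|_{S_i}$, while $\eta_i=\chi_i\tau_i\to\psi\tau=\psi\,\eta\,\overline\psi=\eta$ pointwise on $H$, which is the remaining conclusion.

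The hard part is the extraction of $\tau_i\to\tau$: it is the only step that is not a formal manipulation, and it rests on combining the duality $S_i^\perp\to T^\perp$ from the preceding lemma with compactness of $\widehat H$, which together guarantee that each point of the limit group $T^\perp$ is approached from within the $S_i^\perp$. The remaining ingredients — well-definedness and extension of $\eta$, and the identity $\eta_i|_{S_i}=\chi_i|_{S_i}$ — are routine, and the repeated passage to subnets is harmless since the statement only asserts the conclusion along some subnet.
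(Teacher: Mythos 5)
Your proof is correct and follows essentially the same route as the paper's: pass to a subnet along which $\chi_i\to\psi$ in the compact group $\widehat H$, and then correct each $\chi_i$ by a character of the annihilator $S_i^\perp$ chosen, via the preceding lemma on convergence of annihilators together with the Fell-topology extraction of approximants, to converge to a fixed element of $T^\perp$. The only cosmetic differences are that you build $\eta$ up front by amalgamating $\chi|_S$ with $\psi|_T$ on $S+T$ and extending by injectivity of $\T$ (the paper obtains $\eta$ at the end as a limit $\omega\nu$), and that you apply the annihilator lemma to $S_i\to T$ in $H$ rather than to $S_i\cap S\to T\cap S$ inside $S$; both variants are equally valid.
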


\bp
By passing to a subnet we may assume that $\chi_i\to\omega$ for some $\omega\in\widehat{H}$. Then $\omega|_{S\cap T}=\chi|_{S\cap T}$, so $\chi\omega^{-1}$ lies in the annihilator of $S\cap T$. By the previous lemma, the annihilators of $S_i\cap S$ in~$\widehat S$ converge to the annihilator of $T\cap S$. It follows that, by possibly passing to a subnet, we can find characters $\nu_i\in\widehat S$ such that~$\nu_i$ is trivial on $S_i\cap S$ and $\nu_i(h)\to \chi(h)\omega(h)^{-1}$ for all $h\in S$. Extend~$\nu_i$ to a character on $S_iS$ by letting $\nu_i(gh):=\nu_i(h)$ for $g\in S_i$ and $h\in S$, and then extend~$\nu_i$ to a character on $H$. We continue to denote the extension by $\nu_i$.

By construction we have $\nu_i\in S_i^\perp\subset\widehat{H}$ and $\nu_i(h)\to\chi(h)\omega(h)^{-1}$ for all $h\in S$. By passing to a subnet, we may assume that $(\nu_i)_i$ converges to some character $\nu\in\widehat{H}$. We let $\eta_i:=\chi_i\nu_i$ and $\eta:=\omega\nu$. Then $\eta_i|_{S_i}=\chi_i|_{S_i}$, $\eta_i\to \eta$ and $\eta|_S=\omega\chi\omega^{-1}|_S=\chi|_S$.
\ep

\bp[Proof of Proposition~\ref{prop:Wil}]
It is immediate that the map $p\colon X\times\widehat{H}\to \Stab(G\ltimes X)\dach$, $(x,\chi)\mapsto(x,\chi|_{G_x})$, is continuous. Assume it is not open. Then there exist a point $(x,\chi)\in X\times\widehat{H}$, an open neighbourhood $U\times V$ of this point and a net $((x_i,\chi_i))_i$ such that $p(x_i,\chi_i)\notin p(U\times V)$, but $p(x_i,\chi_i)\to p(x,\chi)$.

By passing to a subnet we may assume that $G_{x_i}\to T$ for some subgroup $T\subset H$. The convergence $p(x_i,\chi_i)\to p(x,\chi)$
means then that $x_i\to x$ and $\chi_i(h)\to\chi(h)$ for all $h\in G_x\cap T$. Note that we actually have $T\subset G_x$, since if $hx_i=x_i$ for some $h\in H$ and all $i$ large enough, then $hx=x$. By the previous lemma applied to $S_i=G_{x_i}$, $T$ and $S=H$, by possibly passing to a subnet, we can then find characters $\eta_i\in\widehat{H}$ such that $\eta_i|_{G_{x_i}}=\chi_i|_{G_{x_i}}$ and $\eta_i(h)\to\chi(h)$ for all $h\in H$. It follows that for all $i$ large enough we have $x_i\in U$ and $\eta_i\in V$, hence $p(x_i,\chi_i)=p(x_i,\eta_i)\in p(U\times V)$, which is a contradiction.
\ep

\subsection{Groupoids graded by abelian groups}\label{ssec:graded}
Next we consider a Hausdorff locally compact \'etale groupoid~$\G$ injectively graded by a discrete abelian group $\Gamma$. By definition this means that we are given a continuous homomorphism (another name is $1$-cocycle) $\Phi\colon\G\to\Gamma$ such that its restriction to every isotropy group $\Gxx$ is injective. In this case every character on $\Gxx$ has the form $\chi\circ\Phi$ for some $\chi\in\widehat{\Gamma}$. Therefore as a set the space $\Stab(\G)\dach$ is a quotient of $\Gu\times\widehat{\Gamma}$. Moreover, the groupoid~$\G$ acts on $\Gu\times\widehat{\Gamma}$ by $g(x,\chi):=(r(g),\chi)$ for $g\in\G_x$, and then the map
\begin{equation}\label{eq:q-on-Stab}
q\colon\Gu\times\widehat{\Gamma}\to \Stab(\G)\dach,\quad q(x,\chi):=(x,\chi\circ\Phi|_{\Gxx}),
\end{equation}
is $\G$-equivariant. This map is continuous, but as we will see later, it is in general not open, in contrast to Proposition~\ref{prop:Wil}.


For every $x\in\Gu$ and $\chi\in\widehat{\Gamma}$, denote by $\pi_{(x,\chi)}$ the representation $\pi^{\chi\circ\Phi|_{\Gxx}}_x$, so
$$
\pi_{(x,\chi)}=\Ind^\G_{\Gxx}(\chi\circ\Phi|_{\Gxx}).
$$
By definition, its underlying space is the Hilbert space of functions $f\colon\G_x\to\C$ such that $f(gh)=\overline{\chi(\Phi(h))}f(g)$ for $g\in\G_x$ and $h\in \Gxx$ and
$$
\sum_{g\in\G_x/\Gxx}|f(g)|^2<\infty.
$$
We can identify this space with $\ell^2([x])$ by associating to every $\xi\in\ell^2([x])$ the function $f_\xi\colon\G_x\to\C$ by
$$
f_\xi(g):=\overline{\chi(\Phi(g))}\xi(r(g)).
$$
Then $\pi_{(x,\chi)}$ becomes a representation $C^*(\G)\to B(\ell^2([x]))$ such that
\begin{equation}\label{eq:pi(x,z)}
\pi_{(x,\chi)}(f)\delta_y=\sum_{g\in\G_y}\chi(\Phi(g))f(g)\delta_{r(g)},\quad y\in[x],\ f\in C_c(\G).
\end{equation}

Using this notation, Corollary~\ref{cor:kappa} (recall also Corollary~\ref{cor:prim-convergence} for a more explicit description of convergence in the Jacobson topology) gives the following result.

\begin{thm}\label{thm:gradedgroupoids}
Assume $\G$ is an amenable second countable Hausdorff locally compact \'etale groupoid injectively graded by a discrete abelian group $\Gamma$, with grading $\Phi\colon\G\to\Gamma$. Then the map $\Gu\times\widehat{\Gamma}\to\Prim C^*(\G)$, $(x,\chi)\mapsto\ker\pi_{(x,\chi)}$, is surjective, and the topology on $\Prim C^*(\G)$ is described as follows.  Fix $(x,\chi)\in\Gu\times\widehat{\Gamma}$ and, for every $g\in\Gxx$, choose an open bisection $W_g$ containing $g$. Then, given a sequence $((x_n,\chi_n))^\infty_{n=1}$ in $\Gu\times\widehat{\Gamma}$, we have $\ker\pi_{(x_n,\chi_n)}\to\ker\pi_{(x,\chi)}$ if and only if for every neighbourhood $U$ of $x$, every $\eps>0$ and every finite subset $F\subset\Gxx$, there exists an index $n_0$ such that for each $n\ge n_0$ we can find $y\in [x_n]\cap U$ satisfying the following property: for every $g\in F$, we have
$$
\text{either}\quad W_g\cap\G^y_y=\emptyset\quad \text{or}\quad |\chi(\Phi(g))-\chi_n(\Phi(g))|<\eps.
$$
\end{thm}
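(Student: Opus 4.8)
The plan is to deduce the theorem from Corollary~\ref{cor:kappa} and Corollary~\ref{cor:prim-convergence}, the only genuinely new ingredient being the simplification of the character data afforded by the grading. Throughout I would write $\psi:=\chi\circ\Phi|_{\Gxx}$ and $\psi_n:=\chi_n\circ\Phi|_{\G^{x_n}_{x_n}}$, so that $\Ind(x,\psi)=\ker\pi_{(x,\chi)}$ and $\Ind(x_n,\psi_n)=\ker\pi_{(x_n,\chi_n)}$. For surjectivity, note that since $\Phi$ restricts to an injective homomorphism $\Gxx\to\Gamma$, each isotropy group is abelian, and by surjectivity of restriction of characters from a discrete abelian group to a subgroup, every character of $\Gxx$ has the form $\chi\circ\Phi|_{\Gxx}$ for some $\chi\in\widehat\Gamma$. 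Hence the map $q$ of~\eqref{eq:q-on-Stab} is onto $\Stab(\G)\dach$, and as $\G$ is amenable and second countable, $\Ind$ is onto $\Prim C^*(\G)$ by Corollary~\ref{cor:kappa}; composing, $(x,\chi)\mapsto\ker\pi_{(x,\chi)}=\Ind(q(x,\chi))$ is surjective.

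The key computation is the following. I would choose each bisection $W_g$ ($g\in\Gxx$) small enough that $\Phi\equiv\Phi(g)$ on $W_g$, which is possible since $\Phi$ is continuous and $\Gamma$ is discrete. Then, for $z\in[x_n]$ and $h\in\G^z_z\cap W_g$, picking any $t\in\G^z_{x_n}$ and using that $\Gamma$ is abelian,
\[
(\psi_n)_z(h)=\psi_n(t^{-1}ht)=\chi_n\big(\Phi(t)^{-1}\Phi(h)\Phi(t)\big)=\chi_n(\Phi(h))=\chi_n(\Phi(g)),
\]
where the last equality uses $\Phi(h)=\Phi(g)$. Likewise $\psi(g)=\chi(\Phi(g))$. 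Consequently the inequality $|\psi(g)-(\psi_n)_z(h)|<\eps$ occurring in Corollary~\ref{cor:prim-convergence} turns into precisely $|\chi(\Phi(g))-\chi_n(\Phi(g))|<\eps$, with no reference left to $h$, to the base point $z$, or to the connecting element $t$.

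Next I would reduce sequential convergence to a family of closure statements via the elementary topological fact that, in any space, $a_n\to a$ if and only if $a\in\overline{\{a_{n_j}:j\}}$ for \emph{every} subsequence $(n_j)_j$ (the nontrivial direction being that if $a_n\not\to a$, a neighbourhood of $a$ avoided infinitely often supplies a subsequence whose closure misses $a$; this is the sequential form of the implication used in Lemma~\ref{lem:Jacobson}). Since $\G$ is amenable, all isotropy groups are amenable, so Corollary~\ref{cor:prim-convergence}, applied with $\Gamma_x=\Gxx$ (the hypothesis is vacuous) and $A=\{(x_{n_j},\psi_{n_j}):j\}$, characterizes each membership $\ker\pi_{(x,\chi)}=\Ind(x,\psi)\in\overline{\Ind A}$: it holds iff for every neighbourhood $U$ of $x$, every $\eps>0$ and every finite $F\subset\Gxx$ there are an index $j$ and a point $z\in[x_{n_j}]\cap U$ such that for each $g\in F$ either $W_g\cap\G^z_z=\emptyset$, or $W_g\cap\G^z_z=\{h\}$ with $|\psi(g)-(\psi_{n_j})_z(h)|<\eps$. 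By the previous paragraph this is the same as asking for $j$ and $z\in[x_{n_j}]\cap U$ with, for each $g\in F$, either $W_g\cap\G^z_z=\emptyset$ or $|\chi(\Phi(g))-\chi_{n_j}(\Phi(g))|<\eps$.

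It then remains to match the quantifiers, and this is the step I expect to require the most care. Let (C) denote the condition in the theorem (a ``for all $n\ge n_0$'' statement) and (C$'$) the displayed condition just obtained, quantified over a subsequence. If (C) holds, then given any subsequence and any $U,\eps,F$, the index $n_0$ from (C) together with any $n_j\ge n_0$ furnishes the required $z$, so (C$'$) holds along every subsequence and convergence follows. Conversely, if (C) fails there are $U,\eps,F$ and a subsequence $(n_j)$ such that for every $j$ no $y\in[x_{n_j}]\cap U$ has the stated property; applying (C$'$) to this very subsequence would produce some $j$ and $z\in[x_{n_j}]\cap U$ with the property, a contradiction. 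Hence (C) is equivalent to (C$'$) holding along every subsequence, i.e.\ to the convergence $\ker\pi_{(x_n,\chi_n)}\to\ker\pi_{(x,\chi)}$. The main obstacle is exactly this bookkeeping: Corollary~\ref{cor:prim-convergence} concerns membership of a single point in the closure of a single set, whereas the theorem concerns sequential convergence, which in the possibly non-metrizable space $\Prim C^*(\G)$ must be tested against all subsequences; one must keep straight the difference between ``there exists a point of $A$'' and ``for all large $n$''. The grading computation, by contrast, is conceptually the heart of the statement but is routine once the $W_g$ are taken with $\Phi$ constant.
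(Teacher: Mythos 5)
Your proposal is correct and follows exactly the route the paper intends: the paper's own ``proof'' of Theorem~\ref{thm:gradedgroupoids} is the single remark that it follows from Corollary~\ref{cor:kappa} together with Corollary~\ref{cor:prim-convergence}, and your write-up simply supplies the details of that deduction (the surjectivity of $q$ from~\eqref{eq:q-on-Stab}, the computation $\eta_z(h)=\chi_n(\Phi(g))$ for $W_g$ with $\Phi$ constant, and the subsequence bookkeeping needed to pass from closure statements to sequential convergence via Lemma~\ref{lem:Jacobson}).
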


Note that we could have formulated the result in terms of nets, but since under our assumptions of second countability the spaces $\Prim C^*(\G)$ and $\Stab(\G)\dach$ are second countable (see~\cite{MR4395600}*{Lemma~3.3}), it is enough to deal with sequences.

\smallskip

An immediate consequence of Theorem~\ref{thm:gradedgroupoids}, or already of Lemma~\ref{lem:continuity}, is that if $\overline{[x]}=\overline{[y]}$, then $\ker\pi_{(x,\chi)}=\ker\pi_{(y,\chi)}$ for all $\chi\in\widehat\Gamma$. This does not fully describe when different points $(x,\chi)$ define the same ideal $\ker\pi_{(x,\chi)}$. However, such a description is known from~\cite{CN2} and can be deduced again from Theorem~\ref{thm:gradedgroupoids}. We need to recall some notation in order to formulate the result. Denote by $\Iso$ the interior of the isotropy bundle $\IsoG\subset\G$. For $x\in\Gu$, we then consider the interior $\IsoGx{x}^\circ$ of the isotropy bundle of the groupoid $\G_{\overline{[x]}}$ obtained by reducing~$\G$ to the closed invariant subset~$\overline{[x]}$. The subgroup $\IsoGx{x}^\circ_x\subset\Gxx$ is called the \emph{essential isotropy} of $\G$  at~$x$ in~\cite{BCS}.

We start with the following result, which is already implicit in~\cite{CN2} and will be important in Section~\ref{ssec:1-graphs}.

\begin{cor} \label{cor:essential-iso2}
In the setting of Theorem~\ref{thm:gradedgroupoids}, every primitive ideal of $C^*(\G)$ has the form $\ker\pi_{(x,\chi)}$ for some $(x,\chi)\in\Gu\times\widehat{\Gamma}$ such that $\IsoGx{x}^\circ_x=\Gxx$.
\end{cor}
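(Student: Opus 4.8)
The goal is to show that every primitive ideal of $C^*(\G)$ can be represented as $\ker\pi_{(x,\chi)}$ with the \emph{essential isotropy} maximal, i.e.\ $\IsoGx{x}^\circ_x=\Gxx$. The plan is to start from an arbitrary primitive ideal, which by Theorem~\ref{thm:gradedgroupoids} has the form $\ker\pi_{(x,\chi)}$ for some $(x,\chi)\in\Gu\times\widehat{\Gamma}$, and then to find a better representative $(x',\chi')$ on the same quasi-orbit, or at least defining the same ideal, for which the essential isotropy condition holds. The natural strategy is to move within the orbit closure $\overline{[x]}$: since $\ker\pi_{(x,\chi)}$ depends only on $\overline{[x]}$ (as noted right after Theorem~\ref{thm:gradedgroupoids}, equal orbit closures give equal kernels for a fixed $\chi$), it suffices to locate a point $x'\in\overline{[x]}$ whose essential isotropy $\IsoGx{x'}^\circ_{x'}$ fills up all of $\G^{x'}_{x'}$, while keeping $\overline{[x']}=\overline{[x]}$, and then use the same $\chi$.

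The key structural input I would invoke is the relationship between the open isotropy bundle $\Iso=\operatorname{Iso}(\G)^\circ$ of the reduced groupoid $\G_{\overline{[x]}}$ and the full isotropy. The reduction $\G_{\overline{[x]}}$ is again a Hausdorff locally compact \'etale groupoid injectively graded by $\Gamma$ (via the restriction of $\Phi$), and amenability and second countability are inherited. Within this reduced groupoid, the interior of the isotropy bundle is an open subgroupoid whose unit space is all of $\overline{[x]}$. The point is that the set of $y\in\overline{[x]}$ where $\IsoGx{x}^\circ_y=\G^y_y$ — that is, where the full isotropy group is already open — should be a dense (indeed residual) subset of $\overline{[x]}$, and in fact one can choose such a $y$ inside the dense orbit-type stratum. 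Concretely, I would look for a point $y$ in $\overline{[x]}$ at which the isotropy group, as a subgroup of $\Gamma$ via $\Phi$, is \emph{minimal} among nearby points: upper semicontinuity of the isotropy (closedness of $\IsoG$) forces the isotropy to drop on an open set, and at a point of locally minimal isotropy every isotropy element is locally the unique isotropy element of a bisection, hence lies in the interior $\Iso$. Passing to the reduction, such a $y$ has $\IsoGx{x}^\circ_y=\G^y_y$.

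The main technical step, therefore, is producing a point $y\in\overline{[x]}$ with $\overline{[y]}=\overline{[x]}$ and $\IsoGx{x}^\circ_y=\G^y_y$; this is where I expect the real work to lie. One approach is a Baire category argument on the (locally compact, second countable, hence Polish-like) space $\overline{[x]}$: for each $g$ in the countable-type data of bisections one controls the set of $y$ where an isotropy element fails to be interior, showing it is nowhere dense, and then intersects over a countable covering of $\IsoGx{x}$ by bisections. Second countability of $\G$ guarantees a countable family of bisections covering the isotropy bundle, which is exactly what makes the category argument go through. The condition $\overline{[y]}=\overline{[x]}$ is automatic if $y$ is chosen in a dense $G_\delta$ of $\overline{[x]}$ contained in the (dense) set of points whose orbit is dense in $\overline{[x]}$; alternatively, one picks $y$ in the orbit of a point whose orbit closure is all of $\overline{[x]}$, which exists because $\overline{[x]}$ is the closure of a single orbit.

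Having found such a $y$, I would conclude as follows. Set $x':=y$ and $\chi':=\chi$. Since $\overline{[x']}=\overline{[x]}$, the remark following Theorem~\ref{thm:gradedgroupoids} gives $\ker\pi_{(x',\chi')}=\ker\pi_{(x,\chi)}$, so the original primitive ideal is represented by $(x',\chi')$. By construction $\IsoGx{x'}^\circ_{x'}=\G^{x'}_{x'}$, which is exactly the required condition $\IsoGx{x}^\circ_x=\Gxx$ with $x$ replaced by $x'$. This completes the argument. The delicate point worth double-checking is that the essential isotropy is computed in the reduced groupoid $\G_{\overline{[x']}}$ rather than in $\G$ itself, so the density/category argument must be carried out for the reduction; but since the reduction shares all the standing hypotheses, this causes no difficulty beyond bookkeeping.
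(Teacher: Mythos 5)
Your proposal is correct and follows essentially the same route as the paper: both start from an arbitrary $\ker\pi_{(y,\chi)}$ and replace $y$ by a point $x\in\overline{[y]}$ with $\overline{[x]}=\overline{[y]}$ and $\IsoGx{x}^\circ_x=\Gxx$, then use that the kernel depends only on the orbit closure. The only difference is that the paper outsources the existence of such a point to \cite{CN2}*{Lemmas~5.2 and~6.2}, whereas you sketch (correctly) the underlying Baire-category argument in the reduced groupoid $\G_{\overline{[y]}}$.
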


\bp
We know that every  primitive ideal of $C^*(\G)$ has the form $\ker\pi_{(y,\chi)}$ for some $(y,\chi)\in\Gu\times\widehat{\Gamma}$. By \cite{CN2}*{Lemmas~5.2 and~6.2} we can find $x\in\overline{[y]}$ such that $\overline{[x]}=\overline{[y]}$ and $\IsoGx{x}^\circ_x=\Gxx$. Then $\ker\pi_{(y,\chi)}=\ker\pi_{(x,\chi)}$.
\ep

\begin{cor}[{\cite{CN2}*{Theorem~C}}]\label{cor:essential-iso1}
In the setting of Theorem~\ref{thm:gradedgroupoids}, assume we are given points $(x_1,\chi_1)$ and $(x_2,\chi_2)$ in~$\Gu\times\hat\Gamma$. Then $\ker\pi_{(x_1,\chi_1)}=\ker\pi_{(x_2,\chi_2)}$ if and only if $\overline{[x_1]}=\overline{[x_2]}$ and $\chi_1=\chi_2$ on $\Phi(\IsoGx{x_1}^\circ_{x_1})=\Phi(\IsoGx{x_2}^\circ_{x_2})$.
\end{cor}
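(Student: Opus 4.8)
The plan is to translate equality of kernels into equality of $\G$-orbit closures in $\Stab(\G)\dach$ and then read everything off the topology. Since $\G$ is amenable with abelian isotropy, we may take $A:=\Stab(\G)\dach=\Char_a(\G)$, and by definition $\ker\pi_{(x_i,\chi_i)}=\Ind(x_i,\chi_i\circ\Phi)$; thus Corollary~\ref{cor:Ind2} gives that $\ker\pi_{(x_1,\chi_1)}=\ker\pi_{(x_2,\chi_2)}$ if and only if the $\G$-orbits of $(x_i,\chi_i\circ\Phi)$ have identical closures in $\Stab(\G)\dach$. First I would record that this forces $\overline{[x_1]}=\overline{[x_2]}$: equality of kernels means the constant sequence $\ker\pi_{(x_2,\chi_2)}$ converges to $\ker\pi_{(x_1,\chi_1)}$, and Theorem~\ref{thm:gradedgroupoids} then yields $[x_2]\cap U\ne\emptyset$ for every neighbourhood $U$ of $x_1$, i.e.\ $x_1\in\overline{[x_2]}$; symmetrically $x_2\in\overline{[x_1]}$. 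Writing $P_x:=\Phi(\IsoGx{x}^\circ_x)$, it remains to show, under $\overline{[x_1]}=\overline{[x_2]}$, that the kernels agree if and only if $\chi_1=\chi_2$ on $P_{x_1}=P_{x_2}$. To simplify the isotropy bookkeeping I would invoke Corollary~\ref{cor:essential-iso2} to replace each $(x_i,\chi_i)$—without changing $\chi_i$, the orbit closure $\overline{[x_i]}$, or the kernel—by a point $x_i'$ with full essential isotropy $\IsoGx{x_i'}^\circ_{x_i'}=\G^{x_i'}_{x_i'}$, so that $\Phi(\G^{x_i'}_{x_i'})=P_{x_i}$.

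Two geometric facts carry the content. The first is the dynamical meaning of essential isotropy: $g\in\IsoGx{x}^\circ_x$ precisely when there is an open bisection $W_g\ni g$ on which the locally constant cocycle $\Phi$ is identically $\Phi(g)$ and which consists of isotropy of the reduction $\G_{\overline{[x]}}$ near $x$, so that $W_g\cap\G^z_z$ is a singleton for every $z\in\overline{[x]}$ close to $x$; conversely, for $g\notin\IsoGx{x}^\circ_x$ there are $z\in\overline{[x]}$ arbitrarily close to $x$ with $W_g\cap\G^z_z=\emptyset$. The second is that $P_x$ is an invariant of the orbit closure: since $\Gamma$ is abelian and $\Phi$ a homomorphism, $\Phi$ is conjugation-invariant, so $P_z$ is constant along each orbit; and if $\gamma=\Phi(g)\in P_{x_1}$ with $g\in\IsoGx{x_1}^\circ_{x_1}$, then the witnessing bisection $W_g$—which lives in the common reduction $\G_{\overline{[x_1]}}=\G_{\overline{[x_2]}}$—still meets the isotropy at any $z\in[x_2]$ close to $x_1$ (such $z$ exist by density of $[x_2]$ in $\overline{[x_2]}=\overline{[x_1]}$), giving $\gamma\in P_z=P_{x_2}$. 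By symmetry $P_{x_1}=P_{x_2}$, which also justifies the equality of the two groups in the statement.

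It then suffices to treat $x_1',x_2'$, where every $g\in\G^{x_i'}_{x_i'}$ is essential. For the ``only if'' part, equality of kernels gives $(x_1',\chi_1\circ\Phi)\in\overline{\G(x_2',\chi_2\circ\Phi)}$ in $\Stab(\G)\dach$; fixing $g\in\G^{x_1'}_{x_1'}$ and $\eps>0$ and using the basic neighbourhoods of Lemma~\ref{lem:Char-topology}, I find $z\in[x_2']$ close to $x_1'$ lying in the corresponding basic set. As $g$ is essential, $W_g\cap\G^z_z=\{h\}$ with $\Phi(h)=\Phi(g)$, and—recalling that the character carried by the orbit point over $z$ is $\chi_2\circ\Phi$—the defining inequality becomes $|\chi_1(\Phi(g))-\chi_2(\Phi(g))|<\eps$; letting $\eps\to0$ yields $\chi_1=\chi_2$ on $\Phi(\G^{x_1'}_{x_1'})=P_{x_1}$. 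For the ``if'' part, assume $\chi_1=\chi_2$ on $P:=P_{x_1}=P_{x_2}$. Given a neighbourhood $U$ of $x_1'$, an $\eps>0$ and a finite $F\subset\G^{x_1'}_{x_1'}$, essentiality lets me shrink $U$ so that $W_g\cap\G^z_z=\{h\}$ with $\Phi(h)=\Phi(g)$ for all $g\in F$ and all $z\in\overline{[x_1']}\cap U$; choosing $z\in[x_2']\cap U$ (possible by density) places the orbit point $(z,\chi_2\circ\Phi)$ in $\UU_{x_1'}^{\chi_1\circ\Phi}(U,\eps,(W_g)_{g\in F})$, because $|\chi_1(\Phi(g))-\chi_2(\Phi(g))|=0$ for $g\in F$ (as $\Phi(g)\in P$). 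Hence $(x_1',\chi_1\circ\Phi)\in\overline{\G(x_2',\chi_2\circ\Phi)}$; the symmetric argument, using $\chi_2=\chi_1$ on $P$, gives the reverse containment, and Corollary~\ref{cor:Ind2} delivers the equality of kernels.

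The main obstacle is the package in the second paragraph: identifying $g\in\IsoGx{x}^\circ_x$ with the persistence of an isotropy bisection of constant $\Phi$-value along $\overline{[x]}$ near $x$, and then combining conjugation-invariance, this persistence, and density of orbits to see that $P_x$ depends only on $\overline{[x]}$. Once this is in place, the reduction to full essential isotropy via Corollary~\ref{cor:essential-iso2} is exactly what removes the non-essential isotropy elements that would otherwise prevent choosing a single $z$ realising the required pattern for all $g$ simultaneously, and both implications then drop out of the neighbourhood description of $\Char(\G)$.
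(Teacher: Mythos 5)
Your proof is correct and takes essentially the same route as the paper: necessity of $\chi_1=\chi_2$ on the essential isotropy comes from the persistence of essential-isotropy bisections (with constant $\Phi$-value) along the orbit closure, and sufficiency comes from passing to points with full essential isotropy supplied by Corollary~\ref{cor:essential-iso2}. The only substantive differences are organizational: you carry out the closure computations in $\Stab(\G)\dach$ via Lemma~\ref{lem:Char-topology} and Corollary~\ref{cor:Ind2} rather than in $\Prim C^*(\G)$ via Theorem~\ref{thm:gradedgroupoids}, you replace both points symmetrically instead of chaining through a single auxiliary point with $\IsoGx{x}^\circ_x=\Gxx$, and you supply a proof that $\Phi(\IsoGx{x}^\circ_x)$ depends only on $\overline{[x]}$, which the paper simply cites as \cite{CN2}*{Lemma~6.4}.
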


We remark that the equality $\Phi(\IsoGx{x_1}^\circ_{x_1})=\Phi(\IsoGx{x_2}^\circ_{x_2})$ here is a consequence of $\overline{[x_1]}=\overline{[x_2]}$ rather than an extra requirement, see~\cite{CN2}*{Lemma~6.4}.

\bp
Since the equality $\overline{[x_1]}=\overline{[x_2]}$ is necessary for $\ker\pi_{(x_1,\chi_1)}=\ker\pi_{(x_2,\chi_2)}$, we may assume that it is satisfied, and then by passing to the reduced groupoid $\G_{\overline{[x_1]}}$ we may assume that both~$[x_1]$ and~$[x_2]$ are dense in~$\Gu$.

For every $g\in\G^{x_1}_{x_1}$, choose an open bisection $W_g$ containing $g$ such that $\Phi(W_g)=\{\Phi(g)\}$. By Theorem~\ref{thm:gradedgroupoids}, we have $\ker\pi_{(x_2,\chi_2)}\subset\ker\pi_{(x_1,\chi_1)}$ if and only if for every neighbourhood $U$ of $x_1$, every $\eps>0$ and every finite subset $F\subset\G^{x_1}_{x_1}$, there exists $y\in [x_2]\cap U$ satisfying the following property: for every $g\in F$, we have
$$
\text{either}\quad W_g\cap\G^y_y=\emptyset\quad \text{or}\quad |\chi_1(\Phi(g))-\chi_2(\Phi(g))|<\eps.
$$

If $g\in\Iso_{x_1}$, then $W_g\cap\G^y_y\ne\emptyset$ for all $y$ close $x_1$. It follows that the equality $\chi_1=\chi_2$ on~$\Phi(\Iso_{x_1})=\Phi(\Iso_{x_2})$ is necessary for $\ker\pi_{(x_2,\chi_2)}\subset\ker\pi_{(x_1,\chi_1)}$. Conversely, assume it is satisfied. As in the proof of Corollary~\ref{cor:essential-iso2}, take any point $x\in\Gu$ with dense orbit such that $\Iso_x=\Gxx$. Since $\Phi(\Iso_{x_1})=\Phi(\Iso_{x_2})=\Phi(\Iso_x)$ and $\ker\pi_{(x,\chi)}$ depends only on the restriction of $\chi$ to $\Phi(\Gxx)=\Phi(\Iso_x)$, we then have
$$
\ker\pi_{(x_1,\chi_1)}=\ker\pi_{(x,\chi_1)}=\ker\pi_{(x,\chi_2)}=\ker\pi_{(x_2,\chi_2)},
$$
finishing the proof of the lemma.
\ep

\begin{remark}\label{rem:essential-iso-proof}
A slightly longer but perhaps more illuminating way to prove Corollary~\ref{cor:essential-iso1} is to show that if $x_1$ and $x_2$ have dense orbits, then for every nonempty finite subset $F\subset\G^{x_1}_{x_1}\setminus\Iso_{x_1}$ and every neighbourhood~$U$ of~$x_1$, we can find $y\in[x_2]\cap U$ such that $W_g\cap\G^y_y=\emptyset$ for all $g\in F$.
To see why this is true, take any point $x\in U\cap\bigcap_{g\in F}r(W_g)$ such that $\overline{[x]}=\Gu$ and $\Iso_x=\Gxx$, which is possible by \cite{CN2}*{Lemmas~5.2 and~6.2}. Then $W_g\cap\Gxx=\emptyset$ for all $g\in F$, since otherwise we would get $\Phi(F)\cap\Phi(\Iso_{x_1})=\Phi(F)\cap\Phi(\Iso_{x})\ne\emptyset$, in contradiction with injectivity of $\Phi$ on~$\G^{x_1}_{x_1}$. For every $g\in F$, we can then find a neighbourhood $V_g$ of $x$ such that $W_g\cap\G^y_y=\emptyset$ for all $y\in V_g$. Then any point $y$ in $[x_2]\cap U\cap\bigcap_{g\in F}V_g$ has the required property.
\end{remark}

\begin{remark}
If $\Gxx\ne\IsoGx{x}^\circ_x$ for some $x$, then the map $q\colon \Gu\times\widehat\Gamma\to\Stab(\G)\dach$ is not open and the topology on $\Stab(\G)\dach$ is non-Hausdorff. In order to see this, take characters $\chi_1,\chi_2\in\widehat\Gamma$ such that $\chi_1=\chi_2$ on $\Phi(\IsoGx{x}^\circ_x)$, but $\chi_1\ne\chi_2$ on $\Phi(\Gxx)$. We claim that for any neighbourhoods~$V_1$ of~$q(x,\chi_1)$ and $V_2$ of $q(x,\chi_2)$ we can find $y\in[x]$ such that $q(y,\chi_1)\in V_1\cap V_2$. By Lemma~\ref{lem:Char-topology} we can find a neighbourhood $U_i$ of $x$, $\eps>0$ and a finite subset $F_i\subset\Gxx$ such that $V_i$ contains all points $q(y,\eta)$ such that $y\in U_i$ and $|\chi_i(g)-\eta(g)|<\eps$ for all $g\in F_i$ with $W_g\cap\G^y_y\ne\emptyset$. Then it suffices to show that there is $y\in [x]\cap U_1\cap U_2$ such that $W_g\cap\G^y_y=\emptyset$ for all $g\in F_2\setminus\IsoGx{x}^\circ_x$. Such a point $y$ indeed exists by the previous remark, so $\Stab(\G)\dach$ is non-Hausdorff. Next, if $\gamma\in\Phi(\Gxx)$ is such that $\chi_1(\gamma_1)\ne\chi_2(\gamma_2)$, then the set $q(\Gu\times\{\eta:|\eta(\gamma)-\chi_2(\gamma)|<|\chi_1(\gamma)-\chi_2(\gamma)|\})$ does not contain any point of the form $q(y,\chi_1)$, so this set cannot be a neighbourhood of~$q(x,\chi_2)$. This shows that $q$ is not open.
Examples of such groupoids $\G$ can be obtained from local homeomorphisms containing periodic points with nondiscrete orbits, see Lemma~\ref{lem:essential=Gxx}.
\end{remark}

\begin{remark}
One might wonder whether some form of Corollary~\ref{cor:essential-iso1} is true for more general groupoids. For example, given a groupoid  $\G$, a point $x\in\Gu$ and homomorphisms $\chi_1,\chi_2\colon\Gxx\to\T$, does the equality $\Ind(x,\chi_1)=\Ind(x,\chi_2)$ imply that $\chi_1=\chi_2$ on $\IsoGx{x}^\circ_x$? The answer is ``no'' already for transformation groupoids. In order to see this, assume we are given an action of a discrete group $\Gamma$ on a locally compact space $X$ and an action of $\Gamma$ by automorphisms on a discrete abelian group $H$. We can then view the action $\Gamma\curvearrowright X$ as an action of $\Gamma\ltimes H$ and consider the transformation groupoid $\G:=(\Gamma\ltimes H)\ltimes X$. Assume $x\in X$ is a point with trivial stabilizer in $\Gamma$. Then $\IsoGx{x}^\circ_x=\Gxx=H$. For $\chi_1,\chi_2\in\widehat H$, it follows from Corollary~\ref{cor:Ind2} that $\Ind(x,\chi_1)=\Ind(x,\chi_2)$ in $\Prim C^*(\G)$ if and only if the $\Gamma$-orbits of $(x,\chi_1)$ and $(x,\chi_2)$ have the same closures in $X\times\widehat H$, which is in general a weaker requirement than $\chi_1=\chi_2$. As a concrete example we can take the standard actions $\SL_2(\Z)\curvearrowright \R^2$ and $\SL_2(\Z)\curvearrowright \Z^2$ and any point $x\in\R^2\setminus\R\cdot\Q^2$, see~\cite{CN4}.\ee
\end{remark}

Next, we can improve Corollary~\ref{cor:prim-convergence} as follows.

\begin{cor}\label{cor:graded-groupoids}
In the setting of Theorem~\ref{thm:gradedgroupoids}, with a point $(x,\chi)\in\Gu\times\widehat{\Gamma}$ and open bisections~$W_g$ ($g\in\Gxx$) fixed, assume $((x_n,\chi_n))^\infty_{n=1}$ is a sequence in $\Gu\times\widehat{\Gamma}$ such that for every neighbourhood~$U$ of~$x$, every $\eps>0$ and every finite subset $F\subset\IsoGx{x}^\circ_x$, there exists an index~$n_0$ such that for each $n\ge n_0$ we can find $y\in [x_n]\cap U$ satisfying the following property: for every $g\in F$, we have
$$
\text{either}\quad W_g\cap\IsoGx{y}^\circ_y=\emptyset\quad \text{or}\quad |\chi(\Phi(g))-\chi_n(\Phi(g))|<\eps.
$$
Then $\ker\pi_{(x_n,\chi_n)}\to\ker\pi_{(x,\chi)}$ in $\Prim C^*(\G)$.
\end{cor}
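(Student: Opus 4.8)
The plan is to deduce the statement from Corollary~\ref{cor:prim-convergence}, applied with the subgroup $\Gamma_x:=\IsoGx{x}^\circ_x\subset\Gxx$. First I would isolate a soft topological point: in any space a sequence $(a_n)$ converges to $L$ precisely when $L$ lies in the closure of $\{a_{n_k}:k\ge1\}$ for \emph{every} subsequence $(a_{n_k})_k$, since otherwise some open neighbourhood of $L$ is missed infinitely often and the closure of the offending subsequence avoids it. As the assumption of the corollary is a condition on all large $n$, it is inherited by every subsequence. Hence, writing $A:=\{(x_n,\chi_n\circ\Phi|_{\G^{x_n}_{x_n}}):n\ge1\}\subset\Char(\G)$, for which $\Ind A=\{\ker\pi_{(x_n,\chi_n)}:n\ge1\}$, it suffices to prove that $\ker\pi_{(x,\chi)}=\Ind(x,\chi\circ\Phi|_{\Gxx})$ lies in $\overline{\Ind A}$; running the same argument on every subsequence then gives convergence by the topological remark.

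To bring Corollary~\ref{cor:prim-convergence} to bear I note that the isotropy groups, embedding into the abelian group $\Gamma$, are abelian, and I check its hypothesis for $\Gamma_x=\IsoGx{x}^\circ_x$: it asks that $\ker\pi^\omega_x=\ker\pi_{(x,\chi)}$ for every character $\omega$ of $\Gxx$ agreeing with $\chi\circ\Phi$ on $\IsoGx{x}^\circ_x$, which, writing $\omega=\omega'\circ\Phi$, is the case $x_1=x_2=x$ of Corollary~\ref{cor:essential-iso1}. I would also replace each $W_g$ by a smaller bisection on which $\Phi\equiv\Phi(g)$; this leaves the assumption intact, since shrinking a bisection only makes ``$W_g\cap\IsoGx{y}^\circ_y=\emptyset$'' easier to hold, while the conclusion is a statement about primitive ideals and is independent of the $W_g$. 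After this normalization, whenever $z\in[x_n]$ and $W_g\cap\G^z_z=\{h\}$ we have $\Phi(h)=\Phi(g)$, so $(\chi_n\circ\Phi)_z(h)=\chi_n(\Phi(g))$.

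The core step is to verify the criterion of Corollary~\ref{cor:prim-convergence}: for a given neighbourhood $U$ of $x$, an $\eps>0$ and a finite $F\subset\IsoGx{x}^\circ_x$, I must find $n$ and $z\in[x_n]\cap U$ so that for every $g\in F$ either $W_g\cap\G^z_z=\emptyset$, or $W_g\cap\G^z_z=\{h\}$ and $|\chi(\Phi(g))-\chi_n(\Phi(g))|<\eps$. Applying the present assumption to $(U,\eps,F)$ yields $n$ and a point $y\in[x_n]\cap U$ meeting the essential-isotropy version of this condition. Splitting $F=F^A\sqcup F^B$ with $F^B:=\{g\in F:|\chi(\Phi(g))-\chi_n(\Phi(g))|\ge\eps\}$, the members of $F^A$ already satisfy the required alternative at every point of $[x_n]$ by the normalization above, so only $F^B$ needs work. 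For $g\in F^B$ the assumption forces $W_g\cap\IsoGx{y}^\circ_y=\emptyset$, i.e.\ any isotropy element of $W_g$ over $y$ is singular in the reduced groupoid $\G_{\overline{[x_n]}}$. Passing to that groupoid, in which $[x_n]$ is dense, I would move $y$ to a nearby $z\in[x_n]\cap U$ with $W_g\cap\G^z_z=\emptyset$ for all $g\in F^B$ at once, exactly by the perturbation of Remark~\ref{rem:essential-iso-proof}. Concretely, \cite{CN2}*{Lemmas~5.2 and~6.2} supply a dense-orbit point $x^*\in U$ close to $y$ whose essential isotropy equals the full isotropy $\G^{x^*}_{x^*}$; injectivity of $\Phi$ on the isotropy then forces $W_g\cap\G^{x^*}_{x^*}=\emptyset$ for those $g\in F^B$ whose bisection carries isotropy over $y$, the remaining $g\in F^B$ being handled by openness of the empty-intersection condition, and a point $z\in[x_n]$ near $x^*$ inherits $W_g\cap\G^z_z=\emptyset$. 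This $z$ meets the criterion, so Corollary~\ref{cor:prim-convergence} gives $\ker\pi_{(x,\chi)}\in\overline{\Ind A}$.

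The hard part will be this last move. Its subtlety is that an element $g\in\IsoGx{x}^\circ_x$, essential for $\G_{\overline{[x]}}$, may fail to be essential for the larger reduced groupoid $\G_{\overline{[x_n]}}$ that governs the approximating points, so the assumption for $g\in F^B$ only certifies that the relevant isotropy over $y$ is singular, not absent. Turning ``singular'' into ``absent'' is precisely what Remark~\ref{rem:essential-iso-proof} provides, and the genuine technical care lies in performing this perturbation for all of $F^B$ simultaneously while keeping the new point inside $U$.
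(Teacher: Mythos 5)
Your proof is correct, but it takes the route that the paper's own proof names in its opening sentence and then deliberately sets aside: reducing to Corollary~\ref{cor:prim-convergence} with $\Gamma_x=\IsoGx{x}^\circ_x$ by a perturbation inside the quasi-orbit in the spirit of Remark~\ref{rem:essential-iso-proof}. The written proof instead perturbs the \emph{characters} rather than the points: after passing to a subsequence and arranging Fell convergence of the sets $S_n$, $R_n$ and of $\Phi(\IsoGx{x_n}^\circ_{x_n})$, it uses Lemma~\ref{lem:fellconv} to replace $\chi_n$ by characters $\eta_n$ agreeing with $\chi_n$ on $\Phi(\IsoGx{x_n}^\circ_{x_n})$ (so the ideals are unchanged by Corollary~\ref{cor:essential-iso1}) and converging to a character $\tilde\eta$ that induces the same ideal as $\chi$, and then applies Theorem~\ref{thm:gradedgroupoids} directly. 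The trade-off is roughly this: your route leans on the Baire-category existence of dense-orbit points whose essential isotropy equals the full isotropy (\cite{CN2}*{Lemmas~5.2 and~6.2}), while the paper's route is constructive and, as the authors remark, adapts to the non-second-countable setting. Your preliminary reductions (the subsequence trick, the verification of the hypothesis of Corollary~\ref{cor:prim-convergence} via the case $x_1=x_2=x$ of Corollary~\ref{cor:essential-iso1}, and the normalization $\Phi(W_g)=\{\Phi(g)\}$) are all fine. Two details in the core step should be made explicit. First, before splitting $F=F^A\sqcup F^B$ you should shrink $U$ so that $U\subset\bigcap_{g\in F}r(W_g)$; only then is $\{z\in r(W_g): W_g\cap\G^z_z=\emptyset\}$ open, which you need both to handle those $g\in F^B$ with $W_g\cap\G^y_y=\emptyset$ and to pass from $x^*$ to a nearby orbit point $z\in[x_n]$. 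Second, the implication ``$W_g$ carries non-essential isotropy over $y$ $\Rightarrow$ $W_g\cap\G^{x^*}_{x^*}=\emptyset$'' uses, besides injectivity of $\Phi$ on isotropy groups, the invariance $\Phi(\IsoGx{y}^\circ_y)=\Phi(\IsoGx{x_n}^\circ_{x^*})$ along the quasi-orbit, i.e.\ \cite{CN2}*{Lemma~6.4}, which the paper invokes right after Corollary~\ref{cor:essential-iso1}. With these two points spelled out your argument goes through.
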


\bp
The proof can actually be reduced to Corollary~\ref{cor:prim-convergence} by perturbing $x_n$ to get $\IsoGx{x_n}^\circ_{x_n}=\G^{x_n}_{x_n}$ and arguing similarly to Remark~\ref{rem:essential-iso-proof} that the assumptions of the corollary are still satisfied. But we will instead use a more constructive argument based on Lemma~\ref{lem:fellconv}. Its advantage is that with minor modifications it can be used without the assumption of second countability.

It suffices to show that we can find a subsequence converging to $\ker\pi_{(x,\chi)}$. In order to ease the notation let us write $\Gamma_y$ for $\IsoGx{y}^\circ_y$. Since $\Phi$ is locally constant and~$\G$ is second countable, the set $\Phi(\G)$ is countable. Therefore without loss of generality we may assume that the group $\Gamma$ is countable. By replacing $W_g$ by smaller bisections we may also assume that $\Phi(W_g)=\{\Phi(g)\}$ for all $g\in\Gxx$.

By passing to a subsequence we may assume that $\chi_n\to\omega$ for a character $\omega\in\widehat{\Gamma}$ and $\Phi(\Gamma_{x_n})\to T$ in $\Sub(\Gamma)$ for a subgroup $T\subset\Gamma$. Choose an increasing sequence of finite subsets $F_n\subset\Gxx$ and a sequence of numbers $\eps_n>0$ such that $\cup_nF_n=\Gxx$ and $\eps_n\to0$.
Similarly to the proof of Corollary~\ref{cor:prim-convergence}, by passing to a subsequence and replacing $x_n$ by an element on the same orbit, we may assume that $x_n\to x$ and for every $g\in F_n\cap\Gamma_x$ we have either  $W_g\cap\Gamma_{x_n}=\emptyset$ or $|\chi(\Phi(g))-\chi_n(\Phi(g))|<\eps_n$.

For every $n$, let $S_n\subset F_n$ be the subset of points $g$ such that $W_g\cap \G^{x_n}_{x_n}\ne\emptyset$, and let $R_n\subset S_n$ be the subset of points $g$ such that $W_g\cap\Gamma_{x_n}\ne\emptyset$. By passing to a subsequence and arguing as in the proof of Lemma~\ref{lem:continuity}, we may assume that $S_n\to S$ and $R_n\to R$ for some subgroups $R\subset S\subset\Gxx$. Then $\chi_n(\Phi(g))\to\chi(\Phi(g))$ for all $g\in\Gamma_x\cap R$. At the same time $\chi_n\to\omega$, hence $\omega=\chi$ on $\Phi(\Gamma_x)\cap\Phi(R)$. Let $\eta\in\widehat{\Gamma}$ be any character such that $\eta=\chi$ on $\Phi(\Gamma_x)$ and $\eta=\omega$ on~$\Phi(R)$. We then have $\chi_n(\Phi(g))\to\eta(\Phi(g))$ for all $g\in R$.

Observe next that $\Phi(S)\cap T=\Phi(R)$. Indeed, it is clear that $\Phi(R)\subset \Phi(S)\cap T$. To prove the opposite inclusion, take $g\in S$ such that $\Phi(g)\in T$. Then, for all $n$ large enough, $W_g\cap \G^{x_n}_{x_n}\ne\emptyset$ and $\Phi(g)\in\Phi(\Gamma_{x_n})$. As $\Phi(W_g)=\{\Phi(g)\}$ and $\Phi$ is injective on $\G^{x_n}_{x_n}$, this is possible only if $W_g\cap \Gamma_{x_n}\ne\emptyset$. Hence $g\in R$.

We now apply Lemma~\ref{lem:fellconv} to $\Phi(S)$, $T$ and $\Phi(\Gamma_{x_n})$ in place of $S$, $T$ and $S_n$. By possibly passing to a subsequence, we can then find characters $\eta_n$ and $\tilde\eta$ in $\widehat{\Gamma}$ such that
$$
\eta_n=\chi_n\ \ \text{on}\ \ \Phi(\Gamma_{x_n}),\quad \tilde\eta=\eta\ \ \text{on}\ \ \Phi(S),\quad \eta_n\to\tilde\eta.
$$
Then, on the one hand, $\ker\pi_{(x_n,\chi_n)}=\ker\pi_{(x_n,\eta_n)}$ and $\ker\pi_{(x,\chi)}=\ker\pi_{(x,\eta)}$ by Corollary~\ref{cor:essential-iso1}. On the other hand, by Theorem~\ref{thm:gradedgroupoids} and the definition of $S$ we have $\ker\pi_{(x_n,\eta_n)}\to \ker\pi_{(x,\eta)}$.
\ep

\begin{remark}\label{rem:invariant-open}
By Corollaries~\ref{cor:kappa} and~\ref{cor:open}, the map $\Ind\colon\Stab(\G)\dach\to \Prim C^*(\G)$ establishes a bijection between the open $\G$-invariant subsets of $\Stab(\G)\dach$ and the open subsets of $\Prim C^*(\G)$. This implies that in terms of the topology on $\Stab(\G)\dach$ the above corollary can be formulated as follows. If $V\subset\Stab(\G)\dach$ is a $\G$-invariant open subset, then for every point $(x,\chi)\in q^{-1}(V)\subset\Gu\times\widehat\Gamma$ there exists a neighbourhood $U$ of $x$ and a finite subset $F\subset\IsoGx{x}^\circ_x$ such that $q^{-1}(V)$ contains the set
$\tilde\UU^\chi_x(U,\eps,(W_g)_{g\in F})$ consisting of all pairs $(y,\eta)\in\Gu\times\widehat\Gamma$ such that $y\in U$ and, for every $g\in F$, either $W_g\cap \IsoGx{y}^\circ_y=\emptyset$ or $|\chi(\Phi(g))-\eta(\Phi(g))|<\eps$. Indeed, otherwise we would be able to construct a sequence of elements $(x_n,\chi_n)\notin q^{-1}(V)$ satisfying the assumptions of the corollary. But then $\ker\pi_{(x_n,\chi_n)}\to\ker\pi_{(x,\chi)}$, contradicting the fact that $q^{-1}(V)$ is the pre-image of an open neighbourhood of $\ker\pi_{(x,\chi)}$ in $\Prim C^*(\G)$. \ee
\end{remark}

For a later use we need yet another reformulation of convergence in $\Prim C^*(\G)$. First, let us introduce the following notion.

\begin{defn} \label{def:newalongconv}
Let $\Gamma$ be a discrete abelian group and $(S_{n})^\infty_{n=1}$ be a sequence of subsets of $\Gamma$. We say that a sequence $(\chi_{n})^\infty_{n=1}$ in $\widehat{\Gamma}$ \emph{converges to $\chi\in \widehat{\Gamma}$ along $(S_{n})_n$} if, for all $\gamma \in \Gamma$, we have
$$
\lim_{n\to\infty}\un_{S_n}(\gamma)|\chi_n(\gamma)-\chi(\gamma)|=0,
$$
where $\un_{S_n}$ is the characteristic function of $S_n$.
\end{defn}

\begin{cor}\label{cor:graded-groupoids2}
In the setting of Theorem~\ref{thm:gradedgroupoids}, with a point $(x,\chi)\in\Gu\times\widehat{\Gamma}$ and open bisections~$W_g$ ($g\in\Gxx$) fixed, the following conditions on a sequence $((x_n,\chi_n))_n$ in $\Gu\times\widehat{\Gamma}$ are equivalent:
\begin{enumerate}
  \item $\ker\pi_{(x_n,\chi_n)}\to\ker\pi_{(x,\chi)}$ in $\Prim C^*(\G)$;
  \item there exist points $y_n\in[x_n]$ such that $y_n\to x$ and $\chi_n\to\chi$ along the sets
\begin{equation}\label{eq:Sn}
S_n:=\{\Phi(g): g\in\Gxx,\ W_g\cap\G^{y_n}_{y_n}\ne\emptyset\};
\end{equation}
  \item there exist points $y_n\in[x_n]$ such that $y_n\to x$ and $\chi_n\to\chi$ along the sets
\begin{equation}\label{eq:Rn}
R_n:=\{\Phi(g): g\in\IsoGx{x}^\circ_x,\ W_g\cap\IsoGx{y_n}^\circ_{y_n}\ne\emptyset\}.
\end{equation}
\end{enumerate}
\end{cor}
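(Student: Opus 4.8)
The plan is to establish the cyclic chain $(1)\Rightarrow(2)\Rightarrow(3)\Rightarrow(1)$, leaning on Theorem~\ref{thm:gradedgroupoids} for the first implication and on Corollary~\ref{cor:graded-groupoids} for the last. The one structural fact I would use repeatedly is that $\Phi$ is injective on $\Gxx$: for $g\in\Gxx$ this makes $\Phi(g)\in S_n$ equivalent to $W_g\cap\G^{y_n}_{y_n}\neq\emptyset$, and for $g\in\IsoGx{x}^\circ_x$ it makes $\Phi(g)\in R_n$ equivalent to $W_g\cap\IsoGx{y_n}^\circ_{y_n}\neq\emptyset$. As in the proof of Corollary~\ref{cor:graded-groupoids}, I would also reduce to $\Gamma$ countable, so that $\Gxx$, being embedded in $\Gamma$ by $\Phi$, is countable.

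For $(1)\Rightarrow(2)$ I would run a diagonal argument. Using second countability, fix a decreasing neighbourhood base $(U_k)_k$ at $x$, an increasing exhaustion $\Gxx=\bigcup_k F_k$ by finite sets, and a sequence $\eps_k\to0$. Theorem~\ref{thm:gradedgroupoids} applied to $(U_k,\eps_k,F_k)$ yields indices $N_1<N_2<\cdots$ so that for every $n\geq N_k$ there is $y\in[x_n]\cap U_k$ with: for each $g\in F_k$, either $W_g\cap\G^y_y=\emptyset$ or $|\chi(\Phi(g))-\chi_n(\Phi(g))|<\eps_k$. For $N_k\leq n<N_{k+1}$ I would pick such a $y_n$ (taking $y_n\in[x_n]$ arbitrary for $n<N_1$). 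Then $y_n\to x$ since $y_n\in U_k$ for $n\geq N_k$. To check convergence along $(S_n)_n$, fix $\gamma\in\Gamma$: if $\gamma\notin\Phi(\Gxx)$ then $\un_{S_n}(\gamma)=0$, and otherwise $\gamma=\Phi(g_0)$ with $g_0\in F_k$ for all large $k$, so for $n$ in the block of index $m=m(n)\to\infty$ the defining property forces $\un_{S_n}(\gamma)|\chi_n(\gamma)-\chi(\gamma)|\leq\eps_m\to0$.

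The implication $(2)\Rightarrow(3)$ is then immediate with the same points $y_n$: from $\IsoGx{x}^\circ_x\subset\Gxx$ and $\IsoGx{y_n}^\circ_{y_n}\subset\G^{y_n}_{y_n}$ one reads off $R_n\subset S_n$, so $\un_{R_n}\leq\un_{S_n}$ pointwise and convergence along $(S_n)_n$ descends to convergence along $(R_n)_n$. For $(3)\Rightarrow(1)$ I would verify the hypothesis of Corollary~\ref{cor:graded-groupoids}: given $U$, $\eps>0$ and a finite $F\subset\IsoGx{x}^\circ_x$, the convergence $y_n\to x$ together with convergence along $(R_n)_n$ — used only for the finitely many $\gamma=\Phi(g)$, $g\in F$ — produces an $n_0$ such that for $n\geq n_0$ the witness $y=y_n$ satisfies, for every $g\in F$, either $W_g\cap\IsoGx{y_n}^\circ_{y_n}=\emptyset$ or $|\chi(\Phi(g))-\chi_n(\Phi(g))|<\eps$. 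Corollary~\ref{cor:graded-groupoids} then gives $\ker\pi_{(x_n,\chi_n)}\to\ker\pi_{(x,\chi)}$.

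The only genuine obstacle is $(1)\Rightarrow(2)$. Theorem~\ref{thm:gradedgroupoids} only guarantees, for each triple $(U,\eps,F)$ separately, an eventually existing witness $y$ that is allowed to depend on all of the data, whereas $(2)$ demands a single sequence $(y_n)_n$ serving every $\gamma$ simultaneously. The diagonal construction above is precisely what upgrades the former to the latter, and it is here that second countability is indispensable, both to obtain a countable neighbourhood base at $x$ and to exhaust the countable group $\Gxx$ by finite sets.
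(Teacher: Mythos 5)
Your proposal is correct and follows essentially the same route as the paper: $(2)\Rightarrow(3)$ from $R_n\subset S_n$, $(3)\Rightarrow(1)$ by feeding the data into Corollary~\ref{cor:graded-groupoids}, and $(1)\Rightarrow(2)$ by exactly the diagonal construction over a countable neighbourhood base and an exhaustion of $\Gxx$ that the paper uses. No gaps.
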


\bp It is clear that (2)$\Rightarrow$(3). The implication (3)$\Rightarrow$(1) follows from Corollary~\ref{cor:graded-groupoids}. It remains to show the implication (1)$\Rightarrow$(2). This is a routine consequence of Theorem~\ref{thm:gradedgroupoids}. Namely, assume that $\ker\pi_{(x_n,\chi_n)}\to\ker\pi_{(x,\chi)}$. Fix a decreasing sequence $(U_n)_n$ of open sets forming a neighbourhood basis of $x$. Choose also an increasing sequence of finite subsets $F_n\subset\Gxx$ with union $\Gxx$. By Theorem~\ref{thm:gradedgroupoids}, for every $k\ge1$, there is $n_k\ge1$ such that for all $n\geq n_{k}$ we can find a point $y^{k}_{n}\in [x_{n}] \cap U_k$ with the following property: for every $g\in F_k$, either $W_g \cap \G_{y^{k}_{n}}^{y^{k}_{n}}=\emptyset$ or $|\chi(\Phi(g))-\chi_{n}(\Phi(g))| < 1/k$. We can assume without loss of generality that $n_k<n_{k+1}$ for all~$k$. We then take $y_n:=x_n$ for $n<n_1$ and $y_n: = y_{n}^{k}$ for $n_k\le n<n_{k+1}$ and $k\ge1$.
\ep


\begin{remark}
Throughout this entire subsection amenability of $\G$ was needed only for surjectivity of $\Ind\colon\Stab(\G)\dach\to\Prim C^*(\G)$, so as long we do need this surjectivity or replace $\Prim C^*(\G)$ by $\Ind(\Stab(\G)\dach)$, all the results hold for any second countable Hausdorff locally compact \'etale groupoid injectively graded by a discrete abelian group.
\end{remark}

\subsection{Harmonious families of bisections}\label{ssec:harmonious}
Brix, Carlsen and Sims~\cite{BCS} have recently described the topology on the primitive ideal space for the Deaconu--Renault groupoids defined by $k$-tuples of commuting local homeomorphisms under the assumption of existence of so-called \emph{harmonious families of bisections} at each point of the unit space. We will discuss these groupoids in more detail later. The goal of this subsection is to show that the description in~\cite{BCS} follows from our Corollary~\ref{cor:graded-groupoids2} for all amenable injectively graded groupoids for which harmonious families exist.

\smallskip

Assume therefore as in Section~\ref{ssec:graded} that $\G$ is an amenable second countable Hausdorff locally compact \'etale groupoid injectively graded by a discrete abelian group $\Gamma$, with grading $\Phi\colon\G\to\Gamma$. A harmonious family of bisections at $x\in\Gu$ is a collection $\BB=(B_\alpha)_\alpha$ of open bisections~$B_\alpha$ that contain different elements of $\Gxx$ and satisfy a number of axioms. We refer the reader to~\cite{BCS}*{Definition~6.1} for the precise formulation. What is important for us are the following two consequences of the definition:
\begin{enumerate}
  \item[(i)] every $g\in\IsoGx{x}^\circ_x\subset\Gxx$ lies in one of the bisections $B_\alpha$;
  \item[(ii)] for every $y\in\Gu$ sufficiently close to $x$, the set
  $$
  H_\BB(y):=\{\Phi(g): g\in \Gxx\cap B_\alpha\ \ \text{for some} \ \alpha,\ \ \IsoGx{y}^\circ_y\cap B_\alpha\ne\emptyset\}
  $$
  is a subgroup of $\Gamma$.
\end{enumerate}

For every $g\in\Gxx$, let us fix an open bisection $W_g$ containing $g$ such that if $g\in B_\alpha$ for some~$\alpha$, then $W_g=B_\alpha$. Then, if $(y_n)_n$ is a sequence converging to $x$, we see from properties (i) and (ii) above that
$$
R_n\subset H_\BB(y_n)\subset S_n,
$$
where the sets $S_n$ and $R_n$ are defined by~\eqref{eq:Sn} and~\eqref{eq:Rn}. Therefore we can conclude from Corollary~\ref{cor:graded-groupoids2} that for any sequence $((x_n,\chi_n))_n$ in $\Gu\times\widehat{\Gamma}$ the following conditions are equivalent:
\begin{enumerate}
  \item $\ker\pi_{(x_n,\chi_n)}\to\ker\pi_{(x,\chi)}$ in $\Prim C^*(\G)$;
  \item there exist points $y_n\in[x_n]$ such that $y_n\to x$ and $\chi_n\to\chi$ along $(H_\BB(y_n))_n$.
\end{enumerate}

A possible advantage of this formulation is that since the sets $H_\BB(y_n)$ are groups (for $n$ large enough), the convergence along them admits the following more transparent description.

\begin{lemma} \label{lem:equivdef}
Let $\Gamma$ be a countable discrete abelian group and $(S_{n})_{n}$ be a sequence of subgroups of $\Gamma$.  Then a sequence $(\chi_{n})_n$ in $\widehat{\Gamma}$ converges to $\chi\in \widehat{\Gamma}$ along $(S_{n})_n$ in the sense of Definition~\ref{def:newalongconv} if and only if there exist characters $\nu_n\in S_n^\perp$ such that $\chi_n\nu_n\to\chi$.
\end{lemma}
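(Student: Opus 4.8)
The plan is to prove the two implications separately, with essentially all the content sitting in the ``only if'' direction. Throughout I would use that, since $\Gamma$ is countable, the dual $\widehat{\Gamma}$ is a compact metrizable abelian group; I fix once and for all a metric $d$ inducing its topology of pointwise convergence. I will also repeatedly use that, for a discrete group, Fell convergence of subgroups is just pointwise convergence of their indicator functions, as recorded in the Preliminaries.

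The ``if'' direction is immediate. Suppose $\nu_n\in S_n^\perp$ satisfy $\chi_n\nu_n\to\chi$. For $\gamma\in S_n$ we have $\nu_n(\gamma)=1$, hence $(\chi_n\nu_n)(\gamma)=\chi_n(\gamma)$, and therefore for every $\gamma\in\Gamma$
$$
\un_{S_n}(\gamma)\,\bigl|\chi_n(\gamma)-\chi(\gamma)\bigr|\le\bigl|(\chi_n\nu_n)(\gamma)-\chi(\gamma)\bigr|\to0,
$$
which is exactly convergence to $\chi$ along $(S_n)_n$ in the sense of Definition~\ref{def:newalongconv}.

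For the ``only if'' direction, the key move is to reformulate the conclusion metrically: since each $S_n^\perp$ is a closed, hence compact, subgroup of $\widehat{\Gamma}$, the coset $\chi_nS_n^\perp$ is compact and the quantity $a_n:=d\bigl(\chi,\chi_nS_n^\perp\bigr)=\min_{\nu\in S_n^\perp}d(\chi,\chi_n\nu)$ is attained. Choosing $\nu_n$ realizing this minimum, the existence of $\nu_n\in S_n^\perp$ with $\chi_n\nu_n\to\chi$ is equivalent to $a_n\to0$. So it suffices to prove that the nonnegative real sequence $(a_n)_n$ tends to $0$, and for this I would use the standard device: it is enough to show that every subsequence admits a further subsequence tending to $0$. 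Given a subsequence, I would pass, using compactness of $\Sub(\Gamma)$ in the Chabauty topology and of the compact metrizable space $\widehat{\Gamma}$, to a further subsequence $(n_j)_j$ along which $S_{n_j}\to T$ in $\Sub(\Gamma)$ and $\chi_{n_j}\to\psi$ in $\widehat{\Gamma}$. For $h\in T$ one has $\un_{S_{n_j}}(h)=1$ eventually, so convergence along $(S_n)_n$ forces $\chi_{n_j}(h)\to\chi(h)$, whence $\psi=\chi$ on $T$. At this point I would apply Lemma~\ref{lem:fellconv} with $H=\Gamma$, the subgroups $S_{n_j}\to T$, the characters $\chi_{n_j}$ and $\chi$, and $S=\Gamma$: then $S\cap T=T$ and the hypothesis $\chi_{n_j}\to\chi$ on $S\cap T$ is precisely what was just verified. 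Passing to a further subsequence as the lemma permits, I obtain characters $\eta_j,\eta\in\widehat{\Gamma}$ with $\eta_j|_{S_{n_j}}=\chi_{n_j}|_{S_{n_j}}$, with $\eta|_\Gamma=\chi$, and with $\eta_j\to\eta=\chi$. Setting $\nu_j:=\eta_j\overline{\chi_{n_j}}$ gives $\nu_j\in S_{n_j}^\perp$ and $\chi_{n_j}\nu_j=\eta_j\to\chi$, so $a_{n_j}\le d(\chi,\eta_j)\to0$ along this subsequence, completing the argument.

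The main obstacle is precisely the forward direction: one cannot construct the $\nu_n$ individually for each fixed $n$ while controlling $\chi_n\nu_n$ on all of $\Gamma$ at once, because $\chi_n|_{S_n}$ only approximately agrees with $\chi|_{S_n}$ and the ambient subgroup $S_n$ varies with $n$. The resolution is to pass to a subsequence along which both $S_n$ and $\chi_n$ converge, which turns the delicate simultaneous extension problem into the clean static situation governed by Lemma~\ref{lem:fellconv}; the subsequence-of-every-subsequence wrapper then transfers the conclusion back to the full sequence. (An alternative that avoids citing Lemma~\ref{lem:fellconv} is to note that $\chi\in\psi\,T^\perp$, that $S_{n_j}^\perp\to T^\perp$ by the earlier lemma relating Chabauty convergence to convergence of annihilators, and that the lower part of Fell convergence then lets one approximate $\chi$ by elements of $\chi_{n_j}S_{n_j}^\perp$; this gives the same bound $a_{n_j}\to0$.)
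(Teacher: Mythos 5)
Your proposal is correct and follows essentially the same route as the paper: both reduce the forward direction to passing to a subsequence along which $S_n\to T$ in the Chabauty topology, checking that convergence along $(S_n)_n$ gives $\chi_n\to\chi$ pointwise on $T$, and then invoking Lemma~\ref{lem:fellconv} with $H=S=\Gamma$ to produce $\eta_n$ with $\eta_n|_{S_n}=\chi_n|_{S_n}$ and $\eta_n\to\chi$. The only difference is packaging — the paper argues by contradiction with a fixed neighbourhood $V$ of $\chi$ such that $\chi_n\notin VS_n^\perp$, whereas you run the subsequence-of-every-subsequence argument directly on the distances $d(\chi,\chi_nS_n^\perp)$ — which is an immaterial variation.
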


In other words, for sequences of subgroups of countable groups convergence in the sense of our Definition~\ref{def:newalongconv} is equivalent to that in the sense of \cite{BCS}*{Definition~9.1}.

\begin{proof}
It is clear that if $\nu_n\in S_n^\perp$ and $\chi_n\nu_n\to\chi$, then $(\chi_{n})_n$ converges to $\chi$ along $(S_{n})_n$.

To prove the other direction, assume that $\chi_{n} \to \chi$ along $(S_{n})_n$, but there are no characters~$\nu_n\in S_n^\perp$ such that $\chi_n\nu_n\to\chi$. This implies that by possibly passing to a subsequence we can choose an open neighbourhood $V\subset \widehat{\Gamma}$ of $\chi$ such that $\chi_{n} \notin V S_{n}^{\perp}$ for all $n$. By possibly passing to a subsequence again, we can assume that $S_{n} \to T$ in the Chabauty topology for some subgroup $T\subset \Gamma$. Every $\gamma \in T$ lies in $S_{n}$ for all $n$ large enough, so $\chi_{n}(\gamma)\to \chi(\gamma)$. We now use Lemma~\ref{lem:fellconv} with $H=S=\Gamma$ to find, after possibly passing to a subsequence, characters $\eta_n\in\widehat{\Gamma}$ such that $\eta_n\to\chi$ and $\eta_{n}|_{S_{n}}=\chi_{n}|_{S_{n}}$ for all $n$. Then, for all $n$ large enough, we have $\eta_{n}\in V$, and hence $\chi_{n}=\eta_n(\eta_n^{-1} \chi_{n}) \in V S_{n}^{\perp}$, which is a contradiction.
\end{proof}

This lemma and the discussion preceding it show that \cite{BCS}*{Theorem~9.5} follows from Corollary~\ref{cor:graded-groupoids2}.

\begin{remark}
In fact, we get a more precise description of convergence for groupoids admitting harmonious families of bisections compared to \cite{BCS}*{Theorem~9.5}, since in that theorem it is only proved that if $\ker\pi_{(x_n,\chi_n)}\to\ker\pi_{(x,\chi)}$ in $\Prim C^*(\G)$, then there exist~$(y_n,\eta_n)$ such that $\ker\pi_{(x_n,\chi_n)}=\ker\pi_{(y_n,\eta_n)}$ (equivalently, $\overline{[x_n]}=\overline{[y_n]}$ and $\chi_n=\eta_n$ on $\Phi(\IsoGx{x_n}^\circ_{x_n})=\Phi(\IsoGx{y_n}^\circ_{y_n})$), $y_n\to x$ and  $\eta_n\to\chi$ along $(H_\BB(y_n))_n$. In this regard we want to caution the reader that the first paragraph of the proof of \cite{BCS}*{Theorem 9.5} may suggest that if $\ker \pi_{(y_{n}, \eta_{n})}\to \ker\pi_{(x, \chi)}$ and $y_{n}\to x$, then one would always get that  $\eta_{n} \to \chi$ along $(H_\BB(y_{n}))_n$. However, when the authors invoke \cite{BCS}*{Theorem~7.1}, they potentially have to re-pick the sequence.
\end{remark}

\subsection{Deaconu--Renault groupoids}\label{subsecRD}
A rich supply of groupoids injectively graded by an abelian group is provided by the Deaconu--Renault groupoids defined by (partial) actions of group embeddable commutative monoids by local homeomorphisms. We will concentrate on the free abelian monoids~$\Z^k_+$, since this will be the setting of the subsequent sections. The corresponding groupoids for $k=1$ were introduced by Deaconu~\cite{D} and Renault~\cite{MR1770333}. For $k\ge2$, these groupoids  are also on occasion called higher-rank Deaconu--Renault groupoids in the literature. We follow~\cite{RW}*{Section 5} in our presentation.

\smallskip

We denote by $\Z_+$ the additive monoid $\{0,1,2,\dots\}$ of nonnegative integers. Fix $k\ge1$. Define the maximum $m \vee n$ of two elements $m,n\in \Z^k_+$ by taking the coordinate-wise maximum.

Assume that $X$ is a  Hausdorff locally compact space and, for each $n\in \Z_+^k$, we are given open subsets $\Dom(\sigma^{n})\subset X$ and $\Ran(\sigma^{n})\subset X$ and a local homeomorphism $\sigma^{n}$ from $\Dom(\sigma^{n})$ onto $\Ran(\sigma^{n})$ satisfying the following conditions:
\begin{itemize}
\item $\Dom(\sigma^{0}) = \Ran(\sigma^{0})=X$ and $\sigma^{0}=\text{id}_{X}$;
\item for all $n,m\in \Z_+^k$, we have $\Dom(\sigma^{m+n})=\Dom(\sigma^{n})\cap (\sigma^{n})^{-1}(\Dom(\sigma^{m}))$ and
$$
\sigma^{m+n}(x)=\sigma^{m}(\sigma^{n}(x)) \quad \text{for all}\quad x\in \Dom(\sigma^{m+n});
$$
\item for all $m,n\in \Z_+^k$, we have $\Dom(\sigma^{m})\cap \Dom(\sigma^{n})=\Dom(\sigma^{m\vee n})$.
\end{itemize}
We then say that $\Z_+^k\curvearrowright^\sigma X$ is a \emph{partial action} of $\Z^k_+$ on~$X$ by local homeomorphisms. We remark that in~\cite{RW} this is called a directed semigroup action.

Given such an action, we define a groupoid $\G_\sigma\subset X\times\Z^k\times X$ by setting
$$
\G_\sigma:=\{ (x,m-n,y) \in\Dom(\sigma^m)\times \Z^{k} \times\Dom(\sigma^n) : m,n\in \Z_+^k, \; \sigma^{m}(x)=\sigma^{n}(y) \},
$$
$$
r((x,p,y)):=(x,0,x),\quad s((x,p,y)):=(y,0,y)\quad \text{and}\quad (x,p,y)(y,q,w):=(x, p+q, w).
$$
We identify the unit space $\G_\sigma^{(0)}$ with $X$. The topology on $\G_\sigma$ is defined by using as a basis the sets of the form
$$
Z(U,m,n,V) : = \{ (x,m-n,y) \in \G_\sigma : x\in U\cap\Dom(\sigma^m),\ y\in V\cap\Dom(\sigma^n),\ \sigma^{m}(x) = \sigma^{n}(y) \},
$$
where $U,V \subset X$ are open subsets and $m,n \in \Z_+^k$. Equipped with this topology, $\G_\sigma$ becomes a locally compact Hausdorff \'etale groupoid, and the map
$$
\Phi\colon \G_\sigma \to \Z^{k},\quad (x,l,y)\mapsto l,
$$
defines an injective grading on $\G_\sigma$. If $X$ is second countable, then $\G_\sigma$ is also second countable and amenable, see~\cite{RW}*{Theorem~5.13}.

\smallskip

By construction the groupoid $\G_\sigma$ has a canonical system of open bisections. For an element $(x,m-n,x)\in (\G_\sigma)^x_x$, with $\sigma^m(x)=\sigma^n(x)$, we can in particular consider an open bisection $Z(U,m,n,U)$ containing it, where $U\subset\Dom(\sigma^m)\cap\Dom(\sigma^n)$ is an open neighbourhood of $x$ and the maps $\sigma^{m}$ and $\sigma^n$ are injective on $U$. 
As a consequence, in the formulations of results from Section~\ref{ssec:graded} we can use finite subsets of $\Z_+^k$ instead of finite collections of bisections. Then, for example, Theorem~\ref{thm:gradedgroupoids} takes the following form.

\begin{thm}\label{thm:DR}
Assume $\Z_+^k\curvearrowright^\sigma X$ is a partial action of $\Z^k_+$ by local homeomorphisms on a second countable Hausdorff locally compact space~$X$. Consider the corresponding Deaconu--Renault groupoid $\G_\sigma$. Then the map $X\times\T^k\to\Prim C^*(\G_\sigma)$, $(x,z)\mapsto\ker\pi_{(x,z)}$, is surjective, and the topology on $\Prim C^*(\G)$ is described as follows. For a sequence $\big((x(l),z(l))\big)^\infty_{l=1}$ in $X\times\T^k$, we have $\ker\pi_{(x(l),z(l))}\to\ker\pi_{(x,z)}$ if and only if for every neighbourhood $U$ of $x$, every $\eps>0$ and every finite set $\{(m(i),n(i))\}_{i=1}^{N} \subset \Z_+^{k} \times \Z_+^{k}$, with $\sigma^{m(i)}(x)=\sigma^{n(i)}(x)$  for all $i$, there exists an index $l_0$ such that for each $l\ge l_0$ we can find $y\in [x(l)]\cap U$ satisfying the following property: for every $i=1,\dots, N$, we have
$$
\text{either}\quad \sigma^{m(i)}(y)\ne\sigma^{n(i)}(y)\quad \text{or}\quad |z^{m(i)-n(i)}-z(l)^{m(i)-n(i)}|<\eps.
$$
\end{thm}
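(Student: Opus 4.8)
The plan is to read this off as the special case $\Gamma=\Z^k$ of Theorem~\ref{thm:gradedgroupoids}. First I would check that $\G_\sigma$ meets all the standing hypotheses: it is a Hausdorff locally compact \'etale groupoid by the construction recalled above, the map $\Phi\colon(x,p,y)\mapsto p$ is an injective grading by $\Z^k$, and when $X$ is second countable $\G_\sigma$ is second countable and amenable by \cite{RW}*{Theorem~5.13}. Under the identification $\widehat{\Z^k}=\T^k$ a character $\chi$ corresponds to the point $z\in\T^k$ with $\chi(p)=z^{p}$, so $\pi_{(x,z)}=\pi_{(x,\chi)}$ and the surjectivity of $(x,z)\mapsto\ker\pi_{(x,z)}$ is exactly the surjectivity assertion of Theorem~\ref{thm:gradedgroupoids}. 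It then remains to match the two convergence criteria, the dictionary being that an isotropy element is a triple $g=(x,p,x)$ with $p=m-n$ for some $m,n\in\Z_+^k$ satisfying $\sigma^m(x)=\sigma^n(x)$; for such a pair the set $Z(U,m,n,U)$, with $U$ a small neighbourhood of $x$ on which $\sigma^m$ and $\sigma^n$ are injective, is an open bisection on which $\Phi\equiv p$ and, for $y\in U$, one has $Z(U,m,n,U)\cap\G^y_y\ne\emptyset$ precisely when $\sigma^m(y)=\sigma^n(y)$. Since the quantity $\chi(\Phi(g))$, resp.\ its analogue for the $l$-th term of the sequence, is $z^{p}$, resp.\ $z(l)^{p}$, the dichotomy of Theorem~\ref{thm:gradedgroupoids} becomes the one stated here.

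For the ``if'' direction I would fix, for every isotropy element $g=(x,p,x)\in\G^x_x$, one representation $p=m_g-n_g$ and set $W_g:=Z(U_g,m_g,n_g,U_g)$, extending this choice arbitrarily to all of $\G^x_x$; note that distinct $g$ have distinct $p$ since $\Phi$ is injective on the isotropy. By Theorem~\ref{thm:gradedgroupoids} it suffices to verify its convergence criterion for this fixed family $(W_g)_g$, and given $U$, $\eps$ and a finite $F\subset\G^x_x$, the hypothesis applied to the finite set of pairs $\{(m_g,n_g):g\in F\}$ (which indeed satisfy $\sigma^{m_g}(x)=\sigma^{n_g}(x)$), after shrinking $U$ inside $\bigcap_{g\in F}U_g$, produces exactly the required point $y$.

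The ``only if'' direction is where the single genuine obstacle appears: a fixed isotropy element $g=(x,p,x)$ typically admits \emph{many} representations $(m,n)$, and the corresponding detection sets $\{y:\sigma^m(y)=\sigma^n(y)\}$ differ, whereas Theorem~\ref{thm:gradedgroupoids} pins down only one bisection per element. A finite set of pairs in the statement may list several representations of one $p$, and when the character difference at $p$ is $\ge\eps$ one must avoid \emph{all} of their detection sets simultaneously. I would resolve this by directedness of the representations: given $(m,n)$ and $(m',n')$ of the same $p$ at $x$, put $M:=m\vee m'$ and $N:=M-p$. The domain axiom $\Dom(\sigma^m)\cap\Dom(\sigma^{m'})=\Dom(\sigma^{m\vee m'})$ together with the cocycle identity shows $(M,N)$ is again a valid representation of $p$ at $x$, and since $\sigma^M=\sigma^{M-m}\circ\sigma^m$ its detection set contains those of $(m,n)$ and $(m',n')$. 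Iterating over the finitely many listed representations of each occurring $p$ yields a single representation $(M_p,N_p)$ whose detection set dominates all the others. Applying Theorem~\ref{thm:gradedgroupoids} to the bisections $W_{g_p}:=Z(U,M_p,N_p,U)$ attached to the distinct elements $g_p=(x,p,x)$, and shrinking $U$ so that all relevant iterates are defined there, convergence furnishes for large $l$ a point $y\in[x(l)]\cap U$ avoiding each $W_{g_p}$-detection whenever $|z^{p}-z(l)^{p}|\ge\eps$; by the domination of detection sets this same $y$ then satisfies the stated dichotomy for every listed pair $(m,n)$. Everything outside this refinement step is routine bookkeeping, so I expect the comparison of detection sets for different representations of one isotropy element to be the only point requiring care.
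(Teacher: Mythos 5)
Your proposal is correct and follows exactly the route the paper intends: the paper states Theorem~\ref{thm:DR} without proof as an immediate specialization of Theorem~\ref{thm:gradedgroupoids} via the canonical bisections $Z(U,m,n,U)$, with the remark after the theorem that one may take the differences $m(i)-n(i)$ to be distinct implicitly covering the one subtlety you isolate. Your treatment of multiple representations $(m,n)$ of the same $p$ — replacing them by the dominating pair $(m\vee m', (n\vee n'))$ whose coincidence set contains the others on a suitably shrunken neighbourhood — is exactly the right way to make that remark rigorous.
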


Here, for $z\in\T^k$ and $n\in\Z_+^k$, we let $z^n:=\prod^k_{j=1}z_j^{n_j}$. We also use the convention that the inequality $\sigma^{m}(y)\ne\sigma^{n}(y)$ is true if $y\notin \Dom(\sigma^{m})\cap\Dom(\sigma^{n})$. Note also that we may require in addition that the elements $m(i)-n(i)$ ($1\le i\le N$) are all different.

\smallskip

Ultimately, one might be more interested in understanding the closed subsets of $\Prim C^*(\G_\sigma)$ than convergence in this space, since such sets are in a bijective correspondence with the ideals of $C^*(\G_\sigma)$. Recall that, as we already observed in Remark~\ref{rem:invariant-open}, the map $\Ind\colon\Stab(\G_\sigma)\dach\to \Prim C^*(\G_\sigma)$ establishes a bijection between the open $\G_\sigma$-invariant subsets of $\Stab(\G_\sigma)\dach$ and the open subsets of $\Prim C^*(\G_\sigma)$. As a consequence, we get the following result.


\begin{thm} \label{thm:RDopen}
Assume $\Z_+^k\curvearrowright^\sigma X$ is a partial action of $\Z^k_+$ by local homeomorphisms on a second countable Hausdorff locally compact space~$X$. Consider the corresponding Deaconu--Renault groupoid $\G_\sigma$. For a subset $Y\subset X\times\T^k$, put $Y_{x}:=\{z\in \T^k: (x,z) \in Y\}$. Then~$Y$ is the pre-image of a $\G_\sigma$-invariant open subset of $\Stab(\G_\sigma)\dach$ under the map $q\colon X\times\T^k\to\Stab(\G_\sigma)\dach$ given by~\eqref{eq:q-on-Stab} if and only if $Y$ satisfies the following conditions:
\begin{enumerate}
\item[(i)] if $x\in\Dom(\sigma^n)$ for some $n\in\Z_+^k$, then $Y_{\sigma^n(x)}=Y_x$;
\item[(ii)] if $x\in X$ and $z\in Y_x$, then there exist $\varepsilon>0$, a neighbourhood $U\subset X$ of $x$ and a finite set $\{(m(i),n(i))\}_{i=1}^{N} \subset \Z_+^{k} \times \Z_+^{k}$, with $\sigma^{m(i)}(x)=\sigma^{n(i)}(x)$  for all $i$, such that the following property holds: for each $y\in U$, we have
$$
T_y:=\{ w \in \T^{k} : |z^{m(i)-n(i)}-w^{m(i)-n(i)}|<\varepsilon \text{ for all } i \text{ with } \sigma^{m(i)}(y)=\sigma^{n(i)}(y)\}\subset Y_y.
$$
\end{enumerate}

Therefore we get a bijection $Y\mapsto \bigcap_{(x,z)\in (X\times\T^k)\setminus Y} \ker\pi_{(x,z)}$ between the subsets $Y\subset X\times \T^{k}$ as above and the ideals of $C^{*}(\G_\sigma)$.
\end{thm}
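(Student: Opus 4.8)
The plan is to first prove the topological statement---that a subset $Y\subset X\times\T^k$ satisfies (i) and (ii) if and only if $Y=q^{-1}(V)$ for a $\G_\sigma$-invariant open $V\subset\Stab(\G_\sigma)\dach$---and then to read off the ideal parametrisation formally. Throughout I write $\psi_z:=z\circ\Phi|_{\Gxx}$, so $q(x,z)=(x,\psi_z)$ and, unwinding the definitions of $\pi_{(x,z)}$ and of $\Ind$, one has $\Ind(q(x,z))=\ker\pi_{(x,z)}$. Two preliminary remarks I would record: every character of $\Gxx$ lifts through the injective map $\Phi|_{\Gxx}$ to a character of $\Z^k$, so $q$ is surjective (using divisibility of $\T$); and the $\G_\sigma$-orbit of $(x,z)$ in $X\times\T^k$ is $[x]\times\{z\}$, whence (i) is precisely the assertion that $Y$ is $\G_\sigma$-invariant.

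For the forward implication, suppose $Y=q^{-1}(V)$ with $V$ open and $\G_\sigma$-invariant. Invariance of $V$ and equivariance of $q$ give that $Y$ is $\G_\sigma$-invariant, which is (i). For (ii) I would invoke Remark~\ref{rem:invariant-open}: it produces, for $(x,z)\in Y$, a neighbourhood $U$ of $x$, an $\eps>0$ and a finite $F\subset\IsoGx{x}^\circ_x$ with $\tilde\UU^z_x(U,\eps,(W_g)_{g\in F})\subset q^{-1}(V)$. Since $F\subset\Gxx$, each $g\in F$ is represented by a pair $(m(i),n(i))$ with $\Phi(g)=m(i)-n(i)$ and $\sigma^{m(i)}(x)=\sigma^{n(i)}(x)$, and I take this finite set in (ii). The only thing to verify is that $w\in T_y$ forces $(y,w)\in\tilde\UU^z_x(U,\eps,(W_g)_{g\in F})$: if $W_g\cap\IsoGx{y}^\circ_y=\emptyset$ the defining condition is automatic, while if $W_g\cap\IsoGx{y}^\circ_y\ne\emptyset$ then a fortiori $\sigma^{m(i)}(y)=\sigma^{n(i)}(y)$ (as $\IsoGx{y}^\circ_y\subset\G^y_y$), so the constraint defining $T_y$ yields $|z^{m(i)-n(i)}-w^{m(i)-n(i)}|<\eps$. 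Hence $T_y\subset Y_y$, which is (ii). The point worth flagging here is the passage from the essential isotropy of Remark~\ref{rem:invariant-open} to the full isotropy appearing in (ii), which is exactly the inclusion $\IsoGx{y}^\circ_y\subset\G^y_y$.

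For the converse, assume (i) and (ii) and set $V:=q(Y)$. The key observation is that (ii) applied at $y=x$ already gives $q$-saturation: all chosen pairs satisfy $\sigma^{m(i)}(x)=\sigma^{n(i)}(x)$, so every $z'$ with $z'=z$ on $\Phi(\Gxx)$ has $z'^{\,m(i)-n(i)}=z^{m(i)-n(i)}$ and thus $z'\in T_x\subset Y_x$; since the fibre of $q$ over $q(x,z)$ is exactly $\{(x,z'):z'=z\text{ on }\Phi(\Gxx)\}$, this gives $q^{-1}(q(Y))=Y$, so $q^{-1}(V)=Y$, and $V$ is $\G_\sigma$-invariant by (i). The main work, and the step I expect to be the principal obstacle, is to show that $V$ is open. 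Fixing $(x,z)\in Y$ with the pairs from (ii), choose isotropy elements $g_i\in\Gxx$ with $\Phi(g_i)=m(i)-n(i)$ and bisections $W_{g_i}=Z(U,m(i),n(i),U)$, so $\Phi(W_{g_i})=\{m(i)-n(i)\}$; after shrinking $U$ I would show that the basic neighbourhood $\UU^{\psi_z}_x(U,\eps,(W_{g_i})_{i=1}^N)$ of $q(x,z)$ from Lemma~\ref{lem:Char-topology} lies in $V$. Given $(y,\eta)$ in this set, $\eta$ factors through $\Phi$ as a character of $\Phi(\G^y_y)\subset\Z^k$, which by divisibility of $\T$ extends to some $w\in\T^k$ with $q(y,w)=(y,\eta)$; the neighbourhood condition together with $\Phi(W_{g_i})=\{m(i)-n(i)\}$ gives $|z^{m(i)-n(i)}-w^{m(i)-n(i)}|<\eps$ for every $i$ with $\sigma^{m(i)}(y)=\sigma^{n(i)}(y)$, so $w\in T_y\subset Y_y$ and hence $(y,\eta)=q(y,w)\in V$. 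This establishes openness of $V$ and finishes the topological correspondence, the character-extension step being where the injective grading is genuinely used.

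Finally, to obtain the parametrisation of ideals I would compose three bijections $Y\leftrightarrow V\leftrightarrow W\leftrightarrow I$. The first is the correspondence just proved, with $q$ and $q^{-1}$ mutually inverse (using surjectivity of $q$ and saturation). The second is the bijection between $\G_\sigma$-invariant open subsets of $\Stab(\G_\sigma)\dach$ and open subsets $W\subset\Prim C^*(\G_\sigma)$ supplied by Corollaries~\ref{cor:kappa} and~\ref{cor:open}, under which $q(x,z)\in V\iff\ker\pi_{(x,z)}\in W$. The third sends an open set $W$ to the ideal $I$ with $\operatorname{hull}(I)=\Prim C^*(\G_\sigma)\setminus W$, using that in a C$^*$-algebra every ideal is the intersection of the primitive ideals containing it. Since $(x,z)\mapsto\ker\pi_{(x,z)}$ is surjective (Theorem~\ref{thm:DR}) and $\Ind(q(x,z))=\ker\pi_{(x,z)}$, a primitive ideal $P=\ker\pi_{(x,z)}$ lies in $\operatorname{hull}(I)$ exactly when $(x,z)\notin Y$, so $I=\bigcap_{(x,z)\notin Y}\ker\pi_{(x,z)}$, which is the asserted bijection.
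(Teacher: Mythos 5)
Your proposal is correct, and the heart of it---the converse direction---follows the paper's own route: identify condition (i) with $\G_\sigma$-invariance, observe that (ii) applied at $y=x$ forces $q$-saturation, and then check that with the bisections $W_{g_i}=Z(U,m(i),n(i),U)$ condition (ii) is exactly the statement that the basic neighbourhood $\UU_x^{\chi_z}(U,\eps,(W_g)_{g})$ from Lemma~\ref{lem:Char-topology} is contained in $q(Y)$; your character-extension step via divisibility of $\T$ is the right way to untangle this. Where you diverge is the forward direction: the paper treats both implications symmetrically through the same equivalence ``(ii) $\Leftrightarrow$ $\UU_x^{\chi_z}(U,\eps,(W_g)_g)\subset q(Y)$'', using nothing beyond the definition of the topology on $\Stab(\G_\sigma)\dach$, whereas you invoke Remark~\ref{rem:invariant-open}, which rests on Corollary~\ref{cor:graded-groupoids} and hence on the full force of Theorem~\ref{thm:main} and amenability of $\G_\sigma$. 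Your passage from essential to full isotropy ($\IsoGx{y}^\circ_y\subset\G^y_y$, so the $T_y$-constraints are at least as strong as those defining $\tilde\UU^z_x$) is sound, and as a by-product you actually obtain the sharper form of condition (ii) that the paper records separately in Remark~\ref{rem:Yy}; the cost is that a purely formal implication is made to depend on the deepest result of the paper. The concluding chain of bijections $Y\leftrightarrow V\leftrightarrow W\leftrightarrow I$ is exactly what the paper leaves implicit when it appeals to Remark~\ref{rem:invariant-open} and the hull--kernel correspondence, and your identification of the resulting ideal as $\bigcap_{(x,z)\notin Y}\ker\pi_{(x,z)}$ is correct.
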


Here we use the convention that if $N=0$ or $\sigma^{m(i)}(y)\ne\sigma^{n(i)}(y)$ for all $i$, then condition (ii) says that $Y_y=\T^k$. Note again that we may require in addition that the elements $m(i)-n(i)$ ($1\le i\le N$) are all different.

\bp
Condition (i) says simply that $Y$ is $\G_\sigma$-invariant. Therefore we need to show only that for every $\G_\sigma$-invariant subset $Y\subset X\times\T^k$ condition (ii) is satisfied if and only if $Y=q^{-1}(q(Y))$ and~$q(Y)$ is open in $\Stab(\G_\sigma)\dach$. Assuming that $Y=q^{-1}(q(Y))$, recall from Lemma~\ref{lem:Char-topology} that a basis of neighbourhoods of $q(x,z)$ is given by the sets $\UU_x^{\chi_z}(U,\eps,(W_g)_{g\in F})$, where we denote by~$\chi_z$ the character of $(\G_\sigma)^x_x$ defined by~$z$. Taking as our bisections $W_g$ sets $Z(U,m,n,U)$ and untangling the definitions one sees that condition (ii) is equivalent to saying that $\UU_x^{\chi_z}(U,\eps,(W_g)_{g\in F})\subset q(Y)$.

Finally, observe that once condition (ii) is satisfied, applying it to $y=x$ we see that with every $z\in Y_x$ the set $Y_x$ contains all elements $w\in\T^k$ that define the same character as~$z$ on~$(\G_\sigma)^x_x$. Therefore condition (ii) does imply that $Y=q^{-1}(q(Y))$.
\ep

\begin{remark}\label{rem:Yy}
By Remark~\ref{rem:invariant-open}, given a $\G_\sigma$-invariant open set $Y=q^{-1}(q(Y))$, we could have used in the above proof the larger (and not necessarily open) sets $q(\tilde\UU_x^{z}(U,\eps,(W_g)_{g\in F}))$ from that remark instead of $\UU_x^{\chi_z}(U,\eps,(W_g)_{g\in F})$ as a basis of neighbourhoods of $q(x,z)$. The conclusion would be that in condition (ii) of Theorem~\ref{thm:RDopen} we may in addition require that $m(i)-n(i)\in\Phi(\IsoGx{x}^\circ_x)$ for all~$i$ and in the definition of $T_y$ we may consider only~$i$ such that $m(i)-n(i)\in\Phi(\IsoGx{y}^\circ_y)$, where $\G:=\G_\sigma$.
\end{remark}

\subsection{Singly generated dynamical systems}\label{ssec:Katsura}
In this subsection we consider the Deaconu--Renault groupoids defined by partial actions of $\Z_+$, that is, when the action is defined by one partially defined local homeomorphism. In this case the topology on the primitive spectrum has been recently described by Katsura~\cite{Kat}. Since he does not use groupoids, in order to compare his results with ours let us first say a few words about the connection between the two settings.

\smallskip

Let $X$ be a Hausdorff locally compact space and $\sigma$ be a local homeomorphism of $\Dom(\sigma)\subset X$ onto $\Ran(\sigma)\subset X$. Katsura associates a C$^{*}$-algebra $C^{*}(X, \sigma)$ to such a local homeomorphism by considering a universal C$^{*}$-algebra generated by the images of a $*$-homomorphism $t^{0}: C_{0}(X) \to C^{*}(X, \sigma)$ and a linear map $t^{1}:C_{c}(\Dom(\sigma)) \to C^{*}(X, \sigma)$ satisfying certain conditions \cite{Kat}*{Definition 1.4}. For each pair $(x, z) \in X\times \T$ a representation $\pi_{(x, z)}: C^{*}(X, \sigma) \to B(\ell^{2}([x]))$ is then introduced by constructing a representation $C_{0}(X) \to B(\ell^{2}([x]))$ and a linear map $C_{c}(\Dom(\sigma)) \to B(\ell^{2}([x]))$ \cite{Kat}*{Definitions~2.3, 2.6}.

The C$^*$-algebra $C^{*}(X, \sigma)$ is known to be isomorphic to the C$^*$-algebra of the Deaconu--Renault groupoid~$\G_\sigma$ associated to~$\sigma$. Namely, the isomorphism arises from the canonical isomorphism $C_0(X)\cong C_0(\G^{(0)}_\sigma)$ and the linear map $t^{1}\colon C_{c}(\Dom(\sigma)) \to C^{*}(\G_\sigma)$, given by
$$
t^{1}(f)(x,n,y) =
\begin{cases}
f(x),  &  \text{if } n=1, \ y=\sigma(x), \\
0,  &  \text{otherwise}.
\end{cases}
$$
It is then easy to check that under this isomorphism Katsura's representations $\pi_{(x, z)}$ become exactly the representations of $C^*(\G_\sigma)$ defined by~\eqref{eq:pi(x,z)}.

\smallskip

The main result of~\cite{Kat} is a description of the closed subsets of $\Prim C^*(X,\sigma)$ in terms of the representations $\pi_{(x,z)}$. As we discussed in Section~\ref{subsecRD}, assuming that $X$ is second countable, the map $\Ind\colon\Stab(\G_\sigma)\dach\to \Prim C^*(\G_\sigma)$ establishes a bijection between the closed $\G_\sigma$-invariant subsets of $\Stab(\G_\sigma)\dach$ and the closed subsets of $\Prim C^*(\G_\sigma)$. Therefore from this perspective \cite{Kat} describes the pre-images of closed $\G_\sigma$-invariant subsets of $\Stab(\G_\sigma)\dach$ in $X\times\T$ under the map $q\colon X\times\T\to\Stab(\G_\sigma)\dach$ given by~\eqref{eq:q-on-Stab}. We are going to show how to obtain this description directly from the definition of $\Stab(\G_\sigma)\dach$. We will need the following notation to formulate the result.

\begin{defn}[\cite{Kat}*{Definition 2.13}]
Let $x\in X$. If there exists $p\ge1$ such that $\sigma^{n+p}(x)=\sigma^{n}(x)$ for some $n\ge0$, then we say that $x$ is periodic, and we define its period $p(x)$ to be the smallest possible such $p$. If $x$ is periodic, we denote by $l(x)$ the smallest number $l\ge0$ satisfying $\sigma^{p(x)+l}(x)=\sigma^{l}(x)$. If $x$ is not periodic, we set $p(x)=l(x)=\infty$.
\end{defn}

Note that if $x$ is periodic, then $x$ lies in the domain of definition of $\sigma^n$ for all $n\ge0$ and $l(x)$ is the number of elements that appear only finitely many times in the sequence $x, \sigma(x), \sigma^{2}(x),\dots $. It follows that $l(x)$ is the smallest number $l\ge0$ such that $\sigma^{p+l}(x)=\sigma^{l}(x)$ for some $p\ge1$.

\begin{thm}[{cf.~\cite{Kat}*{Theorem~7.8}}]\label{thm:Kat}
Assume $X$ is a Hausdorff  locally compact space and $\sigma\colon \Dom(\sigma)\subset X\to\Ran(\sigma)\subset X$ is a partially defined local homeomorphism of $X$. Consider the corresponding Deaconu--Renault groupoid~$\G_\sigma$. For a subset $Y\subset X\times\T$, put $Y_{x}:=\{z\in \T:(x,z) \in Y\}$. Then~$Y$ is the pre-image of a $\G_\sigma$-invariant closed subset of $\Stab(\G_\sigma)\dach$ under the map $q\colon X\times\T\to\Stab(\G_\sigma)\dach$ given by~\eqref{eq:q-on-Stab} if and only if $Y$ satisfies the following conditions:
\begin{enumerate}
\item[(i)] $Y$ is a closed subset of $X\times \T$ with respect to the product topology;
\item[(ii)] $Y_{x} = Y_{\sigma(x)}$ for all $x\in \Dom(\sigma)$;
\item[(iii)] if $Y_{x_{0}}\neq \emptyset, \T$, then $x_{0}$ is periodic, $e^{2 \pi i/ p(x_{0})} Y_{x_{0}} = Y_{x_{0}}$ and there exists a neighbourhood~$V$ of~$x_{0}$ such that for all $x\in V$ with $l(x)\neq l(x_{0})$ we have $Y_{x}=\emptyset$.
\end{enumerate}
\end{thm}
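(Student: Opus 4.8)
The plan is to unwind the statement "$Y=q^{-1}(C)$ for a closed $\G_\sigma$-invariant $C\subset\Stab(\G_\sigma)\dach$" into three independent requirements on $Y$: that $Y$ be $\G_\sigma$-invariant, that $Y$ be $q$-saturated (i.e.\ $Y=q^{-1}(q(Y))$), and that $q(Y)$ be closed in $\Stab(\G_\sigma)\dach$. Since $q$ is continuous and $\G_\sigma$-equivariant, invariance of $C$ is equivalent to invariance of $Y$; as the orbit relation of $\G_\sigma$ is generated by $x\sim\sigma(x)$, invariance of $Y$ is precisely condition~(ii). Continuity of $q$ makes necessity of~(i) immediate. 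Next I would compute the isotropy groups and their characters in the Deaconu--Renault picture: for aperiodic $x$ the group $\Gxx$ is trivial, while for periodic $x$ one has $\Gxx\cong\Z$ with $\Phi(\Gxx)=p(x)\Z$, generated by the element realized through $\sigma^{l(x)+p(x)}(x)=\sigma^{l(x)}(x)$, so that the character attached to $(x,z)$ depends only on $z^{p(x)}$. Hence $q$-saturation means exactly that $Y_x\in\{\emptyset,\T\}$ when $x$ is aperiodic and that $Y_x$ is invariant under multiplication by $e^{2\pi i/p(x)}$ when $x$ is periodic; combined with~(i) this gives the first two clauses of~(iii).

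The heart of the argument is to show, for $Y$ already invariant, saturated and closed in the product topology, that $q(Y)$ is closed in $\Stab(\G_\sigma)\dach$ if and only if the neighbourhood clause of~(iii) holds. Here I would work with the explicit neighbourhood basis $\UU_{x_0}^{\chi_z}(U,\eps,(W_g)_{g\in F})$ from Lemma~\ref{lem:Char-topology}, taking the $W_g$ to be the canonical bisections $Z(U_0,l_0+jp_0,l_0,U_0)$, where $l_0=l(x_0)$ and $p_0=p(x_0)$. The key geometric observation is that such a bisection meets $\G^x_x$ at a point $x$ near $x_0$ precisely when $\sigma^{l_0+jp_0}(x)=\sigma^{l_0}(x)$, and that for $l(x)>l_0$ the point $\sigma^{l_0}(x)$ is not yet periodic, so every such bisection with $j\neq 0$ misses $\G^x_x$ and the character-matching condition becomes vacuous.

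This drives both directions. For necessity: if the neighbourhood clause fails one finds points $x\to x_0$ with $l(x)\neq l_0$ and $Y_x\neq\emptyset$ along which the relevant isotropy "disappears" (because the bisections eventually miss $\G^x_x$), so that for any fixed $z\notin Y_{x_0}$ the points $q(x,w)$ with $w\in Y_x$ land in every basic neighbourhood of $q(x_0,z)$, forcing $q(x_0,z)\in\overline{q(Y)}$ and contradicting closedness. For sufficiency: the clause confines the support of $Y$ near $x_0$ to the single level $l(x)=l_0$, on which the bisection does meet $\G^x_x$, so choosing $\eps$ small (using that $z^{p_0}$ is bounded away from the compact set $\{w^{p_0}:w\in Y_{x_0}\}$) yields a basic neighbourhood of $q(x_0,z)$ disjoint from $q(Y)$. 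The aperiodic case and the case $Y_{x_0}=\emptyset$ are handled separately and more easily, using~(i) together with the empty-$F$ neighbourhoods of $q(x_0,z)$, which reduce to the product topology.

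I expect the main obstacle to be the reconciliation of the clean $l(x)$-based dividing line with period divisibility: a point $x$ near $x_0$ with $l(x)=l_0$ need not a priori satisfy $p(x)\mid p_0$, in which case the generator's bisection could still miss $\G^x_x$, and conversely points with $l(x)<l_0$ but $p(x)$ a large period coprime to $p_0$ can make all bisections in any fixed finite $F$ miss. Resolving this requires a careful local analysis of the return map $\sigma^{p_0}$ near the genuinely periodic point $\sigma^{l_0}(x_0)$, distinguishing discrete from non-discrete orbits; I would control it by invoking the characterization $\IsoGx{x_0}^\circ_{x_0}=\Gxx$ from Lemma~\ref{lem:essential=Gxx} together with Corollary~\ref{cor:graded-groupoids2}, after first using invariance to replace $x_0$ by a representative of its orbit with the most convenient value of $l$. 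In the second countable case one could alternatively complement $Y$ and quote Theorem~\ref{thm:RDopen}, but since the present statement makes no countability assumption I would carry out the direct argument throughout.
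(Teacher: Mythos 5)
Your overall architecture is the same as the paper's: reduce to invariance plus ``$Y=q^{-1}(q(Y))$ and $q(Y)$ closed'', identify $q$-saturation with the first two clauses of (iii) via $\Phi(\Gxx)=p(x)\Z$, and then analyse closedness through the canonical bisections $Z(U,l_0+jp_0,l_0,U)$. Up to that point the proposal is sound. But the step you yourself flag as ``the main obstacle'' is exactly where the proof lives, and your proposed resolution does not work. Concretely, in your necessity argument you claim that for approximating points $x\to x_0$ with $l(x)\neq l_0$ and $Y_x\neq\emptyset$ ``the relevant isotropy disappears''. That is true when $l(x)>l_0$ (then $\sigma^{l_0+jp_0}(x)=\sigma^{l_0}(x)$ would force $l(x)\le l_0$) and when $p(x)\to\infty$, but it is false when $l(x)<l_0$ with bounded periods: e.g.\ if $p(x)=p_0$ and $l(x)=0$ then $\sigma^{l_0+jp_0}(x)=\sigma^{l_0}(x)$ for all $j$, so every bisection in $F$ \emph{does} meet $\G^x_x$ and imposes a nonvacuous character condition. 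The missing observation is elementary: after passing to a subnet with $l(x_i)=l$ and $p(x_i)=p$ constant, the identity $\sigma^{l+p}(x_i)=\sigma^l(x_i)$ passes to the limit and yields $\sigma^{l+p}(x_0)=\sigma^l(x_0)$, whence $l(x_0)\le l$ and $p(x_0)\mid p$. This single continuity argument simultaneously kills the case $l<l_0$ and resolves your divisibility worry (one matches characters using the bisection $W_p$ for the period $p$ of the approximating points, not $W_{p_0}$, and concludes $z_i^p\to z^p$). No ``local analysis of the return map'' or case distinction between discrete and non-discrete orbits is needed.

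The second unresolved case is unbounded periods among nearby points with $Y_x\neq\emptyset$ (your ``large period coprime to $p_0$'' scenario, which makes every finite family of bisections vacuous). Here the argument is not topological-dynamical at all: rotation invariance $e^{2\pi i/p(x_i)}Y_{x_i}=Y_{x_i}$ together with $p(x_i)\to\infty$ and closedness of $Y$ in the product topology forces $Y_{x_0}=\T$, which is excluded by hypothesis. Neither of the tools you cite addresses either case: Lemma~\ref{lem:essential=Gxx} describes invariant subsets of the unit space, and Corollary~\ref{cor:graded-groupoids2} concerns convergence in $\Prim C^*(\G)$ under second countability and amenability, which is both off-target (the theorem is a statement about $\Stab(\G_\sigma)\dach$ alone) and inconsistent with your stated intention to avoid countability hypotheses. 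With the two elementary arguments above inserted in place of the deferred ``careful local analysis'', your outline becomes essentially the paper's proof.
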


Therefore if $X$ is in addition second countable, then this theorem together with Corollary~\ref{cor:kappa} give a classification of ideals of $C^*(\G_\sigma)\cong C^*(X,\sigma)$ in terms of subsets of $X\times\T$, recovering in this case the result of~\cite{Kat}.

\smallskip

Before we turn to the proof, let us make a few observations about the topology on $\Stab(\G_\sigma)\dach$.

\begin{lemma} \label{lem:convSGDS}
Let $(x,z)\in X\times\T$ and $((x_i,z_i))_i$ be a net in $X\times\T$ such that $x_i\to x$. Then
\begin{enumerate}
\item if $p(x) = \infty$ or $p(x_{i}) \to \infty$, then $q(x_i,z_i)\to q(x,z)$ in $\Stab(\G_\sigma)\dach$;
\item if $l(x_{i})> l(x)$ for all $i$, then $q(x_i,z_i)\to q(x,z)$;
\item if $q(x_i,z_i)\to q(x,z)$, $x$ is periodic, $p(x_{i})=p$ and $l(x_{i})=l(x)$ for all $i$ and some $p\ge1$, then $p(x)$ divides $p$ and $z_{i}^p\to z^p$.
\end{enumerate}
\end{lemma}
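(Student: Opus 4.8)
The plan is to reduce everything to the concrete description of convergence in $\Stab(\G_\sigma)\dach$ obtained by specializing Corollary~\ref{cor:char-convergence} (equivalently Lemma~\ref{lem:Char-topology}) to the Deaconu--Renault setting. Using the canonical bisections $Z(U,m,n,U)$ and the grading $\Phi(x,m-n,y)=m-n$, the convergence $q(x_i,z_i)\to q(x,z)$ means precisely that $x_i\to x$ and that for every pair $(m,n)$ with $\sigma^m(x)=\sigma^n(x)$ and every $\eps>0$, eventually either $\sigma^m(x_i)\ne\sigma^n(x_i)$ or $|z^{m-n}-z_i^{m-n}|<\eps$. Since the neighbourhood base of Lemma~\ref{lem:Char-topology} is independent of the chosen bisections, I am free to represent each isotropy element of $x$ by a convenient pair $(m,n)$ of the \emph{same} grade; this freedom is the main device in parts (1) and (2). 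I will also repeatedly use the elementary fact that $\sigma^m(y)=\sigma^n(y)$ with $m>n$ forces $y$ to be periodic with $p(y)\mid(m-n)$, $p(y)\le m-n$, and $\min(m,n)\ge l(y)$, the latter because the repeated value $\sigma^n(y)$ must lie on the cycle.

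For part (1), if $p(x)=\infty$ then the isotropy group $(\G_\sigma)^x_x$ is trivial, so the second clause of the criterion is vacuous and convergence follows from $x_i\to x$ alone. If instead $p(x_i)\to\infty$, then for any pair $(m,n)$ with $\sigma^m(x)=\sigma^n(x)$ and $m>n$ (of which there are none unless $x$ is periodic), the equality $\sigma^m(x_i)=\sigma^n(x_i)$ would give $p(x_i)\le m-n$ by the elementary fact, which is impossible once $i$ is large. Thus the first alternative holds eventually and convergence follows.

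For part (2), I may assume $x$ is periodic, since otherwise $l(x)=\infty$ and the hypothesis is vacuous. Here I represent each isotropy element of positive grade $d$ (a multiple of $p(x)$) by the pair $(l(x)+d,\,l(x))$, so the associated bisection meets $(\G_\sigma)^{x_i}_{x_i}$ only if $\sigma^{l(x)+d}(x_i)=\sigma^{l(x)}(x_i)$. But this equality would place $\sigma^{l(x)}(x_i)$ on the cycle of $x_i$, forcing $l(x_i)\le l(x)$ and contradicting $l(x_i)>l(x)$. Hence the chosen bisection never meets the isotropy of $x_i$, the first alternative always holds, and convergence follows. The crucial point is the choice $n=l(x)$, which tethers the test directly to the entry time into the cycle.

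For part (3) the two conclusions are proved separately. To obtain $p(x)\mid p$, I use that $x_i$ is periodic with $p(x_i)=p$ and $l(x_i)=l(x)=:l$, so $\sigma^{l+p}(x_i)=\sigma^l(x_i)$ for every $i$; since $x$ is periodic it lies in every domain $\Dom(\sigma^n)$, the maps $\sigma^l$ and $\sigma^{l+p}$ are continuous there, and passing to the limit in the Hausdorff space $X$ yields $\sigma^{l+p}(x)=\sigma^l(x)$, whence $p(x)\mid p$ because $\sigma^l(x)$ lies on the cycle of $x$. For $z_i^p\to z^p$, note that $p\in p(x)\Z=\Phi((\G_\sigma)^x_x)$, so there is a genuine isotropy element of $x$ of grade $p$, which I represent by $(l+p,l)$; its bisection meets $(\G_\sigma)^{x_i}_{x_i}$ for \emph{every} $i$ (again because $\sigma^{l+p}(x_i)=\sigma^l(x_i)$), so the escape-to-emptiness alternative of the criterion is never available and the convergence hypothesis forces $|z^p-z_i^p|<\eps$ eventually. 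I expect the only genuinely delicate point across the three parts to be the limit argument in (3): one must check that $x$ and the relevant $x_i$ all lie in the open domain $\Dom(\sigma^{l+p})$ so that continuity of the iterates may legitimately be invoked, after which $\sigma^{l+p}(x)=\sigma^l(x)$ follows from Hausdorffness.
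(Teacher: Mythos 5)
Your proposal is correct and follows essentially the same route as the paper: both arguments specialize the convergence criterion of Corollary~\ref{cor:char-convergence} to the canonical bisections $Z(U,l(x)+d,l(x),U)$ anchored at $n=l(x)$, use that $\sigma^{m}(y)=\sigma^{n}(y)$ forces $p(y)\le m-n$ and $l(y)\le\min(m,n)$ for parts (1) and (2), and in part (3) pass to the limit in $\sigma^{l+p}(x_i)=\sigma^{l}(x_i)$ before reading off $z_i^p\to z^p$ from the bisection of grade $p$. The only (harmless) deviations are cosmetic: you justify the limit step in (3) slightly more carefully, and your claim that the grade-$p$ bisection meets $\G^{x_i}_{x_i}$ for \emph{every} $i$ should read ``for all $i$ large enough that $x_i$ lies in the chosen neighbourhood,'' exactly as in the paper.
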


\begin{proof}
Let us write $\G$ for $\G_\sigma$. If $p(x)=\infty$, then $\Gxx=\{x\}$ and we have $q(x_i,z_i)\to q(x,z)$ by Corollary~\ref{cor:char-convergence}. Assume now that $p(x)<\infty$. Then $\Phi(\Gxx)=p(x)\Z$. Consider the open bisections
$$
W_{mp(x)}:=
\begin{cases}
Z(U_m, l(x)+mp(x), l(x), U_m),& \  \text{if $m\geq 0$,} \\
Z(U_m, l(x), l(x)-mp(x), U_m), & \  \text{if $m< 0$,}
\end{cases}
$$
containing the elements of $\Gxx$, where $U_m\subset \Dom(\sigma^{l(x)+|m|p(x)})$ is a fixed open neighbourhood of~$x$ such that $\sigma^{l(x)+|m|p(x)}$ is injective on $U_m$. If $m\ne0$ and $p(x_{i}) \to \infty$, then $W_{mp(x)} \cap \G_{x_{i}}^{x_{i}}=\emptyset$ for all sufficiently large $i$, so by Corollary~\ref{cor:char-convergence} we have $q(x_i,z_i)\to q(x,z)$. This proves (1).

If $W_{mp(x)} \cap \G_{x_{i}}^{x_{i}}\neq \emptyset$ for some $m\ne0$ and $i$, then $\sigma^{l(x)+|m|p(x)}(x_{i})=\sigma^{l(x)}(x_{i})$. By the observation before Theorem~\ref{thm:Kat} it follows that $l(x_{i}) \leq l(x)$. Therefore if $l(x_{i})> l(x)$ for all $i$, then $W_{mp(x)} \cap \G_{x_{i}}^{x_{i}}= \emptyset$ for all $m\ne0$ and we again get $q(x_i,z_i)\to q(x,z)$. This proves (2).

In order to prove (3), notice that since by assumption we have $\sigma^{p+l(x)}(x_{i}) = \sigma^{l(x)}(x_{i})$ for all~$i$, we get $\sigma^{p+l(x)}(x) = \sigma^{l(x)}(x)$. Hence $p(x)$ divides $p$ and $\G_{x_{i}}^{x_{i}} \cap W_{p} \neq \emptyset$ for all $i$ sufficiently large. Then $z_i^p\to z^p$ by Corollary~\ref{cor:char-convergence}.
\end{proof}

\bp[Proof of Theorem~\ref{thm:Kat}]
We again write $\G$ for $\G_\sigma$. Condition (ii) in the statement of the theorem is equivalent to $\G$-invariance of $Y$. Since the map $q\colon X\times\T\to\Stab(\G)\dach$ is $\G$-equivariant, it follows that in order to prove the theorem it suffices to show that the pre-images of closed sets are characterized by conditions (i) and (iii).

\smallskip

Assume first that $Y \subset X\times \T$ is the pre-image of a closed set. Since $q\colon X\times\T\to\Stab(\G_\sigma)\dach$ is continuous, condition (i) is obviously satisfied.
Assume $x_0\in X$ is such that $Y_{x_{0}} \neq \emptyset, \T$. If the group $\G_{x_{0}}^{x_{0}}$ was trivial, all numbers $z\in \T$ would induce the same character on $\G_{x_{0}}^{x_{0}}$ and we would have $Y_{x_{0}} =\T$. It follows that $\G_{x_{0}}^{x_{0}}$ is nontrivial and hence $x_{0}$ is periodic. Since $\Phi(\G_{x_{0}}^{x_{0}}) = p(x_{0}) \Z$, the numbers $e^{2 \pi i/ p(x_{0})} z$ and $z$ define the same character on $\Phi(\G_{x_{0}}^{x_{0}})$ for all $z\in\T$, which implies that $e^{2 \pi i/ p(x_{0})} Y_{x_{0}} = Y_{x_{0}}$.

To verify the last condition in (iii), assume for a contradiction that one can find a net $(x_{i})_i$ with $x_{i} \to x_{0}$, $l(x_{i})\neq l(x_{0})$ and $Y_{x_{i}} \neq \emptyset$ for all $i$. The net $(p(x_i))_i$ must be eventually bounded, since otherwise we would get $Y_{x_{0}}=\T$ by Lemma \ref{lem:convSGDS}(1). Therefore by passing to a subnet we may assume that $p(x_{i})=p$ for all $i$ and some $p\ge1$. By passing to a subnet we can then also assume that either $l(x_{i})=l$ for all $i$ and some $l<l(x_0)$ or $l(x_{i})>l(x_{0})$ for all $i$. If $l(x_{i})>l(x_{0})$ for all~$i$, then Lemma \ref{lem:convSGDS}(2) implies that $Y_{x_{0}}=\T$, giving a contradiction. Therefore $l(x_{i})=l<l(x_0)$ for all~$i$. Then $\sigma^{l+p}(x_{i})=\sigma^{l}(x_{i})$, and hence by continuity $\sigma^{l+p}(x_{0})=\sigma^{l}(x_{0})$, contradicting that $l<l(x_{0})$. Thus we reach a contradiction in both cases, which proves that condition (iii) holds true for $Y$.

\smallskip

Conversely, assume that $Y\subset X \times \T$ satisfies conditions (i) and (iii). 
Assume $((x_j, z_j))_j$ is a net in~$Y$ such that $q(x_j,z_j)\to q(x,z)$ for some $(x,z)\in X\times\T$. We need to show that $(x,z)\in Y$, as then we can conclude that $q(Y)$ is closed and $Y=q^{-1}(q(Y))$. By definition we have $x_j\to x$. By passing to a subnet we may assume that $z_j \to w$ for some $w\in \T$. Then $w\in Y_{x}$ by condition~(i). If $Y_{x}=\T$, then $(x,z) \in Y$ and we are done, so assume $Y_{x}\neq\T$. By condition (iii) we may then assume that $l(x_j)=l(x)$ for all $j$. The net $(p(x_j))_j$ must be eventually bounded, since otherwise using the property $e^{2 \pi i/ p(x_j)} Y_{x_j} = Y_{x_j}$, which follows from condition (iii), we would get $Y_x=\T$ by condition~(i). Therefore by passing to a subnet we may assume that $p(x_j)=p$ for all $j$ and some $p\ge1$.

By Lemma~\ref{lem:convSGDS}(3) we can conclude now that $z_j^p\to z^p$. Hence ${w}^p=z^p$, so $z=we^{2\pi i l/p}$ for some $l\ge0$. By condition (iii) we have $z_je^{2\pi i l/p} \in Y_{x_j}$ for all $j$, hence by condition (i) we get that $z= w e^{2\pi i l/p} \in Y_{x}$, proving that $(x,z)\in Y$.
\ep

\begin{remark}
If $X$ is not second countable, then the map $\Ind\colon \Stab(\G_\sigma)\dach\to\Prim C^*(\G_\sigma)$ might be nonsurjective, but Corollary~\ref{cor:Ind2} implies that we still have a one-to-one correspondence between the closed subsets of $\Ind(\Stab(\G_\sigma)\dach)\subset\Prim C^*(\G_\sigma)$ (in the relative topology) and the $\G_\sigma$-invariant closed subsets of $\Stab(\G_\sigma)\dach$. Therefore we get a one-to-one correspondence between the closed subsets of $\Ind(\Stab(\G_\sigma)\dach)$ and the subsets of $X\times\T$ satisfying conditions (i)--(iii) of Theorem~\ref{thm:Kat}. In order to obtain a full classification of closed subsets of $\Prim C^*(\G_\sigma)$ from this, as in \cite{Kat}, it remains to show that for every closed subset $A\subset \Prim C^*(\G_\sigma)$ we have
$$
A=\overline{A\cap \Ind(\Stab(\G_\sigma)\dach)}.
$$
Equivalently, every primitive ideal in $C^*(\G_\sigma)$ is an intersection of ideals $\ker\pi_{(x,z)}$. This is a property established in~\cite{Kat}*{Corollary~4.19}.
\end{remark}

\bigskip

\section{Graph algebras}\label{sec:Graph}

\subsection{\texorpdfstring{$1$}{1}-graphs}\label{ssec:1-graphs}
The primitive ideal space for Cuntz-Krieger C$^{*}$-algebras of directed graphs has been completely described by Hong and Szyma\'nski~\cite{HS}, see also~\cite{Gabe} for a correction. In this subsection we propose an equivalent description obtained entirely using the groupoid picture for these C$^*$-algebras. For the case of row-finite graphs without sources, see also~\citelist{\cite{CS}\cite{BCS}}.

\smallskip

Our starting point is an observation about singly generated dynamical systems. In order to formulate the result we introduce the following notation.

\begin{defn}\label{def:nice_points}
Given a partially defined local homeomorphism $\sigma\colon\Dom(\sigma)\to\Ran(\sigma)$ of a Hausdorff locally compact space $X$, denote by $A(\sigma)\subset X$ the set of aperiodic points and by $P_0(\sigma)\subset X$ the set of periodic points $x$ that are isolated in~$[x]$.
\end{defn}

In other words, $x\in A(\sigma)$ if and only if there are no $l\ge0$ and $p\ge1$ such that $\sigma^{l+p}(x)=\sigma^l(x)$, and $x\in P_0(\sigma)$ if and only if $\sigma^{l+p}(x)=\sigma^l(x)$ for some $l\ge0$ and $p\ge1$ and there exists a neighbourhood $U$ of $x$ such that if $\sigma^m(y)=\sigma^n(x)$ for some $y\in U$ and $m,n\ge0$, then $y=x$. Note that if $\sigma$ is injective, then $P_0(\sigma)$ is simply the set of periodic points.

Consider the associated Deaconu--Renault groupoid $\G_\sigma$.

\begin{lemma} \label{lem:essential=Gxx}
The sets $A(\sigma)$ and $P_0(\sigma)$ are $\G_\sigma$-invariant subsets of $\G^{(0)}_\sigma=X$, and their union is the set of points $x$ such that $\operatorname{Iso}((\G_\sigma)_{\overline{[x]}})^\circ_x=(\G_\sigma)^x_x$.
\end{lemma}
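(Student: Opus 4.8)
The plan is to handle the two assertions separately, and to reduce the identification of the essential-isotropy set to a concrete statement about the fixed points of $\sigma^{p(x)}$.

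First I would record the $\G_\sigma$-invariance of $A(\sigma)$ and $P_0(\sigma)$. Aperiodicity of $x$ is equivalent to triviality of $(\G_\sigma)^x_x$, and since the isotropy groups at points of one orbit are conjugate in $\G_\sigma$ (cf.\ Lemma~\ref{lem:UE}), this is an orbit property; hence $A(\sigma)$ is invariant. For $P_0(\sigma)$, periodicity is likewise an orbit property, so it remains to check that being isolated in $[x]$ is preserved along the orbit. Given $y\in[x]$, I would pick $g\in\G_\sigma$ with $s(g)=x$, $r(g)=y$ and an open bisection $W\ni g$; if $V\ni x$ satisfies $V\cap[x]=\{x\}$, then $r\big(W\cap s^{-1}(V)\big)$ is an open neighbourhood of $y$ meeting $[x]$ only in $y$, because the source of any of its elements lying over $[x]$ must be $x$ by injectivity of $s$ on $W$. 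The same style of argument gives the elementary fact, which I will use below, that $x$ is isolated in $[x]$ if and only if it is isolated in $\overline{[x]}$: a point of $\overline{[x]}\setminus\{x\}$ near $x$ is a limit of orbit points, which eventually enter a neighbourhood isolating $x$.

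Next I would reduce the main identity to a single, purely periodic generator. Since $\IsoGx{x}^\circ_x$ is a subgroup of $(\G_\sigma)^x_x\cong\Z$, the equality $\IsoGx{x}^\circ_x=(\G_\sigma)^x_x$ holds exactly when a generator lies in $\IsoGx{x}^\circ$. Conjugating by the element of $\G_\sigma$ joining $x$ to $x^\ast:=\sigma^{l(x)}(x)$, which preserves $\IsoGx{x}^\circ$ and does not change $\overline{[x]}$, I may assume $x$ is purely periodic, i.e.\ $\sigma^{p}(x)=x$ with $p=p(x)$. Using the bisections $Z(U,p,0,U)$ as a neighbourhood basis of the generator $(x,p,x)$ (as in Lemma~\ref{lem:convSGDS}), the condition $(x,p,x)\in\IsoGx{x}^\circ$ becomes: there is a neighbourhood $U$ of $x$ such that every $u\in U\cap\overline{[x]}$ with $\sigma^{p}(u)\in U$ satisfies $\sigma^{p}(u)=u$. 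The aperiodic case is then immediate, since there $(\G_\sigma)^x_x$ is trivial, so $A(\sigma)$ is contained in the target set.

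Finally I would prove, for purely periodic $x$, that this reformulated condition is equivalent to $x$ being isolated in $[x]$. The key observation — and the step I expect to carry the argument — is that the set of $u\in[x]$ fixed by $\sigma^{p}$ is exactly the finite cycle $\{x,\sigma(x),\dots,\sigma^{p-1}(x)\}$: any such $u$ is purely periodic of period dividing $p$, hence, period being an orbit invariant, of period exactly $p$ and lying in the cycle of $x$. Thus $x$ is isolated among the $\sigma^{p}$-fixed points of its orbit. If $x$ is isolated in $[x]$, then it is isolated in $\overline{[x]}$, and a small $U$ with $U\cap\overline{[x]}=\{x\}$ makes the condition hold, so $x$ lies in the target set. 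Conversely, if $x$ is not isolated in $[x]$, I would take orbit points $w\to x$ with $w\ne x$; shrinking any given $U$ to $U_0$ so that $U_0$ meets the finite (hence discrete) $\sigma^{p}$-fixed part of $[x]$ only in $x$ and so that $\sigma^{p}(U_0)\subset U$, every such $w\in U_0$ is a \emph{non}-fixed point of $\sigma^{p}$ with $w,\sigma^{p}(w)\in U$, yielding a non-isotropy element $(w,p,\sigma^{p}(w))$ of $\G_{\overline{[x]}}$ arbitrarily close to $(x,p,x)$. Hence the generator is not interior and the essential isotropy is proper. The main obstacle is exactly ruling out the scenario of nearby orbit points all fixed by $\sigma^{p}$; finiteness of the cycle is precisely what excludes it.
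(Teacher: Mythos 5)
Your proof is correct and follows essentially the same route as the paper: reduce via orbit‑invariance to the purely periodic case $\sigma^{p}(x)=x$, use the bisections $Z(U,p,0,U)$ as a neighbourhood basis of the generator of $\Gxx$, and exploit the fact that the $\sigma^{p}$-fixed points of $[x]$ form the finite cycle $\{x,\sigma(x),\dots,\sigma^{p-1}(x)\}$. The only difference is one of packaging — the paper argues directly that the isotropy condition forces $U\cap[x]=\{x\}$, while you prove the contrapositive by exhibiting non-isotropy elements $(w,p,\sigma^{p}(w))$ accumulating at the generator — and you spell out the invariance claims the paper calls clear.
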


\begin{proof}
Write $\G$ for $\G_\sigma$. Since
$$
A(\sigma)=\{x:\Gxx=\{x\}\}\quad\text{and}\quad P_0(\sigma)=\{x:\Gxx\ne\{x\}\ \text{and}\ x\ \text{is isolated in}\ [x]\},
$$
it is clear that the sets $A(\sigma)$ and $P_0(\sigma)$ are invariant.

It is also clear by definition that $A(\sigma)\cup P_0(\sigma)$ is contained in the set of points $x$ such that $\IsoGx{x}^\circ_x=\Gxx$. In order to prove the equality we need to show that if $x$ is periodic and $\IsoGx{x}^\circ_x=\Gxx$, then $x\in P_0(\sigma)$. Since the set $P_0(\sigma)$ is invariant, we can further assume that $\sigma^{p}(x)=x$, where $p:=p(x)$ is the period of $x$. Then, by definition, we can find open neighbourhoods $U$  and $V$ of~$x$ and a number $n\ge 0$ such that
$$
Z(U, p+n, n,V) \cap \G_{\overline{[x]}}\subset\IsoG.
$$
We may assume that $U\subset\Dom(\sigma^{p+n})$ and $V\subset\Dom(\sigma^n)$. Then $Z(U, p, 0, V) \subset Z(U, p+n, n, V)$, so $Z(U, p, 0, V) \cap \G_{\overline{[x]}}$ consists entirely of isotropy. Since $x,\sigma(x),\dots, \sigma^{p-1}(x)$ are different elements, we may, by possibly choosing a smaller $U$, assume that $U\cap \sigma^{j}(U)=\emptyset$ for $1\leq  j <p$ and $\sigma^{p}(U) \subset V$.

Suppose now that $y\in U\cap [x]$. Then $(y, p, \sigma^{p}(y))\in Z(U, p, 0, V) \cap \G_{\overline{[x]}}$. Since the last set consists of isotropy, we get $\sigma^{p}(y)=y$. Since $\sigma^{p}(x)=x$ and $y\in  [x]$, it follows that $y=\sigma^{l}(x)$ for some $0\le l< p$. But then $y\in U\cap \sigma^{l}(U)$, implying that $l=0$ and $y=x$. In conclusion, $U\cap [x]=\{x\}$, so $x\in P_0(\sigma)$.
\end{proof}

We will apply this lemma in the special case of directed graphs. We refer the reader to \cite{BCW}*{Section 2} for more background and proofs regarding the groupoid model for Cuntz--Krieger algebras of directed graphs, but note that in order to be consistent with the next subsection we follow the ``Australian convention'' that swaps the roles of sources and ranges.

Let $E=(E^{0},E^{1}, r,s)$ denote a countable directed graph, i.e., $E^{0}$ is a countable set of vertices, $E^{1}$ is a countable set of edges and $s,r\colon E^{1} \to E^{0}$ denote respectively the source and range maps. Define
$$
E^\sing:=\{v\in E^{0} : |r^{-1}(v)| \in \{0, \infty\}\}.
$$

A finite path $e_{1} \cdots e_{n}$ of length $n\ge1$ is a concatenation of edges with $s(e_{i})=r(e_{i+1})$ for all $i<n$, and an infinite path $e_{1}e_{2} \cdots $ is an infinite concatenation of edges with $s(e_{i})=r(e_{i+1})$ for all $i$. We view the vertices of $E$ as paths of length $0$. We denote the set of finite paths by~$E^{*}$ and the set of infinite paths by~$E^{\infty}$, and we extend the range map to both sets by letting $r(e_{1}e_{2} \cdots):=r(e_{1})$ for paths of length $\ge1$ and $r(v):=v$ for $v\in E^0\subset E^*$. We also extend the source map~$s$ to~$E^{*}$ by $s(e_{1} \cdots e_{n}):=s(e_{n})$ for $n\ge1$ and $s(v):=v$ for $v\in E^0\subset E^*$.

Consider the set
$$
\partial E:=E^{\infty} \cup \{ \alpha \in E^{*} : s(\alpha)\in E^{\sing}\}
$$
of so-called boundary paths. For $\alpha\in E^*$, we denote by $Z(\alpha)$ the sets of paths in $\partial E$ of the form~$\alpha x$, with $x=\emptyset$ or $x\in\partial E$ satisfying $r(x)=s(\alpha)$. The set $\partial E$ is a second countable Hausdorff locally compact space with a basis of topology given by the sets
$$
Z(\alpha)\setminus\bigcup_{e\in F}Z(\alpha e),
$$
where $F\subset E^1$ is a finite (possibly empty) subset of edges in $r^{-1}(s(\alpha))$. Each of the open sets~$Z(\alpha)$ is compact in this topology.

The shift map $\sigma_E\colon \partial E\setminus E^{\sing}\to\partial E  $ is defined on paths of length $\ge 2$ by $\sigma_E(e_1e_2\cdots):=e_2\cdots$, and on paths of length $1$ by $\sigma_E(e):=s(e)$. It is a local homeomorphism, and we denote the corresponding Deaconu--Renault groupoid by $\G_{E}$. The C$^{*}$-algebra $C^{*}(\G_{E})$ is the Cuntz--Krieger algebra $C^{*}(E)$ of $E$.

\smallskip

Define a preorder on $E^{0}$ by declaring that $v\leq w$ iff there exists $\alpha \in E^{*}$ with $s(\alpha) = v$ and $r(\alpha)=w$. We then get an equivalence relation on $E^{0}$ by declaring that $v\sim w$ iff $v\leq w$ and $w\leq v$. An equivalence class in $E^{0}$ is called a \emph{component}.

\begin{defn}
A \emph{primitive loop} in $E$ is a finite component $L\subset E^{0}$ such that every vertex $v\in L$ is the range of exactly one edge originating in $L$, that is,
$$
|\{ e\in E^1 : r(e)=v\ \text{and}\ s(e)\in L\}|=1.
$$
Denote by $\LL(E)$ the set of primitive loops in $E$.
\end{defn}

In other words, the vertices of a simple cycle $e_1\cdots e_p$ in~$E$ (that is, $e_1\cdots e_p$ is a path such that $r(e_1)=s(e_p)$ and $r(e_{i}) \neq r(e_{j})$ for $i\neq j$) form a primitive loop if and only if the only paths of positive length from $r(e_1)$ to itself are the powers of $e_1\cdots e_p$.

The role of primitive loops is explained by the following lemma.

\begin{lemma} \label{lem:simpleloop}
A periodic path $x=x_{1} x_{2}\cdots\in \partial E$ with period $p:=p(x)$ and $\sigma_E^p(x)=x$ has the property that $x$ is isolated in $[x]$ if and only if the vertices $r(x_1),\dots, r(x_{p})$ form a primitive loop.
\end{lemma}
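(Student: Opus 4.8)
The plan is to translate isolation of $x$ in its orbit into a purely combinatorial condition on the graph and then match that condition with the definition of a primitive loop. Since the sets $Z(x_1\cdots x_n)$ form a countable neighbourhood basis at $x$, the point $x$ is isolated in $[x]$ if and only if no sequence in $[x]\setminus\{x\}$ converges to it, i.e.\ if and only if there is \emph{no} $y\in[x]$ with $y\ne x$ agreeing with $x$ on arbitrarily long initial segments. First I would record the shape of the orbit: every point of $[x]$ is an infinite path whose tail is eventually a shift of $x$ (a finite boundary path cannot lie in $[x]$, since $\sigma_E^m(y)=\sigma_E^{n'}(x)$ would force $\sigma_E^m(y)$ to be infinite). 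Writing $v_i:=r(x_i)$ with indices read modulo $p$, one has $s(x_i)=v_{i+1}$, so all $v_i$ are $\sim$-equivalent and $L=\{v_1,\dots,v_p\}$ is contained in a single component $C$.

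The heart of the argument is the following equivalence, which I would prove by an explicit back-and-forth construction: $x$ is \emph{not} isolated in $[x]$ if and only if there is an index $i_0$ and an edge $f\ne x_{i_0}$ with $r(f)=v_{i_0}$ and $s(f)\in C$. For the ``if'' direction, choose a loop vertex $v_j$ and a path $Q$ from $v_j$ to $s(f)$ (possible since $s(f)\sim v_{i_0}\sim v_j$), and for every $n$ with $n+1\equiv i_0 \pmod p$ set $y:=x_1\cdots x_n\,f\,Q\,\sigma_E^{j-1}(x)$; one checks the concatenation is legal, that $y\in[x]$ because its tail is $\sigma_E^{j-1}(x)$, and that $y$ first disagrees with $x$ at position $n+1$, so these $y$ converge to $x$. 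For the ``only if'' direction, given $y\in[x]$ first disagreeing with $x$ at position $n+1$, put $f:=y_{n+1}$ and take $i_0\equiv n+1$; the edge $f$ gives $s(f)\le r(f)=v_{i_0}$, while the fact that the tail of $y$ re-enters the cycle produces a path from a loop vertex $v_j$ to $s(f)$, i.e.\ $v_j\le s(f)$, and hence $s(f)\sim v_{i_0}$, that is $s(f)\in C$.

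It then remains to see that such a ``branching edge'' $f$ exists exactly when $L$ fails to be a primitive loop. Note first that any edge $f$ with $r(f)=v_{i_0}\in L$ automatically satisfies $s(f)\le v_{i_0}$, so the condition $s(f)\in C$ is equivalent to $s(f)\sim v_{i_0}$. If $L$ is a primitive loop then $L=C$ and $x_{i_0}$ is the unique edge into $v_{i_0}$ with source in $L$; any branching edge would have $s(f)\in C=L$ and $f\ne x_{i_0}$, contradicting uniqueness, so no branching edge exists and $x$ is isolated. Conversely, if $L$ is not a primitive loop, then either some $v_{i_0}\in L$ carries a second incoming edge with source in $L$ (which is immediately a branching edge), or $L\subsetneq C$; in the latter case I would pick $u\in C\setminus L$, choose a path $A$ from $u$ to a loop vertex, and let $f$ be the edge of $A$ at which $A$ first enters $L$ (so $r(f)\in L$, $s(f)\notin L$); then $s(f)$ lies on $A$ between $u$ and $L$, whence $s(f)\in C$, and since $s(f)\notin L$ we have $f\ne x_{i_0}$. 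The main obstacle is precisely this last case $L\subsetneq C$: one has to locate the correct edge along the connecting path at which it crosses into $L$ and verify that its source can still be steered back into the cycle, all while keeping the ``Australian'' roles of $r$ and $s$ (and the induced direction of the preorder $\le$) straight throughout.
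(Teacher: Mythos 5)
Your proposal is correct and takes essentially the same route as the paper: the paper's converse direction likewise produces, from the failure of primitivity, a path $e_1\cdots e_n$ with $r(e_1)=r(x_i)$, $s(e_n)=r(x_j)$ and $e_1\ne x_i$ (your ``branching edge'' $f=e_1$ together with the return path $Q$) and splices it into the cycle to get $y=\alpha^k x_1\cdots x_{i-1}e_1\cdots e_n x_jx_{j+1}\cdots\in Z(\alpha^k)\cap[x]$ with $y\ne x$, while the forward direction is the same uniqueness-of-incoming-edge induction you sketch. Your write-up merely makes explicit the intermediate ``branching edge exists iff not isolated'' equivalence and the case analysis ($L\subsetneq C$ versus a second incoming edge within $L$) that the paper leaves as routine.
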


\begin{proof}
If $r(x_1),\dots, r(x_{p})$ form a primitive loop, then it is straightforward to check that $Z(r(x))\cap [x]=\{x\}$. Conversely, assume $x$ is isolated in $[x]$. Consider $\alpha:=x_1x_2\cdots x_{p}$, then  $x=\alpha^{\infty}$. Choose $k\ge1$ big enough that $Z(\alpha^{k})\cap [x]=\{x\}$. Assume $L:=\{r(x_1),\dots,r(x_p)\}$
is not a primitive loop. Then, for some $i,j\in\{1,\dots, p\}$, there exists a path $e_{1} \cdots e_{n}\in E^{*}$ with $r(e_{1})=r(x_{i})$, $s(e_{n})=r(x_{j})$ and $e_{1}\neq x_{i}$. Then the path $y:=\alpha^{k}x_{1} \cdots x_{i-1}e_{1}\cdots e_{n} x_{j}x_{j+1}x_{j+2}\cdots $ satisfies $y\in Z(\alpha^{k})\cap [x]$ and $y\ne x$, which is a contradiction. This proves that $L$ is a primitive loop.
\end{proof}

For every $L\in\LL(E)$, fix an infinite path $x_L=x_1x_2\cdots$ such that $r(x_i)\in L$ for all $i$. Then $\sigma_E^{|L|}(x_L)=x_L$ and any other choice of such a path has the form $\sigma_E^k(x_L)$ for some $0\le k<|L|$.

\smallskip

We are now ready to give a preliminary description of $\Prim C^*(\G_E)$.

\begin{prop} \label{prop:graph}
For every countable directed graph $E$, the map
$$
\Big(A(\sigma_E)\sqcup\bigsqcup_{L\in\LL(E)}[x_L]\Big)\times \T \to \Prim C^{*}(\G_E),\quad (x,z)\mapsto \ker\pi_{(x,z)},
$$
is onto, and $ \ker\pi_{(x,z)}=\ker\pi_{(y,w)}$ if and only if either $x,y\in A(\sigma_E)$ and $\overline{[x]}=\overline{[y]}$, or $x,y\in [x_L]$ for some $L\in\LL(E)$ and $z^{|L|}=w^{|L|}$. The topology on  $\Prim C^{*}(\G_E)$ is described as follows. Consider a sequence $((x_n,z_n))_n$ and an element $(x,z)$ in $\Big(A(\sigma_E)\sqcup\bigsqcup_{L\in\LL(E)}[x_L]\Big)\times \T$. Then we have:
\begin{enumerate}
\item[(i)] if $x\in A(\sigma_E)$, then $\ker \pi_{(x_n,z_n)} \to \ker \pi_{(x,z)}$ if and only if there exist $y_{n}\in [x_{n}]$ with $y_{n} \to x$;
\item[(ii)] if $x\in[x_L]$ for some $L\in\LL(E)$ and $x_{n} \notin [x_L]$ for all $n$, then $\ker \pi_{(x_n,z_n)} \to \ker \pi_{(x,z)}$ if and only if there exist $y_{n}\in [x_{n}]$ with $y_{n} \to x$;
\item[(iii)] if $x\in [x_L]$ for some $L\in\LL(E)$ and $x_{n} \in [x_L]$ for all $n$, then $\ker \pi_{(x_n,z_n)} \to \ker \pi_{(x,z)}$ if and only if  $z_n^{|L|}\to z^{|L|}$.
\end{enumerate}
\end{prop}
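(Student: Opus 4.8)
The plan is to read everything off the general results for injectively graded groupoids from Section~\ref{ssec:graded}, applied to $\G_E$ with its grading $\Phi\colon\G_E\to\Z$, combined with Lemmas~\ref{lem:essential=Gxx} and~\ref{lem:simpleloop}. Throughout I would use that a point $x\in[x_L]$ is purely periodic of period $|L|$, so that $\Phi(\Gxx)=|L|\Z$ and $\IsoGx{x}^\circ_x=\Gxx$ by Lemma~\ref{lem:essential=Gxx}. For surjectivity, Corollary~\ref{cor:essential-iso2} gives every primitive ideal as $\ker\pi_{(x,z)}$ with $\IsoGx{x}^\circ_x=\Gxx$, and Lemma~\ref{lem:essential=Gxx} then puts $x$ in $A(\sigma_E)\cup P_0(\sigma_E)$. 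If $x\in P_0(\sigma_E)$, the point $x':=\sigma_E^{l(x)}(x)$ is purely periodic, lies on $[x]$ and again in $P_0(\sigma_E)$, so $\ker\pi_{(x,z)}=\ker\pi_{(x',z)}$ by Corollary~\ref{cor:essential-iso1}; by Lemma~\ref{lem:simpleloop} the initial vertices of $x'$ form a primitive loop $L$, and primitivity forces $x'$ to be a shift of $x_L$, hence $x'\in[x_L]$. (Since points of $A(\sigma_E)$ are aperiodic and those of $[x_L]$ periodic, and distinct $[x_L]$ are distinct orbits, the index set is a genuine disjoint union.)

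To decide when $\ker\pi_{(x,z)}=\ker\pi_{(y,w)}$, I would apply Corollary~\ref{cor:essential-iso1}: this holds iff $\overline{[x]}=\overline{[y]}$ and $z=w$ on $\Phi(\IsoGx{x}^\circ_x)$. For $x,y\in A(\sigma_E)$ the essential isotropy is trivial, so the condition is just $\overline{[x]}=\overline{[y]}$. A point of $A(\sigma_E)$ and a point of some $[x_L]$ can never give the same ideal, since $\overline{[x]}=\overline{[y]}$ would force $\Phi(\IsoGx{x}^\circ_x)=\Phi(\IsoGx{y}^\circ_y)$ by the remark following Corollary~\ref{cor:essential-iso1}, i.e.\ $\{0\}=|L|\Z$. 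For $x\in[x_L]$ and $y\in[x_{L'}]$ that same remark gives $|L|=|L'|$; to upgrade $\overline{[x_L]}=\overline{[x_{L'}]}$ to $L=L'$, I would observe that $x_{L'}\in\overline{[x_L]}$ yields, from arbitrarily long prefixes of $x_{L'}$, finite paths running from vertices of $L'$ into $L$, and symmetrically, so $L$ and $L'$ are mutually reachable and hence equal as components. Finally, for $x,y\in[x_L]$ one has $\overline{[x]}=\overline{[y]}$ automatically and $\Phi(\IsoGx{x}^\circ_x)=|L|\Z$, and agreement of $z,w$ on $|L|\Z$ is precisely $z^{|L|}=w^{|L|}$.

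For the topology I would invoke Theorem~\ref{thm:DR}. When $x\in A(\sigma_E)$ the only pairs with $\sigma_E^{m}(x)=\sigma_E^{n}(x)$ have $m=n$, so the $\T$-condition in Theorem~\ref{thm:DR} is vacuous and convergence reduces to the existence of $y_n\in[x_n]$ with $y_n\to x$, giving case~(i). In case~(iii), where $x$ and all $x_n$ lie in $[x_L]$, the point $x$ is isolated in $[x]=[x_L]$, so for small $U$ one is forced to take $y=x$ in Theorem~\ref{thm:DR}; the $\T$-condition then reads $|z^{m(i)-n(i)}-z_n^{m(i)-n(i)}|<\eps$ with each $m(i)-n(i)\in|L|\Z$, and running over all finite sets this is equivalent to $z_n^{|L|}\to z^{|L|}$.

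The main obstacle is case~(ii), where $x\in[x_L]$ but $x_n\notin[x_L]$, and I must show the $\T$-condition becomes vacuous. Writing $x=\alpha^{\infty}$ with $\alpha=x_1\cdots x_{|L|}$, the key elementary claim is: if $y$ begins with $\alpha^{k}$ and $\sigma_E^{m}(y)=\sigma_E^{n}(y)$ for some $m>n$ with $m\le k|L|$, then $w:=\sigma_E^{n}(y)$ satisfies $\sigma_E^{m-n}(w)=w$ with its first $m-n$ edges lying inside the $\alpha^{k}$-prefix; hence $w=(x_{n+1}\cdots x_m)^{\infty}$ cycles through $L$ and lies in $[x_L]$, so $y\in[w]=[x_L]$. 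Consequently, given a finite family of relations and a neighbourhood $U$ of $x$, passing to $Z(\alpha^{k})\cap U$ with $k|L|$ larger than every $m(i)$ ensures that any $y\in[x_n]\cap Z(\alpha^{k})\cap U$—which lies outside $[x_L]$ because $x_n$ does—has $\sigma_E^{m(i)}(y)\ne\sigma_E^{n(i)}(y)$ for all $i$ with $m(i)\ne n(i)$. Thus the $\T$-condition of Theorem~\ref{thm:DR} holds automatically, and convergence is again equivalent to the existence of $y_n\in[x_n]$ with $y_n\to x$. I expect this prefix/periodicity argument to be the only genuinely graph-specific step, with everything else reducing to bookkeeping against the cited corollaries.
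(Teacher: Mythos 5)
Your proposal follows essentially the same route as the paper's proof: surjectivity and the identification of kernels come from Corollaries~\ref{cor:essential-iso2} and~\ref{cor:essential-iso1} together with Lemmas~\ref{lem:essential=Gxx} and~\ref{lem:simpleloop}, the topology comes from Theorem~\ref{thm:DR}, and case~(ii) rests on the observation that a sufficiently long prefix of $x$ forces any point of that cylinder satisfying one of the relevant periodicity relations back onto $[x_L]$ (equivalently, to coincide with $x$). Your handling of $L\ne L'$ via mutual reachability of the two loops is a mild variant of the paper's shorter argument that $x_L$ is isolated in $\overline{[x_L]}$, so $\overline{[x]}=\overline{[x_L]}$ forces $x\in[x_L]$; both work. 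The one step to repair is in case~(ii): you write $x=\alpha^\infty$ with $\alpha$ the first $|L|$ edges of $x$, but a general $x\in[x_L]$ is only eventually periodic, of the form $\beta\,\sigma_E^b(x_L)$ for a finite path $\beta$, so this identity can fail. The fix is easy and does not change the structure of the argument: either first replace $x$ by $x_L$ (both the ideal and the condition ``there exist $y_n\in[x_n]$ with $y_n\to x$'' are invariant under moving $x$ along its orbit), or note that any relation $\sigma_E^m(x)=\sigma_E^n(x)$ with $m>n$ has $n\ge l(x)$, so the edges $e_{n+1},\dots,e_m$ of $x$ already lie in the loop and your claim goes through verbatim with the cylinder $Z(e_1\cdots e_m)$ in place of $Z(\alpha^k)$ --- which is exactly the paper's choice $U=Z(e_1\cdots e_{m|L|+l})$.
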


Note that, given $L$, for any sequence $((x_n,z_n))_n$, we can discard finitely many elements and divide the rest into two subsequences such that one satisfies $x_{n} \notin [x_L]$ for all $n$, while the other satisfies $x_{n} \in [x_L]$ for all $n$.  Hence Proposition~\ref{prop:graph} completely describes the topology on $\Prim C^*(\G_E)$.

\begin{proof}[Proof of Proposition~\ref{prop:graph}]
Let us write $\G$ for $\G_E$ and $\sigma$ for $\sigma_E$. The map in the formulation is surjective by Corollary~\ref{cor:essential-iso2} and the description of the points $x$ such that $\IsoGx{x}^\circ_x=\Gxx$ provided by Lemmas~\ref{lem:essential=Gxx} and~\ref{lem:simpleloop}. The characterization of the equality $\ker\pi_{(x,z)}=\ker\pi_{(y,w)}$ follows from Corollary~\ref{cor:essential-iso1} once one observes that if $\overline{[x_{L}]}=\overline{[x]}$ for some $x$, then $x\in [x_{L}]$, since $x_L$ is isolated in $\overline{[x_L]}$.

Since $\Gxx=\{x\}$ for $x\in A(\sigma)$, part (i) follows immediately from Theorem~\ref{thm:DR}. From the same theorem we also get (iii), since every $x\in[x_L]$ is isolated in $[x_L]$.

To prove (ii), assume that $x\in[x_{L}]$. Then $\Phi(\Gxx)=|L|\Z$. Assume $m\ge1$ and $l\ge0$ are such that $\sigma^{m|L|+l}(x)=\sigma^l(x)$. If $x=e_1e_2\cdots$, let $U:=Z(e_{1} \cdots e_{m|L|+l})$. Then $\sigma^{m|L|+l}(y)\ne\sigma^l(y)$ for all $y\in U\setminus\{x\}$. By applying Theorem~\ref{thm:DR} we get~(ii).
\end{proof}

It remains to describe the closures $\overline{[x]}$ for $x\in A(\sigma_E)$ in graph-theoretic terms.

\begin{defn}[{\cite{HS}}]\label{def:max-tails}
A nonempty subset $M\subset E^0$ is called a \emph{maximal tail} if the following three conditions are satisfied:
\begin{enumerate}
  \item[(i)] if $v\in E^0$, $w\in M$ and $v\ge w$, then $v\in M$;
  \item[(ii)] if $v\in M$ and $0<|\{e\in E^1:r(e)=v\}|<\infty$, then there is $e\in E^1$ such that $r(e) = v$ and $s(e)\in M$;
  \item[(iii)] for every $v, w\in M$, there exists $u\in M$ such that $v\ge u$ and $w\ge u$.
\end{enumerate}
Denote by $\M(E)$ the set of maximal tails. Denote by $\M_\gamma(E)\subset \M(E)$ the subset of all maximal tails $M$ such that for each simple cycle $e_1\cdots e_p$ in $E$ with vertices in $M$ there is an edge $e\in E^1$ such that $e\ne e_i$ for all $i$, $r(e)=r(e_j)$ for some $j$ and $s(e)\in M$. A bit informally we formulate this by saying that every simple cycle in $M$ has an entrance in $M$.
\end{defn}

\begin{lemma}\label{lem:tails}
We have a well-defined map $\MT\colon A(\sigma_E)\to\M(E)$ that associates to $x\in A(\sigma_E)$ the set $\MT(x):=\{r(y):y\in[x]\}$. Then
$$
\MT(A(\sigma_E)\cap E^\infty)\subset\M_\gamma(E)\subset \MT(A(\sigma_E)).
$$
If $\overline{[x]}=\overline{[y]}$, then $\MT(x)=\MT(y)$, and the converse is true if both paths $x$ and $y$ are infinite.
\end{lemma}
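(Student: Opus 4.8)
The plan is to first record a concrete description of $\MT(x)$ and then dispatch the four assertions in turn. Writing $V(x):=\{r(\sigma_E^n(x)):n\ge 0\}$ for the set of vertices met by $x$ and its shifts, one checks straight from the definition of the $\G_E$-orbit that
$$
\MT(x)=\{v\in E^0: v\ge w\ \text{for some}\ w\in V(x)\},
$$
i.e.\ $\MT(x)$ is the $\le$-upward closure of $V(x)$: every $z\in[x]$ has the form $\beta\,\sigma_E^m(x)$ for a finite path $\beta$ ending at a vertex of $V(x)$, so $r(z)\ge w$ for some $w\in V(x)$, and conversely any $v\ge w\in V(x)$ is realized by prepending to the appropriate shift of $x$. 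With this formula the maximal-tail axioms are routine: (i) is transitivity of $\le$; (iii) holds because two visited vertices $r(\sigma_E^{m}(x))$, $r(\sigma_E^{m'}(x))$ with $m\le m'$ satisfy $r(\sigma_E^{m}(x))\ge r(\sigma_E^{m'}(x))$, so the later one is a common lower bound; and (ii) follows by taking, for a regular $v\in\MT(x)$, the first edge of a path realizing $v\ge w$, or the next edge of $x$ when $v\in V(x)$, the only excluded case $v=s(x)$ with $x$ finite forcing $v\in E^\sing$. This proves $\MT$ is well defined.

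For $\MT(A(\sigma_E)\cap E^\infty)\subset\M_\gamma(E)$ I would argue by contradiction. Let $x$ be aperiodic and infinite, set $M=\MT(x)$, and suppose a simple cycle $e_1\cdots e_p$ with vertices in $M$ has no entrance in $M$. The crux is that, since $V(x)\subset M$ while every non-cycle edge out of a cycle vertex leaves $M$, any path whose vertices lie in $M$ and which starts at a cycle vertex is forced to run along the cycle. Applying axiom (iii) to $r(e_1)$ and $r(x_1)$ produces a common lower bound, hence a path inside $M$ from $r(e_1)$ to some $w\in V(x)$; by the forcing this path ends on the cycle, so $x$ visits a cycle vertex $r(\sigma_E^{m}(x))\in\{r(e_1),\dots,r(e_p)\}$. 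The same forcing, now applied to $x$ from step $m$ on, shows $\sigma_E^{m}(x)$ is a cyclic power of $e_1\cdots e_p$, so $x$ is eventually periodic, contradicting aperiodicity. Hence every such cycle has an entrance and $M\in\M_\gamma(E)$.

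For the reverse inclusion $\M_\gamma(E)\subset\MT(A(\sigma_E))$ I would \emph{construct}, for $M\in\M_\gamma(E)$, an aperiodic boundary path $x$ with $V(x)\subset M$ cofinal in $(M,\le)$; the displayed formula then gives $\MT(x)=M$. Since $E^0$ is countable and $M$ is downward directed by (iii), a diagonal choice yields a descending cofinal sequence $u_1\ge u_2\ge\cdots$ in $M$, which I join by paths inside $M$. If it stabilizes at a singular vertex (source or infinite emitter), that vertex is a minimum of $M$ and the resulting \emph{finite} path represents $M$ and is automatically aperiodic. Otherwise one builds an infinite path, and here aperiodicity is \emph{not} automatic; \textbf{this is the step I expect to be the main obstacle}. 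The point is that whenever the path threatens to settle into a cycle, that cycle lies in $M$ and therefore, by the $\M_\gamma$-condition, carries an entrance edge back into $M$; taking it introduces a branching that I would exploit, diagonalizing over all pairs $(l,p)$, to force $x_n\ne x_{n+p}$ for some $n\ge l$ and thereby exclude every eventual period. Because $M$ stays directed, after each deviation one resumes the cofinal descent, so cofinality and aperiodicity are met simultaneously; making the interleaving of these two demands fully precise is the delicate part.

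Finally I would handle the orbit-closure statement. The inclusion $[x]\subset\overline{[x]}$ upgrades to $\MT(x)=\{r(z):z\in\overline{[x]}\}$, since $z_i\to z$ in $\partial E$ forces $r(z_i)=r(z)$ eventually; thus $\MT(x)$ depends only on $\overline{[x]}$, which gives the first implication for arbitrary $x,y$. For the converse, assume $x,y$ are infinite with $\MT(x)=\MT(y)=:M$. To see $x\in\overline{[y]}$, note each prefix $x_1\cdots x_n$ ends at $r(\sigma_E^{n}(x))\in V(x)\subset M=\MT(y)$; as this vertex lies in $\MT(y)$ there is $z\in[y]$ with that range, so $x_1\cdots x_n z\in[y]$ agrees with $x$ on its first $n$ edges. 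Since $x$ is infinite these approximants converge to $x$, whence $x\in\overline{[y]}$ and $\overline{[x]}\subset\overline{[y]}$; the symmetric argument, which uses that $y$ is infinite too, gives equality. This also shows transparently why the converse may fail for finite paths: a finite prefix ending at a source admits no continuation inside $[y]$.
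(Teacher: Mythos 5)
Your verification that $\MT$ is well defined, your proof of $\MT(A(\sigma_E)\cap E^\infty)\subset\M_\gamma(E)$, and your treatment of the orbit-closure statement are all correct and essentially follow the paper's route. (For the first inclusion the paper is more direct: since $r(e_1)\in\MT(x)$ there is already some $y\in[x]$ with $r(y)=r(e_1)$, and the no-entrance hypothesis forces $y=(e_1\cdots e_p)^\infty$, contradicting aperiodicity of the orbit; your detour through axiom (iii) and a common lower bound is sound but unnecessary.)

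The genuine gap is exactly where you flag it: the inclusion $\M_\gamma(E)\subset\MT(A(\sigma_E))$. You never complete the construction of an aperiodic representative, and the ``delicate interleaving'' you defer is not a routine detail --- it is the entire content of this inclusion. The paper resolves it by a trichotomy on least elements of $M$ that decouples the two demands you are trying to satisfy simultaneously. (a) If $M$ has a unique least element $v$ without self-loops, then $v\in E^\sing$ and the finite path $x=v$ works. (b) If some least element carries a cycle (a self-loop at the unique least element, or a simple cycle through two distinct least elements $v,u$), the hypothesis $M\in\M_\gamma(E)$ is invoked exactly once, to produce a second cycle at $v$ that is not a power of the first; one then concatenates the two cycles in a non-eventually-periodic pattern, and no further descent is needed because $v$ is already a minimum of $M$. (c) If $M$ has no least element, the cofinal descending sequence necessarily runs through infinitely many distinct vertices, so the concatenated path is automatically aperiodic and the entrance condition is never used. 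Your proposed diagonalization over all pairs $(l,p)$ while maintaining cofinal descent attempts to do (b) and (c) at once; it can presumably be pushed through, but as written it is not a proof. Note also that your case split is incomplete: the descent can stabilize at a \emph{regular} minimum (which your ``singular vertex'' alternative does not cover), and that is precisely situation (b), where the self-loop forced by axiom (ii) and the second loop supplied by the $\M_\gamma$ condition are what save the argument.
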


\bp
Given $x\in A(\sigma_E)$, it is easy to see that properties (i) and (iii) in Definition~\ref{def:max-tails} are satisfied for $\MT(x)$. Property (ii) is also satisfied, because if $0<|\{e\in E^1|:r(e)=v\}|<\infty$ for some $v$, then $v\notin \partial E$.

Next, we need to show that if $x$ is infinite, then $\MT(x)\in\M_\gamma(E)$. Assume this is not the case, so there is a simple cycle $\alpha=\alpha_1\cdots\alpha_p\in E^*$ with vertices in~$\MT(x)$ that does not have an entrance in~$\MT(x)$. This implies that if $y\in[x]$ and $r(y)=r(\alpha_1)$, then $y=\alpha^\infty$, contradicting the assumption that $x$ is aperiodic.

The claim that $\MT(x)=\MT(y)$ when $\overline{[x]}=\overline{[y]}$ is obvious from the fact that the map $r\colon\partial E\to E^0$ is continuous if $E^0$ is considered as a discrete space. Conversely, assume that $\MT(x)=\MT(y)$. If both $x$ and $y$ are infinite and $x=\alpha x'$ for some $\alpha\in E^*$ and $x'\in E^\infty$, then $r(x')=r(y')$ for some $y'\in[y]$, hence $\alpha y'\in[y]$. This implies that $x\in\overline{[y]}$. For the same reason $y\in\overline{[x]}$.

It remains to show that if $M\in\M_\gamma(E)$, then there is $x\in A(\sigma_E)$ with $\MT(x)=M$. Consider three cases.

Assume first that $M$ contains a unique least element $v$, that is, $v\leq w$ for all $w\in M$ and $v$ is the only element with this property in $M$. If $v\in E^\sing$, we simply take $x=v$. If $v\notin E^\sing$, then by minimality of $v$ and property (ii) of maximal tails there is a self-loop at $v$, that is, an edge from~$v$ to~$v$. As $M\in M_\gamma(E)$, there in fact must be at least two such self-loops $e_1, e_2$. Then for~$x$ we take any aperiodic infinite path obtained by concatenating $e_1$ and $e_2$.

Next, assume that $M$ has two or more least elements, say, $v$ and $u$. Then there is a simple cycle~$\alpha\in E^*$ passing through $u$ and starting and ending at $v$. As $M\in M_\gamma(E)$, we can then find another (not necessarily simple) cycle $\beta$ such that $r(\beta)=s(\beta)=v$ and $\beta$ is not a power of $\alpha$. Then for $x$ we take any aperiodic infinite path obtained by concatenating the cycles~$\alpha$ and~$\beta$.

Finally, assume that $M$ does not have a least element. Using property (iii) of maximal tails we can find vertices $v_n\in M$ ($n\ge1$) such that $v_n\ge v_{n+1}$ for all $n$ and for every $v\in M$ we have $v\ge v_n$ for $n$ large enough. Then as $x$ we take any infinite path obtained by concatenating paths from $v_{n+1}$ to $v_n$ for all $n$. Such a path is aperiodic, since it passes through infinitely many different vertices.
\ep


In order to fully describe the orbit closures of vertices in $E^\sing$ we will need the following notion.

\begin{defn}[{\cite{MR1988256}}]
A vertex $v\in E^\sing$ is called a \emph{breaking vertex} if
$$
0<|\{e\in E^1: r(e)=v,\ s(e)\ge v\}|<\infty.
$$
Denote by $BV(E)$ the set of breaking vertices.
\end{defn}

\begin{lemma}\label{lem:sing-orbits}
For every vertex $v\in E^\sing$ we have one of the following possibilities.
\begin{enumerate}
  \item The set $\{e\in E^1: r(e)=v,\ s(e)\ge v\}$ is empty. Then $\MT(v)\in\M_\gamma(E)$, and if $\MT(v)=\MT(x)$ for some $x\in E^\sing\cup E^\infty$, then $x=v$.
  \item The set $\{e\in E^1: r(e)=v,\ s(e)\ge v\}$ is infinite. Then $\MT(v)\in\M_\gamma(E)$ and $\overline{[v]}=\overline{[x]}$ for some $x\in E^\infty\cap A(\sigma_E)$.
  \item The vertex $v$ is a breaking vertex and $\MT(v)\notin\M_\gamma(E)$. Then $\MT(v)\ne \MT(x)$ for all $x\in E^\infty\cap A(\sigma_E)$, and if $\overline{[v]}=\overline{[u]}$ for some $u\in E^\sing \cup E^{\infty}$, then $u=v$.
  \item The vertex $v$ is a breaking vertex and $\MT(v)\in\M_\gamma(E)$. Then $\MT(v)=\MT(x)$ for some $x\in E^\infty\cap A(\sigma_E)$. For every such $x$ we have $\overline{[v]}\ne\overline{[x]}$, and if $\overline{[v]}=\overline{[u]}$ for some $u\in E^\sing$, then $u=v$.
\end{enumerate}
\end{lemma}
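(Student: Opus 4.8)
The plan is to reduce each of the four cases to the explicit structure of a singular vertex together with the size of a single set of edges. Since $v\in E^\sing$ we have $v\notin\Dom(\sigma_E)$, so $v$ is aperiodic and $\MT(v)$ is defined; moreover $[v]=\{\alpha\in E^*:s(\alpha)=v\}$ and $\MT(v)=\{w\in E^0:w\ge v\}$. A basic neighbourhood of a singular vertex $w$ in $\partial E$ has the form $Z(w)\setminus\bigcup_{e\in F}Z(e)$ with $F\subset r^{-1}(w)$ finite, so that $w\in\overline{[u]}$ if and only if for every finite $F\subset r^{-1}(w)$ there is $\gamma\in[u]$ with $r(\gamma)=w$ whose first edge (if $\gamma$ has positive length) avoids $F$. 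Everything will be governed by
$$
R_v:=\{e\in E^1: r(e)=v,\ s(e)\ge v\}.
$$
The four cases correspond exactly to $R_v=\emptyset$, $R_v$ infinite, and $0<|R_v|<\infty$ (the breaking case, split by whether $\MT(v)\in\M_\gamma(E)$), so they are mutually exclusive and exhaustive; the $\M_\gamma$-memberships in cases (3),(4) are hypotheses, and only the $\M_\gamma$-claims of (1),(2) need proof.

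For $\MT(v)\in\M_\gamma(E)$ in cases (1),(2) I would check the cycle condition directly on $M=\{w\ge v\}$. Given a simple cycle $C$ with vertices in $M$: if $C$ avoids $v$, pick a vertex $w$ of $C$ and a positive-length path $\beta$ from $w$ down to $v$ (positive since $w\ge v$ and $w\ne v$); the first edge of $\beta$ that leaves $C$ has range a vertex of $C$, source in $M$, and is not an edge of $C$, hence is an entrance. If instead $C$ passes through $v$, then one of its edges lies in $R_v$; in case (1) this is impossible, so every cycle avoids $v$, while in case (2) the infinitely many edges of $R_v$ not used by the finite cycle $C$ supply an entrance. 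The same remark---that a cycle through $v$ forces an edge of $R_v$---shows that when $R_v=\emptyset$ the component of $v$ is $\{v\}$.

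The separation statements all rest on one observation: if $u\in E^\sing\cup E^\infty$ satisfies $\MT(u)=\MT(v)$ and $\gamma\in[u]$ has $r(\gamma)=v$ and positive length, then its first edge $e$ satisfies $s(e)=r(\sigma_E(\gamma))\in\MT(u)=\{w\ge v\}$, whence $e\in R_v$. In case (1), an infinite $u$ with $\MT(u)=\MT(v)$ would, since $v\in\MT(u)$, produce such a $\gamma$, forcing $R_v\ne\emptyset$, a contradiction; a singular $u$ with $\MT(u)=\MT(v)$ satisfies $u\sim v$ and hence $u=v$ because the component is trivial. This gives the second assertion of (1). For the breaking cases (3),(4), assume $u\in E^\sing\cup E^\infty$ with $\overline{[u]}=\overline{[v]}$ and $u\ne v$; then $\MT(u)=\MT(v)$ by Lemma~\ref{lem:tails}, and $v\in\overline{[u]}$ produces, for every finite $F\subset r^{-1}(v)$, an element of $[u]$ of range $v$ and positive length (the length-$0$ candidate $v$ is excluded, as $v\notin[u]$) whose first edge avoids $F$; by the observation these first edges lie in $R_v$, so $R_v$ is infinite, contradicting that $v$ is breaking. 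This single argument delivers $u=v$ in (3), and in (4) it gives both $u=v$ for singular $u$ and $\overline{[v]}\ne\overline{[x]}$ for every infinite~$x$.

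It remains to treat existence. In case (3), $\MT(v)\notin\M_\gamma(E)$ while $\MT(x)\in\M_\gamma(E)$ for all $x\in A(\sigma_E)\cap E^\infty$ by Lemma~\ref{lem:tails}, so $\MT(v)\ne\MT(x)$. In case (4), $\MT(v)\in\M_\gamma(E)$ and the construction in the proof of Lemma~\ref{lem:tails} furnishes an infinite aperiodic $x$ with $\MT(x)=\MT(v)$. In case (2), using that $R_v$ is infinite I would enumerate distinct edges $e_1,e_2,\dots\in R_v$, choose return paths $\beta_k$ from $s(e_k)$ to $v$, and set $x=(e_1\beta_1)(e_2\beta_2)\cdots$; its finite prefixes $e_1\beta_1\cdots e_k\beta_k$ lie in $[v]$ and converge to $x$, so $x\in\overline{[v]}$, while the shifts of $x$ to the start of the $k$-th loop lie in $[x]$, have range $v$ and pairwise distinct first edges, so $v\in\overline{[x]}$; hence $\overline{[x]}=\overline{[v]}$, and since infinitely many distinct edges occur, $x$ is aperiodic. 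I expect the crux to be precisely the breaking-vertex separation: the point is that finiteness of $R_v$ bounds the ways an orbit can return to $v$, so the purely topological condition $v\in\overline{[u]}$ collapses to a statement about finitely many edges; getting the neighbourhood bookkeeping and the first-edge-in-$R_v$ observation exactly right is the main obstacle, whereas the $\M_\gamma$-entrance arguments are routine graph combinatorics.
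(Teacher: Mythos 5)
Your overall strategy coincides with the paper's own proof: the key observation that any positive-length path in $[u]$ with range $v$ and vertices in $\MT(v)$ must have its first edge in $R_v:=\{e\in E^1:r(e)=v,\ s(e)\ge v\}$, the use of finiteness of $R_v$ to show that a breaking vertex cannot be a nontrivial limit of its own quasi-orbit class, and the concatenation-of-cycles construction in case (2) are exactly the arguments used there. Your direct verification of the entrance condition for $\MT(v)$ in cases (1) and (2) via the ``first edge of $\beta$ leaving the cycle'' is correct (the paper instead deduces case (2) from Lemma~\ref{lem:tails} once $\overline{[v]}=\overline{[x]}$ is in hand, but your route is equally valid), and the separation arguments in (1), (3) and (4) are sound.

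The one genuine gap is the existence claim in case (4). You assert that ``the construction in the proof of Lemma~\ref{lem:tails} furnishes an infinite aperiodic $x$ with $\MT(x)=\MT(v)$,'' but that construction only produces some $x\in A(\sigma_E)$, and in the sub-case where $\MT(v)$ has the unique least element $v$ with $v\in E^\sing$ it simply outputs $x=v$, which is a vertex and not an infinite path. This sub-case genuinely occurs for breaking vertices: take $v$ with exactly two self-loops and infinitely many further incoming edges whose sources are not in $\MT(v)$; then $\MT(v)=\{v\}\in\M_\gamma(E)$ and $v$ is breaking. The missing step is short and is what the paper does: since $v$ is breaking, $R_v\ne\emptyset$ gives a simple cycle $\alpha$ at $v$; since $\MT(v)\in\M_\gamma(E)$, this cycle has an entrance $e$ in $\MT(v)$, and $s(e)\ge v$ yields a second cycle $\beta$ at $v$ that is not a power of $\alpha$; an aperiodic concatenation of $\alpha$ and $\beta$ then gives the required $x\in E^\infty\cap A(\sigma_E)$ with $\MT(x)=\MT(v)$. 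With this inserted, your proof is complete.
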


\bp (1) Assume the set $\{e\in E^1: r(e)=v,\ s(e)\ge v\}$ is empty. Since $v$ is a least element in~$\MT(v)$, the only possibility for a simple cycle in $\MT(v)$ not to have an entrance in $\MT(v)$ is to pass through~$v$. But there is no such cycle by our assumption, so $\MT(v)\in\M_\gamma(E)$. Since $v$ is the only finite path in~$\MT(v)$ with range $v$, if $\MT(v)=\MT(x)$ for some $x\in E^\sing\cup E^\infty$, we must have $x=v$.

\smallskip

(2) Assume the set $\{e\in E^1: r(e)=v,\ s(e)\ge v\}$ is infinite. Let $(e_n)_n$ be a sequence of different elements in this set. We can then find cycles of the form $\alpha_n=e_n\alpha_n'$. Consider the infinite aperiodic path $x:=\alpha_1\alpha_2\cdots$. As $\alpha_n\to v$ in $\partial E$, we see that $v\in\overline{[x]}$. As $\alpha_1\cdots\alpha_n\to x$, we also have $x\in\overline{[v]}$, so $\overline{[v]}=\overline{[x]}$. Since $x$ is aperiodic, we then have $\MT(v)=\MT(x)\in\M_\gamma(E)$ by Lemma~\ref{lem:tails}.

\smallskip

(3) Assume that $v$ is a breaking vertex and $\MT(v)\notin\M_\gamma(E)$. For every $x\in E^\infty\cap A(\sigma_E)$, we have $\MT(x)\in\M_\gamma(E)$ by Lemma~\ref{lem:tails}, hence $\MT(x)\ne \MT(v)$. Since $v$ is a breaking vertex, there is no sequence of paths of length $\ge1$ in $\MT(v)$ converging to $v$. It follows that if $v\in\overline{[u]}$ for some $u\in E^\sing$ with $\MT(u)=\MT(v)$, then we must have $v\in[u]$ and hence $u=v$.

\smallskip

(4) Finally, assume that $v$ is a breaking vertex and $\MT(v)\in\M_\gamma(E)$. Since $v$ is a breaking vertex, there is a simple cycle $\alpha$ starting and ending at $v$. As $\MT(v)\in\M_\gamma(E)$, there must exist another (possibly nonsimple) cycle $\beta$ starting and ending at $v$ that is not a power of $\alpha$. By concatenating these two cycles we can construct an infinite aperiodic path $x$ with $\MT(x)=\MT(v)$. The same argument as in (3) shows that if $v\in\overline{[y]}$ for some $y\in E^\sing\cup E^\infty$ with $\MT(y)=\MT(v)$, then $y=v$, completing the proof of the lemma.
\ep

We are now ready to describe the quasi-orbits of aperiodic paths.

\begin{prop}
For a countable directed graph $E$, consider the quasi-orbit space $(\G_E\backslash A(\sigma_E))^\sim$, that is, two points $x,y\in A(\sigma_E)$ define the same point of this space if and only if $\overline{[x]}=\overline{[y]}$. Then there is a unique bijection
$$
\M_\gamma(E)\sqcup BV(E)\to (\G_E\backslash A(\sigma_E))^\sim
$$
satisfying the following properties:
\begin{enumerate}
  \item[(i)] if $M\in\M_\gamma(E)$ has a unique least element $v$ and this vertex does not have self-loops, then the corresponding quasi-orbit is represented by $x_M:=v$;
  \item[(ii)] if $M\in\M_\gamma(E)$ does not have a unique least element without self-loops, then the corresponding quasi-orbit is represented by any path $x_M\in E^\infty\cap A(\sigma_E)$ such that $\MT(x_M)=M$ (and such a path indeed exists);
  \item[(iii)] the quasi-orbit corresponding to $v\in BV(E)$ is represented by $v$.
\end{enumerate}
\end{prop}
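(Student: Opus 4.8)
The plan is to treat this as an assembly of the two preceding lemmas, since the substantive content already lives in Lemma~\ref{lem:tails} and Lemma~\ref{lem:sing-orbits}. First I would observe that the prescription (i)--(iii) determines the image of every element of $\M_\gamma(E)\sqcup BV(E)$ (each $M\in\M_\gamma(E)$ falls under exactly one of (i),(ii), and each $v\in BV(E)$ under (iii)), so the map is fully specified and uniqueness is automatic; only well-definedness and bijectivity require argument. Well-definedness under (ii) has two parts: the quasi-orbit $\overline{[x_M]}$ must not depend on the choice of infinite aperiodic $x_M$ with $\MT(x_M)=M$, which is precisely the clause of Lemma~\ref{lem:tails} that $\MT(x)=\MT(y)$ forces $\overline{[x]}=\overline{[y]}$ for infinite paths; and such an $x_M$ must exist. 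For existence I would extract from the construction in the proof of Lemma~\ref{lem:tails} that the tails \emph{not} covered by (i)---several least elements, no least element, or a unique least element carrying self-loops---all produce an infinite aperiodic path (in the last case $M\in\M_\gamma(E)$ forces at least two self-loops at $v$, since every edge into $v$ from $M$ must be a self-loop by minimality and then $\M_\gamma$ demands an entrance).

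Next I would set up the dictionary between the cases of Lemma~\ref{lem:sing-orbits} and the summands of the domain. The entry point is the orbit computation: for a finite path $\alpha\in A(\sigma_E)$ with $s(\alpha)=v\in E^\sing$, the $\G_E$-orbit $[\alpha]$ equals the orbit $[v]$ of the vertex (both are the set of finite paths with source $v$), so every quasi-orbit is represented either by an infinite aperiodic path or by a singular vertex. For a singular vertex $v$ I would invoke the four-way split of Lemma~\ref{lem:sing-orbits}. The key identification is in case~(1): here $\MT(v)=\{w:w\ge v\}$, whose unique least element is $v$, and the emptiness of $\{e:r(e)=v,\ s(e)\ge v\}$ is equivalent to $v$ having no self-loops (any edge into $v$ from $M$ forces, by uniqueness of the least element, $s(e)\sim v$, i.e.\ a self-loop). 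Since a unique self-loop-free least element is automatically singular, this matches case~(1) vertices bijectively with type-(i) maximal tails. Case~(2) vertices share a quasi-orbit with an infinite aperiodic path, while cases~(3),(4) are exactly the breaking vertices.

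With the dictionary in place, surjectivity is a short check. A quasi-orbit with an infinite aperiodic representative $x$ has $\MT(x)\in\M_\gamma(E)$ of type~(ii): it cannot be of type~(i), since that would make $\MT(x)$ the tail of a case-(1) vertex $w$, and Lemma~\ref{lem:sing-orbits}(1) forbids an infinite path sharing $\MT$ with $w$; so it is hit through~(ii). A quasi-orbit with only a singular representative $v$ is hit through~(i) if $v$ is of type~(1) and through~(iii) if $v\in BV(E)$ (type~(3) or~(4)), while type~(2) cannot occur here since it supplies an infinite representative.

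For injectivity I would split according to the summands. Two distinct maximal tails give quasi-orbits whose $\MT$-values are distinct, hence distinct quasi-orbits by Lemma~\ref{lem:tails}. Two distinct breaking vertices give distinct quasi-orbits by the ``$u=v$'' clauses of Lemma~\ref{lem:sing-orbits}(3),(4). A maximal tail cannot collide with a breaking vertex $v$: if $M$ is type~(ii) then either $\MT(v)\ne M$, forcing distinct closures, or $\MT(v)=M$ (so $v$ is type~(4)) and Lemma~\ref{lem:sing-orbits}(4) rules out $\overline{[v]}=\overline{[x_M]}$; if $M$ is type~(i) its representative is a case-(1) vertex $w$, which is not breaking, so $w\ne v$ and Lemma~\ref{lem:sing-orbits}(3),(4) again give $\overline{[w]}\ne\overline{[v]}$. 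I expect the main obstacle to be organizational rather than technical: the delicate point is the clean verification of the equivalence ``$\{e:r(e)=v,\ s(e)\ge v\}=\emptyset$ $\Leftrightarrow$ $v$ is the unique self-loop-free least element of $\MT(v)$'', and keeping the type-(4) collision watertight, namely that $\MT$ genuinely fails to separate a breaking vertex from an infinite aperiodic path with the same maximal tail, so that these really must be counted separately.
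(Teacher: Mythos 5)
Your proposal is correct and follows essentially the same route as the paper: both proofs reduce the statement to an assembly of Lemma~\ref{lem:tails} and Lemma~\ref{lem:sing-orbits}, splitting the quasi-orbits into those represented by infinite aperiodic paths (matched with the maximal tails in $\M_\gamma(E)$ that are not of type (i) via $\MT$), those represented by vertices with $\{e:r(e)=v,\ s(e)\ge v\}=\emptyset$ (matched with the type-(i) tails via the same equivalence you single out as the delicate point), and those represented by breaking vertices. The only cosmetic difference is that the paper obtains existence of an infinite aperiodic representative for a type-(ii) tail from Lemma~\ref{lem:sing-orbits}(2)--(4), whereas you re-extract it from the construction in the proof of Lemma~\ref{lem:tails} together with the two-self-loops observation; both work.
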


\bp
Denote by $E_0^\sing$ (resp., $E_\infty^\sing$) the set of vertices $v\in E^\sing$ such that the set $\{e\in E^1: r(e)=v,\ s(e)\ge v\}$ is empty (resp., infinite). Therefore $E^\sing$ is the disjoint union of the sets $E_0^\sing$, $E_\infty^\sing$ and $BV(E)$.

Observe that if $v\in E^0$ is a least element of a maximal tail $M\in\M(E)$, then the condition $\{e\in E^1: r(e)=v,\ s(e)\ge v\}=\emptyset$ means exactly that $v$ is a unique least element of~$M$ and~$v$ does not have self-loops. If it is satisfied, then $v\in E^\sing$ by property (ii) of maximal tails. Denote by $\M_0(E)$ the set of maximal tails $M$ such that there is a unique least element $v\in M$ and $v$ does not have self-loops. The observation implies that the map $E_0^\sing\to\M_0(E)$, $v\mapsto\MT(v)$, is a bijection. Note also that,  by Lemma~\ref{lem:sing-orbits}(1), the sets $\M_0(E)$ and $\MT(A(\sigma_E)\cap E^\infty)$ are disjoint and $\M_0(E)\subset\M_\gamma(E)$.

Let $p\colon A(\sigma_E)\to (\G_E\backslash A(\sigma_E))^\sim$ be the quotient map. Lemma~\ref{lem:sing-orbits} implies that the map $p$ is injective on $E_0^\sing$ and $BV(E)$, and the space $(\G_E\backslash A(\sigma_E))^\sim$ decomposes into the disjoint union of the sets $p(E_0^\sing)$, $p(BV(E))$ and $p(A(\sigma_E)\cap E^\infty)$. Therefore to finish the proof it suffices to show that the map $p(x)\mapsto\MT(x)$ is a well-defined bijection between $p(A(\sigma_E)\cap E^\infty)$ and $\M_\gamma(E)\setminus\M_0(E)$.

That this map is a well-defined injection follows from Lemma~\ref{lem:tails}. The same lemma implies that every $M\in \M_\gamma(E)\setminus\M_0(E)$ has the form $\MT(x)$ for some $x\in (A(\sigma_E)\cap E^\infty)\cup E_\infty^\sing\cup BV(E)$. But then
 by Lemma~\ref{lem:sing-orbits}(2)-(4) we can always find $x\in A(\sigma_E)\cap E^\infty$ with $M=\MT(x)$.
\ep

Using the elements $x_M$ ($M\in\M_\gamma(E)$), $v\in BV(E)$ and $x_L$ ($L\in\LL(E)$), we can now formulate Proposition~\ref{prop:graph} as follows.

\begin{thm}[cf.~{\citelist{\cite{HS}*{Theorem~3.4}\cite{Gabe}*{Theorem~1}}}]  \label{thm:graph}
For every countable directed graph $E$, we have a bijection
$$
\M_\gamma(E)\sqcup BV(E)\sqcup(\LL(E)\times\T)\to \Prim C^{*}(\G_E)
$$
such that $\M_\gamma(E)\ni M\mapsto\ker\pi_{(x_M,1)}$, $BV(E)\ni v\mapsto \ker\pi_{(v,1)}$, $\LL(E)\times\T\ni (L,w)\mapsto \ker\pi_{(x_L,z)}$, where $z\in\T$ is any $|L|$-th root of $w$. The topology on  $\Prim C^{*}(\G_E)$ is described as follows. Consider a sequence of elements $((x_n,z_n))_n$ and an element $(x,z)$, each of the form $(x_M,1)$, $(v,1)$ or $(x_L,z')$. Then we have:
\begin{enumerate}
\item[(i)] if $x=x_M$ ($M\in\M_\gamma(E)$) or $x=v\in BV(E)$, then $\ker \pi_{(x_n,z_n)} \to \ker \pi_{(x,z)}$ if and only if there exist $y_{n}\in [x_{n}]$ with $y_{n} \to x$;
\item[(ii)] if $x=x_L$ ($L\in\LL(E)$) and $x_{n}\ne x_L$ for all $n$, then $\ker \pi_{(x_n,z_n)} \to \ker \pi_{(x,z)}$ if and only if there exist $y_{n}\in [x_{n}]$ with $y_{n} \to x$;
\item[(iii)] if $x=x_L$ ($L\in\LL(E)$) and $x_{n}=x_L$ for all $n$, then $\ker \pi_{(x_n,z_n)} \to \ker \pi_{(x,z)}$ if and only if  $z_n^{|L|}\to z^{|L|}$.
\end{enumerate}
\end{thm}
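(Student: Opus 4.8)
The plan is to deduce the theorem directly from Proposition~\ref{prop:graph} together with the preceding proposition, which identifies the quasi-orbit space $(\G_E\backslash A(\sigma_E))^\sim$ with $\M_\gamma(E)\sqcup BV(E)$ via the representatives $x_M$ and $v$. Almost all of the content is already contained in these two results, so the task is to reorganize them and check that the chosen catalogue elements $x_M$, $v\in BV(E)$, $x_L$ license a verbatim transcription of Proposition~\ref{prop:graph}.

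First I would set up the bijection. For $x\in A(\sigma_E)$ the isotropy group $\Gxx=\{x\}$ is trivial, so $\pi_{(x,z)}$ does not depend on $z$; hence the aperiodic primitive ideals depend only on the orbit closure $\overline{[x]}$, that is, on the point of $(\G_E\backslash A(\sigma_E))^\sim$. The preceding proposition then turns this set into $\M_\gamma(E)\sqcup BV(E)$ through the representatives $x_M$ and $v$, yielding the two families $M\mapsto\ker\pi_{(x_M,1)}$ and $v\mapsto\ker\pi_{(v,1)}$ (the value $1\in\T$ being irrelevant by triviality of the isotropy). For a primitive loop $L$, every $x\in[x_L]$ satisfies $\overline{[x]}=\overline{[x_L]}$, so $\ker\pi_{(x,z)}=\ker\pi_{(x_L,z)}$, and by the equality criterion of Proposition~\ref{prop:graph} this ideal depends only on $w:=z^{|L|}$; since $z\mapsto z^{|L|}$ maps $\T$ onto $\T$, this produces the family $(L,w)\mapsto\ker\pi_{(x_L,z)}$, which is well defined. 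The same equality criterion shows the three families are pairwise disjoint and injective within each: aperiodic and periodic ideals never coincide, distinct loops give distinct orbits, and within a fixed $L$ we have $\ker\pi_{(x_L,z)}=\ker\pi_{(x_L,z')}$ iff $z^{|L|}=(z')^{|L|}$. Assembling these gives the stated bijection, and surjectivity onto $\Prim C^*(\G_E)$ is exactly the surjectivity in Proposition~\ref{prop:graph}.

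Next I would read off the topology. Since $x_M$ and $v$ lie in $A(\sigma_E)$ while $x_L$ is periodic, case~(i) of the theorem is literally case~(i) of Proposition~\ref{prop:graph}, and case~(iii) is the specialization of case~(iii) of Proposition~\ref{prop:graph} to $x_n=x=x_L$. For case~(ii) the only point to verify is that, \emph{among catalogue elements}, the condition $x_n\ne x_L$ is equivalent to $x_n\notin[x_L]$: the periodic orbits $[x_{L'}]$ are pairwise disjoint and disjoint from $A(\sigma_E)$, and $x_L$ is the unique catalogue element lying in $[x_L]$, so any catalogue element $x_n\ne x_L$ automatically satisfies $x_n\notin[x_L]$. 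With this, case~(ii) of the theorem coincides with case~(ii) of Proposition~\ref{prop:graph}, completing the description of convergence.

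Since both prior results do the heavy lifting, the main obstacle is not a deep argument but a handful of compatibility checks that must be made explicit before the transcription is legitimate. Concretely: (a) breaking vertices are aperiodic, so that case~(i) applies to them---this holds because $v\in E^\sing$ lies outside $\Dom(\sigma_E)$, whence $\sigma_E^n(v)$ is defined only for $n=0$ and $v$ cannot be periodic; (b) the identification $x_n\ne x_L\Leftrightarrow x_n\notin[x_L]$ for catalogue elements, as above; and (c) the well-definedness of the periodic family under the covering $z\mapsto z^{|L|}$, where one must confirm that the limit ideal depends only on $w=z^{|L|}$ and not on the chosen root $z$. None of these is difficult, but each is needed to ensure that no case of Proposition~\ref{prop:graph} is dropped or mismatched when passing to the explicit parameterization by $\M_\gamma(E)\sqcup BV(E)\sqcup(\LL(E)\times\T)$.
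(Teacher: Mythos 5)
Your proposal is correct and takes essentially the same route as the paper, which states Theorem~\ref{thm:graph} as a direct reformulation of Proposition~\ref{prop:graph} via the parameterization of $(\G_E\backslash A(\sigma_E))^\sim$ by $\M_\gamma(E)\sqcup BV(E)$ and gives no separate written-out proof. The compatibility checks you single out (breaking vertices are aperiodic since they lie outside $\Dom(\sigma_E)$; among catalogue elements $x_n\ne x_L$ is equivalent to $x_n\notin[x_L]$; the periodic family is well defined under $z\mapsto z^{|L|}$) are exactly the points needed to make that transcription legitimate.
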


The convergences $y_n\to x$ in this theorem can be easily formulated in terms of the graph using the definition of the topology on $\partial E$, but the whole list of rules is long and hardly more illuminating than the above formulation, cf.~\citelist{\cite{HS}\cite{Gabe}}, so we omit it. For example, given $v\in BV(E)$ and a sequence $(L_n)_n$ in $\LL(E)$, elements $y_n\in[x_{L_n}]$ such that $y_n\to v$ exist if and only if for all $n$ large enough we can find finite paths $\alpha_n=e_n\alpha'_n$, with $e_n\in E^1$, such that $r(e_n)=v$, $s(\alpha_n)\in L_n$ and every edge appears in the sequence $(e_n)_n$ at most finitely many times. See also the next subsection for the case of row-finite graphs without sources.

\subsection{Higher rank graphs}\label{subsecHRG}
We next turn to higher rank graphs. Proofs of the claims in the following introductory discussion can be found in \cite{KP}.

A countable higher rank graph is a pair $(\Lambda, d)$, where $\Lambda$ is a countable category, thought of as a countable set of morphisms, and $d\colon\Lambda\mapsto \Z^{k}_+$ is a functor, called the degree map, such that whenever $d(\lambda) = m + n$ for some morphism $\lambda$ and $m,n\in\Z^k_+$, there is a unique factorization $\lambda=\mu\nu$ such that $d(\mu) = m$ and $d(\nu) = n$. The number $k\in\N$ is called the rank of $(\Lambda,d)$, and $(\Lambda,d)$ is also called a $k$-graph. We will follow the standard notation for higher rank graphs and set $\Lambda^{n}:=d^{-1}(n)$ for $n\in \Z^{k}_+$. The unique factorization property implies that $\Lambda^0$ is exactly the set of identity morphisms of objects in our category. Its elements are called vertices, and the elements of $\Lambda^n$ are called paths of degree $n$.

Denote by $r,s\colon\Lambda\to\Lambda^0$ the codomain and domain maps, respectively. For $v\in \Lambda^{0}$, consider the set $v\Lambda^n$ of compositions of $v$ with elements of $\Lambda^n$, in other words, $v\Lambda^{n}=\{\lambda \in \Lambda^n : r(\lambda)=v\}$. We will assume throughout this subsection that our higher rank graphs are row-finite and have no sources, which means that $0<|v\Lambda^{n}|<\infty$ for all $v\in \Lambda^{0}$ and~$n\in \Z_+^{k}$.

The pair
$$
\Omega_{k}:=\{ (p,q)\in \Z_+^{k} \times \Z_+^{k} : p\leq q \}, \quad d: \Omega_{k} \to \Z_+^{k},\quad (p,q)\mapsto q-p,
$$
is a higher rank graph with composition $(p,q)(q,t):=(p,t)$. The space of infinite paths in~$\Lambda$ is defined~by
$$
\Lambda^{\infty}:=\{ x: \Omega_{k} \to \Lambda \; | \; x \text{ is a $k$-graph morphism} \},
$$
where by a morphism one means a functor respecting the degree maps. We define $x(n):=x(n,n)\in \Lambda^{0}$ for $n\in \Z_+^{k}$.

Note that for every finite path $\lambda$ we have well-defined paths $\lambda(p,q)$ for $0\le p\le q\le d(\lambda)$ uniquely determined by the properties that $d(\lambda(p,q))=q-p$ and $\lambda=\lambda(0,p)\lambda(p,q)\lambda(q,d(\lambda))$. In the same way as for infinite paths we write $\lambda(n)$ for $\lambda(n,n)$.

For $\lambda \in \Lambda$ and $x\in \Lambda^{\infty}$ with $s(\lambda) = x(0)$, there is a unique way to define a concatenation $\lambda x \in \Lambda^{\infty}$ such that $(\lambda x)(0,d(\lambda)+n)=\lambda x(0,n)$ for all $n\in\Z^k_+$. For each $\lambda \in \Lambda$, define
$$
Z(\lambda):=\{ x\in \Lambda^{\infty} :  x(0, d(\lambda))=\lambda \}=\{\lambda x : x\in \Lambda^{\infty},\ s(\lambda)=x(0)\}.
$$
The sets $Z(\lambda)$ constitute a basis of compact open sets for a second countable Hausdorff locally compact topology on $\Lambda^{\infty}$.

For every $x\in \Lambda^{\infty}$ and $n\in \Z_+^{k}$, there exists a unique path $\sigma^{n}(x) \in \Lambda^{\infty}$ such that
$$
\sigma^{n}(x)(p,q)=x(n+p,n+q)
$$
for all $(p,q)\in\Omega_k$.  This way we get an action $\Z_+^k\curvearrowright^\sigma \Lambda^\infty$ by local homeomorphisms as in Section~\ref{subsecRD}. We can therefore consider the corresponding Deaconu--Renault groupoid $\G_\Lambda:=\G_{\sigma}$. The groupoid C$^{*}$-algebra $C^{*}(\G_\Lambda)$ is then the Cuntz--Krieger C$^{*}$-algebra~$C^{*}(\Lambda)$ of the higher rank graph $\Lambda$. We let $\Phi\colon\G_\Lambda\to \Z^{k}$ denote the natural grading on~$\G_\Lambda$.

\smallskip

Our first goal is to describe the open subsets of $\Prim C^*(\G_\Lambda)$. Recall from Section~\ref{subsecRD} that they are in a bijective correspondence with the $\G_\Lambda$-invariant open subsets of $\Stab(\G_\Lambda)\dach$.

Let us first consider the space $\G_\Lambda^{(0)}=\Lambda^\infty$. For this space the invariant open sets are known. In order to formulate the result recall the following notions.

\begin{defn}[{\cite{RSY}}]
A subset $H\subset\Lambda^0$ is called \emph{hereditary}, if for every $\lambda \in \Lambda$ with $r(\lambda)\in H$ we have $s(\lambda)\in H$. It is called \emph{saturated}, if whenever $s(v\Lambda^{n})\subset H$ for some $n\in \Z_+^{k}$ and $v\in \Lambda^{0}$, we must have $v\in H$.
\end{defn}

The following observation goes back to \cite{RSY}*{Theorem 5.2}, although it is formulated without using groupoids there, see also \cite{BCS}*{Lemmas~11.3, 11.5}.

\begin{lemma}\label{lem:RSY-sets}
There is a one-to-one correspondence between the $\G_\Lambda$-invariant open subsets of~$\Lambda^\infty$ and the hereditary and saturated subsets of $\Lambda^0$. Namely, given a $\G_\Lambda$-invariant open subset $\Omega\subset\Lambda^\infty$, we define
$$
H_\Omega:=\{v\in\Lambda^0: Z(v)\subset \Omega\},
$$
and given a hereditary and saturated subset $H$ of $\Lambda^0$, we define
$$
\Omega_H:=\{x\in\Lambda^\infty: x(n)\in H\ \text{for some}\ n\in\Z_+^k\}.
$$
Then the maps $\Omega\mapsto H_\Omega$ and $H\mapsto \Omega_H$ are inverse to each other.
\end{lemma}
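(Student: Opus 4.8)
The plan is to verify that both assignments land in the correct classes of sets and then that they are mutually inverse. Throughout I would use that, by the definition of $\G_\Lambda=\G_\sigma$ in Section~\ref{subsecRD}, two infinite paths $x,y$ lie in the same $\G_\Lambda$-orbit precisely when $\sigma^m(x)=\sigma^n(y)$ for some $m,n\in\Z_+^k$, together with the identity $\sigma^n(x)(q)=x(n+q)$. First I would note that $\Omega_H=\bigcup_{\lambda:\,s(\lambda)\in H}Z(\lambda)$: indeed $x(n)\in H$ forces $x\in Z(x(0,n))$ with $s(x(0,n))=x(n)\in H$, and conversely $x\in Z(\lambda)$ with $s(\lambda)\in H$ gives $x(d(\lambda))=s(\lambda)\in H$; hence $\Omega_H$ is open. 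For $\G_\Lambda$-invariance I would use heredity: if $x(p)\in H$ then $x(q)\in H$ for every $q\ge p$ (apply heredity to $x(p,q)$), so from $\sigma^m(x)=\sigma^n(y)$ one transports membership across the orbit by evaluating at $q:=p\vee m$. Dually, both heredity and saturation of $H_\Omega$ follow from invariance of $\Omega$: for heredity, $x\in Z(s(\lambda))$ gives $\lambda x\in Z(r(\lambda))\subset\Omega$ with $\lambda x\in[x]$; for saturation, if $s(v\Lambda^n)\subset H_\Omega$ then for $x\in Z(v)$ the path $x(0,n)\in v\Lambda^n$ has $x(n)\in H_\Omega$, so $\sigma^n(x)\in Z(x(n))\subset\Omega$ and $\sigma^n(x)\in[x]$ give $x\in\Omega$.

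Next I would prove $\Omega_{H_\Omega}=\Omega$ for invariant open $\Omega$. The inclusion $\Omega_{H_\Omega}\subset\Omega$ is immediate: $x(n)\in H_\Omega$ means $Z(x(n))\subset\Omega$, and $\sigma^n(x)\in Z(x(n))$ with $\sigma^n(x)\in[x]$ forces $x\in\Omega$. For $\Omega\subset\Omega_{H_\Omega}$, given $x\in\Omega$ I would pick a basic neighbourhood $Z(\lambda)\subset\Omega$; then every $y\in Z(s(\lambda))$ satisfies $\lambda y\in Z(\lambda)\subset\Omega$ with $\lambda y\in[y]$, so $Z(s(\lambda))\subset\Omega$, i.e. $s(\lambda)=x(d(\lambda))\in H_\Omega$ and $x\in\Omega_{H_\Omega}$.

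The remaining and most delicate identity is $H_{\Omega_H}=H$. The inclusion $H\subset H_{\Omega_H}$ is trivial, since $x\in Z(v)$ with $v\in H$ has $x(0)=v\in H$. For $H_{\Omega_H}\subset H$ I would exploit compactness: if $Z(v)\subset\Omega_H=\bigcup_{s(\mu)\in H}Z(\mu)$, compactness of $Z(v)$ yields finitely many $\mu_1,\dots,\mu_r$ with $s(\mu_i)\in H$ and $Z(v)\subset\bigcup_i Z(\mu_i)$. Setting $n:=d(\mu_1)\vee\cdots\vee d(\mu_r)$, I claim $s(v\Lambda^n)\subset H$: for $\mu\in v\Lambda^n$ choose $x\in Z(\mu)$ (nonempty as $\Lambda$ has no sources), so $x\in Z(v)$ lands in some $Z(\mu_i)$, whence $x(d(\mu_i))=s(\mu_i)\in H$ and, by heredity and $n\ge d(\mu_i)$, $s(\mu)=x(n)\in H$. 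Saturation of $H$ then delivers $v\in H$. I expect this uniform-$n$ extraction — upgrading the pointwise membership $x\in\Omega_H$ to a single exponent $n$ controlling all of $v\Lambda^n$ — to be the crux of the argument; it is precisely where row-finiteness (compactness of $Z(v)$), the no-sources hypothesis (nonemptiness of $Z(\mu)$), heredity, and saturation all come into play.
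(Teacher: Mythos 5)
Your proof is correct and complete. The paper itself offers no proof of this lemma, deferring to \cite{RSY}*{Theorem 5.2} and \cite{BCS}*{Lemmas 11.3, 11.5}; your argument is essentially the standard one found there, and you correctly identify the one nontrivial step — using compactness of $Z(v)$ (row-finiteness) together with the no-sources hypothesis to extract a single exponent $n$ with $s(v\Lambda^n)\subset H$ so that saturation can be invoked.
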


The following result is inspired by \cite{BCS}*{Corollary~11.7} that gives a description of ideals of $C^*(\Lambda)$ for row-finite higher rank graphs without sources under the assumption of existence of harmonious families of bisections (see Section \ref{ssec:harmonious}).

\begin{prop} \label{prop:HR}
Let $\Lambda$ be a countable row-finite $k$-graph without sources. Then there is a bijective correspondence between the $\G_\Lambda$-invariant  open subsets of $\Stab(\G_\Lambda)\dach$ and the subsets $D\subset \Lambda^{0}\times \T^{k}$ satisfying the following conditions:
\begin{enumerate}
\item[(i)] for every $z\in\T^k$, the set $\{v\in\Lambda^0:(v,z)\in D\}$ is hereditary and saturated;
\item[(ii)] for every $(v, z)\in D$ and every $x\in Z(v)$, there exist $\varepsilon >0$, $n\in \Z_+^{k}$ and a finite set $\{(m(i),n(i))\}_{i=1}^{N} \subset \Z_+^{k} \times \Z_+^{k}$, with $\sigma^{m(i)}(x)=\sigma^{n(i)}(x)$ for all $i$, such that the following property holds: for each $y\in Z(x(0,n))$, there exists $m\in\Z^k_+$ such that $\{y(m)\}\times T_y\subset D$, where
\begin{equation}\label{eq:Ty}
T_y:=\{ w \in \T^{k} :|z^{m(i)-n(i)}-w^{m(i)-n(i)}|<\varepsilon\text{ for all } i \text{ with } \sigma^{m(i)}(y)=\sigma^{n(i)}(y)\}.
\end{equation}
\end{enumerate}
Namely, the pre-image $q^{-1}(U_D)\subset\Lambda^\infty\times\T^k$ of the $\G_\Lambda$-invariant open subset $U_D\subset \Stab(\G_\Lambda)\dach$ corresponding to~$D$, where $q\colon \Lambda^\infty\times\T^k\to\Stab(\G_\Lambda)\dach$ is given by~\eqref{eq:q-on-Stab}, is
\begin{equation}\label{eq:UD}
q^{-1}(U_D)=\{(x,z)\in\Lambda^\infty\times\T^k: (x(n),z)\in D\ \text{for some}\ n\in\Z_+^k\}.
\end{equation}
\end{prop}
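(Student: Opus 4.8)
The plan is to deduce Proposition~\ref{prop:HR} from Theorem~\ref{thm:RDopen} by translating its description of $\G_\Lambda$-invariant open subsets of $\Stab(\G_\Lambda)\dach$ in terms of subsets $Y\subset\Lambda^\infty\times\T^k$ into a description in terms of subsets $D\subset\Lambda^0\times\T^k$. Since $\Lambda$ is row-finite without sources, $\Dom(\sigma^n)=\Lambda^\infty$ for all $n$, so by Theorem~\ref{thm:RDopen} the map $U\mapsto q^{-1}(U)$ is a bijection between the $\G_\Lambda$-invariant open subsets of $\Stab(\G_\Lambda)\dach$ and the subsets $Y$ satisfying conditions~(i) and~(ii) of that theorem. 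It therefore suffices to show that $D\mapsto Y_D$, with $Y_D$ the right-hand side of~\eqref{eq:UD}, is a bijection from the subsets $D$ satisfying~(i) and~(ii) of the proposition onto such $Y$, with inverse $Y\mapsto D_Y:=\{(v,w)\in\Lambda^0\times\T^k:Z(v)\times\{w\}\subset Y\}$.

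First I would organize everything by slices. For $z\in\T^k$ write $Y^z:=\{x:(x,z)\in Y\}$ and $D^z:=\{v:(v,z)\in D\}$; then $(Y_D)^z=\Omega_{D^z}$ and $(D_Y)^z=H_{Y^z}$ in the notation of Lemma~\ref{lem:RSY-sets}. Condition~(i) of the proposition is exactly the statement that every $D^z$ is hereditary and saturated, while condition~(i) of Theorem~\ref{thm:RDopen} says precisely that every $Y^z$ is $\G_\Lambda$-invariant; moreover any $Y$ satisfying~(i) and~(ii) of Theorem~\ref{thm:RDopen} equals $q^{-1}(U)$ for an open $U$, so each $Y^z=\{x:q(x,z)\in U\}$ is open by continuity of $x\mapsto q(x,z)$. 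Lemma~\ref{lem:RSY-sets} then says that $\Omega_{(\cdot)}$ and $H_{(\cdot)}$ are mutually inverse between hereditary saturated sets and invariant open sets, which gives $D_{Y_D}=D$ and $Y_{D_Y}=Y$ slice by slice and matches condition~(i) on both sides. It remains to match condition~(ii).

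For the implication from the proposition to Theorem~\ref{thm:RDopen}, I would use a shift argument. Given $(x,z)\in Y_D$, pick $l_0$ with $x(l_0)\in D^z$, set $v:=x(l_0)$ and $x':=\sigma^{l_0}(x)\in Z(v)$, and feed $x'$ into condition~(ii) of the proposition to obtain $\varepsilon,n$ and pairs $(m(i),n(i))$ with $\sigma^{m(i)}(x')=\sigma^{n(i)}(x')$. Translating by $l_0$, the pairs $(m(i)+l_0,n(i)+l_0)$ satisfy $\sigma^{m(i)+l_0}(x)=\sigma^{n(i)+l_0}(x)$ and have the same differences $m(i)-n(i)$, so the sets $T_y$ of~\eqref{eq:Ty} computed for $y$ on the cylinder $U:=Z(x(0,l_0+n))$ agree with the sets $T_{\sigma^{l_0}(y)}$ for $x'$; since $y(l_0+m)=\sigma^{l_0}(y)(m)\in D^w$ for all $w\in T_y$, this yields $T_y\subset(Y_D)_y$, i.e.\ condition~(ii) of Theorem~\ref{thm:RDopen}.

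The main obstacle is the converse implication, because condition~(ii) of Theorem~\ref{thm:RDopen} only provides, for each $w\in T_y$, \emph{some} level $l$ with $y(l)\in D^w$, whereas condition~(ii) of the proposition demands a single $m=m(y)$ with $y(m)\in D^w$ for all $w\in T_y$ at once. To upgrade pointwise containment to a uniform one I would exploit three facts about $D=D_Y$. First, each slice $D_u=\{w:Z(u)\times\{w\}\subset Y\}$ is open in $\T^k$: writing $Y=q^{-1}(U)$ with $U$ open, compactness of the cylinder $Z(u)$ collapses the product neighbourhoods on which $q$ lands in $U$ into a single neighbourhood of any given $w\in D_u$. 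Second, hereditarity of $D^w$ makes the open sets $D_{y(l)}$ increase with $l$ along the directed set $\Z_+^k$, with $\bigcup_l D_{y(l)}=(Y_D)_y$. Third, in condition~(ii) of the proposition I am free to replace $\varepsilon$ by a smaller $\varepsilon'$, so that the closed, hence compact, set $\overline{T_y(\varepsilon')}$ is still contained in $T_y(\varepsilon)\subset(Y_D)_y$; a finite subcover by the increasing open sets $D_{y(l)}$ then produces a single $m$ with $\overline{T_y(\varepsilon')}\subset D_{y(m)}$, which is exactly $\{y(m)\}\times T_y(\varepsilon')\subset D$. (If one wishes, Remark~\ref{rem:Yy} allows restricting the pairs further to $m(i)-n(i)\in\Phi(\IsoGx{x}^\circ_x)$.) Combining these, $D_Y$ satisfies condition~(ii), which completes the bijection and the proof.
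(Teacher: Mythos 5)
Your proposal is correct and follows essentially the same route as the paper: both reduce the statement to Theorem~\ref{thm:RDopen} by applying Lemma~\ref{lem:RSY-sets} slice by slice in $z\in\T^k$, prove one implication by shifting the data from $\sigma^{l_0}(x)$ back to $x$, and prove the other by shrinking $\varepsilon$ so that the compact set $\overline{T_y(\varepsilon')}$ can be covered and then extracting a single level $m$. Your phrasing of the compactness step via openness of the slices $D_u$ and monotonicity of $D_{y(l)}$ in $l$ is just a repackaging of the paper's argument with product-open cylinders $Z(y(0,m_w))\times U_w$.
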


As in Theorem~\ref{thm:RDopen}, the convention here is that if $N=0$ or $\sigma^{m(i)}(y)\ne\sigma^{n(i)}(y)$ for all $i$, then $T_y=\T^k$.

\begin{proof}
Denote by $V_D$ the set on the right hand side of~\eqref{eq:UD}. By Lemma~\ref{lem:RSY-sets}, the map $D\mapsto V_D$ establishes a bijective correspondence between the subsets $D\subset\Lambda^0\times\T^k$ satisfying (i) and the subsets $V\subset\Lambda^\infty\times\T^k$ such that $V$ is $\G_\Lambda$-invariant and $V\cap(\Lambda^\infty\times\{z\})$ is open in $\Lambda^\infty\times\{z\}$ for all~$z$. Since the last condition is satisfied for the pre-image of every open subset of $\Stab(\G_\Lambda)\dach$, in order to prove the proposition we only need to show that given a set $D\subset\Lambda^0\times\T^k$ satisfying~(i), condition (ii) is satisfied if and only if $q^{-1}(q(V_D))=V_D$ and the set $q(V_D)\subset\Stab(\G_\Lambda)\dach$ is open.

\smallskip

Assume first that $D\subset\Lambda^0\times\T^k$ satisfies (i), $q^{-1}(q(V_D))=V_D$ and the set $q(V_D)\subset\Stab(\G_\Lambda)\dach$ is open. Fix $(v,z)\in D$ and $x\in Z(v)$.
By Theorem~\ref{thm:RDopen}, there exist $\varepsilon>0$, an open neighbourhood $U\subset \Lambda^{\infty}$ of $x$ and a finite set $\{(m(i),n(i))\}_{i=1}^{N} \subset \Z_+^{k} \times \Z_+^{k}$ with $\sigma^{m(i)}(x) = \sigma^{n(i)}(x)$ for all $i$, such that $(y,w)\in V_D$ for all $y\in U$ and $w\in T_y$, where $T_y$ is defined by~\eqref{eq:Ty}. By replacing~$\eps$ by a smaller number we may assume that we actually have $(y,w)\in V_D$ for all $y\in U$ and $w\in\bar T_y$.

Choose $n\in \Z_+^{k}$ such that $Z(x(0,n))\subset U$. We then claim that condition (ii) is satisfied with this choice of $\eps$, $n$ and $\{(n(i),m(i))\}_{i=1}^{N}$. To prove this, take $y\in Z(x(0,n))$. Since $V_D=q^{-1}(q(V_D))$ is open in the product topology, for each $w \in \bar T_{y}$ there is $m_{w}\in\Z_+^k$ and an open neighbourhood  $U_{w} \subset \T^{k}$ of $w$ such that $Z(y(0,m_{w})) \times U_{w} \subset V_D$. Since  $\bar T_{y}$ is compact, there is a finite set $w_{1}, \dots , w_{p}\in \bar T_y$ such that $\bar T_{y}$ is contained in $U_{w_{1}}\cup \dots \cup U_{w_{p}}$. Take $m\in\Z_+^k$ such that $m\geq m_{w_{j}}$ for all $j=1,\dots,p$. Then $Z(y(0,m))\times U_{w_{j}} \subset V_D$ for all $j$, and hence $Z(y(0,m))\times\{w\}\subset V_D$ for all $w\in T_y$. By $\G_\Lambda$-invariance of $V_D$ this implies that $Z(y(m))\times\{w\}\subset V_D$, hence $(y(m),w)\in D$ by Lemma~\ref{lem:RSY-sets}. Therefore condition (ii) is satisfied.

\smallskip

Next, assume $D\subset\Lambda^0\times\T^k$ satisfies conditions (i) and (ii). We want to apply Theorem~\ref{thm:RDopen} to conclude that $q^{-1}(q(V_D))=V_D$ and the set $q(V_D)\subset\Stab(\G_\Lambda)\dach$ is open. Since we already know that the set $V_D$ is invariant, we only need to check condition (ii) in that theorem.
For this, take $(x,z) \in V_D$ and pick $p\in \Z_+^{k}$ such that $(x(p),z) \in D$.

Apply condition (ii) on $D$ to $v:=x(p)$ and $\sigma^p(x)$ in place of $x$ to get $\varepsilon>0$, $n\in \Z_+^{k}$ and a finite set $\{(m(i),n(i))\}_{i=1}^{N} \subset \Z_+^{k} \times \Z_+^{k}$ with the properties as stated there. We claim that condition (ii) in Theorem~\ref{thm:RDopen} is satisfied for our $\eps$, $U:=Z(x(0,p+n))$ and the finite set $\{(p+m(i),p+n(i))\}_{i=1}^{N} \subset \Z_+^{k} \times \Z_+^{k}$. To see this, assume $y\in Z(x(0,p+n))$ and $w\in \T^k$ satisfy
$$
|z^{m(i)-n(i)}-w^{m(i)-n(i)}|<\varepsilon\text{ for all } i \text{ with } \sigma^{p+m(i)}(y)=\sigma^{p+n(i)}(y).
$$
Since $\sigma^{p}(y)\in Z(x(p,p+n))$, by condition (ii) on $D$ there is $m\in\Z^k_+$ such that $(\sigma^p(y)(m),w)\in D$. As $\sigma^p(y)(m)=y(p+m)$, we conclude that $(y,w)\in V_D$. This proves that condition (ii) in Theorem~\ref{thm:RDopen} is satisfied for~$V_D$.
\end{proof}

By the last part of Theorem~\ref{thm:RDopen} we thus get a bijective correspondence between the ideals of $C^*(\G_\Lambda)\cong C^*(\Lambda)$ and the subsets $D\subset\Lambda^0\times\T^k$ satisfying conditions (i) and (ii) of the above proposition. Namely, the ideal corresponding to $D$ is
$$
\bigcap_{(x,z)\in(\Lambda^\infty\times\T^k)\setminus q^{-1}(U_D)}\ker\pi_{(x,z)}.
$$

\begin{remark}
Instead of Theorem~\ref{thm:RDopen} in the above proof we could have used its variant indicated in Remark~\ref{rem:Yy}. Then we would get that in condition (ii) of Proposition~\ref{prop:HR} we may in addition require that $m(i)-n(i)\in \Phi(\IsoGx{x}^\circ_x)$ for all $i$ and in the definition of $T_y$ we may consider only~$i$ such that $m(i)-n(i)\in\Phi(\IsoGx{y}^\circ_y)$, where $\G:=\G_\Lambda$. Similarly to our discussion in Section~\ref{ssec:harmonious}, a combination of these two forms of Proposition~\ref{prop:HR} implies \cite{BCS}*{Corollary~11.7} when $\G_\Lambda$ admits harmonious families of bisections. \ee
\end{remark}

Our next goal is to give an alternative description of $\Prim C^*(\Lambda)$ similar to Theorem~\ref{thm:graph}. We need some preparation in order to formulate and prove the result.

\smallskip

Denote by $e_1,\dots,e_k$ the standard generators of $\Z^k_+$. The notion of a maximal tail from Definition~\ref{def:max-tails} has the following analogue for row-finite $k$-graphs without sources.

\begin{defn}[\cite{MR3189779}]
A nonempty subset $M\subset\Lambda^0$ is called a \emph{maximal tail} if the following three conditions are satisfied:
\begin{enumerate}
  \item[(i)] if $v\in \Lambda^0$, $w\in M$ and $v\Lambda w\ne\emptyset$, then $v\in M$;
  \item[(ii)] if $v\in M$ and $1\le i\le k$, then there is $\lambda\in v\Lambda^{e_i}$ such that $s(\lambda)\in M$;
  \item[(iii)] for every $v, w\in M$, there exists $u\in M$ such that $v\Lambda u\ne\emptyset$ and $w\Lambda u\ne\emptyset$.
\end{enumerate}
Denote by $\M(\Lambda)$ the set of maximal tails.
\end{defn}

The following result is similar to Lemma~\ref{lem:tails}, but is easier, because now we do not have singular vertices and do not care about aperiodicity of paths, so we omit the proof.

\begin{lemma}\label{lem:k-tails}
We have a well-defined map $\MT\colon \Lambda^\infty\to \M(\Lambda)$ that associates to $x\in\Lambda^\infty$ the set $\MT(x):=\{y(0):y\in[x]\}$. This map is surjective, and we have $\MT(x)=\MT(y)$ if and only if $\overline{[x]}=\overline{[y]}$.
\end{lemma}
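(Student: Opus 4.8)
The plan is to work throughout with the preorder on $\Lambda^0$ given by $v\ge w$ iff $v\Lambda w\neq\emptyset$, and to first record an explicit description of the sets $\MT(x)$. Unwinding the definition of the Deaconu--Renault orbit, a path $y$ lies in $[x]$ exactly when $\sigma^m(y)=\sigma^n(x)$ for some $m,n\in\Z_+^k$, which forces $y=\mu\,\sigma^n(x)$ for a finite path $\mu$ with $s(\mu)=x(n)$. Hence
\[
\MT(x)=\{y(0):y\in[x]\}=\{v\in\Lambda^0: v\Lambda\,x(n)\neq\emptyset\ \text{for some}\ n\in\Z_+^k\}.
\]
With this formula the three maximal-tail axioms are quick: axiom (i) is transitivity of $\ge$; for (iii), given $v\ge x(m)$ and $w\ge x(n)$ I take $u:=x(m\vee n)$, which lies below both $x(m)$ and $x(n)$ via the factorization paths $x(m,m\vee n)$ and $x(n,m\vee n)$; and for (ii) I pass to the infinite path $z:=\mu\,\sigma^n(x)\in[x]$ with $z(0)=v$ and read off $z(0,e_i)\in v\Lambda^{e_i}$, whose source $z(e_i)=r(\sigma^{e_i}z)$ again lies in $\MT(x)$. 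Thus $\MT$ indeed lands in $\M(\Lambda)$, so it is a well-defined map.

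For surjectivity --- which I expect to be the only real work --- I fix $M\in\M(\Lambda)$, enumerate $M=\{v_1,v_2,\dots\}$, and build a coherent sequence of finite paths $\lambda_0,\lambda_1,\dots$ (with $\lambda_j=\lambda_{j+1}(0,d(\lambda_j))$) all of whose vertices lie in $M$, whose degrees grow to infinity in every coordinate, and which is cofinal in the sense that $v_j\ge s(\lambda_j)$. Starting from $\lambda_0:=v_1$, I pass from $\lambda_j$ to $\lambda_{j+1}$ in two moves: first, applying axiom (iii) to $s(\lambda_j)$ and $v_{j+1}$, I find $u\in M$ below both and append a path $\nu\in s(\lambda_j)\Lambda u$, whose intermediate vertices all dominate $u\in M$ and hence lie in $M$ by (i); second, I append one edge in each coordinate direction $1,\dots,k$ using axiom (ii), which keeps the source in $M$ and raises the degree by $(1,\dots,1)$. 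Since the degrees are then cofinal in $\Z_+^k$, the sequence $(\lambda_j)$ determines a unique $x\in\Lambda^\infty$ with $x(0,d(\lambda_j))=\lambda_j$. All vertices of $x$ lie in $M$, so the displayed formula together with (i) gives $\MT(x)\subseteq M$, while cofinality gives $v_j\ge x(d(\lambda_j))$, whence $M\subseteq\MT(x)$; thus $\MT(x)=M$. The delicate point is precisely the interleaving required to force the degrees to increase in all $k$ directions while remaining inside $M$, which is where the no-sources hypothesis (through axiom (ii)) enters.

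Finally I treat the equivalence with orbit closures. The map $r_0\colon\Lambda^\infty\to\Lambda^0$, $x\mapsto x(0)$, is locally constant (constant on each basic set $Z(v)$), hence continuous into the discrete space $\Lambda^0$; and for $z\in\overline{[x]}$ the open set $Z(z(0))$ meets $[x]$, so $z(0)\in r_0([x])$. Therefore $\MT(x)=r_0([x])=r_0(\overline{[x]})$, which depends only on $\overline{[x]}$ and settles $\overline{[x]}=\overline{[y]}\Rightarrow\MT(x)=\MT(y)$. For the converse, assume $\MT(x)=\MT(y)$; by symmetry it suffices to show $x\in\overline{[y]}$, i.e.\ that each basic neighbourhood $Z(x(0,n))$ meets $[y]$. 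Writing $\lambda:=x(0,n)$, the vertex $s(\lambda)=x(n)$ lies in $\MT(x)=\MT(y)$, so there is $w\in[y]$ with $w(0)=x(n)$; then $\lambda w\in Z(x(0,n))$, and $\sigma^{n}(\lambda w)=w$ shows $\lambda w\in[y]$. Hence $x\in\overline{[y]}$, and symmetrically $y\in\overline{[x]}$, giving $\overline{[x]}=\overline{[y]}$.
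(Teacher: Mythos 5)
The paper deliberately omits the proof of this lemma, remarking only that it is an easier analogue of Lemma~\ref{lem:tails}, and your argument correctly supplies the omitted details along exactly those lines: the identification $\MT(x)=\{v:v\Lambda x(n)\neq\emptyset\ \text{for some}\ n\}$, the verification of the three axioms, and the characterization via orbit closures are all sound. Your surjectivity construction is the $k$-graph analogue of the ``concatenate paths from $v_{n+1}$ to $v_n$'' case in the proof of Lemma~\ref{lem:tails}, and you correctly identify the one genuinely new point, namely interleaving an edge in each coordinate direction (via axiom (ii), i.e.\ the no-sources hypothesis) so that the degrees $d(\lambda_j)$ are cofinal in $\Z_+^k$ and the nested finite paths determine an infinite path.
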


We thus get a bijection between $(\G_\Lambda\backslash\Lambda^\infty)^\sim$ and $\M(\Lambda)$. Next, we need to describe the essential isotropy of $\G_\Lambda$.

Given a maximal tail $M$, the set $\Lambda M=M\Lambda M$ is itself a row-finite $k$-graph without sources. We can consider the corresponding set $(\Lambda M)^\infty\subset\Lambda^\infty$ of infinite paths. Thus, $x\in\Lambda^\infty$ lies in $(\Lambda M)^\infty$ if and only if $x(n)\in M$ for all (equivalently, for arbitrary large) $n\in\Z^k_+$.

\begin{defn}[\cite{MR3150171}]
Given a maximal tail $M\subset\Lambda^0$, define an equivalence relation on finite paths in $\Lambda M$ by
$$
\mu\sim_M\nu\quad\text{iff}\quad s(\mu)=s(\nu)\ \ \text{and}\ \ \mu x=\nu x\ \ \text{for all}\ \ x\in (\Lambda M)^\infty\ \ \text{with}\ \ x(0)=s(\mu).
$$
Then define
$$
\Per(\Lambda M):=\{d(\mu)-d(\nu)\mid \mu,\nu\in\Lambda M,\ \mu\sim_M\nu\}\subset\Z^k.
$$
\end{defn}

Note that the relation $\sim_M$ can be defined using only finite paths. Namely, $\mu\sim_M\nu$ if and only if $s(\mu)=s(\nu)$ and $(\mu\lambda)(0,n)=(\nu\lambda)(0,n)$ for all $\lambda\in s(\mu)\Lambda M$ and $n\in\Z^k_+$ such that $n\le d(\mu)+d(\lambda)$ and $n\le d(\nu)+d(\lambda)$.

\smallskip

The following lemma is suggested by results in \citelist{\cite{MR3150171}\cite{SW}}, but it seems it hasn't been formulated explicitly.

\begin{lemma}\label{lem:per}
Assume $M\subset\Lambda^0$ is a maximal tail and $x\in\Lambda^\infty$ is a path such that $\MT(x)=M$. Consider $\G:=\G_\Lambda$ and the standard grading $\Phi\colon\G\to\Z^k$.
Then $\Phi(\IsoGx{x}^\circ_x)=\Per(\Lambda M)$.
\end{lemma}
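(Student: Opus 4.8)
The plan is to reduce the statement to a computation inside the reduced groupoid $\G_{\overline{[x]}}$ and then match the two descriptions of periodicity directly. First I would pin down the orbit closure: using Lemma~\ref{lem:k-tails} I would check that $\overline{[x]}=(\Lambda M)^\infty$. The inclusion $[x]\subset(\Lambda M)^\infty$ holds because for $y\in[x]$ we have $y(n)=\sigma^n(y)(0)\in\MT(y)=\MT(x)=M$ for all $n$, and $(\Lambda M)^\infty=\{y: y(n)\in M\text{ for all }n\}$ is closed; conversely, for $y\in(\Lambda M)^\infty$ and any $n$, since $y(n)\in M=\MT(x)$ there is $w\in[x]$ with $w(0)=y(n)$, whence $y(0,n)w\in Z(y(0,n))\cap[x]$, so $y\in\overline{[x]}$. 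Consequently $\G_{\overline{[x]}}$ is exactly the Deaconu--Renault groupoid $\G_{\Lambda M}$ of the shift on $(\Lambda M)^\infty$, and $\IsoGx{x}^\circ_x$ is the essential (interior) isotropy of $\G_{\Lambda M}$ at $x$. Thus it suffices to prove $\Phi\big(\operatorname{Iso}(\G_{\Lambda M})^\circ_x\big)=\Per(\Lambda M)$.

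The key elementary device I would record is a bisection criterion. For $m,n\in\Z^k_+$ with $\sigma^m(x)=\sigma^n(x)$, the set $B_{m,n}=\{(w,m-n,w'): w\in Z(x(0,m)),\ w'\in Z(x(0,n)),\ \sigma^m(w)=\sigma^n(w')\}$ is an open bisection of $\G_{\Lambda M}$ through $(x,m-n,x)$; writing $w'=x(0,n)v$ one sees that $B_{m,n}$ lies inside the isotropy bundle precisely when $x(0,m)v=x(0,n)v$ for all $v\in(\Lambda M)^\infty$ with $v(0)=x(n)$, i.e.\ when $x(0,m)\sim_M x(0,n)$. With this, the inclusion $\Phi(\operatorname{Iso}^\circ_x)\subset\Per(\Lambda M)$ follows: if $(x,p,x)$ lies in the interior, intersect a witnessing open bisection with $B_{m,n}$ for some $m,n$ with $m-n=p$ and $\sigma^m x=\sigma^n x$; its source contains some $Z(x(0,l))\cap(\Lambda M)^\infty$ with $l\ge m\vee n$, and since every element over this set is isotropy we get $\sigma^m(w)=\sigma^n(w)$ for all such $w$, which unwinds to $x(m,l)\sim_M x(n,l)$. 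As $\sim_M$ is symmetric, $p=d(x(n,l))-d(x(m,l))\in\Per(\Lambda M)$.

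For the reverse inclusion I would realize an abstract period at $x$. Given $\mu\sim_M\nu$ with $s(\mu)=s(\nu)=u$, the vertex $u$ lies in $M=\MT(x)$, so there is $y\in[x]$ with $y(0)=u$. Then $z:=\mu y=\nu y$ (the two concatenations agree because $\mu\sim_M\nu$) satisfies $\sigma^{d(\mu)}(z)=y=\sigma^{d(\nu)}(z)$, so $z\in[y]=[x]$ and $(z,d(\mu)-d(\nu),z)$ is isotropy. Applying the bisection criterion at $z$—the relevant bisection sits inside the isotropy bundle exactly because $\mu\sim_M\nu$—gives $d(\mu)-d(\nu)\in\Phi\big(\operatorname{Iso}(\G_{\Lambda M})^\circ_z\big)$. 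Finally, since $z\in[x]$ and the interior of the isotropy bundle is invariant under the conjugation action of the groupoid, conjugating by any $g$ with $s(g)=z$, $r(g)=x$ carries $\operatorname{Iso}^\circ_z$ onto $\operatorname{Iso}^\circ_x$; as $\Phi$ is a homomorphism into the abelian group $\Z^k$, conjugation preserves $\Phi$-values, so $\Phi(\operatorname{Iso}^\circ_z)=\Phi(\operatorname{Iso}^\circ_x)$ and therefore $d(\mu)-d(\nu)\in\Phi(\operatorname{Iso}^\circ_x)$.

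The main obstacle is this reverse inclusion: an element of $\Per(\Lambda M)$ is defined through arbitrary paths $\mu,\nu$ of $\Lambda M$, whereas the essential isotropy must be exhibited at the single prescribed path $x$. The device that resolves it is to first realize the period at the auxiliary path $z=\mu y$ on the orbit of $x$, and then transport it along the orbit using conjugation-invariance of the interior of the isotropy bundle together with abelianness of $\Z^k$. Everything else is routine bookkeeping with shifts and concatenations, already encapsulated in the bisection criterion.
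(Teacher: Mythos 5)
Your proposal is correct and follows essentially the same route as the paper: both directions are proved by translating between open isotropy bisections over cylinder sets $Z(x(0,l))$ and the relation $\sim_M$, with the period first realized at an auxiliary point $z=\mu y$ on the orbit and then transported to $x$ using that $\Phi(\IsoGx{x}^\circ_y)=\Phi(\IsoGx{x}^\circ_x)$ for $y\in[x]$. The only difference is that you spell out details the paper leaves implicit (the identification $\overline{[x]}=(\Lambda M)^\infty$ behind the reduction to $\Lambda^0=M$, and the conjugation-invariance argument for the last equality), which is fine.
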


\bp
We may assume without loss of generality that $\Lambda^0=M$.

Assume first that $\mu\sim_{\Lambda^0}\nu$. We can find $y\in[x]$ such that $y(0,d(\mu))=\mu$. Take any $y'=\mu x'\in Z(\mu)$. By the definition of the relation $\sim_{\Lambda^0}$ we get $y'=\nu x'$. Thus, $\sigma^{d(\mu)}(y')=x'=\sigma^{d(\nu)}(y')$. This shows that $(y,d(\mu)-d(\nu),y)\in\IsoG^\circ$, hence
$$
d(\mu)-d(\nu)\in\Phi(\IsoG^\circ_y)=\Phi(\IsoG^\circ_x),
$$
proving that $\Per(\Lambda)\subset \Phi(\IsoG^\circ_x)$.

For the opposite inclusion, take any $n\in\Phi(\IsoG^\circ_x)$. Then we can find $m,p,q\in\Z^k_+$ such that $n=p-q$ and for all $y\in Z(x(0,m))$ we have $\sigma^p(y)=\sigma^q(y)$. Let $N:=m\vee p\vee q$, $\mu:=x(q,N)$, $\nu:=x(p,N)$ and $v:=x(N)=s(\mu)=s(\nu)$. Take any $x'\in \Lambda^\infty$ with $x'(0)=v$. Then $y:=x(0,N)x'\in Z(x(0,m))$, hence
$$
\mu x'=x(q,N)x'=\sigma^q(y)=\sigma^p(y)=x(p,N)x'=\nu x'.
$$
This shows that $\mu\sim_{\Lambda^0}\nu$. As $d(\mu)=N-q$ and $d(\nu)=N-p$, we conclude that $n=d(\mu)-d(\nu)\in\Per(\Lambda)$.
\ep

We can now give a preliminary version of an analogue of Theorem~\ref{thm:graph}.

\begin{prop} \label{prop:k-graph}
Let $\Lambda$ be a countable row-finite $k$-graph without sources. Then $\Prim C^*(\Lambda)$ can be identified with the set of pairs $(M,\chi)$, where $M\in\M(\Lambda)$ and $\chi\in\Per(\Lambda M)\dach$. Namely, the primitive ideal corresponding to $(M,\chi)$ is $\ker\pi_{(x,z)}$, where $x\in\Lambda^\infty$ is any path with $\MT(x)=M$ and $z\in\T^k$ is any character such that $z|_{\Per(\Lambda M)}=\chi$.

Under this identification the topology on $\Prim C^*(\Lambda)$ is described as follows. Assume we are given $(M,\chi)$ and $(M_n,\chi_n)$ ($n\in\N$) as above. Fix $x,x_n\in\Lambda^\infty$ and $z,z(n)\in\T^k$ such that $\MT(x)=M$, $\MT(x_n)=M_n$, $z|_{\Per(\Lambda M)}=\chi$ and $z(n)|_{\Per(\Lambda M_n)}=\chi_n$. For every $l\in\Per(\Lambda M)$, choose $(p,q)\in\Z^k_+\times\Z^k_+$ such that $p-q=l$ and $\sigma^p(x)=\sigma^q(x)$, and denote by $\Sigma$ the set of pairs $(p,q)$ we thus get.
Then $(M_n,\chi_n)\to(M,\chi)$ if and only if there exist $y_n\in[x_n]$ such that $y_n\to x$ in~$\Lambda^\infty$ and $z(n)\to z$ along the sets
$$
R_n:=\{p-q: (p,q)\in\Sigma,\ p-q\in\Per(\Lambda M_n),\ \sigma^p(y_n)=\sigma^q(y_n)\}.
$$
\end{prop}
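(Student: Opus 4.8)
The plan is to derive the statement from the general results on injectively graded groupoids in Section~\ref{ssec:graded}, applied to $\G:=\G_\Lambda$ with its grading $\Phi\colon\G\to\Z^k$; recall that $\G$ is amenable and second countable since $\Lambda$ is row-finite without sources.

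First I would set up the set-theoretic parametrization. By Corollary~\ref{cor:essential-iso2} every primitive ideal has the form $\ker\pi_{(x,z)}$ for some $(x,z)\in\Lambda^\infty\times\T^k$, and by Corollary~\ref{cor:essential-iso1} we have $\ker\pi_{(x,z)}=\ker\pi_{(y,w)}$ if and only if $\overline{[x]}=\overline{[y]}$ and $z=w$ on $\Phi(\IsoGx{x}^\circ_x)$. Lemma~\ref{lem:k-tails} turns the first condition into $\MT(x)=\MT(y)$, so orbit closures are in bijection with maximal tails $M\in\M(\Lambda)$, while Lemma~\ref{lem:per} identifies $\Phi(\IsoGx{x}^\circ_x)$ with $\Per(\Lambda M)$, so the second condition becomes the equality of the characters that $z$ and $w$ define on $\Per(\Lambda M)$. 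Since $\T$ is divisible, every $\chi\in\Per(\Lambda M)\dach$ extends to some $z\in\T^k$, and it follows that $(M,\chi)\mapsto\ker\pi_{(x,z)}$, with $\MT(x)=M$ and $z|_{\Per(\Lambda M)}=\chi$, is a well-defined bijection onto $\Prim C^*(\Lambda)$.

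For the topology I would apply the equivalence (1)$\Leftrightarrow$(3) of Corollary~\ref{cor:graded-groupoids2}, after choosing the bisections $W_g$ adapted to the $k$-graph. For $g=(x,l,x)\in\IsoGx{x}^\circ_x$, so $l\in\Per(\Lambda M)$, I take the pair $(p,q)\in\Sigma$ from the statement and set $W_g:=Z(U_g,p,q,U_g)$, where $U_g$ is a small open neighbourhood of $x$ on which $\sigma^p$ and $\sigma^q$ are injective; since $\sigma^p(x)=\sigma^q(x)$, this is an open bisection containing $g$ (the bisections attached to $g\in\Gxx\setminus\IsoGx{x}^\circ_x$ play no role in condition~(3)). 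Under the identification of the first part, convergence $(M_n,\chi_n)\to(M,\chi)$ is exactly $\ker\pi_{(x_n,z(n))}\to\ker\pi_{(x,z)}$, which by Corollary~\ref{cor:graded-groupoids2}(3) holds if and only if there are $y_n\in[x_n]$ with $y_n\to x$ and $z(n)\to z$ along $R_n^{\mathrm{cor}}:=\{\Phi(g):g\in\IsoGx{x}^\circ_x,\ W_g\cap\IsoGx{y_n}^\circ_{y_n}\ne\emptyset\}$.

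The crux is to match $R_n^{\mathrm{cor}}$ with the set $R_n$ of the statement. Since $y_n\in[x_n]$ gives $\overline{[y_n]}=\overline{[x_n]}$, Lemma~\ref{lem:k-tails} yields $\MT(y_n)=M_n$, and Lemma~\ref{lem:per} gives $\Phi(\IsoGx{y_n}^\circ_{y_n})=\Per(\Lambda M_n)$. An element of $W_g\cap\IsoGx{y_n}^\circ_{y_n}$ must equal $(y_n,l,y_n)$ with $y_n\in U_g$; membership in $W_g=Z(U_g,p,q,U_g)$ forces $\sigma^p(y_n)=\sigma^q(y_n)$, while membership in the essential isotropy forces $l\in\Per(\Lambda M_n)$, and conversely these two conditions together with $y_n\in U_g$ exhibit such an element. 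Hence, for each fixed $\gamma=l\in\Z^k$ and all $n$ large enough that $y_n\in U_g$ (where $g$ is the unique essential-isotropy element with $\Phi(g)=l$), we have $\un_{R_n^{\mathrm{cor}}}(\gamma)=\un_{R_n}(\gamma)$, while for $\gamma\notin\Per(\Lambda M)$ both indicators vanish. Because convergence along a sequence of sets is tested pointwise in $\gamma$ (Definition~\ref{def:newalongconv}), convergence along $(R_n^{\mathrm{cor}})_n$ and along $(R_n)_n$ coincide, which completes the proof. The only real subtlety, which I would emphasize, is to keep the representative $(p,q)\in\Sigma$ fixed throughout: the bisection records the specific equality $\sigma^p(y_n)=\sigma^q(y_n)$, whereas Lemma~\ref{lem:per} only records that $l$ lies in $\Per(\Lambda M_n)$, and both conditions are genuinely needed to characterize $R_n$.
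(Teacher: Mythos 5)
Your proposal is correct and follows essentially the same route as the paper, which simply cites Theorem~\ref{thm:gradedgroupoids}, Corollary~\ref{cor:essential-iso1}, Lemmas~\ref{lem:k-tails} and~\ref{lem:per} for the parametrization and Corollary~\ref{cor:graded-groupoids2} for the convergence criterion; your write-up just makes explicit the choice of bisections $W_g=Z(U_g,p,q,U_g)$ and the eventual pointwise agreement of the sets $R_n$ with those of Corollary~\ref{cor:graded-groupoids2}(3), details the paper leaves to the reader.
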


A parameterization of $\Prim C^*(\Lambda)$ by the pairs $(M,\chi)$ is already known from~\cite{MR3150171}, but we will obtain it again using our results.

\bp
The identification of $\Prim C^*(\G)$ with the set of pairs $(M,\chi)$ as in the statement of the proposition follows from Theorem~\ref{thm:gradedgroupoids} and Corollary~\ref{cor:essential-iso1} once we take into account the descriptions of quasi-orbits and essential isotropy provided by Lemmas~\ref{lem:k-tails} and~\ref{lem:per}. The description of convergence follows then from Corollary~\ref{cor:graded-groupoids2}.
\ep

A drawback of this formulation is that it involves infinite paths in a seemingly essential way. What we mean is that it is not difficult to formulate existence of paths $y_n\in[x_n]$ such that $y_n\to x$ in terms of existence of a sequence of finite paths with certain properties, but to define the sets $R_n$ we do need to know the entire infinite paths $y_n$. Our goal is to get a formulation involving only finite paths. For this we will need a result from~\cite{MR3150171}.

\begin{defn}[\cite{MR3150171}]
Given a maximal tail $M\subset\Lambda^0$, denote by $M_{\mathrm{Per}}$ the set of vertices $v\in M$ such that for every $\lambda\in v\Lambda M$ and $m\in\Z^k_+$ with $d(\lambda)-m\in\Per(\Lambda M)$, there exists a finite path~$\mu$ in~$\Lambda M$ such that $d(\mu)=m$ and $\lambda\sim_M\mu$.
\end{defn}

The set $M_{\mathrm{Per}}$ is denoted by $H_{\mathrm{Per}}(\Lambda M)$ in~\cite{MR3150171}. It is proved in \cite{MR3150171}*{Theorem~4.2} that $M_{\mathrm{Per}}$ is a nonempty hereditary (with respect to the $k$-graph $\Lambda M$) subset of $M$, and if $x\in (\Lambda M)^\infty$ is such that $x(0)\in M_{\mathrm{Per}}$ and $p,q\in\Z^k_+$ are such that $p-q\in\Per(\Lambda M)$, then $\sigma^p(x)=\sigma^q(x)$. If $\Per(\Lambda M)=0$, then obviously $M_{\mathrm{Per}}=M$. When $\Per(\Lambda M)\ne0$, the set $M_{\mathrm{Per}}$ is a higher rank analogue of primitive loops we used in the previous subsection.

Since $M_{\mathrm{Per}}\subset M$ is hereditary, for any infinite path $x\in (\Lambda M)^\infty$ with $\MT(x)=M$ we can find $m\in \Z^k_+$ such that $x(n)\in M_{\mathrm{Per}}$ for all $n\ge m$. It follows that if $p,q\in\Z^k_+$ are such that $p-q\in\Per(\Lambda M)$, then to check whether $\sigma^p(x)=\sigma^q(x)$ we need to verify a condition involving only finite paths. Namely, we have the following.

\begin{lemma}\label{lem:period}
If $p-q\in\Per(\Lambda M)$, $x\in (\Lambda M)^\infty$ and $m\in\Z^k_+$ is such that $x(m)\in M_{\mathrm{Per}}$, then $\sigma^p(x)=\sigma^q(x)$ if and only if $x(p,m+p)=x(q,m+q)$.
\end{lemma}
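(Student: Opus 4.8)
The plan is to recognize that both sides of the asserted equivalence are really statements about the two shifted infinite paths $\sigma^p(x)$ and $\sigma^q(x)$, and that the right-hand side is exactly the assertion that their degree-$m$ initial segments agree. Indeed, unwinding the definition of the shift one has $\sigma^p(x)(0,m)=x(p,m+p)$ and $\sigma^q(x)(0,m)=x(q,m+q)$, so the condition $x(p,m+p)=x(q,m+q)$ says precisely that $\sigma^p(x)$ and $\sigma^q(x)$ have the same initial segment of degree $m$. With this reformulation the ``only if'' direction is immediate: if $\sigma^p(x)=\sigma^q(x)$, then in particular their initial segments of degree $m$ coincide.

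For the ``if'' direction I would exploit the hypothesis $x(m)\in M_{\mathrm{Per}}$ by shifting. Set $y:=\sigma^m(x)$; since $x\in(\Lambda M)^\infty$ we have $y\in(\Lambda M)^\infty$, and $y(0)=x(m)\in M_{\mathrm{Per}}$. Because $p-q\in\Per(\Lambda M)$, the cited fact from \cite{MR3150171}*{Theorem~4.2} applies to $y$ and gives $\sigma^p(y)=\sigma^q(y)$, that is, $\sigma^{m+p}(x)=\sigma^{m+q}(x)$. So the \emph{tails} of $\sigma^p(x)$ and $\sigma^q(x)$ beyond position $m$ automatically agree, regardless of the finite-path hypothesis; the latter is needed only to match the initial segments. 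To assemble these two pieces I would use the factorization $z=z(0,m)\,\sigma^m(z)$ valid for any $z\in\Lambda^\infty$: applying it to $z=\sigma^p(x)$ and $z=\sigma^q(x)$ yields
$$
\sigma^p(x)=x(p,m+p)\,\sigma^{m+p}(x),\qquad \sigma^q(x)=x(q,m+q)\,\sigma^{m+q}(x).
$$
The two finite initial paths are equal by assumption, the two tails are equal by the previous step, and the sources match by construction, so uniqueness of concatenation gives $\sigma^p(x)=\sigma^q(x)$.

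There is no genuine obstacle here; the lemma is essentially a bookkeeping statement that localizes the cited periodicity result to the point $m$. The only things to be careful about are the index arithmetic $\sigma^p(\sigma^m(x))=\sigma^{m+p}(x)$ (using that $\sigma$ is an action of $\Z^k_+$), the observation that $M_{\mathrm{Per}}$ being reached at position $m$ is exactly what lets us invoke Theorem~4.2 after shifting, and the fact that the degree-$m$ initial segment together with the tail determines an infinite path uniquely. Once these are in place the equivalence follows immediately.
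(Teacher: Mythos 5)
Your proof is correct and follows essentially the same route as the paper: both arguments split the shifted paths at position $m$ and invoke \cite{MR3150171}*{Theorem~4.2} at the vertex $x(m)\in M_{\mathrm{Per}}$ to see that the tails $\sigma^{m+p}(x)$ and $\sigma^{m+q}(x)$ agree automatically, reducing the equivalence to equality of the degree-$m$ initial segments. The only cosmetic difference is that you phrase the decomposition in terms of infinite paths ($z=z(0,m)\,\sigma^m(z)$) while the paper works with the finite segments $x(p,N+p)=x(p,m+p)x(m+p,N+p)$ for arbitrarily large $N$.
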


\bp
We have $\sigma^p(x)=\sigma^q(x)$ if and only if $x(p,N+p)=x(q,N+q)$ for arbitrarily large $N\in\Z^k_+$. Since $x(m)\in M_{\mathrm{Per}}$, by \cite{MR3150171}*{Theorem~4.2} we have $x(m+p,N+p)=x(m+q,N+q)$ for all $N\ge m$. On the other hand, for all such $N$ we have $x(p,N+p)=x(p,m+p)x(m+p,N+p)$ and $x(q,N+q)=x(q,m+q)x(m+q,N+q)$. Therefore $x(p,N+p)=x(q,N+q)$ for all $N\ge m$ if and only if $x(p,m+p)=x(q,m+q)$.
\ep

We are now ready to prove our main result on the higher rank graphs.

\begin{thm}\label{thm:k-graph}
Let $\Lambda$ be a countable row-finite $k$-graph without sources. In terms of the identification of $\Prim C^*(\Lambda)$ with the set of pairs $(M,\chi)$ ($M\in\M(\Lambda)$, $\chi\in\Per(\Lambda M)\dach$) given by Proposition~\ref{prop:k-graph}, the topology on $\Prim C^*(\Lambda)$ is described as follows.

Assume we are given $(M,\chi)$ and $(M_n,\chi_n)$ ($n\in\N$). For every $l\in\Per(\Lambda M)$, fix $p(l),q(l)\in\Z^k_+$ such that $p(l)-q(l)=l$. Then $(M_n,\chi_n)\to(M,\chi)$ if and only if for every finite path~$\lambda$ in~$\Lambda M$ with $r(\lambda)\in M_{\mathrm{Per}}$, every $\eps>0$ and every finite set $F\subset\Per(\Lambda M)$ we can find $n_0\in\N$ such that for each $n\ge n_0$ the following property is satisfied: there is a finite path $\mu$ in $\Lambda M_n$ of degree $\ge d(\lambda)$ such that $\mu(0,d(\lambda))=\lambda$, $\mu(m)\in(M_n)_{\mathrm{Per}}$ for some $m\le d(\mu)$ and, for every $l\in F\cap\Per(\Lambda M_n)$, we have $p(l)\vee q(l)\le d(\mu)-m$ and
$$
\text{either}\quad \mu(p(l),m+p(l))\ne\mu(q(l),m+q(l))\quad\text{or}\quad |\chi(l)-\chi_n(l)|<\eps.
$$
\end{thm}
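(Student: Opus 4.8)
The plan is to reduce everything to Proposition~\ref{prop:k-graph}, whose only defect is that the sets $R_n$ are phrased through the entire infinite paths $y_n$; the whole task is to re-express the condition ``$z(n)\to z$ along $R_n$'' using only finite initial segments. The bridge is Lemma~\ref{lem:period} together with \cite{MR3150171}*{Theorem~4.2}: once a path in $(\Lambda M_n)^\infty$ has passed through a vertex of $(M_n)_{\mathrm{Per}}$ at level $m$, the equality $\sigma^{p}(y_n)=\sigma^{q}(y_n)$ for $p-q\in\Per(\Lambda M_n)$ becomes the finite condition $y_n(p,m+p)=y_n(q,m+q)$, and moreover any infinite path whose range lies in $M_{\mathrm{Per}}$ automatically satisfies $\sigma^{p(l)}=\sigma^{q(l)}$ for every $l\in\Per(\Lambda M)$. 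This latter fact is what lets me use the arbitrary representatives $p(l),q(l)$ from the statement: they form an admissible family $\Sigma$ in Proposition~\ref{prop:k-graph} as soon as the reference path starts inside $M_{\mathrm{Per}}$.

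For the forward implication, I assume $(M_n,\chi_n)\to(M,\chi)$ and fix a probe $(\lambda,\eps,F)$ with $r(\lambda)\in M_{\mathrm{Per}}$. Since $\MT$ is orbit-invariant (Lemma~\ref{lem:k-tails}), I prepend $\lambda$ to a path in $[x]$ to obtain $\tilde x\in[x]$ with $\tilde x(0,d(\lambda))=\lambda$ and $\tilde x(0)=r(\lambda)\in M_{\mathrm{Per}}$, and apply Proposition~\ref{prop:k-graph} with this representative and with $\Sigma=\{(p(l),q(l))\}$, which is admissible by the remark above. This yields $\tilde y_n\in[x_n]$ with $\tilde y_n\to\tilde x$ and $z(n)\to z$ along the corresponding $R_n$. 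For each large $n$ I then take $\mu:=\tilde y_n(0,N)$ with $N$ large enough that $\mu(0,d(\lambda))=\lambda$, that $\tilde y_n(m)\in (M_n)_{\mathrm{Per}}$ for some $m\le N$, and that $N-m\ge p(l)\vee q(l)$ for all $l\in F$. Lemma~\ref{lem:period} converts each relation $\sigma^{p(l)}(\tilde y_n)=\sigma^{q(l)}(\tilde y_n)$ into the finite equality in the statement, and convergence along $R_n$ supplies, for the finitely many $l\in F$, a single threshold beyond which each $l$ either drops out of $R_n$ (the first alternative) or satisfies $|\chi(l)-\chi_n(l)|<\eps$ (the second alternative). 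This is exactly the asserted property of~$\mu$.

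For the converse I run a diagonal argument. I fix a representative $x$ with $x(0)\in M_{\mathrm{Per}}$ and $\MT(x)=M$, so that $\Sigma=\{(p(l),q(l))\}$ is again admissible, and I take the initial segments $\lambda_k:=x(0,N_k)$ (each with $r(\lambda_k)=x(0)\in M_{\mathrm{Per}}$) together with an exhaustion $F_k\nearrow\Per(\Lambda M)$ and $\eps_k\to0$. The finite-path hypothesis provides, for each $k$ and all $n\ge n_k$ (with $n_k$ increasing), a path $\mu_{n,k}\in\Lambda M_n$ with the listed properties; since $s(\mu_{n,k})\in M_n=\MT(x_n)$, I extend it inside the orbit by setting $y_n:=\mu_{n,k}\,w$ for some $w\in[x_n]$ with $w(0)=s(\mu_{n,k})$, which lies in $[x_n]$ and agrees with $x$ on the initial segment of length $N_k$, forcing $y_n\to x$. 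Reading the disjunction through Lemma~\ref{lem:period} exactly as above shows $\un_{R_n}(l)\,|z(n)^l-z^l|<\eps_k$ for every $l\in F_k$ once $n\ge n_k$, so $z(n)\to z$ along $R_n$ in the sense of Definition~\ref{def:newalongconv}; Proposition~\ref{prop:k-graph} then gives $(M_n,\chi_n)\to(M,\chi)$.

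The main obstacle is the bookkeeping that keeps the \emph{finite} data faithful to the \emph{infinite} set $R_n$: one must guarantee simultaneously that the fixed pairs $p(l),q(l)$ are legitimate choices for $\Sigma$ (resolved by starting the reference path in $M_{\mathrm{Per}}$ and invoking \cite{MR3150171}*{Theorem~4.2}), that the witnessing path $\mu$ reaches $(M_n)_{\mathrm{Per}}$ early enough for Lemma~\ref{lem:period} to apply to all $l\in F$, and that the orbit membership $y_n\in[x_n]$ is preserved under the prepending constructions. Once these are aligned, the two alternatives in the theorem correspond precisely to ``$l\notin R_n$'' and ``$|\chi(l)-\chi_n(l)|<\eps$'', and the equivalence follows.
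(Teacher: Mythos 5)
Your argument is correct and follows essentially the same route as the paper's own proof: both directions reduce to Proposition~\ref{prop:k-graph} with $\Sigma=\{(p(l),q(l))\}$ made admissible by starting the reference path at a vertex of $M_{\mathrm{Per}}$ (via \cite{MR3150171}*{Theorem~4.2}), and Lemma~\ref{lem:period} is used in exactly the same way to translate $\sigma^{p(l)}(y)=\sigma^{q(l)}(y)$ into the finite equality, with the converse handled by the same diagonal construction. No gaps.
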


\bp
Let us fix $z, z(n)\in\T^k$ such that $z|_{\Per(\Lambda M)}=\chi$ and $z(n)|_{\mathrm{Per}(\Lambda M_n)}=\chi_n$.

Assume first that $(M_n,\chi_n)\to(M,\chi)$ and fix a finite path~$\lambda$ in~$\Lambda M$ with $r(\lambda)\in M_{\mathrm{Per}}$, $\eps>0$ and a finite set $F\subset\Per(\Lambda M)$. We can find $x\in (\Lambda M)^\infty$ such that $x(0,d(\lambda))=\lambda$ and $\MT(x)=M$. Since $x(0)\in M_{\mathrm{Per}}$, by~\cite{MR3150171}*{Theorem~4.2} we have, for every $l\in\Per(\Lambda M)$, that $\sigma^{p(l)}(x)=\sigma^{q(l)}(x)$. Take any $x_n\in (\Lambda M_n)^\infty$ such that $\MT(x_n)=M_n$. Then by Proposition~\ref{prop:k-graph} applied to $\Sigma:=\{(p(l),q(l))\colon l\in\Per(\Lambda M)\}$ we can find $y_n\in[x_n]$ such that $y_n\to x$ and $z(n)\to z$ along the sets
\begin{equation}\label{eq:k-Rn}
R_n:=\{l\in\Per(\Lambda M)\cap\Per(\Lambda M_n):\sigma^{p(l)}(y_n)=\sigma^{q(l)}(y_n)\}.
\end{equation}
Hence, for each $n\ge n_0$ with $n_0$ large enough, we have $y_n(0,d(\lambda))=\lambda$ and $|\chi(l)-\chi_n(l)|<\eps$ for all $l\in F\cap\Per(\Lambda M_n)$ such that $\sigma^{p(l)}(y_n)=\sigma^{q(l)}(y_n)$. As we observed before Lemma~\ref{lem:period}, since $\MT(y_n)=M_n$, there is $m$ (depending on $n$) such that $y_n(m)\in (M_n)_{\mathrm{Per}}$. By that lemma, for every $l\in\Per(\Lambda M)\cap\Per(\Lambda M_n)$, we then have $\sigma^{p(l)}(y_n)=\sigma^{q(l)}(y_n)$ if and only if $y_n(p(l),m+p(l))=y_n(q(l),m+q(l))$. Hence the property in the formulation of the theorem holds for $\mu:=y_n(0,N)$, where $N\in\Z^k_+$ is any element such that $N\ge d(\lambda)$ and $N\ge p(l)\vee q(l)+m$ for all $l\in F\cap\Per(\Lambda M_n)$.

\smallskip

Conversely, assume we can find paths $\mu$ as in the statement of the theorem. Fix $x\in (\Lambda M)^\infty$ such that $x(0)\in M_{\mathrm{Per}}$ and $\MT(x)=M$. Taking $N\in\Z^k_+$ and applying our assumption to $\lambda:=x(0,N)$, $\eps>0$ and a finite $F\subset\Per(\Lambda M)$, we can find, for each $n\ge n_0$, a finite path~$\mu$ in~$M_n$ and an element $m\le d(\mu)$ with properties as in the statement of the theorem. Let $y\in (\Lambda M_n)^\infty$ be any path such that $y(0,d(\mu))=\mu$ and $\MT(y)=M_n$. By Lemma~\ref{lem:period} we get, for every $l\in\Per(\Lambda M)\cap\Per(\Lambda M_n)$, that $\sigma^{p(l)}(y)=\sigma^{q(l)}(y)$ if and only if $y(p(l),m+p(l))=y(q(l),m+q(l))$. Therefore $y$ has the properties that $y(0,N)=x(0,N)$ and, for every $l\in F\cap\Per(\Lambda M_n)$, either $\sigma^{p(l)}(y)\ne\sigma^{q(l)}(y)$ or $|z^l-z(n)^l|<\eps$. From such paths~$y$ we can then construct, using a standard diagonal argument as in the proof of Corollary~\ref{cor:graded-groupoids2}, a sequence of paths $y_n\in (\Lambda M_n)^\infty$ such that $\MT(y_n)=M_n$, $y_n\to x$ and $z(n)\to z$ along the sets~$R_n$ defined by~\eqref{eq:k-Rn}. Therefore the criterion of convergence in Proposition~\ref{prop:k-graph} is satisfied for $x_n:=y_n$ and $\Sigma:=\{(p(l),q(l))\colon l\in\Per(\Lambda M)\}$. Hence $(M_n,\chi_n)\to (M,\chi)$.
\ep

If we consider usual (rank one) row-finite graphs without sources, then we see that the formulation of this theorem is different from that of Theorem~\ref{thm:graph}, but it is not difficult to deduce one theorem from the other. Namely, let $E$ be such a graph. Then $E$ has no breaking vertices. One can check that if $M\in\M_\gamma(E)$, then $\Per(E M)=0$, while if $M\in\M(E)\setminus\M_\gamma(E)$, then $M$ contains a unique primitive loop $L$, $M_{\mathrm{Per}}=L$ and $\Per(E M)=|L|\Z$.

Assume $(M_n,\chi_n)\to(M,\chi)$ for some $M\in\M(E)\setminus\M_\gamma(E)$ with a primitive loop~$L$. Then using either theorem we get that eventually $L\subset M_n$ and hence $M\subset M_n$. If $M$ is a proper subset  of~$M_n$ for some~$n$, then $M_n\in\M_\gamma(E)$ and therefore $\Per(E M_n)=0$, since otherwise $L$ would be the unique primitive loop in $M_n$ and hence $M=M_n$.

Using Theorem~\ref{thm:k-graph} we can thus conclude that if $M\in \M(E)\setminus\M_\gamma(E)$, then $(M_n,\chi_n)\to(M,\chi)$ if and only if for each $n$ large enough either $M\subsetneq M_n$, or $M_n=M$ and $\chi_n$ is arbitrarily close to $\chi$. It is not difficult to see that this is the same as what Theorem~\ref{thm:graph} gives. On the other hand, if $M\in\M_\gamma(E)$, then both theorems imply that $(M_n,\chi_n)\to(M,\chi)$ if and only if every vertex in~$M$ is eventually contained in $M_n$.

\smallskip

Let us finally remark that in certain cases the Jacobson topology on $\Prim C^*(\Lambda)$ has also been described by Li and Yang in~\cite{MR4283280}. In fact, they worked in a more general setting of higher rank analogues of Exel--Pardo algebras. It is possible to extend Theorem~\ref{thm:k-graph} to this setting and give a description of the Jacobson topology for all self-similar $k$-graph C$^*$-algebras satisfying assumptions~(FV) and~(Cyc) in~\cite{MR4283280}. This will be discussed in detail elsewhere.

\bigskip

\begin{bibdiv}
\begin{biblist}

\bib{BP}{article}{
   author={Baggett, Lawrence},
   author={Packer, Judith},
   title={The primitive ideal space of two-step nilpotent group
   $C^*$-algebras},
   journal={J. Funct. Anal.},
   volume={124},
   date={1994},
   number={2},
   pages={389--426},
   issn={0022-1236},
   review={\MR{1289356}},
   doi={10.1006/jfan.1994.1112},
}

\bib{MR1988256}{article}{
   author={Bates, Teresa},
   author={Hong, Jeong Hee},
   author={Raeburn, Iain},
   author={Szyma\'{n}ski, Wojciech},
   title={The ideal structure of the $C^*$-algebras of infinite graphs},
   journal={Illinois J. Math.},
   volume={46},
   date={2002},
   number={4},
   pages={1159--1176},
   issn={0019-2082},
   review={\MR{1988256}},
}

\bib{BCS}{article}{
   author={Brix, K. A.},
   author={Carlsen, T. M.},
   author={Sims, A.},
   title={Ideal structure of $C^*$-algebras of commuting local homeomorphisms},
   how={preprint},
   date={2023},
   eprint={\href{https://arxiv.org/abs/2303.02313v2}{\texttt{arXiv:2303.02313v2 [math.OA]}}},
}

\bib{BCW}{article}{
   author={Brownlowe, Nathan},
   author={Carlsen, Toke Meier},
   author={Whittaker, Michael F.},
   title={Graph algebras and orbit equivalence},
  journal={Ergod. Theory Dyn. Syst.},
   volume={37},
   number={2},
   date={2017},
   pages={389--417},
  doi={10.1017/etds.2015.52},
}

\bib{MR3150171}{article}{
   author={Carlsen, Toke Meier},
   author={Kang, Sooran},
   author={Shotwell, Jacob},
   author={Sims, Aidan},
   title={The primitive ideals of the Cuntz-Krieger algebra of a row-finite
   higher-rank graph with no sources},
   journal={J. Funct. Anal.},
   volume={266},
   date={2014},
   number={4},
   pages={2570--2589},
   issn={0022-1236},
   review={\MR{3150171}},
   doi={10.1016/j.jfa.2013.08.029},
}

\bib{CS}{article}{
   author={Carlsen, Toke M.},
   author={Sims, Aidan},
   title={On Hong and Szyma\'{n}ski's description of the primitive-ideal
   space of a graph algebra},
   conference={
      title={Operator algebras and applications---the Abel Symposium 2015},
   },
   book={
      series={Abel Symp.},
      volume={12},
      publisher={Springer, [Cham]},
   },
   isbn={978-3-319-39284-4},
   isbn={978-3-319-39286-8},
   date={2017},
   pages={115--132},
   review={\MR{3837593}},
}

\bib{CN2}{article}{
   author={Christensen, Johannes},
   author={Neshveyev, Sergey},
   title={Isotropy fibers of ideals in groupoid $\rm C^*$-algebras},
   journal={Adv. Math.},
   volume={447},
   date={2024},
   pages={Paper No. 109696, 32},
   issn={0001-8708},
   review={\MR{4742724}},
   doi={10.1016/j.aim.2024.109696},
}

\bib{CN4}{article}{
   author={Christensen, Johannes},
   author={Neshveyev, Sergey},
   title={The ideal structure of C$^*$-algebras of \'etale groupoids with isotropy groups of local polynomial growth},
   how={preprint},
   date={2024},
   eprint={\href{https://arxiv.org/abs/2412.11805}{\texttt{2412.11805 [math.OA]}}},
}

\bib{MR1503352}{article}{
   author={Clifford, A. H.},
   title={Representations induced in an invariant subgroup},
   journal={Ann. of Math. (2)},
   volume={38},
   date={1937},
   number={3},
   pages={533--550},
   issn={0003-486X},
   review={\MR{1503352}},
   doi={10.2307/1968599},
}

\bib{D}{article}{
   author={Deaconu, Valentin},
   title={Groupoids associated with endomorphisms},
   journal={Trans. Amer. Math. Soc},
   volume={347},
   date={1995},
  number={5},
   pages={1779--1786},
  review={\MR{1233967}},
   doi={10.2307/2154972},
}

\bib{EE}{article}{
   author={Echterhoff, Siegfried},
   author={Emerson, Heath},
   title={Structure and $K$-theory of crossed products by proper actions},
   journal={Expo. Math.},
   volume={29},
   date={2011},
   number={3},
   pages={300--344},
   issn={0723-0869},
   review={\MR{2820377}},
   doi={10.1016/j.exmath.2011.05.001},
}

\bib{EH}{book}{
   author={Effros, Edward G.},
   author={Hahn, Frank},
   title={Locally compact transformation groups and $C\sp{\ast}$-algebras},
   series={},
   volume={No. 75},
   publisher={American Mathematical Society, Providence, R.I.},
   date={1967},
   pages={92},
   review={\MR{0227310}},
}

\bib{MR0146681}{article}{
   author={Fell, J. M. G.},
   title={The dual spaces of $C\sp{\ast} $-algebras},
   journal={Trans. Amer. Math. Soc.},
   volume={94},
   date={1960},
   pages={365--403},
   issn={0002-9947},
   review={\MR{0146681}},
   doi={10.2307/1993431},
}

\bib{MR0139135}{article}{
   author={Fell, J. M. G.},
   title={A Hausdorff topology for the closed subsets of a locally compact
   non-Hausdorff space},
   journal={Proc. Amer. Math. Soc.},
   volume={13},
   date={1962},
   pages={472--476},
   issn={0002-9939},
   review={\MR{0139135}},
   doi={10.2307/2034964},
}

\bib{Gabe}{article}{
   author={Gabe, James},
   title={Graph $C^*$-algebras with a $T_1$ primitive ideal space},
   conference={
      title={Operator algebra and dynamics},
   },
   book={
      series={Springer Proc. Math. Stat.},
      volume={58},
      publisher={Springer, Heidelberg},
   },
   isbn={978-3-642-39459-1},
   isbn={978-3-642-39458-4},
   date={2013},
   pages={141--156},
   review={\MR{3142035}},
   doi={10.1007/978-3-642-39459-1\_7},
}

\bib{MR0146297}{article}{
   author={Glimm, James},
   title={Families of induced representations},
   journal={Pacific J. Math.},
   volume={12},
   date={1962},
   pages={885--911},
   issn={0030-8730},
   review={\MR{0146297}},
}

\bib{MR2966476}{article}{
   author={Goehle, Geoff},
   title={The Mackey machine for crossed products by regular groupoids. II},
   journal={Rocky Mountain J. Math.},
   volume={42},
   date={2012},
   number={3},
   pages={873--900},
   issn={0035-7596},
   review={\MR{2966476}},
   doi={10.1216/RMJ-2012-42-3-873},
}

\bib{GR}{article}{
   author={Gootman, Elliot C.},
   author={Rosenberg, Jonathan},
   title={The structure of crossed product $C\sp{\ast} $-algebras: a proof
   of the generalized Effros--Hahn conjecture},
   journal={Invent. Math.},
   volume={52},
   date={1979},
   number={3},
   pages={283--298},
   issn={0020-9910},
   review={\MR{0537063}},
   doi={10.1007/BF01389885},
}

\bib{MR0246999}{article}{
   author={Greenleaf, F. P.},
   title={Amenable actions of locally compact groups},
   journal={J. Functional Analysis},
   volume={4},
   date={1969},
   pages={295--315},
   issn={0022-1236},
   review={\MR{0246999}},
   doi={10.1016/0022-1236(69)90016-0},
}

\bib{HS}{article}{
   author={Hong, J. H.},
   author={Szyma\'nski, W.},
   title={The primitive ideal space of the C$^{*}$-algebras of infinite graphs},
   journal={J. Math. Soc. Japan},
   volume={56},
   date={2004},
   number={2},
   pages={45--64},
   issn={1016-443X},
   review={\MR{1911663}},
   doi={10.1007/s00039-002-8249-5},
}

\bib{IW0}{article}{
   author={Ionescu, Marius},
   author={Williams, Dana P.},
   title={Irreducible representations of groupoid $C^*$-algebras},
   journal={Proc. Amer. Math. Soc.},
   volume={137},
   date={2009},
   number={4},
   pages={1323--1332},
   issn={0002-9939},
   review={\MR{2465655}},
   doi={10.1090/S0002-9939-08-09782-7},
}

\bib{IW}{article}{
   author={Ionescu, Marius},
   author={Williams, Dana P.},
   title={The generalized Effros--Hahn conjecture for groupoids},
   journal={Indiana Univ. Math. J.},
   volume={58},
   date={2009},
   number={6},
   pages={2489--2508},
   issn={0022-2518},
   review={\MR{2603756}},
   doi={10.1512/iumj.2009.58.3746},
}

\bib{MR3189779}{article}{
   author={Kang, Sooran},
   author={Pask, David},
   title={Aperiodicity and primitive ideals of row-finite $k$-graphs},
   journal={Internat. J. Math.},
   volume={25},
   date={2014},
   number={3},
   pages={1450022, 25},
   issn={0129-167X},
   review={\MR{3189779}},
   doi={10.1142/S0129167X14500220},
}

\bib{Kat}{article}{
   author={Katsura, Takeshi},
   title={Ideal structure of C$^{*}$-algebras of singly generated dynamical systems},
        how={preprint},
        date={2021},
      eprint={\href{https://arxiv.org/abs/2107.10422}{\texttt{2107.10422 [math.OA]}}},
}

\bib{KP}{article}{
   author={Kumjian, Alex},
   author={Pask, David},
   title={Higher rank graph $C^\ast$-algebras},
   journal={New York J. Math.},
   volume={6},
   date={2000},
   pages={1--20},
   review={\MR{1745529}},
}

\bib{MR4283280}{article}{
   author={Li, Hui},
   author={Yang, Dilian},
   title={The ideal structures of self-similar $k$-graph C*-algebras},
   journal={Ergodic Theory Dynam. Systems},
   volume={41},
   date={2021},
   number={8},
   pages={2480--2507},
   issn={0143-3857},
   review={\MR{4283280}},
   doi={10.1017/etds.2020.52},
}

\bib{MR0031489}{article}{
   author={Mackey, George W.},
   title={Imprimitivity for representations of locally compact groups. I},
   journal={Proc. Nat. Acad. Sci. U.S.A.},
   volume={35},
   date={1949},
   pages={537--545},
   issn={0027-8424},
   review={\MR{0031489}},
   doi={10.1073/pnas.35.9.537},
}

\bib{MR0098328}{article}{
   author={Mackey, George W.},
   title={Unitary representations of group extensions. I},
   journal={Acta Math.},
   volume={99},
   date={1958},
   pages={265--311},
   issn={0001-5962},
   review={\MR{0098328}},
   doi={10.1007/BF02392428},
}

\bib{NS}{article}{
   author={Neshveyev, Sergey},
   author={Schwartz, Gaute},
   title={Non-Hausdorff \'{e}tale groupoids and $C^*$-algebras of left
   cancellative monoids},
   journal={M\"{u}nster J. Math.},
   volume={16},
   date={2023},
   number={1},
   pages={147--175},
   issn={1867-5778},
   review={\MR{4563262}},
}

\bib{NT}{article}{
   author={Neshveyev, Sergey},
   author={Tuset, Lars},
   title={Quantized algebras of functions on homogeneous spaces with Poisson
   stabilizers},
   journal={Comm. Math. Phys.},
   volume={312},
   date={2012},
   number={1},
   pages={223--250},
   issn={0010-3616},
   review={\MR{2914062}},
   doi={10.1007/s00220-012-1455-6},
}

\bib{Ped}{book}{
   author={Pedersen, Gert K.},
   title={$C^*$-algebras and their automorphism groups},
   series={Pure and Applied Mathematics (Amsterdam)},
   edition={2},
   publisher={Academic Press, London},
   date={2018},
   pages={xviii+520},
   isbn={978-0-12-814122-9},
   review={\MR{3839621}},
}

\bib{RSY}{article}{
   author={Raeburn, Iain},
   author={Sims, Aidan},
   author={Yeend, Treent},
   title={Higher-rank graphs and their C$^{*}$-algebras},
   journal={Proc. Edinburgh Math. Soc.},
   volume={46},
   date={2003},
   number={1},
   pages={99--115},
  review={\MR{1961175}},
}

\bib{R}{article}{
   author={Renault, Jean},
   title={The ideal structure of groupoid crossed product $C^\ast$-algebras},
   note={With an appendix by Georges Skandalis},
   journal={J. Operator Theory},
   volume={25},
   date={1991},
   number={1},
   pages={3--36},
   issn={0379-4024},
   review={\MR{1191252}},
}

\bib{MR1770333}{article}{
   author={Renault, Jean},
   title={Cuntz-like algebras},
   conference={
      title={Operator theoretical methods},
      address={Timi\c{s}oara},
      date={1998},
   },
   book={
      publisher={Theta Found., Bucharest},
   },
   isbn={973-99097-2-8},
   date={2000},
   pages={371--386},
   review={\MR{1770333}},
}

\bib{RW}{article}{
   author={Renault, Jean},
   author={Williams, Dana P.},
   title={Amenability of groupoids arising from partial semigroup actions and topological higher rank graphs},
   journal={Trans. Amer. Math. Soc.},
   volume={369},
   date={2007},
   number={4},
   pages={2255--2283},
   issn={0002-9947},
  review={\MR{3592511}},
}

\bib{Sau}{article}{
   author={Sauvageot, Jean-Luc},
   title={Id\'{e}aux primitifs induits dans les produits crois\'{e}s},
   journal={J. Functional Analysis},
   volume={32},
   date={1979},
   number={3},
   pages={381--392},
   issn={0022-1236},
   review={\MR{0538862}},
   doi={10.1016/0022-1236(79)90047-8},
}

\bib{SSW}{collection}{
   author={Sims, Aidan},
   author={Szab\'{o}, G\'{a}bor},
   author={Williams, Dana},
   title={Operator algebras and dynamics: groupoids, crossed products, and
   Rokhlin dimension},
   series={Advanced Courses in Mathematics. CRM Barcelona},
   editor={Perera, Francesc},
   publisher={Birkh\"{a}user/Springer, Cham},
   date={2020},
   pages={x+163},
   isbn={978-3-030-39712-8},
   isbn={978-3-030-39713-5},
   review={\MR{4321941}},
   doi={10.1007/978-3-030-39713-5},
}

\bib{SW}{article}{
   author={Sims, Aidan},
   author={Williams, Dana P.},
   title={The primitive ideals of some \'etale groupoid C$^{*}$-algebras},
   journal={Algebr. Represent. Theory},
   volume={19},
   date={2016},
   number={2},
   pages={255--276},
   doi={10.1007/s10468-015-9573-4},
}

\bib{MR4395600}{article}{
   author={Van Wyk, Daniel W.},
   author={Williams, Dana P.},
   title={The primitive ideal space of groupoid $C^*$-algebras for groupoids
   with abelian isotropy},
   journal={Indiana Univ. Math. J.},
   volume={71},
   date={2022},
   number={1},
   pages={359--390},
   issn={0022-2518},
   review={\MR{4395600}},
   doi={10.1512/iumj.2022.71.9523},
}

\bib{MR0617538}{article}{
   author={Williams, Dana P.},
   title={The topology on the primitive ideal space of transformation group
   $C\sp{\ast} $-algebras and C.C.R. transformation group $C\sp{\ast}
   $-algebras},
   journal={Trans. Amer. Math. Soc.},
   volume={266},
   date={1981},
   number={2},
   pages={335--359},
   issn={0002-9947},
   review={\MR{0617538}},
   doi={10.2307/1998427},
}

\end{biblist}
\end{bibdiv}

\bigskip

\end{document}